\documentclass[11pt]{article}
\usepackage{amsmath}
\usepackage{amsfonts}
\usepackage{amssymb}
\usepackage{mathrsfs}
\usepackage{amsthm}
\usepackage{palatino}
\usepackage[margin=2.5cm, vmargin={1.5cm}]{geometry}

\usepackage{dcolumn}

\usepackage{times}
\usepackage{graphicx}
\usepackage{xcolor}

\usepackage{pstricks}
\usepackage{pst-plot}
\usepackage{pst-grad}

\usepackage{wrapfig}

\usepackage{bm}

\usepackage{caption}
\renewcommand\footnotemark{}

\begin{document}

\title{Patterns in the Markov numbers and their generalizations}

\author{
  Cormac ~O'Sullivan\footnote{{\it Date:} Sept 9, 2026.
\newline \indent \ \ \
  {\it 2020 Mathematics Subject Classification:} 11J06, 11Y65, 37E25
\newline \indent \ \ \
{\em Key words and phrases.} Markov numbers, once-punctured torus, topographs, continued fractions.
  \newline \indent \ \ \
Support for this project was provided by a PSC-CUNY Award, jointly funded by The Professional Staff Congress and The City
\newline \indent \ \ \
University of New York.}
  }

\date{}

\maketitle


\def\s#1#2{\langle \,#1 , #2 \,\rangle}

\def\H{{\mathbb{H}}}
\def\F{{\mathfrak F}}
\def\Fd{{\mathcal F}}
\def\Gd{{\mathcal G}}
\def\C{{\mathbb C}}
\def\R{{\mathbb R}}
\def\Z{{\mathbb Z}}
\def\Q{{\mathbb Q}}
\def\N{{\mathbb N}}
\def\P{{\mathbb P}}
\def\G{{\Gamma}}
\def\GH{{\G \backslash \H}}
\def\g{{\gamma}}
\def\L{{\Lambda}}
\def\ee{{\varepsilon}}
\def\K{{\mathcal K}}
\def\Re{\mathrm{Re}}
\def\Im{\mathrm{Im}}
\def\PSL{\mathrm{PSL}}
\def\SL{\mathrm{SL}}
\def\GL{\mathrm{GL}}
\def\Vol{\operatorname{Vol}}
\def\lqs{\leqslant}
\def\gqs{\geqslant}
\def\sgn{\operatorname{sgn}}
\def\res{\operatornamewithlimits{Res}}
\def\li{\operatorname{Li_2}}
\def\lip{\operatorname{Li}'_2}
\def\pl{\operatorname{Li}}

\def\ch{\operatorname{ch}}
\def\tr{\operatorname{tr}}

\def\ei{\mathrm{Ei}}

\def\clp{\operatorname{Cl}'_2}
\def\clpp{\operatorname{Cl}''_2}
\def\farey{\mathscr F}

\def\dm{{\mathcal A}}
\def\ov{{\overline{p}}}
\def\ja{{K}}

\def\nb{{\mathcal B}}
\def\cc{{\mathcal C}}
\def\nd{{\mathcal D}}

\def\u{{\text{\rm u}}}
\def\v{{\text{\rm v}}}
\def\ib{{\text{\,\rm i}}}
\def\jb{{\text{\,\rm j}}}
\def\qq{{f}}

\def\mg{{\mathcal M_{G}}}
\def\mz{{\mathcal M_{Z}}}
\def\stab{{\text{\rm Stab}}}
\def\aut{{\text{\rm Aut}}}
\def\ze{{z}}

\newcommand{\stira}[2]{{\genfrac{[}{]}{0pt}{}{#1}{#2}}}
\newcommand{\stirb}[2]{{\genfrac{\{}{\}}{0pt}{}{#1}{#2}}}
\newcommand{\eu}[2]{{\left\langle\!\! \genfrac{\langle}{\rangle}{0pt}{}{#1}{#2}\!\!\right\rangle}}
\newcommand{\eud}[2]{{\big\langle\! \genfrac{\langle}{\rangle}{0pt}{}{#1}{#2}\!\big\rangle}}
\newcommand{\norm}[1]{\left\lVert #1 \right\rVert}
\newcommand{\dx}[1]{\overset{*}{#1}}

\newcommand{\e}{\eqref}
\newcommand{\la}{\label}
\newcommand{\bo}[1]{O\left( #1 \right)}
\newcommand{\ol}[1]{\,\overline{\!{#1}}} 


\newtheorem{theorem}{Theorem}[section]
\newtheorem{lemma}[theorem]{Lemma}
\newtheorem{prop}[theorem]{Proposition}
\newtheorem{conj}[theorem]{Conjecture}
\newtheorem{cor}[theorem]{Corollary}
\newtheorem{assume}[theorem]{Assumptions}
\newtheorem{adef}[theorem]{Definition}
\newtheorem{ex}[theorem]{Example}

\newtheorem*{algo}{Reduction algorithm}
\newtheorem*{algo2}{Continued fraction algorithm}
\newtheorem*{algo3}{General continued fraction algorithm}
\newtheorem*{algo4}{Algorithm to find $\varepsilon_D$ and $\varepsilon^*_D$}

\numberwithin{figure}{section}
\numberwithin{table}{section}


\newcounter{counrem}
\newtheorem{remark}[counrem]{Remark}

\renewcommand{\labelenumi}{(\roman{enumi})}
\newcommand{\spr}[2]{\sideset{}{_{#2}^{-1}}{\textstyle \prod}({#1})}
\newcommand{\spn}[2]{\sideset{}{_{#2}}{\textstyle \prod}({#1})}

\numberwithin{equation}{section}

\let\originalleft\left
\let\originalright\right
\renewcommand{\left}{\mathopen{}\mathclose\bgroup\originalleft}
\renewcommand{\right}{\aftergroup\egroup\originalright}

\bibliographystyle{alpha}

\setcounter{topnumber}{1}

\begin{abstract}
Positive integer solutions to $x^2+y^2+z^2- x y z =D$  correspond to the important Markov (Markoff) numbers when $D=0$. From a given solution triple, three more are found with Vieta involutions, making an infinite tree of solutions. Starting instead with three real numbers greater than $2$ gives lengths of closed geodesics in a  punctured torus. In this paper we study these tree structures for any real $D$.   A continuous function, originally related to a norm on homology,  encodes all the numbers on each of these trees. The  properties of this function are developed here in general, with a  self-contained exposition, showing that the usual $D=0$ case is part of a bigger picture. Encoding function graphs are shown to be convex for $D<4$, straight lines for $D=4$ and concave for $D>4$. 
Among other consequences  are generalizations to all $D$ of: estimates for counting numbers on these trees, descriptions of the geometry of the corresponding lattice curves, uniqueness conditions, and   identities of McShane and Hines.  
\end{abstract}

\section{Introduction}
The Markov (or Markoff) numbers $1, 2, 5, 13, 29, 34, 89, 169, \dots$ are those appearing in positive integer solutions $(x,y,z)$ of the equation
$
  x^2+y^2+z^2 = 3 x y z
$. They have a long history, see \cite[Chap. 1, 2]{aig}, with their initial importance  related to the Lagrange spectrum,  showing  fundamental limits to the approximation of irrationals by rationals. We will also see their connections to hyperbolic geometry.  Multiplying 
by $3$ to get $3, 6, 15, 39, \dots$ allows a convenient equivalent formulation  that we will use, as solutions of
\begin{equation} \la{mar2}
  x^2+y^2+z^2- x y z =D,
\end{equation}
for $D=0$. In this paper, we allow $x, y, z$ and $D$ in \e{mar2} to be real numbers with 
\begin{equation}\label{restr}
  x, y, z \in (-\infty,-2] \cup [2,\infty).
\end{equation}

\SpecialCoor
\psset{griddots=5,subgriddiv=0,gridlabels=0pt}
\psset{xunit=0.35cm, yunit=0.35cm, runit=0.5cm}
\psset{linewidth=1pt}
\psset{dotsize=4pt 0,dotstyle=*}
\begin{figure}[ht]
\centering

\psset{arrowscale=1.4,arrowinset=0.3,arrowlength=1.1}
\newrgbcolor{light}{0.8 0.8 1.0}
\newrgbcolor{pale}{1 0.7 1}
\newrgbcolor{pale}{1 0.7 0.4}

\begin{pspicture}(-15,-1.6)(15,19) 

\psset{arrowscale=1.8,arrowinset=0.3,arrowlength=1.1}
\newrgbcolor{light}{0.9 0.7 1.0}
\newrgbcolor{blue2}{0.3 0.3 0.8}
\newrgbcolor{light}{0.7 0.7 1.0}



\psset{linecolor=brown}

\psline(0,0)(0,4)

\psline[linecolor=light,linestyle=dashed,dash=3pt 2pt](-2,-1.6)(0,0)(2,-1.6)
\psdot[linecolor=black](0,0)

\psline(0,4)(2.34549,5.87047)(5.32318,6.23569)(8.21397,5.4336)(10.7154,3.77746)(12.438,1.82995)

\psline(10.7154,3.77746)(13.1809,2.95215)(13.1809,2.95215)(13.1809,2.95215)(13.1809,2.95215)(13.1809,2.95215)

\psline(8.21397,5.4336)(11.2111,5.56406)(13.7494,5.00099)(13.7494,5.00099)(13.7494,5.00099)(13.7494,5.00099)

\psline(11.2111,5.56406)(13.6909,6.34558)(13.6909,6.34558)(13.6909,6.34558)(13.6909,6.34558)(13.6909,6.34558)

\psline(5.32318,6.23569)(7.93444,7.71262)(10.8743,8.31034)(13.4694,8.15128)(13.4694,8.15128)(13.4694,8.15128)

\psline(10.8743,8.31034)(13.2013,9.47015)(13.2013,9.47015)(13.2013,9.47015)(13.2013,9.47015)(13.2013,9.47015)

\psline(7.93444,7.71262)(9.9615,9.92419)(12.1545,11.3209)(12.1545,11.3209)(12.1545,11.3209)(12.1545,11.3209)

\psline(9.9615,9.92419)(11.1623,12.2303)(11.1623,12.2303)(11.1623,12.2303)(11.1623,12.2303)(11.1623,12.2303)

\psline(2.34549,5.87047)(3.36415,8.69223)(5.38512,10.9094)(7.9923,12.3935)(10.5078,13.0511)(10.5078,13.0511)

\psline(7.9923,12.3935)(9.84196,14.2207)(9.84196,14.2207)(9.84196,14.2207)(9.84196,14.2207)(9.84196,14.2207)

\psline(5.38512,10.9094)(6.62203,13.6425)(8.27056,15.6531)(8.27056,15.6531)(8.27056,15.6531)(8.27056,15.6531)

\psline(6.62203,13.6425)(7.04442,16.208)(7.04442,16.208)(7.04442,16.208)(7.04442,16.208)(7.04442,16.208)

\psline(3.36415,8.69223)(3.22543,11.689)(4.01956,14.582)(5.33328,16.8257)(5.33328,16.8257)(5.33328,16.8257)

\psline(4.01956,14.582)(4.03543,17.182)(4.03543,17.182)(4.03543,17.182)(4.03543,17.182)(4.03543,17.182)

\psline(3.22543,11.689)(2.16744,14.4963)(1.91145,17.0836)(1.91145,17.0836)(1.91145,17.0836)(1.91145,17.0836)

\psline(2.16744,14.4963)(0.652066,16.609)(0.652066,16.609)(0.652066,16.609)(0.652066,16.609)(0.652066,16.609)

\psline(0,4)(-2.34549,5.87047)(-3.36415,8.69223)(-3.22543,11.689)(-2.16744,14.4963)(-0.652066,16.609)

\psline(-2.16744,14.4963)(-1.91145,17.0836)(-1.91145,17.0836)(-1.91145,17.0836)(-1.91145,17.0836)(-1.91145,17.0836)

\psline(-3.22543,11.689)(-4.01956,14.582)(-4.03543,17.182)(-4.03543,17.182)(-4.03543,17.182)(-4.03543,17.182)

\psline(-4.01956,14.582)(-5.33328,16.8257)(-5.33328,16.8257)(-5.33328,16.8257)(-5.33328,16.8257)(-5.33328,16.8257)

\psline(-3.36415,8.69223)(-5.38512,10.9094)(-6.62203,13.6425)(-7.04442,16.208)(-7.04442,16.208)(-7.04442,16.208)

\psline(-6.62203,13.6425)(-8.27056,15.6531)(-8.27056,15.6531)(-8.27056,15.6531)(-8.27056,15.6531)(-8.27056,15.6531)

\psline(-5.38512,10.9094)(-7.9923,12.3935)(-9.84196,14.2207)(-9.84196,14.2207)(-9.84196,14.2207)(-9.84196,14.2207)

\psline(-7.9923,12.3935)(-10.5078,13.0511)(-10.5078,13.0511)(-10.5078,13.0511)(-10.5078,13.0511)(-10.5078,13.0511)

\psline(-2.34549,5.87047)(-5.32318,6.23569)(-7.93444,7.71262)(-9.9615,9.92419)(-11.1623,12.2303)(-11.1623,12.2303)

\psline(-9.9615,9.92419)(-12.1545,11.3209)(-12.1545,11.3209)(-12.1545,11.3209)(-12.1545,11.3209)(-12.1545,11.3209)

\psline(-7.93444,7.71262)(-10.8743,8.31034)(-13.2013,9.47015)(-13.2013,9.47015)(-13.2013,9.47015)(-13.2013,9.47015)

\psline(-10.8743,8.31034)(-13.4694,8.15128)(-13.4694,8.15128)(-13.4694,8.15128)(-13.4694,8.15128)(-13.4694,8.15128)

\psline(-5.32318,6.23569)(-8.21397,5.4336)(-11.2111,5.56406)(-13.6909,6.34558)(-13.6909,6.34558)(-13.6909,6.34558)

\psline(-11.2111,5.56406)(-13.7494,5.00099)(-13.7494,5.00099)(-13.7494,5.00099)(-13.7494,5.00099)(-13.7494,5.00099)

\psline(-8.21397,5.4336)(-10.7154,3.77746)(-13.1809,2.95215)(-13.1809,2.95215)(-13.1809,2.95215)(-13.1809,2.95215)

\psline(-10.7154,3.77746)(-12.438,1.82995)(-12.438,1.82995)(-12.438,1.82995)(-12.438,1.82995)(-12.438,1.82995)

\rput(-2.34549,2){$1'$}
\rput(2.34549,2){$2'$}

\rput(-14.2083,5.69452){$_{9077'}$}
\rput(-13.8141,8.90805){$_{135137'}$}
\rput(-13.2168,2.12131){$_{233'}$}
\rput(-11.9886,12.1357){$_{51641'}$}
\rput(-11.1048,4.6315){$89'$}
\rput(-10.5995,13.8776){$_{294685'}$}
\rput(-10.5457,9.18954){$1325'$}
\rput(-8.30087,6.6009){$34'$}
\rput(-7.85893,16.3757){$_{4400489'}$}
\rput(-7.40607,13.1265){$7561'$}
\rput(-4.81368,17.475){$_{1686049'}$}
\rput(-4.38281,11.514){$194'$}
\rput(-4.69099,7.74094){$13'$} 
\rput(-3.0867,14.6858){$2897'$}
\rput(-1.34,17.3035){$_{43261'}$}  
\rput(0.,6.8){$5'$}
\rput(1.34,17.3035){$_{96557'}$} 
\rput(3.0867,14.6858){$6466'$}
\rput(4.69099,7.74094){$29'$}
\rput(4.38281,11.514){$433'$}
\rput(4.81368,17.475){$_{8399329'}$}
\rput(7.40607,13.1265){$37666'$}
\rput(7.85893,16.3757){$_{48928105'}$}
\rput(8.30087,6.6009){$169'$}
\rput(10.5457,9.18954){$14701'$}
\rput(10.5995,13.8776){$_{3276509'}$}
\rput(11.1048,4.6315){$985'$}
\rput(11.9886,12.1357){$_{1278818'}$}
\rput(13.2168,2.12131){$_{5741'}$}
\rput(13.9941,8.90805){$_{7453378'}$}
\rput(14.2083,5.69452){$_{499393'}$}

\psset{linecolor=gray}
\rput(20,13){%
        \begin{pspicture}(-2,-1.5)(2,4.5)
          \psline(1.8,-0.2)(0,1)
\psline(-1.8,-0.2)(0,1)
\psline(0,1)(0,3)

\psline(-1.8,4.2)(0,3)
\psline(1.8,4.2)(0,3)

\rput(0,-0.3){$r$}
\rput(1,1.99){$t$}
\rput(-1,1.99){$s$}
\rput(0,4.3){$u$}

        \end{pspicture}}

\psset{linecolor=black}

\end{pspicture}
\caption{The beginning of the original Markov tree. Here $x'$ means $3x$.}
\label{mart}
\end{figure}
For a fixed $D$, every solution $(x,y,z)$ of \e{mar2}  gives rise to three others, from the alternate solutions of the quadratic equations in each variable, producing an infinite tree. Beginning with $(3,15,6)$ at the bottom gives the rooted Markov tree displayed in Figure \ref{mart}. 
The three region labels surrounding each vertex give a solution to \e{mar2} for $D=0$. Four regions $r, s, t, u$ surrounding an edge, as on the right of Figure \ref{mart}, must satisfy the local rule $r+u = st$. 

We may replace $(3,15,6)$ with any triple of real numbers $(s_0,u_0,t_0)$ with $2<s_0, t_0 \lqs u_0$, find $D$ from \e{mar2}, and fill out the tree using the same local rule. As before, the three regions at each vertex give a solution to \e{mar2} for this fixed $D$. This  gives an example of a  {\em rising} tree. See Section \ref{asc} for the full definition. Letting $s_0$ or $t_0$ be $2$ above gives a {\em progressing} tree which has some different properties. 
To index the regions in each such tree $T$ we may use the rational numbers $q$ in the interval $[0,1]$, with their positions in the Farey tree shown in Figure \ref{farey}. 
\SpecialCoor
\psset{griddots=5,subgriddiv=0,gridlabels=0pt}
\psset{xunit=0.35cm, yunit=0.35cm, runit=0.5cm}
\psset{linewidth=1pt}
\psset{dotsize=4pt 0,dotstyle=*}
\begin{figure}[htb]
\centering

\psset{arrowscale=1.4,arrowinset=0.3,arrowlength=1.1}
\newrgbcolor{light}{0.8 0.8 1.0}
\newrgbcolor{pale}{1 0.7 1}
\newrgbcolor{pale}{1 0.7 0.4}

\begin{pspicture}(-15,-3)(15,19) 

\psset{arrowscale=1.8,arrowinset=0.3,arrowlength=1.1}
\newrgbcolor{light}{0.9 0.7 1.0}
\newrgbcolor{light}{0.7 0.7 1.0}
\newrgbcolor{blue2}{0.3 0.3 0.8}
\newrgbcolor{darkbrown}{0.5 0.324219 0.257813}



\psset{linecolor=light}
\psline{->}(-3,-2)(0,0)
\psline{->}(0,0)(3,-2)

\psdot[linecolor=black](0,0)

\psset{linecolor=black}

\psline{->}(0,0)(0,4)

\psset{linecolor=darkbrown}

\psline(0,4)(2.34549,5.87047)(5.32318,6.23569)(8.21397,5.4336)(10.7154,3.77746)(12.438,1.82995)

\psline(10.7154,3.77746)(13.1809,2.95215)(13.1809,2.95215)(13.1809,2.95215)(13.1809,2.95215)(13.1809,2.95215)

\psline(8.21397,5.4336)(11.2111,5.56406)(13.7494,5.00099)(13.7494,5.00099)(13.7494,5.00099)(13.7494,5.00099)

\psline(11.2111,5.56406)(13.6909,6.34558)(13.6909,6.34558)(13.6909,6.34558)(13.6909,6.34558)(13.6909,6.34558)

\psline(5.32318,6.23569)(7.93444,7.71262)(10.8743,8.31034)(13.4694,8.15128)(13.4694,8.15128)(13.4694,8.15128)

\psline(10.8743,8.31034)(13.2013,9.47015)(13.2013,9.47015)(13.2013,9.47015)(13.2013,9.47015)(13.2013,9.47015)

\psline(7.93444,7.71262)(9.9615,9.92419)(12.1545,11.3209)(12.1545,11.3209)(12.1545,11.3209)(12.1545,11.3209)

\psline(9.9615,9.92419)(11.1623,12.2303)(11.1623,12.2303)(11.1623,12.2303)(11.1623,12.2303)(11.1623,12.2303)

\psline(2.34549,5.87047)(3.36415,8.69223)(5.38512,10.9094)(7.9923,12.3935)(10.5078,13.0511)(10.5078,13.0511)

\psline(7.9923,12.3935)(9.84196,14.2207)(9.84196,14.2207)(9.84196,14.2207)(9.84196,14.2207)(9.84196,14.2207)

\psline(5.38512,10.9094)(6.62203,13.6425)(8.27056,15.6531)(8.27056,15.6531)(8.27056,15.6531)(8.27056,15.6531)

\psline(6.62203,13.6425)(7.04442,16.208)(7.04442,16.208)(7.04442,16.208)(7.04442,16.208)(7.04442,16.208)

\psline(3.36415,8.69223)(3.22543,11.689)(4.01956,14.582)(5.33328,16.8257)(5.33328,16.8257)(5.33328,16.8257)

\psline(4.01956,14.582)(4.03543,17.182)(4.03543,17.182)(4.03543,17.182)(4.03543,17.182)(4.03543,17.182)

\psline(3.22543,11.689)(2.16744,14.4963)(1.91145,17.0836)(1.91145,17.0836)(1.91145,17.0836)(1.91145,17.0836)

\psline(2.16744,14.4963)(0.652066,16.609)(0.652066,16.609)(0.652066,16.609)(0.652066,16.609)(0.652066,16.609)

\psline(0,4)(-2.34549,5.87047)(-3.36415,8.69223)(-3.22543,11.689)(-2.16744,14.4963)(-0.652066,16.609)

\psline(-2.16744,14.4963)(-1.91145,17.0836)(-1.91145,17.0836)(-1.91145,17.0836)(-1.91145,17.0836)(-1.91145,17.0836)

\psline(-3.22543,11.689)(-4.01956,14.582)(-4.03543,17.182)(-4.03543,17.182)(-4.03543,17.182)(-4.03543,17.182)

\psline(-4.01956,14.582)(-5.33328,16.8257)(-5.33328,16.8257)(-5.33328,16.8257)(-5.33328,16.8257)(-5.33328,16.8257)

\psline(-3.36415,8.69223)(-5.38512,10.9094)(-6.62203,13.6425)(-7.04442,16.208)(-7.04442,16.208)(-7.04442,16.208)

\psline(-6.62203,13.6425)(-8.27056,15.6531)(-8.27056,15.6531)(-8.27056,15.6531)(-8.27056,15.6531)(-8.27056,15.6531)

\psline(-5.38512,10.9094)(-7.9923,12.3935)(-9.84196,14.2207)(-9.84196,14.2207)(-9.84196,14.2207)(-9.84196,14.2207)

\psline(-7.9923,12.3935)(-10.5078,13.0511)(-10.5078,13.0511)(-10.5078,13.0511)(-10.5078,13.0511)(-10.5078,13.0511)

\psline(-2.34549,5.87047)(-5.32318,6.23569)(-7.93444,7.71262)(-9.9615,9.92419)(-11.1623,12.2303)(-11.1623,12.2303)

\psline(-9.9615,9.92419)(-12.1545,11.3209)(-12.1545,11.3209)(-12.1545,11.3209)(-12.1545,11.3209)(-12.1545,11.3209)

\psline(-7.93444,7.71262)(-10.8743,8.31034)(-13.2013,9.47015)(-13.2013,9.47015)(-13.2013,9.47015)(-13.2013,9.47015)

\psline(-10.8743,8.31034)(-13.4694,8.15128)(-13.4694,8.15128)(-13.4694,8.15128)(-13.4694,8.15128)(-13.4694,8.15128)

\psline(-5.32318,6.23569)(-8.21397,5.4336)(-11.2111,5.56406)(-13.6909,6.34558)(-13.6909,6.34558)(-13.6909,6.34558)

\psline(-11.2111,5.56406)(-13.7494,5.00099)(-13.7494,5.00099)(-13.7494,5.00099)(-13.7494,5.00099)(-13.7494,5.00099)

\psline(-8.21397,5.4336)(-10.7154,3.77746)(-13.1809,2.95215)(-13.1809,2.95215)(-13.1809,2.95215)(-13.1809,2.95215)

\psline(-10.7154,3.77746)(-12.438,1.82995)(-12.438,1.82995)(-12.438,1.82995)(-12.438,1.82995)(-12.438,1.82995)

\rput(-2.34549,2){$\frac{0}{1}$}
\rput(2.34549,2){$\frac{1}{1}$}

\rput(-14.2083,5.69452){$\frac{2}{9}$}
\rput(-13.8141,8.90805){$\frac{3}{11}$}
\rput(-13.2168,2.12131){$_{\frac{1}{6}}$}
\rput(-11.9886,12.1357){$\frac{3}{10}$}
\rput(-11.1048,4.6315){$\frac{1}{5}$}
\rput(-10.5995,13.8776){$\frac{4}{11}$}
\rput(-10.5457,9.18954){$\frac{2}{7}$}
\rput(-8.30087,6.6009){$\frac{1}{4}$}
\rput(-7.85893,16.3757){$\frac{5}{13}$}
\rput(-7.40607,13.1265){$\frac{3}{8}$}
\rput(-4.81368,17.475){$\frac{5}{12}$}
\rput(-4.38281,11.514){$\frac{2}{5}$}
\rput(-4.69099,7.74094){$\frac{1}{3}$} 
\rput(-3.0867,14.6858){$\frac{3}{7}$}
\rput(-1.3,17.3035){$\frac{4}{9}$}  
\rput(0.,6.8){$\frac{1}{2}$}
\rput(1.3,17.3035){$\frac{5}{9}$} 
\rput(3.0867,14.6858){$\frac{4}{7}$}
\rput(4.69099,7.74094){$\frac{2}{3}$}
\rput(4.38281,11.514){$\frac{3}{5}$}
\rput(4.81368,17.475){$\frac{7}{12}$}
\rput(7.40607,13.1265){$\frac{5}{8}$}
\rput(7.85893,16.3757){$\frac{8}{13}$}
\rput(8.30087,6.6009){$\frac{3}{4}$}
\rput(10.5457,9.18954){$\frac{5}{7}$}
\rput(10.5995,13.8776){$\frac{7}{11}$}
\rput(11.1048,4.6315){$\frac{4}{5}$}
\rput(11.9886,12.1357){$\frac{7}{10}$}
\rput(13.2168,2.12131){$\frac{5}{6}$}
\rput(13.9941,8.90805){$\frac{8}{11}$}
\rput(14.2083,5.69452){$\frac{7}{9}$}

\rput(0,-2){$\frac 10$}

\psset{linecolor=gray}
\rput(20,13){%
        \begin{pspicture}(1,-1)(7,8)

\psline(4,5)(6,6.333)
\psline(4,5)(2,6.333)

\psline{->}(4,2)(4,5)

\rput(3,3.5){$\frac{a}{b}$}

\rput(5,3.5){$\frac{c}{d}$}

\rput(4,7){$\frac{a+c}{b+d}$}

\end{pspicture}}

\psset{linecolor=black}

\end{pspicture}
\caption{The Farey tree}
\label{farey}
\end{figure}
Originally,  Frobenius in \cite[Sect. 8]{fro} used indices from the Stern-Brocot tree,  containing the Farey tree as its left branch.
The notation $m(T)_q=m_q$, for $q\in \Q \cap [0,1]$, indicates the tree number in position $q$. For example, overlaying  Figure \ref{mart} 
on Figure \ref{farey} gives
\begin{equation} \la{mkno}
  m_0= 1'=3, \quad m_1 = 2'=6, \quad m_{1/2} = 5'=15, \quad m_{1/3} = 13', \quad m_{2/3} = 29',
\end{equation}
and so on. The reason this is so useful is because the additive mediants in the Farey tree correspond to logarithms of the multiplicative numbers in $T$ with $u \approx st$. For example, with the Markov tree in Figure \ref{mart} it may be verified numerically that
\begin{align}
  \log m_{k/n} &\approx 0.8 k +  0.962 n, \notag\\
  & = n(0.8 k/n +  0.962). \label{mv}
\end{align}
Our first main result is the following. 

\begin{theorem} \la{mthm}
Let $T$ be a rising tree or a progressing tree, with Farey-indexed numbers $m_{k/n}=m(T)_{k/n}$.  There is a continuous function $\Phi=\Phi_T: [0,1] \to \R_{\gqs 0}$ that encodes these numbers  with
\begin{equation}\label{mknc}
  m_{k/n}=2 \cosh(n \cdot \Phi(k/n)).
\end{equation}
The  graph of $\Phi$ is strictly convex if $D<4$, a straight line if $D=4$, and strictly concave when $D>4$.  Also $\Phi$ is differentiable at irrational numbers, while at rationals it is only semi-differentiable if $D \neq 4$.
\end{theorem}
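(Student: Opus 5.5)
The plan is to work with the homogeneous "length" function $L(k,n)=\operatorname{arccosh}(m_{k/n}/2)$ and set $\Phi(k/n)=L(k,n)/n$, so that \e{mknc} holds by definition on rationals. The content of the theorem is then that this $\Phi$ extends continuously to $[0,1]$ with the stated shape. First I would record that in a rising or progressing tree every label is $\gqs 2$, so $L$ is well defined and nonnegative. I would also check that labels increase away from the root, so that at each edge the new region is the larger root of the quadratic.

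The key local computation compares a mediant with its two Farey parents. Let $s=2\cosh A$ and $t=2\cosh B$ sit at Farey neighbours $a/b,c/d$, and let $u$ sit at the mediant. From \e{mar2}, $u$ is the larger root of $z^2-stz+s^2+t^2-D=0$, so
\begin{equation*}
u=\tfrac12\Bigl(st+\sqrt{(s^2-4)(t^2-4)+4(D-4)}\Bigr).
\end{equation*}
When $D=4$ this gives exactly $u=2\cosh(A+B)$. Hence $L(a+c,b+d)=L(a,b)+L(c,d)$ for all Farey neighbour pairs, so $L$ is additive on the cone and $\Phi$ is linear. For $D<4$ we get $L_u<L_s+L_t$, and for $D>4$ we get $L_u>L_s+L_t$. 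Dividing by $b+d$, this says that $\Phi$ at the mediant lies strictly below the chord (if $D<4$) or strictly above it (if $D>4$) joining $(a/b,\Phi(a/b))$ and $(c/d,\Phi(c/d))$, where the mediant has weights $b/(b+d)$ and $d/(b+d)$.

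The mediant-below-chord property alone does not give convexity. Inserting a point below a chord can destroy convexity at the chord's endpoints. So I would argue level by level with the piecewise-linear interpolants $\Phi_N$ through the Farey fractions of depth $\lqs N$, and prove that each $\Phi_N$ is strictly convex (resp.\ concave). At a vertex $q$ of $\Phi_N$ with current neighbours $p<q<r$, either $q$ is the mediant of $p$ and $r$, or $p,q,r$ are consecutive terms of a chain $(a+jc)/(b+jd)$ obtained by repeatedly adding a fixed neighbour. Along such a chain the local rule $r+u=st$ gives a linear recurrence $m_{j+1}+m_{j-1}=m_{c/d}\,m_j$. Its solution is $m_j=\alpha\lambda^j+\beta\lambda^{-j}$ with $\lambda+\lambda^{-1}=m_{c/d}$, and the sign of $(D-4)$ fixes the sign of $\alpha\beta-1$. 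I would compute the slopes of consecutive chords explicitly from this closed form and show they are monotone with the correct direction. I expect this step to be the main obstacle: organising the combinatorics so that every vertex of every $\Phi_N$ falls into one of these two configurations, and getting the strict inequality from the explicit $\cosh$/$\sinh$ expressions.

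Once the $\Phi_N$ are convex (resp.\ concave) and agree on the rationals, uniform bounds on slopes follow from comparison with the $D=4$ line through $\Phi(0),\Phi(1)$ and the explicit chain asymptotics. These give uniform Lipschitz control, so $\Phi$ extends continuously to $[0,1]$ and inherits strict convexity or concavity. For differentiability, at an irrational $x$ I would bound the left and right difference quotients by slopes of chords between consecutive convergents (Farey neighbours straddling $x$). The chain formula shows that the gap between these slopes is $O(1/(b d))$ times a bounded quantity, which tends to $0$, so the derivative exists. At a rational $q=c/d$ with $D\neq4$, I would compute the one-sided derivatives as limits along the two chains $(a+jc)/(b+jd)$ approaching $q$ from either side. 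The limits differ by a term proportional to $\log(\alpha\beta)$ (equivalently to $\log$ of $1+4(D-4)/((s^2-4)(t^2-4))$-type factors), which is nonzero exactly when $D\neq4$. So $\Phi$ is only semi-differentiable there.
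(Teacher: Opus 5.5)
The gap is in the convexity step, which is the heart of the theorem, and it is exactly where you expected trouble. Your dichotomy for the vertices of $\Phi_N$ is false.

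At depth $2$ the nodes are $0,\tfrac13,\tfrac12,\tfrac23,1$, and the vertex $\tfrac12$ has neighbours $\tfrac13$ and $\tfrac23$. These are not Farey neighbours, so your mediant inequality says nothing about them. They are also not consecutive terms of a chain $(a+jc)/(b+jd)$ through $\tfrac12$: the gaps along such a chain strictly decrease, while here both gaps are $\tfrac16$.

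In general, write $q_i=(k_0+ik)/(n_0+in)$ and $q_j'=(k_0'+jk)/(n_0'+jn)$ for the two neighbour chains converging to $q=k/n$ from opposite sides. A vertex $q$ of depth $r<N$ has neighbours $q_{N-r}<q<q'_{N-r}$. So $q$ is the common limit of two different chains, not a term of one. The only triples in which $q$ is a middle chain term are $(q_0,q,q_1')$ and $(q_1,q,q_0')$, so checks of your two types can never control a fixed vertex once the partition is fine.

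What you need is $\gamma(q_i,q)<\gamma(q,q_j')$ for \emph{all} $i,j\ge0$ when $D<4$, reversed when $D>4$. You already have the tools:
\begin{enumerate}
\item Since $q_{i+1}$ is the mediant of $q_i$ and $q$, your mediant inequality makes $\gamma(q_i,q)$ strictly increasing in $i$. Likewise $\gamma(q,q_j')$ is strictly decreasing in $j$.
\item The closed form $\alpha\lambda^j+\beta\lambda^{-j}$ identifies the limits $S$ and $S'$ of these two slope sequences.
\item $S'-S=n\log(\alpha\beta)=n\log\frac{u^2-D}{u^2-4}$, which is positive for $D<4$.
\end{enumerate}
Hence $\gamma(q_i,q)<S<S'<\gamma(q,q_j')$. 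This is the paper's route, Propositions~\ref{sjs0} and~\ref{sjs}. The $\log(\alpha\beta)$ computation you reserve for non-differentiability is what actually drives convexity.

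Three further points need repair.

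First, labels do not increase away from the root in a rising tree with a nonempty descending path, for example $(1660,17,98)$. On that path $u$ is the smaller root. The inequality $L_u<L_s+L_t$ still holds there, since $u\le st/2<2\cosh(A+B)$. But it has the correct sign only because such trees have $D<4$ (Lemmas~\ref{wells} and~\ref{tr4}). You must prove this, and even your $D=4$ additivity argument relies on it.

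Second, there is no uniform Lipschitz bound for progressing trees. If $s_0=2$, then $m_{1/n}$ grows linearly and $\gamma(0,1/n)=\rho(m_{1/n})\to\infty$. Continuity at that endpoint therefore needs a separate argument, for instance $\phi(1/n)\to0$ together with monotonicity of $\phi$ near $0$.

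Third, take an irrational $x$ between consecutive convergents $a/b<x<c/d$. Convexity only gives $S'(a/b)\le D^-\Phi(x)\le D^+\Phi(x)\le S(c/d)$. To close this gap you must show that the difference of chord slopes to Farey neighbours on the two sides of a rational $k/n$ tends to $0$ as $n\to\infty$. That difference is at most $n\,\bigl|\rho(m_{q_0})+\rho(m_{q_0'})-\rho(m_{k/n})\bigr|$. Showing it tends to $0$ requires:
\begin{enumerate}
\item a quantitative bound on the defect $|\rho(s)+\rho(t)-\rho(u)|$ in terms of $u$ (Propositions~\ref{zag2} and~\ref{zag2b});
\item the growth of $m_{k/n}$ in $n$ (Propositions~\ref{sim2} and~\ref{sih}).
\end{enumerate}
Neither appears in your sketch, and your claimed $O(1/(bd))$ bound does not follow from the chain formula alone.
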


Figure \ref{phplot} displays the graph of the encoding function $\Phi$ for the original Markov tree. For this, a recursive procedure using the local rule computed region numbers, up to a given size, and their corresponding mediants.
The graph is strictly convex, lying a little below the line joining the endpoints. The points at $(k/n,\Phi(k/n))$ for small $n$ are highlighted. We will see in \e{sps} that the difference in the slopes of the graph to the left and right of $1/2$ is approximately $0.0359$.
Figure \ref{phplots} shows  $\Phi$ graphs for other rising trees. 

\SpecialCoor
\psset{griddots=5,subgriddiv=0,gridlabels=0pt}
\psset{xunit=7cm, yunit=6cm, runit=5cm}
\psset{linewidth=1pt}
\psset{dotsize=1.4pt,dotstyle=*}
\begin{figure}[htb]
\centering

\psset{arrowscale=1.4,arrowinset=0.3,arrowlength=1.1}
\newrgbcolor{light}{0.8 0.8 1.0}
\newrgbcolor{pale}{1 0.7 1}
\newrgbcolor{pale}{1 0.7 0.4}

\newrgbcolor{markov}{0.4 0.2 0.8}
\psset{linecolor=markov}

\begin{pspicture}(-0.2,0.8)(1.2,1.8) 

\psset{linecolor=lightgray}

\psline(-0.025,1.8)(0.025,1.8)
\psline(-0.025,1.6)(0.025,1.6)
\psline(-0.025,1.4)(0.025,1.4)
\psline(-0.025,1.2)(0.025,1.2)
\psline(-0.025,1)(0.025,1)

\rput(-0.07,1.8){$_{1.8}$}
\rput(-0.07,1.6){$_{1.6}$}
\rput(-0.07,1.4){$_{1.4}$}
\rput(-0.07,1.2){$_{1.2}$}
\rput(-0.07,1.0){$_{1.0}$}

\psline(0.2,0.925)(0.2,0.875)
\psline(0.4,0.925)(0.4,0.875)
\psline(0.6,0.925)(0.6,0.875)
\psline(0.8,0.925)(0.8,0.875)
\psline(1,0.925)(1,0.875)

\rput(0.2,0.84){$_{0.2}$}
\rput(0.4,0.84){$_{0.4}$}
\rput(0.6,0.84){$_{0.6}$}
\rput(0.8,0.84){$_{0.8}$}
\rput(1.0,0.84){$_{1.0}$}

\psline(0,0.85)(0,1.8)


\psline(-0.1,0.9)(1.1,0.9)

\psset{dotsize=1.3pt,dotstyle=*}
\psset{linewidth=0.9pt}


\savedata{\mydata}[
{{0., 0.962424}, {0.1, 1.03993}, {0.111111, 1.04855}, {0.125, 
  1.05931}, {0.142857, 1.07315}, {0.166667, 1.09161}, {0.2, 
  1.11745}, {0.222222, 1.13468}, {0.25, 1.15622}, {0.285714, 
  1.18397}, {0.3, 1.19507}, {0.333333, 1.22097}, {0.375, 
  1.25367}, {0.4, 1.27329}, {0.428571, 1.29572}, {0.444444, 
  1.30818}, {0.5, 1.35179}, {0.555556, 1.39739}, {0.571429, 
  1.41042}, {0.6, 1.43387}, {0.625, 1.45439}, {0.666667, 
  1.48859}, {0.7, 1.51601}, {0.714286, 1.52775}, {0.75, 
  1.55713}, {0.777778, 1.57997}, {0.8, 1.59825}, {0.833333, 
  1.62567}, {0.857143, 1.64525}, {0.875, 1.65994}, {0.888889, 
  1.67136}, {0.9, 1.6805}, {1., 1.76275}}
  ]
  
\dataplot[linecolor=cyan]{\mydata}

\savedata{\mydatab}[
{{0., 0.962424}, {1., 1.76275}, {0.5, 1.35179}, {0.333333, 
  1.22097}, {0.666667, 1.48859}, {0.25, 1.15622}, {0.4, 
  1.27329}, {0.6, 1.43387}, {0.75, 1.55713}, {0.2, 
  1.11745}, {0.285714, 1.18397}, {0.375, 1.25367}, {0.428571, 
  1.29572}, {0.571429, 1.41042}, {0.625, 1.45439}, {0.714286, 
  1.52775}, {0.8, 1.59825}, {0.166667, 1.09161}, {0.222222, 
  1.13468}, {0.272727, 1.17388}, {0.3, 1.19507}, {0.363636, 
  1.24475}, {0.384615, 1.26122}, {0.416667, 1.28638}, {0.444444, 
  1.30818}, {0.555556, 1.39739}, {0.583333, 1.42019}, {0.615385, 
  1.4465}, {0.636364, 1.46372}, {0.7, 1.51601}, {0.727273, 
  1.53844}, {0.777778, 1.57997}, {0.833333, 1.62567}, {0.142857, 
  1.07315}, {0.181818, 1.10335}, {0.214286, 1.12852}, {0.230769, 
  1.14131}, {0.266667, 1.16917}, {0.277778, 1.1778}, {0.294118, 
  1.1905}, {0.307692, 1.20105}, {0.357143, 1.23966}, {0.368421, 
  1.24851}, {0.380952, 1.25834}, {0.388889, 1.26457}, {0.411765, 
  1.28253}, {0.421053, 1.28982}, {0.4375, 1.30273}, {0.454545, 
  1.31611}, {0.545455, 1.3891}, {0.5625, 1.40309}, {0.578947, 
  1.41659}, {0.588235, 1.42421}, {0.611111, 1.44299}, {0.619048, 
  1.4495}, {0.631579, 1.45979}, {0.642857, 1.46905}, {0.692308, 
  1.50968}, {0.705882, 1.52084}, {0.722222, 1.53428}, {0.733333, 
  1.54342}, {0.769231, 1.57294}, {0.785714, 1.5865}, {0.818182, 
  1.6132}, {0.857143, 1.64525}, {0.125, 1.05931}, {0.153846, 
  1.08167}, {0.176471, 1.09921}, {0.1875, 1.10776}, {0.210526, 
  1.12561}, {0.217391, 1.13093}, {0.227273, 1.1386}, {0.235294, 
  1.14482}, {0.263158, 1.16644}, {0.269231, 1.17116}, {0.275862, 
  1.17631}, {0.28, 1.17953}, {0.291667, 1.18859}, {0.296296, 
  1.19219}, {0.304348, 1.19845}, {0.3125, 1.20478}, {0.352941, 
  1.23636}, {0.36, 1.2419}, {0.366667, 1.24713}, {0.37037, 
  1.25004}, {0.37931, 1.25705}, {0.382353, 1.25944}, {0.387097, 
  1.26317}, {0.391304, 1.26647}, {0.409091, 1.28043}, {0.413793, 
  1.28412}, {0.419355, 1.28849}, {0.423077, 1.29141}, {0.434783, 
  1.3006}, {0.44, 1.30469}, {0.45, 1.31254}, {0.461538, 
  1.3216}, {0.538462, 1.38336}, {0.55, 1.39283}, {0.56, 
  1.40104}, {0.565217, 1.40532}, {0.576923, 1.41493}, {0.580645, 
  1.41798}, {0.586207, 1.42255}, {0.590909, 1.42641}, {0.608696, 
  1.44101}, {0.612903, 1.44446}, {0.617647, 1.44836}, {0.62069, 
  1.45085}, {0.62963, 1.45819}, {0.633333, 1.46123}, {0.64, 
  1.4667}, {0.647059, 1.4725}, {0.6875, 1.50573}, {0.695652, 
  1.51243}, {0.703704, 1.51905}, {0.708333, 1.52286}, {0.72, 
  1.53245}, {0.724138, 1.53586}, {0.730769, 1.54131}, {0.736842, 
  1.54631}, {0.764706, 1.56922}, {0.772727, 1.57582}, {0.782609, 
  1.58395}, {0.789474, 1.58959}, {0.8125, 1.60853}, {0.823529, 
  1.6176}, {0.846154, 1.63621}, {0.875, 1.65994}, {0.111111, 
  1.04855}, {0.133333, 1.06577}, {0.15, 1.07869}, {0.157895, 
  1.08481}, {0.173913, 1.09723}, {0.178571, 1.10084}, {0.185185, 
  1.10596}, {0.190476, 1.11006}, {0.208333, 1.12391}, {0.212121, 
  1.12685}, {0.216216, 1.13002}, {0.21875, 1.13199}, {0.225806, 
  1.13746}, {0.228571, 1.1396}, {0.233333, 1.1433}, {0.238095, 
  1.14699}, {0.26087, 1.16466}, {0.264706, 1.16765}, {0.268293, 
  1.17043}, {0.27027, 1.17197}, {0.275, 1.17564}, {0.276596, 
  1.17688}, {0.27907, 1.17881}, {0.28125, 1.1805}, {0.290323, 
  1.18755}, {0.292683, 1.18938}, {0.295455, 1.19154}, {0.297297, 
  1.19297}, {0.30303, 1.19742}, {0.305556, 1.19938}, {0.310345, 
  1.20311}, {0.315789, 1.20734}, {0.35, 1.23405}, {0.354839, 
  1.23785}, {0.358974, 1.24109}, {0.361111, 1.24277}, {0.365854, 
  1.24649}, {0.367347, 1.24766}, {0.369565, 1.24941}, {0.371429, 
  1.25087}, {0.378378, 1.25632}, {0.38, 1.2576}, {0.381818, 
  1.25902}, {0.382979, 1.25993}, {0.386364, 1.26259}, {0.387755, 
  1.26368}, {0.390244, 1.26564}, {0.392857, 1.26769}, {0.407407, 
  1.27911}, {0.410256, 1.28134}, {0.413043, 1.28353}, {0.414634, 
  1.28478}, {0.418605, 1.2879}, {0.42, 1.28899}, {0.422222, 
  1.29074}, {0.424242, 1.29232}, {0.433333, 1.29946}, {0.435897, 
  1.30147}, {0.439024, 1.30393}, {0.441176, 1.30561}, {0.448276, 
  1.31119}, {0.451613, 1.31381}, {0.458333, 1.31908}, {0.466667, 
  1.32562}, {0.533333, 1.37915}, {0.541667, 1.38599}, {0.548387, 
  1.3915}, {0.551724, 1.39424}, {0.558824, 1.40007}, {0.560976, 
  1.40184}, {0.564103, 1.4044}, {0.566667, 1.40651}, {0.575758, 
  1.41397}, {0.577778, 1.41563}, {0.58, 1.41745}, {0.581395, 
  1.4186}, {0.585366, 1.42186}, {0.586957, 1.42316}, {0.589744, 
  1.42545}, {0.592593, 1.42779}, {0.607143, 1.43973}, {0.609756, 
  1.44188}, {0.612245, 1.44392}, {0.613636, 1.44506}, {0.617021, 
  1.44784}, {0.618182, 1.44879}, {0.62, 1.45029}, {0.621622, 
  1.45162}, {0.628571, 1.45732}, {0.630435, 1.45885}, {0.632653, 
  1.46067}, {0.634146, 1.4619}, {0.638889, 1.46579}, {0.641026, 
  1.46755}, {0.645161, 1.47094}, {0.65, 1.47491}, {0.684211, 
  1.50302}, {0.689655, 1.5075}, {0.694444, 1.51144}, {0.69697, 
  1.51351}, {0.702703, 1.51823}, {0.704545, 1.51974}, {0.707317, 
  1.52202}, {0.709677, 1.52396}, {0.71875, 1.53143}, {0.72093, 
  1.53322}, {0.723404, 1.53525}, {0.725, 1.53657}, {0.72973, 
  1.54046}, {0.731707, 1.54208}, {0.735294, 1.54503}, {0.73913, 
  1.54819}, {0.761905, 1.56692}, {0.766667, 1.57083}, {0.771429, 
  1.57475}, {0.774194, 1.57703}, {0.78125, 1.58283}, {0.783784, 
  1.58491}, {0.787879, 1.58828}, {0.791667, 1.5914}, {0.809524, 
  1.60608}, {0.814815, 1.61044}, {0.821429, 1.61588}, {0.826087, 
  1.61971}, {0.842105, 1.63288}, {0.85, 1.63937}, {0.866667, 
  1.65308}, {0.888889, 1.67136}, {0.1, 1.03993}, {0.117647, 
  1.05361}, {0.130435, 1.06352}, {0.136364, 1.06812}, {0.148148, 
  1.07725}, {0.151515, 1.07986}, {0.15625, 1.08353}, {0.16, 
  1.08644}, {0.172414, 1.09606}, {0.175, 1.09807}, {0.177778, 
  1.10022}, {0.179487, 1.10155}, {0.184211, 1.10521}, {0.186047, 
  1.10663}, {0.189189, 1.10907}, {0.192308, 1.11148}, {0.206897, 
  1.12279}, {0.209302, 1.12466}, {0.211538, 1.12639}, {0.212766, 
  1.12735}, {0.215686, 1.12961}, {0.216667, 1.13037}, {0.218182, 
  1.13155}, {0.219512, 1.13258}, {0.225, 1.13683}, {0.226415, 
  1.13793}, {0.22807, 1.13921}, {0.229167, 1.14006}, {0.232558, 
  1.14269}, {0.234043, 1.14385}, {0.236842, 1.14602}, {0.24, 
  1.14846}, {0.259259, 1.16341}, {0.261905, 1.16547}, {0.264151, 
  1.16721}, {0.265306, 1.16811}, {0.267857, 1.17009}, {0.268657, 
  1.17072}, {0.269841, 1.17164}, {0.270833, 1.17241}, {0.27451, 
  1.17526}, {0.275362, 1.17593}, {0.276316, 1.17667}, {0.276923, 
  1.17714}, {0.278689, 1.17851}, {0.279412, 1.17907}, {0.280702, 
  1.18007}, {0.282051, 1.18112}, {0.289474, 1.18689}, {0.290909, 
  1.188}, {0.292308, 1.18909}, {0.293103, 1.18971}, {0.295082, 
  1.19125}, {0.295775, 1.19179}, {0.296875, 1.19264}, {0.297872, 
  1.19342}, {0.302326, 1.19688}, {0.303571, 1.19784}, {0.305085, 
  1.19902}, {0.306122, 1.19983}, {0.309524, 1.20247}, {0.311111, 
  1.2037}, {0.314286, 1.20617}, {0.318182, 1.2092}, {0.347826, 
  1.23234}, {0.351351, 1.23511}, {0.354167, 1.23732}, {0.355556, 
  1.23841}, {0.358491, 1.24071}, {0.359375, 1.24141}, {0.360656, 
  1.24241}, {0.361702, 1.24323}, {0.365385, 1.24612}, {0.366197, 
  1.24676}, {0.367089, 1.24746}, {0.367647, 1.2479}, {0.369231, 
  1.24914}, {0.369863, 1.24964}, {0.370968, 1.25051}, {0.372093, 
  1.25139}, {0.377778, 1.25585}, {0.378788, 1.25664}, {0.379747, 
  1.2574}, {0.380282, 1.25782}, {0.381579, 1.25884}, {0.382022, 
  1.25918}, {0.382716, 1.25973}, {0.383333, 1.26021}, {0.385965, 
  1.26228}, {0.386667, 1.26283}, {0.3875, 1.26348}, {0.38806, 
  1.26392}, {0.389831, 1.26531}, {0.390625, 1.26594}, {0.392157, 
  1.26714}, {0.393939, 1.26854}, {0.40625, 1.2782}, {0.408163, 
  1.2797}, {0.409836, 1.28101}, {0.410714, 1.2817}, {0.412698, 
  1.28326}, {0.413333, 1.28376}, {0.414286, 1.28451}, {0.415094, 
  1.28514}, {0.418182, 1.28757}, {0.418919, 1.28814}, {0.419753, 
  1.2888}, {0.42029, 1.28922}, {0.421875, 1.29046}, {0.422535, 
  1.29098}, {0.423729, 1.29192}, {0.425, 1.29292}, {0.432432, 
  1.29875}, {0.433962, 1.29995}, {0.435484, 1.30115}, {0.436364, 
  1.30184}, {0.438596, 1.30359}, {0.439394, 1.30422}, {0.440678, 
  1.30522}, {0.44186, 1.30615}, {0.447368, 1.31048}, {0.44898, 
  1.31174}, {0.45098, 1.31331}, {0.452381, 1.31441}, {0.457143, 
  1.31815}, {0.459459, 1.31997}, {0.464286, 1.32375}, {0.470588, 
  1.3287}, {0.529412, 1.37593}, {0.535714, 1.3811}, {0.540541, 
  1.38506}, {0.542857, 1.38697}, {0.547619, 1.39087}, {0.54902, 
  1.39202}, {0.55102, 1.39367}, {0.552632, 1.39499}, {0.55814, 
  1.39951}, {0.559322, 1.40048}, {0.560606, 1.40153}, {0.561404, 
  1.40219}, {0.563636, 1.40402}, {0.564516, 1.40474}, {0.566038, 
  1.40599}, {0.567568, 1.40725}, {0.575, 1.41335}, {0.576271, 
  1.41439}, {0.577465, 1.41537}, {0.578125, 1.41591}, {0.57971, 
  1.41722}, {0.580247, 1.41766}, {0.581081, 1.41834}, {0.581818, 
  1.41895}, {0.584906, 1.42148}, {0.585714, 1.42214}, {0.586667, 
  1.42293}, {0.587302, 1.42345}, {0.589286, 1.42508}, {0.590164, 
  1.4258}, {0.591837, 1.42717}, {0.59375, 1.42874}, {0.606061, 
  1.43884}, {0.607843, 1.44031}, {0.609375, 1.44157}, {0.610169, 
  1.44222}, {0.61194, 1.44367}, {0.6125, 1.44413}, {0.613333, 
  1.44481}, {0.614035, 1.44539}, {0.616667, 1.44755}, {0.617284, 
  1.44806}, {0.617978, 1.44863}, {0.618421, 1.44899}, {0.619718, 
  1.45006}, {0.620253, 1.45049}, {0.621212, 1.45128}, {0.622222, 
  1.45211}, {0.627907, 1.45678}, {0.629032, 1.4577}, {0.630137, 
  1.45861}, {0.630769, 1.45913}, {0.632353, 1.46043}, {0.632911, 
  1.46088}, {0.633803, 1.46162}, {0.634615, 1.46228}, {0.638298, 
  1.46531}, {0.639344, 1.46616}, {0.640625, 1.46722}, {0.641509, 
  1.46794}, {0.644444, 1.47035}, {0.645833, 1.47149}, {0.648649, 
  1.4738}, {0.652174, 1.4767}, {0.681818, 1.50105}, {0.685714, 
  1.50426}, {0.688889, 1.50687}, {0.690476, 1.50817}, {0.693878, 
  1.51097}, {0.694915, 1.51182}, {0.696429, 1.51307}, {0.697674, 
  1.51409}, {0.702128, 1.51776}, {0.703125, 1.51858}, {0.704225, 
  1.51948}, {0.704918, 1.52005}, {0.706897, 1.52168}, {0.707692, 
  1.52233}, {0.709091, 1.52348}, {0.710526, 1.52466}, {0.717949, 
  1.53077}, {0.719298, 1.53188}, {0.720588, 1.53294}, {0.721311, 
  1.53353}, {0.723077, 1.53498}, {0.723684, 1.53548}, {0.724638, 
  1.53627}, {0.72549, 1.53697}, {0.729167, 1.53999}, {0.730159, 
  1.54081}, {0.731343, 1.54178}, {0.732143, 1.54244}, {0.734694, 
  1.54454}, {0.735849, 1.54549}, {0.738095, 1.54734}, {0.740741, 
  1.54951}, {0.76, 1.56535}, {0.763158, 1.56795}}
]

\dataplot[plotstyle=dots,linecolor=blue]{\mydatab}

\rput(0.7,1.7){$\Phi_T$}

\rput(0.8,1.15){$T=(3, 15, 6)$}
\rput(0.8,1.07){$D=0$}

\end{pspicture}
\caption{A very slightly crooked line: the graph of $\Phi$ for the Markov tree $(3,15,6)$.}
\label{phplot}
\end{figure}

The novelty of Theorem \ref{mthm} is that it is valid for all real $D$, and that the proof given is self contained 
 and does not require any hyperbolic geometry. 
Special cases  have appeared in the literature in several forms, based on the well-known interpretation of Markov numbers in terms of lengths of simple closed geodesics on the once-punctured torus. Fock in \cite[Sect. 7.3]{fo97} defined a similar function to $\Phi$, indexed with fractions in $[0,1/2]$, for the  original Markov tree and $D=0$. He showed it was continuous and convex using torus geometry. Sorrentino and Veselov went further in \cite{sove}, using the stable norm on the homology of the torus and Aubrey-Mather theory to show that Fock's function is differentiable at irrationals and not differentiable at rationals. Their methods should also apply for $D<0$ since that corresponds to a torus with a hole of geodesic length $2\cosh^{-1}(1-D/2)$, see \cite[p. 2154]{sove}.

Earlier, McShane and Rivin in \cite{mr95b} observed that the  boundary of the stable norm ball is strictly convex with corners in rational slope directions.  This corresponds to Theorem \ref{mthm} for $D=0$, and we explain the exact connection between these norm boundaries and $\Phi$ in Section \ref{norm}. 
The closest to our approach to Theorem \ref{mthm} is that of Hines in \cite[p. 6]{hin} who proves  it in the case when $D=0$. 
The author was at first unaware of all this work, and rediscovered the function $\Phi$ for the Markov tree in Figure \ref{mart} after noticing the striking regularity of the points  $\log(m_{k/n}) e^{\pi i k/n}$ in $\C$.

With Theorem \ref{mthm} in hand it is straightforward to explain  basic patterns appearing in the Markov numbers, and in the numbers on these kinds of trees in general. Further results in this paper come under the following headings.

\vskip 3mm
\noindent
$\bullet$ {\bf Counting tree numbers below a given bound.} 
The following corollary is a special case of results obtained in Sections \ref{num} and \ref{belo}.

\begin{cor} \la{zm}
For $C \approx 0.1807171$, which can be found to any accuracy, the number of Markov numbers $1, 2, 5, 13, 29, 34, 89, 169, \dots$ below $X$, counted with their multiplicity on the Markov tree, is
\begin{equation}\label{asy}
M(X) = C (\log X)^2 +O\left(\log X \log \log X \right).
\end{equation}
\end{cor}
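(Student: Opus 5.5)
We derive Corollary \ref{zm} from Theorem \ref{mthm}, applied to the Markov tree $T=(3,15,6)$ with $D=0$. A Markov number $\mu$ appears on $T$ as $m_{k/n}=3\mu$. So counting Markov numbers below $X$, with multiplicity on the tree, amounts to counting Farey indices $k/n\in\Q\cap[0,1]$ (in lowest terms, $0\lqs k\lqs n$, $\gcd(k,n)=1$) with $m_{k/n}<3X$. By \e{mknc} this condition reads $2\cosh(n\Phi(k/n))<3X$. Since $\Phi$ is continuous and positive on $[0,1]$, it is bounded below by some $\Phi_{\min}>0$. Hence $n\Phi(k/n)\to\infty$, and $2\cosh(n\Phi)=e^{n\Phi}(1+O(e^{-2n\Phi}))$. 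The condition therefore becomes
\[
n\,\Phi(k/n) < L + O(X^{-2}), \qquad L:=\log(3X).
\]
The $O(X^{-2})$ perturbation can shift the count by at most the number of lattice points near the boundary curve, which is absorbed into the error term below. So
\[
M(X)=\#\{(k,n): \gcd(k,n)=1,\ 0\lqs k\lqs n,\ n\Phi(k/n)<L\}+O(\log X).
\]

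Next, interpret the main count geometrically. Put $(x,y)=(k,n)$ in the cone $0\lqs x\lqs y$, and define the region
\[
R_L=\{(x,y): 0\lqs x\lqs y,\ y\,\Phi(x/y)<L\}.
\]
The function $(x,y)\mapsto y\Phi(x/y)$ is positively homogeneous of degree $1$, so $R_L=L\cdot R_1$. Its area is
\[
\operatorname{Area}(R_1)=\int_0^1\frac{dq}{2\Phi(q)^2},
\]
obtained by substituting $x=qy$. Moreover, $\Phi$ is convex (Theorem \ref{mthm}, $D=0<4$), so $y\Phi(x/y)$ is a convex, norm-like function. Hence $R_1$ is a convex region with rectifiable boundary.

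Now count primitive lattice points in $L R_1$. Möbius inversion gives
\[
\#\{\text{primitive points in } LR_1\}=\sum_{d\lqs cL}\mu(d)\,\#\{\text{lattice points in } (L/d)R_1\},
\]
where the sum is finite because all points satisfy $n\lqs L/\Phi_{\min}$. For a convex region, the number of lattice points equals $(L/d)^2\operatorname{Area}(R_1)+O(L/d+1)$. Summing over $d$ then gives
\[
\frac{6}{\pi^2}\operatorname{Area}(R_1)\,L^2+O\Big(L\sum_{d\lqs cL}\frac1d\Big)=\frac{6}{\pi^2}\operatorname{Area}(R_1)L^2+O(L\log L).
\]
This is exactly the $O(\log X\log\log X)$ error. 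Finally, replacing $L=\log X+\log 3$ by $\log X$ costs only $O(\log X)$. This yields \e{asy} with
\[
C=\frac{3}{\pi^2}\int_0^1\frac{dq}{\Phi(q)^2}.
\]

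The remaining issues are these. Check that the integral gives $C\approx0.1807$. Note that the endpoints $k/n=0/1,1/1$ and the multiplicity convention agree with the tree counting. And note that $C$ is computable to any accuracy, since $\Phi$ can be approximated uniformly by its values at rationals, using continuity and convexity.

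The main obstacle is the lattice-point step. We need a uniform boundary-error bound $O(L/d+1)$ for the dilated region, which requires the boundary curve $y\Phi(x/y)=1$ to have bounded length. Convexity of $\Phi$, and hence of $R_1$, gives this directly: a convex region's lattice-point count differs from its area by at most a constant times its perimeter plus one. This argument needs only the continuity and convexity parts of Theorem \ref{mthm}, not differentiability.
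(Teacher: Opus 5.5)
Your proposal is correct and is essentially the paper's argument (Theorem \ref{zzz} applied to $(3,15,6)$). Your region $R_1$ is the wedge $\mathcal W_T$ with coordinates swapped, and $\frac12\int_0^1\Phi(q)^{-2}\,dq$ is its area; the Möbius inversion over this star-shaped region, together with a perimeter bound on the lattice-point error, is exactly how the paper obtains $\frac{6}{\pi^2}\operatorname{Area}(\mathcal W_T)(\log X)^2+O(\log X\log\log X)$. The remaining differences are cosmetic: you use convexity of the wedge (available since $D=0<4$) where the paper cites Jarník's bound for rectifiable Jordan curves, and you absorb the factor $3$ through $L=\log(3X)$ rather than noting that $M(\lambda X)$ has the same expansion as $M(X)$; the bound $\Phi\geq\Phi_{\min}>0$, which you assert without proof, follows from Proposition \ref{sim2}.
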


 Corollary \ref{zm} first appeared in the work of Gurwood \cite{gur} and Zagier \cite{z82} with  slightly weaker error estimates. 
 McShane and Rivin proved \e{asy} as a consequence of the more general result \cite[Thm. 4.1]{mr95b}. The precise link between the geodesics counted in \cite[Thm. 4.1]{mr95b} and  \e{asy} is discussed in Section \ref{bowd}. Our Theorem \ref{zzzb} also extends the McShane-Rivin result to all  $D$.  The main term coefficient $C$ in \e{asy} equals $6\text{\rm Area}(\mathcal W)/\pi^2$ for $\mathcal W$ the wedge-shaped region  pictured in Figure \ref{wplot}. Zagier's formula for $C$ is generalized in Proposition \ref{wedg}.


\vskip 3mm
\noindent
$\bullet$ {\bf Uniqueness.} 
The uniqueness conjecture states that the largest number in any triple on the Markov tree $(3,15,6)$ determines the other two. This is equivalent to there being no repeated numbers on the tree, as seen in Section  \ref{niqu}.

Looking for repeats, we may take a line $\ell$ that passes through some lattice points $(x,y)=(n,k)\in \Z^2$ with $0\lqs k \lqs n$ and $n, k$ relatively prime.  Then the relative sizes of the numbers $m_{k/n}$ moving along this `index line' can be considered. Aigner did this and conjectured in \cite[Conj. 10.11]{aig} that the Markov numbers strictly increase with $x=n$ along lines of slope $0$ and $-1$, and strictly increase with $y=k$ along vertical lines. These statements were proved and generalized by several authors using a variety of techniques; see the discussions in the introductions to \cite{llrs23,ga22}. A more general conjecture was provided in \cite[Sect. 6]{llrs23}, with the main part of this proved by Gaster in \cite{ga22}. This is Corollary \ref{zak} that we in turn prove in a wider context in Section  \ref{niqu}. 
Set
\begin{equation} \la{sigs}
  \sigma(0)=-\frac{\log(\tfrac 12(3+\sqrt{5}))}{\log(\tfrac 3{10}(5+\sqrt{5}))} \approx -1.242, \qquad 
  \sigma(1)=-\frac{\log(\tfrac 34(2+\sqrt{2}))}{\log(\tfrac 2{3}(2+\sqrt{2}))} \approx -1.143.
\end{equation}

\begin{cor}  \cite[Thm. 1.1]{ga22} \la{zak}
The  Markov numbers  $m_{k/n}$ in \e{mkno} strictly increase with $y=k$ along index lines of slope $s$ with $-\infty \lqs s \lqs \sigma(0)$. 
They strictly increase with $x=n$ along  lines with $\sigma(1) \lqs s < \infty$. 
These statements are sharp in the sense that $\sigma(0)$ cannot be increased and $\sigma(1)$ cannot be decreased. 
\end{cor}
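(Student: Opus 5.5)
Since $m_{k/n}=2\cosh(n\Phi(k/n))$ with $\Phi\gqs 0$ and $2\cosh$ increasing on $\R_{\gqs0}$, comparing tree numbers at lattice points $(n,k)$ and $(n',k')$ is the same as comparing $n\Phi(k/n)$ and $n'\Phi(k'/n')$. The idea is to pass to the homogeneous function
\[
F(x,y)=x\,\Phi(y/x), \qquad x>0,\ 0\lqs y\lqs x,
\]
so that $m_{k/n}=2\cosh F(n,k)$. By Theorem \ref{mthm}, $D=0<4$ gives $\Phi$ strictly convex, and so $F$ is convex and positively homogeneous of degree one (a perspective function). This is essentially the stable-norm picture. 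The monotonicity statements then become statements about the sign of directional derivatives of $F$ along the index line $\ell$.

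\textbf{Reduction to boundary slopes.} Along $\ell$, parametrize by $y=k$ in the first claim, so the direction is $v=(1/s,1)$, with $v=(0,1)$ when $s=-\infty$. For a convex function on a segment, strict increase at lattice points follows if the one-sided derivative $D_vF\gqs 0$ at the starting end of the admissible segment, with strict inequality somewhere. Positive homogeneity makes $\nabla F$, or its one-sided versions at rational directions, constant along rays. So the relevant derivatives are the one-sided slopes of $\Phi$ at the endpoints $q=0$ and $q=1$. Concretely, $\partial_y F=\Phi'(q)$ and $\partial_x F=\Phi(q)-q\Phi'(q)$. By convexity, the minimum of $D_vF$ over the triangle $0\lqs y\lqs x$ is attained at a boundary ray:
\begin{itemize}
\item For increase in $k$ (moving up), the worst case is the ray $q=0$ with right derivative $\Phi'(0^+)$. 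The condition is $\Phi'(0^+)+s^{-1}\Phi(0)\gqs 0$, and since $s<0$ this reads $s\lqs -\Phi(0)/\Phi'(0^+)$.
\item For increase in $x$, the worst case is the ray $q=1$. Using $\partial_xF=\Phi(1)-\Phi'(1^-)$ and $\partial_yF=\Phi'(1^-)$, the condition is $\Phi(1)-\Phi'(1^-)+s\,\Phi'(1^-)\gqs 0$. This is a bound $s\gqs \sigma(1)$ with $\sigma(1)=-(\Phi(1)-\Phi'(1^-))/\Phi'(1^-)$.
\end{itemize}
Strictness then comes from the strict convexity of $\Phi$ in Theorem \ref{mthm}.

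\textbf{Computing the boundary data.} Here $\Phi(0)=\cosh^{-1}(3/2)=\log\tfrac12(3+\sqrt5)$ and $\Phi(1)=\cosh^{-1}(3)=\log(3+2\sqrt2)$. The one-sided derivatives come from the edge sequences of the tree. Along the left boundary, $m_{1/n}$ satisfies the linear recurrence $m_{1/(n+1)}=m_0m_{1/n}-m_{1/(n-1)}$, so $m_{1/n}\sim c\lambda^n$ with $\lambda+\lambda^{-1}=3$. Hence $n\Phi(1/n)=n\Phi(0)+\log c+o(1)$, which gives $\Phi'(0^+)=\log c$. Similarly, $m_{(n-1)/n}$ grows like $c'\mu^n$ with $\mu+\mu^{-1}=6$. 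This yields $\Phi'(1^-)$ in terms of $\Phi(1)$ and $\log c'$. Solving the recurrences explicitly should reproduce the constants $\log(\tfrac3{10}(5+\sqrt5))$ and $\log(\tfrac23(2+\sqrt2))$ in \eqref{sigs}. Sharpness follows because the one-sided derivative is attained in the limit along the extreme rays. For $s$ slightly beyond the threshold, a line through $(n,1)$ and nearby points near the boundary ray will show a decrease for large $n$, and the explicit asymptotics produce such a pair.

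\textbf{Main obstacle.} The delicate step is the boundary-derivative identification together with the strict-increase claim exactly at $s=\sigma(0)$ and $s=\sigma(1)$. There the directional derivative at the extreme ray is $0$, so only the convexity defect decides. I would handle these cases by working with exact values rather than the limit. The errors $n\Phi(1/n)-n\Phi(0)-\Phi'(0^+)$ have a definite sign by convexity of $\Phi$: each chord slope $(\Phi(1/n)-\Phi(0))/(1/n)$ strictly exceeds $\Phi'(0^+)$. That strict gap gives strict increase even on the critical line.
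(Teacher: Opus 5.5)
Your argument is correct. For the monotonicity claims it follows the paper's route, in analytic rather than geometric language; the sharpness argument is genuinely different.

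\textbf{Monotonicity.} Your function $F(x,y)=x\Phi(y/x)$ has unit level set $\{F=1\}$ equal to the paper's lattice curve $\Gamma_T$, and its sublevel set in the cone is the wedge $\mathcal W_T$. So checking the sign of $D_vF$ where an index line enters the cone is the analytic form of Theorem~\ref{ag}. There, a line whose slope lies outside the secant-slope set $I_T=(\sigma(0),\sigma(1))$ meets each dilate $t\Gamma_T$ at most once, and Lemma~\ref{latt} turns this into monotonicity. Your thresholds $-\Phi(0)/\Phi'(0^+)$ and $1-\Phi(1)/\Phi'(1^-)$ are exactly the paper's $\sigma(0)$, $\sigma(1)$ in Proposition~\ref{tan}. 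Here $\Phi'(0^+)=S'(0)$ and $\Phi'(1^-)=S(1)$ have the closed forms \eqref{sg}, \eqref{sgb}, which is what solving your recurrences produces.

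\textbf{Sharpness.} The paper gets sharpness from the last part of Theorem~\ref{ag}. It uses primes in arithmetic progressions to build a non-monotone index line for every rational slope in $I_T$. You need only one bad pair for each rational slope $s=-a/b$ just beyond a threshold, taken near a boundary ray. For $s>\sigma(0)$ use $(n,1)$ and $(n-b,1+a)$. Then $F(n-b,1+a)-F(n,1)\to b\bigl(-\Phi(0)-s\Phi'(0^+)\bigr)<0$, and coprimality is secured by $n\equiv b+1 \pmod{a+1}$. For $s<\sigma(1)$ use $(n,n-1)$ and $(n+b,n-1-a)$ in the same way. Your route is more elementary, but it proves less: it says nothing about slopes deep inside $I_T$ or about the antimodal shape of the numbers there.

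\textbf{Two small corrections.}
\begin{enumerate}
\item For increase in $n$, the entry ray is not always $q=1$. Whenever $s>0$ and the intercept is negative (in particular for every $s>1$), the line enters along $q=0$. So you must check both rays. On the ray $q=0$ the condition $\Phi(0)+s\Phi'(0^+)\ge 0$ is automatic for $s>0$, so the $q=1$ condition is the binding one.
\item Your ``main obstacle'' is not really one. Strict convexity of $F$ along any line missing the origin already gives strict increase when $D_vF=0$ at the entry point. In fact the critical slopes are irrational. Each ratio $\log\lambda_3/\log(\tfrac{3}{10}(5+\sqrt5))$ and $\log\lambda_6/\log(\tfrac23(2+\sqrt2))$ pairs a unit with a non-unit, of norms $9/5$ and $8/9$ respectively. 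So an index line of exactly slope $\sigma(0)$ or $\sigma(1)$ carries at most one lattice point.
\end{enumerate}
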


In Section \ref{d4} we find trees, with labels in a quadratic field, that do have uniqueness. Theorem \ref{thm-rat} in fact gives an exact criterion for when a tree with $D=4$ has uniqueness or repeated numbers.

\vskip 3mm
\noindent
$\bullet$ {\bf Identities.} 
As hinted at in Figure \ref{mart}, these trees may be continued past the root to obtain an infinite 3-regular tree, as in Figure \ref{topb}. In Conway's terminology these completed trees are {\em topographs}, and the same local rule $r+u=st$ is used to fill in all their regions. Bowditch used them to give a short proof of McShane's famous identity for geodesic lengths on any hyperbolic punctured torus. In Section \ref{tmc} we give a new proof of the generalization
\begin{equation} \label{pa}
  \sum_{ m  \in \mathcal G} \left[ \left(1+\frac{D/3}{m^2-D}\right)\left(\frac 2{e^{2 \cosh^{-1}(|m|/2)}+1}-1\right)+1\right] =1,
\end{equation}
 which is due to Hu,  Tan, and  Zhang in \cite{tan15}.
In \e{pa} we are summing over all regions $m$ of any topograph $\mathcal G$  with labels in $(-\infty,-2] \cup [2,\infty)$ and $D\neq 4$. The case $D=0$ corresponds to McShane's identity as explained in Section \ref{bowd}.  An identity of Hines is similarly extended to all $D$ in Section \ref{hiid}.

\vskip 3mm
\noindent
$\bullet$ {\bf Properties of $\Phi$.} 
Detailed properties of $\Phi$ are shown in Section \ref{y5}, with properties of its precursor function $\phi$ on rationals  developed in Section \ref{y4}. McShane and Rivin claimed in \cite[Thm. 2.1]{mr95b} that the  boundary of the stable norm ball is  infinitely flat in all irrational directions, though their promised proof never appeared. This translates into  $\Phi(x)$ being infinitely flat (all derivatives after the first being zero) at every irrational $x$. We prove this for a dense set of irrationals in Section \ref{ite}. However, the recent preprint  \cite{nxv} of  Doan,  Li, and  Nguyen shows that McShane and Rivin's claim is not true in general, and we give an explicit example of an irrational $x$ where $\Phi(x)$ is not infinitely flat, based on their work.

Section \ref{seri} looks at approximations to $\Phi$, $\Phi'$ and in particular how to evaluate them at irrationals. 
We find  series that give them at each irrational to remarkable accuracy, with the number of correct decimal places exponential in the number of terms taken.

\vskip 3mm
\noindent
$\bullet$ {\bf Classification.} The results we have discussed all depend on the properties of the underlying trees and topographs that satisfy the local rule $r+u=st$. For trees with all regions real and at least $2$, we show in Theorem \ref{class2} that they come in only four distinct types: rising, progressing, leveling and constant, with the names reflecting the possible growth of regions. Topographs with regions satisfying \e{restr} similarly break into four types. They all have a vertex, edge or region at the `bottom' (which may be at infinity) from which regions increase. These results are established in Sections \ref{d4} and \ref{rea}. We begin in Sections \ref{mtop} and \ref{asc} with the basic properties of these `Markov'  topographs and trees.

\SpecialCoor
\psset{griddots=5,subgriddiv=0,gridlabels=0pt}
\psset{xunit=5.5cm, yunit=5.5cm, runit=5.5cm}
\psset{linewidth=1pt}
\psset{dotsize=1.4pt,dotstyle=*}
\begin{figure}[ht]
\centering

\psset{arrowscale=1.4,arrowinset=0.3,arrowlength=1.1}
\newrgbcolor{light}{0.8 0.8 1.0}
\newrgbcolor{pale}{1 0.7 1}
\newrgbcolor{pale}{1 0.7 0.4}

\newrgbcolor{markov}{0.4 0.2 0.8}
\psset{linecolor=markov}

}

\end{pspicture}
\caption{Examples of encoding functions $\Phi$  that are convex and concave. The trees $T_1$, $T_2$ are strictly rising.}
\label{phplots}
\end{figure}

\vskip 3mm

A coherent picture emerges in this paper for the properties of numbers on these trees.
The restriction \e{restr} for them is a natural separating line, excluding more complicated behavior. For future work it would be interesting to understand the wilder regime beyond this restriction. We also note that in \cite{sv17} a Lyapunov exponent is defined, closely linked to $\Phi$ for the original Markov tree. This exponent is generalized in \cite{bhi} to incorporate properties of weakly holomorphic modular functions such as the $j$ function. The results here should be relevant for those studies.

\section{Preliminaries}

In Sections \ref{mtop} and \ref{asc} we set up the basic objects of study and introduce some descriptive terminology. 

\subsection{Markov topographs} \la{mtop}
\begin{adef} \la{tdef}
{\rm A $3$-regular tree is drawn in the plane and every complementary region  is given a label from a ring. This forms a {\em Markov topograph} $\mathcal G$ if the labels of the four regions surrounding every edge always satisfy the  local rule $r+u = st$, for the configuration in the top right of Figure \ref{mart}. 
}
\end{adef}

These $3$-regular trees are pictured in Figure \ref{topb}. We are concerned with real Markov topographs where the ring is $\R$. Bowditch in \cite{bow96} used these `trees of Markov triples' 
 to give an elegant proof of McShane's identity, see Section \ref{tops}. Changing the local rule gives other kinds of topographs as discussed in   \cite{nor}. Conway in \cite{con97} used the
same trivalent tree structure to describe values of binary quadratic forms, needing the 
 rule $r+u=2(s+t)$. We follow Conway in calling these types of trees topographs. 

For convenience we usually refer to regions using their labels, even though these might not be distinct. 
If three regions  $r, s, t$ meet at a vertex and are listed in clockwise order, we call them a {\em triple} $(r,s,t)$ of the topograph. Since the rest of the topograph is determined from one triple, we say two topographs are the same if they have a triple in common. It is straightforward to verify with the local rule that the components of
each triple $(r,s,t)$ of a given topograph $\mathcal G$ satisfy
\begin{equation}\label{suwx}
  r^2+s^2+t^2-rst =D
\end{equation}
for a fixed $D=D_{\mathcal G}$ that we call the {\em discriminator} of $\mathcal G$.  Hurwitz   used the same notation $D$ in \cite[Eq. (26)]{hu07}, noting its parallels to a discriminant.
(See \cite[Eq. (3.4)]{OStop} where the corresponding invariant for a quadratic form topograph is $D  = r^2+s^2+t^2 -2rs-2rt-2st$,  the discriminant of the  equivalence class of forms it represents.)

The edge shown  in the middle  of Figure \ref{root}, connecting the regions $r$ and $u$, may be given the label $u-r$ when directed from $r$ to $u$. With the opposite direction its label is $r-u$. So directed edges in a real topograph may be positive, negative or zero.
 We often specify the {\em positive edge direction} of a real edge. This is the direction that makes its label positive, while $0$-labeled edges are left undirected.

\SpecialCoor
\psset{griddots=5,subgriddiv=0,gridlabels=0pt}
\psset{xunit=0.4cm, yunit=0.4cm, runit=0.4cm}
\psset{linewidth=1pt}
\psset{dotsize=7pt 0,dotstyle=*}
\begin{figure}[!htb]
\centering
\begin{pspicture}(0,-1.1)(21,8) 

\psset{arrowscale=1.8,arrowinset=0.3,arrowlength=1.1}
\newrgbcolor{light}{0.7 0.7 1.0}
\newrgbcolor{blue2}{0.3 0.3 0.8}

\rput(4,3.5){%
        \begin{pspicture}(1,0)(7,7)

\psline[linecolor=light](1,0)(4,2)
\psline[linecolor=light](7,0)(4,2)
\psline[linecolor=light]{->}(4,5)(7,7)
\psline[linecolor=light]{->}(4,5)(1,7)

\psline{->}(4,2)(4,5)

\rput(1,3.5){$a$}
\rput(3.3,3.5){\textcolor{red}{$b$}}
\rput(7,3.5){$c$}

\rput(4,-0.5){$(ac-b)/2$}
\rput(4,7.5){$(ac+b)/2$}

\rput(1.8,5.5){\textcolor{red}{$b_L$}}
\rput(6.2,5.5){\textcolor{red}{$b_R$}}

\end{pspicture}}

\rput(22,3.5){%
\begin{pspicture}(0,-4)(10,4)  
$\begin{aligned}
   b_L & = a(ac+b)/2-2c, \phantom{\displaystyle \frac 12}\\
    b_R & = c(ac+b)/2-2a
\end{aligned}$
\end{pspicture}}

\end{pspicture}
\caption{The configuration $[a, b, c]$ in a Markov topograph}
\label{yy}
\end{figure}


Let $[a, b, c]$ denote the configuration shown in Figure \ref{yy} having a directed edge with label $b$ adjacent to regions $a$ and $c$. The basic result on how numbers in a real Markov topograph grow is given in the following version of Conway's quadratic form climbing lemma.

\begin{lemma} [Climbing lemma] \la{climb}
If a Markov topograph contains $[a,b,c]$, 
\begin{equation*}
  (i) \quad \text{with $a, c \gqs 2$ and $b>0$, \quad  \qquad or} \quad \qquad (ii) \quad \text{with $a, c >2$ and $b\gqs 0$},
\end{equation*}
 then all region labels must grow as we continue moving forward and away from $[a,b,c]$. Precisely, for any path moving forward, the sequence of regions encountered on the left of the path and the sequence of those on the right are both strictly increasing. Also, all the edges of these paths are positive when directed forwards.
\end{lemma}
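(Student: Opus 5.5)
The plan is a one-step computation followed by induction along the path. The step is this: if $[a,b,c]$ satisfies (i) or (ii), then the region $u=(ac+b)/2$ at the head of the edge is strictly larger than both $a$ and $c$. Moreover, the two forward configurations $[a,b_L,u]$ (turn left) and $[u,b_R,c]$ (turn right) shown in Figure \ref{yy} again satisfy hypothesis (i), with strictly positive edge labels. Iterating this along any forward path gives every claim in the lemma.

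First, the labels in Figure \ref{yy} follow from the local rule. The edge directed from the region $(ac-b)/2$ to $u=(ac+b)/2$ has label $u-(ac-b)/2=b$, as it should. The left forward edge separates $c$ from the new region $au-c$, so its label is $au-2c=b_L$. Symmetrically $b_R=cu-2a$.

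Next I check that $u$ exceeds both neighbours:
\begin{equation*}
u-c=\tfrac12\bigl(c(a-2)+b\bigr), \qquad u-a=\tfrac12\bigl(a(c-2)+b\bigr).
\end{equation*}
Both quantities are positive under either hypothesis.
\begin{itemize}
\item In case (i), $a,c\gqs 2$ gives $c(a-2)\gqs 0$ and $a(c-2)\gqs 0$, and $b>0$ makes each sum strictly positive.
\item In case (ii), $b\gqs 0$ and $c(a-2)>0$, $a(c-2)>0$ because $a,c>2$.
\end{itemize}
Hence $u>\max(a,c)\gqs 2$, so in particular $u>2$.

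Then
\begin{equation*}
b_L=au-2c\gqs 2u-2c>0, \qquad b_R=cu-2a\gqs 2u-2a>0.
\end{equation*}
The left configuration $[a,b_L,u]$ has $a\gqs 2$, $u>2$ and $b_L>0$, so it satisfies (i). The same holds for $[u,b_R,c]$.

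Now take any forward path and induct on its length, so that each successive directed edge sits in a configuration satisfying (i). All edges after the first therefore have positive labels. The first edge has label $b$, which is positive in case (i). In case (ii) with $b=0$, the first edge is undirected and the positivity statement concerns the forward edges beyond it.

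For the monotonicity of regions, track what happens at each vertex of the path.
\begin{itemize}
\item When the path turns left, the left-hand region stays $a$ and the right-hand region changes from $c$ to $u>c$.
\item When it turns right, the right-hand region stays $c$ and the left-hand region changes from $a$ to $u>a$.
\end{itemize}
So each new region met on the left, or on the right, strictly exceeds the previous one on that side. Both sequences are strictly increasing.

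There is no real obstacle here. The only care needed is in case (ii) with $b=0$, where strictness must come from $a,c>2$ rather than from $b$. The inequality $u>\max(a,c)$ handles this, and after one step the argument is always in case (i).
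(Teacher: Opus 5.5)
Your proof is correct and follows the paper's argument. You show that $u=(ac+b)/2$ exceeds both $a$ and $c$, check that the forward configurations $[a,b_L,u]$ and $[u,b_R,c]$ again satisfy (i), and iterate; the only cosmetic difference is that the paper verifies $b_L\gqs b$ and $b_R\gqs b$ where you bound $b_L\gqs 2(u-c)>0$ and $b_R\gqs 2(u-a)>0$ directly, and either suffices.
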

\begin{proof}
Refer to Figure \ref{yy} and let $r=(ac+b)/2$. Assuming the conditions for part (i) we find $r>a$, $r>c$, $b_L\gqs b$ and $b_R\gqs b$. So the region labels strictly increased going forward one edge to the left or right. The new configurations $[a,b_L,r]$ and $[r,b_R,c]$ satisfy the part (i) conditions, so the argument repeats going forward. Similarly, the part (ii) conditions imply $r>a$, $r>c$, $b_L> b$ and $b_R> b$.
\end{proof}

\begin{adef}\la{well}
{\rm Let $\mathcal G$ be a Markov topograph with all regions $\gqs 2$.
A {\em vertex well} is a vertex in $\mathcal G$ with all edges positively directed out from it. An {\em edge well} is a $0$-labeled edge  in $\mathcal G$ whose two adjacent regions are $>2$.}
\end{adef}

So  a vertex well occurs in the middle of the triple $(r,s,t)$ if and only if $st>2r, rt>2s$, and $rs>2t$. An edge well adjacent to the regions $s, t$ has the triples $(r,s,t)$, $(r,t,s)$ at each endpoint, with $st=2r$.
By the climbing lemma,  regions must strictly increase moving away from these wells. Hence topographs with all regions $\gqs 2$ can have at most one well, and if they have one then their basic structure is straightforward.

\begin{lemma} \la{wells}
If a Markov topograph with all regions $\gqs 2$ has an edge or vertex well, then its discriminator  satisfies $D<4$.
\end{lemma}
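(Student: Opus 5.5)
We treat the two kinds of well separately; in each case we compute $D$ from a triple at the well and show directly that $D<4$.

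\emph{Edge well.} Suppose the well is a $0$-labeled edge between regions $s,t>2$, with triples $(r,s,t)$ and $(r,t,s)$ at its ends and $st=2r$. Substituting $r=st/2$ into \e{suwx} gives
\begin{equation*}
D = \frac{s^2t^2}{4}+s^2+t^2-\frac{s^2t^2}{2} = s^2+t^2-\frac{s^2t^2}{4}.
\end{equation*}
This factors as
\begin{equation*}
4-D = \frac{s^2t^2}{4}-s^2-t^2+4 = \frac14 (s^2-4)(t^2-4).
\end{equation*}
Since $s,t>2$, the right side is positive, so $D<4$.

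\emph{Vertex well.} Suppose the well is at the triple $(r,s,t)$ with $r,s,t\gqs 2$, so that $st>2r$, $rt>2s$ and $rs>2t$. We view \e{suwx} as a quadratic in $r$:
\begin{equation*}
f(r)=r^2-str+s^2+t^2-D=0.
\end{equation*}
Its roots are $r$ and $r'=st-r$, and $r'$ is the region across the edge opposite $r$. The condition $st>2r$ says that $r$ is the smaller root, so $r<st/2$. Evaluating the quadratic at its vertex gives
\begin{equation*}
f(st/2) = -\frac{s^2t^2}{4}+s^2+t^2-D = D_0 - D, \qquad \text{where } D_0 = s^2+t^2-\frac{s^2t^2}{4}.
\end{equation*}
Because the roots are real and distinct, $f(st/2)<0$, so $D>D_0$. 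This is the wrong direction for our purpose, so a different argument is needed.

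Instead we use that $r\gqs 2$ lies strictly below the vertex $st/2$. We may assume $st/2>2$, which holds since $s,t\gqs 2$ and not both can equal $2$ (if $s=t=2$ then $2r<4$, contradicting $r\gqs 2$). The quadratic $f$ is decreasing on $(-\infty,st/2]$, and $r$ is a root in $[2,st/2)$, so $f(2)\gqs f(r)=0$. Computing,
\begin{equation*}
f(2)=4-2st+s^2+t^2-D=(s-t)^2+4-D,
\end{equation*}
so $D\lqs (s-t)^2+4$. This is still not sufficient on its own.

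To sharpen it, use symmetry and order the triple so that $r$ is the largest. Then $s\gqs 2$ lies in $[2,rt/2)$ as the smaller root of the quadratic in $s$, and likewise $t$ is the smaller root of the quadratic in $t$. Applying the same bound gives $D\lqs (r-t)^2+4$ and $D\lqs (r-s)^2+4$, which only bound $D$ by a positive quantity plus $4$.

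The step I expect to be the main obstacle is turning the three well inequalities into strict $D<4$. The plan is a direct algebraic identity. Write $r=2\cosh a$, $s=2\cosh b$, $t=2\cosh c$ with $a,b,c\gqs 0$. Then
\begin{equation*}
4-D = 16\cosh a\cosh b\cosh c - 4(\cosh^2 a+\cosh^2 b+\cosh^2 c)+4.
\end{equation*}
This factors as
\begin{equation*}
4-D = 16\cosh\tfrac{a+b+c}{2}\,\cosh\tfrac{-a+b+c}{2}\,\cosh\tfrac{a-b+c}{2}\,\cosh\tfrac{a+b-c}{2}
\end{equation*}
up to sign conventions, and I would check this factorization carefully. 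The well conditions then say $\cosh b\cosh c>\cosh a$, and cyclically; these should force each factor to have the sign that makes the product positive.

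If the factorization turns out to be awkward, the fallback is to reduce the vertex case to the edge case. Move $r$ continuously down to $st/2$ while keeping $s,t$ fixed. Then $D=r^2-str+s^2+t^2$ increases as $r$ decreases toward the vertex, and it reaches $D_0<4$ by the edge computation. Hence the original $D$ is smaller still, giving $D<D_0<4$.
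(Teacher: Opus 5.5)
Your edge-well computation is correct and matches the paper's. The vertex-well case has a genuine gap, because neither of your two routes works as written.

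In the main route, the expansion is already wrong. Since $rst=8\cosh a\cosh b\cosh c$, the coefficient is $8$, not $16$. The claimed factorization into four $\cosh$ factors is also false: at $a=b=c=0$ it reads $0=16$. This is not a matter of sign conventions. A product of $\cosh$'s is always positive, so such an identity would give $D<4$ for every real triple with entries $\gqs 2$, and the well conditions would play no role. But $(2,3,4)$ has $D=5$.

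Your fallback reverses a monotonicity. For fixed $s,t$, $D=r^2-str+s^2+t^2$ is an upward parabola in $r$ with minimum value $D_0$ at $r=st/2$. At a well $r<st/2$, so $r$ must move \emph{up} to reach the vertex, and $D$ \emph{decreases} along the way. This gives $D>D_0$, which is exactly the inequality you already derived and rejected as the wrong direction. The edge-well bound $D_0<4$ therefore gives no upper bound on $D$.

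The missing ingredient can be supplied in two ways. First, the correct identity uses $\sinh$:
\[
4-D=16\sinh\tfrac{a+b+c}{2}\,\sinh\tfrac{-a+b+c}{2}\,\sinh\tfrac{a-b+c}{2}\,\sinh\tfrac{a+b-c}{2}.
\]
It follows from $D-4=4(\cosh a-\cosh(b+c))(\cosh a-\cosh(b-c))$. With it your plan goes through. The condition $st>2r$ gives $\cosh(b+c)\gqs\cosh b\cosh c>\cosh a$, hence $b+c>a$, and cyclically, so all four factors are positive. This would be a clean symmetric proof, with no ordering and no separate treatment of a label equal to $2$.

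Second, the paper argues directly, and its key step is close to your $f(2)\gqs 0$ idea. The point is to evaluate the quadratic in the \emph{largest} entry at the \emph{middle} entry, not at $2$. Order $r\lqs s\lqs t$. The case $r=2$ is impossible, since the well conditions would give $t>s$ and $s>t$. So $2<r\lqs s\lqs t<rs/2$. Then $D=(t-rs/2)^2+r^2+s^2-r^2s^2/4$ is largest at $t=s$, giving $D\lqs 2s^2+r^2-rs^2=4+(r-2)(r+2-s^2)\lqs 4-(r-2)(s-2)(s+1)<4$.
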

\begin{proof}
 If it has an edge well then there is a triple $(r,s,t)$ with $s,t>2$ and $s t = 2r$. Equation \e{suwx} becomes
\begin{equation} \la{eq}
  D=4-\tfrac 14(s^2-4)(t^2-4),
\end{equation}
implying $D<4$. If it has a vertex well with surrounding triple $(r,s,t)$ then $r=2$ implies $s<t$ and $t<s$. So we may assume  $2<r \lqs s \lqs t$, say, with $t< r s/2$. As a function of $t$, $$D= (t-rs/2)^2  + r^2 + s^2 -r^2s^2/4$$ has its maximum at $t=s$. Hence 
\begin{align*}
  D  &\lqs 2s^2+r^2-rs^2 \\
  & = 4+(r-2)(r+2-s^2)\\
  & \lqs 4+(r-2)(s+2-s^2)\\
  & = 4 - (r-2)(s-2)(s+1),
\end{align*}
making $D<4$ in this case also.
\end{proof}

Changing the signs of two of the labels  $r, s, t$  gives another solution to \e{suwx}. This extends to a new topograph that is closely related to the original containing $(r,s,t)$, with the signs of certain regions changed. This sign pattern can be described using the topograph $3$-coloring in  
Figure \ref{topv}, with adjacent regions having different colors and three different colors meeting at each vertex. 
\SpecialCoor
\psset{griddots=5,subgriddiv=0,gridlabels=0pt}
\psset{xunit=0.2cm, yunit=0.2cm, runit=0.2cm}
\psset{linewidth=1pt}
\psset{dotsize=7pt 0,dotstyle=*}
\begin{figure}[htb]
\centering

\psset{arrowscale=1.4,arrowinset=0.3,arrowlength=1.1}
\newrgbcolor{light}{0.8 0.8 1.0}
\newrgbcolor{pale}{1 0.7 1}
\newrgbcolor{pale}{1 0.7 0.4}

\newrgbcolor{cta}{0.6 0.8 1}
\newrgbcolor{ctb}{0.5 0.2 0.9}
\newrgbcolor{ctc}{0.0 0.0 0.6}

\newrgbcolor{cta}{1 0.8 0.6}
\newrgbcolor{cta}{0.6 0.8 1}
\newrgbcolor{ctb}{0.9 0.2 0.5}
\newrgbcolor{ctc}{0.3 0.3 0.9}
\psset{linecolor=black}

\newrgbcolor{cta}{0.9 0.9 0.5}

\begin{pspicture}(-15,-12)(15,13.8) 

\rput(25,0){%
\begin{pspicture}(-6,-6)(6,6)

\pspolygon[linecolor=cta,fillstyle=solid,fillcolor=cta](-5,-5)(5,-5)(5,5)(-5,5)(-5,-5)

\pscustom[fillstyle=solid,fillcolor=ctc]{
\psline(0,-2.25)(0,2.25)
\psecurve(-0.5,1.5)(0,2.25)(1.5,3)(3,0)(1.5,-3)(0,-2.25)(-0.5,-1.5)
}

\pscustom[fillstyle=solid,fillcolor=ctb]{
\psline(0,-2.25)(0,2.25)
\psecurve(0.5,1.5)(0,2.25)(-1.5,3)(-3,0)(-1.5,-3)(0,-2.25)(0.5,-1.5)
}

\end{pspicture}}

\pswedge*[linecolor=ctb](0,0){10.95}{90.}{210.}

\pswedge*[linecolor=ctc](0,0){10.95}{-30.}{90.}

\pswedge*[linecolor=cta](0,3){9.66}{52.5}{127.5}

\pswedge*[linecolor=ctb](1.82628,5.38006){7.35}{22.5}{82.5}

\pswedge*[linecolor=ctc](-1.82628,5.38006){7.35}{97.5}{157.5}

\pswedge*[linecolor=cta](4.59792,6.52811){4.59}{2.5}{42.5}

\pswedge*[linecolor=ctc](2.21786,8.35439){4.59}{62.5}{102.5}

\pswedge*[linecolor=ctb](-2.21786,8.35439){4.59}{77.5}{117.5}

\pswedge*[linecolor=cta](-4.59792,6.52811){4.59}{137.5}{177.5}

\pswedge*[linecolor=ctb](7.59507,6.65897){1.7}{-22.5}{27.5}

\pswedge*[linecolor=ctc](6.80975,8.55488){1.7}{17.5}{67.5}

\pswedge*[linecolor=cta](3.60311,11.0154){1.7}{37.5}{87.5}

\pswedge*[linecolor=ctb](1.56854,11.2833){1.7}{77.5}{127.5}

\pswedge*[linecolor=ctc](-1.56854,11.2833){1.7}{52.5}{102.5}

\pswedge*[linecolor=cta](-3.60311,11.0154){1.7}{92.5}{142.5}

\pswedge*[linecolor=ctb](-6.80975,8.55488){1.7}{112.5}{162.5}

\pswedge*[linecolor=ctc](-7.59507,6.65897){1.7}{152.5}{202.5}


\pswedge*[linecolor=cta](0,0){10.95}{-150.}{-30.}

\pswedge*[linecolor=ctb](2.59808,-1.5){9.66}{-67.5}{7.5}

\pswedge*[linecolor=ctc](3.74613,-4.27164){7.35}{-97.5}{-37.5}

\pswedge*[linecolor=cta](5.57241,-1.10842){7.35}{-22.5}{37.5}

\pswedge*[linecolor=ctb](3.35455,-7.24597){4.59}{-117.5}{-77.5}

\pswedge*[linecolor=cta](6.12619,-6.09792){4.59}{-57.5}{-17.5}

\pswedge*[linecolor=ctc](8.34405,-2.25647){4.59}{-42.5}{-2.5}

\pswedge*[linecolor=ctb](7.95247,0.717863){4.59}{17.5}{57.5}

\pswedge*[linecolor=ctc](1.9693,-9.90701){1.7}{-142.5}{-92.5}

\pswedge*[linecolor=cta](4.00387,-10.1749){1.7}{-102.5}{-52.5}

\pswedge*[linecolor=ctb](7.73809,-8.6281){1.7}{-82.5}{-32.5}

\pswedge*[linecolor=ctc](8.98734,-7.00004){1.7}{-42.5}{7.5}

\pswedge*[linecolor=cta](10.5559,-4.28324){1.7}{-67.5}{-17.5}

\pswedge*[linecolor=ctb](11.3412,-2.38733){1.7}{-27.5}{22.5}

\pswedge*[linecolor=ctc](10.8136,1.61998){1.7}{-7.5}{42.5}

\pswedge*[linecolor=cta](9.56437,3.24804){1.7}{32.5}{82.5}


\pswedge*[linecolor=ctc](-2.59808,-1.5){9.66}{172.5}{247.5}

\pswedge*[linecolor=cta](-5.57241,-1.10842){7.35}{142.5}{202.5}

\pswedge*[linecolor=ctb](-3.74613,-4.27164){7.35}{217.5}{277.5}

\pswedge*[linecolor=ctc](-7.95247,0.717863){4.59}{122.5}{162.5}

\pswedge*[linecolor=ctb](-8.34405,-2.25647){4.59}{182.5}{222.5}

\pswedge*[linecolor=cta](-6.12619,-6.09792){4.59}{197.5}{237.5}

\pswedge*[linecolor=ctc](-3.35455,-7.24597){4.59}{257.5}{297.5}

\pswedge*[linecolor=cta](-9.56437,3.24804){1.7}{97.5}{147.5}

\pswedge*[linecolor=ctb](-10.8136,1.61998){1.7}{137.5}{187.5}

\pswedge*[linecolor=ctc](-11.3412,-2.38733){1.7}{157.5}{207.5}

\pswedge*[linecolor=cta](-10.5559,-4.28324){1.7}{197.5}{247.5}

\pswedge*[linecolor=ctb](-8.98734,-7.00004){1.7}{172.5}{222.5}

\pswedge*[linecolor=ctc](-7.73809,-8.6281){1.7}{212.5}{262.5}

\pswedge*[linecolor=cta](-4.00387,-10.1749){1.7}{232.5}{282.5}

\pswedge*[linecolor=ctb](-1.9693,-9.90701){1.7}{272.5}{322.5}


\psline(0,0)(0,3)

\psline(0,3)(1.82628,5.38006)(4.59792,6.52811)(7.59507,6.65897)(9.16566,6.00841)

\psline(7.59507,6.65897)(9.10299,7.44394)(9.10299,7.44394)(9.10299,7.44394)(9.10299,7.44394)

\psline(4.59792,6.52811)(6.80975,8.55488)(8.43107,9.06608)(8.43107,9.06608)(8.43107,9.06608)

\psline(6.80975,8.55488)(7.46032,10.1255)(7.46032,10.1255)(7.46032,10.1255)(7.46032,10.1255)

\psline(1.82628,5.38006)(2.21786,8.35439)(3.60311,11.0154)(4.95181,12.0503)(4.95181,12.0503)

\psline(3.60311,11.0154)(3.67726,12.7138)(3.67726,12.7138)(3.67726,12.7138)(3.67726,12.7138)

\psline(2.21786,8.35439)(1.56854,11.2833)(1.93649,12.943)(1.93649,12.943)(1.93649,12.943)

\psline(1.56854,11.2833)(0.53365,12.632)(0.53365,12.632)(0.53365,12.632)(0.53365,12.632)

\psline(0,3)(-1.82628,5.38006)(-2.21786,8.35439)(-1.56854,11.2833)(-0.53365,12.632)

\psline(-1.56854,11.2833)(-1.93649,12.943)(-1.93649,12.943)(-1.93649,12.943)(-1.93649,12.943)

\psline(-2.21786,8.35439)(-3.60311,11.0154)(-3.67726,12.7138)(-3.67726,12.7138)(-3.67726,12.7138)

\psline(-3.60311,11.0154)(-4.95181,12.0503)(-4.95181,12.0503)(-4.95181,12.0503)(-4.95181,12.0503)

\psline(-1.82628,5.38006)(-4.59792,6.52811)(-6.80975,8.55488)(-7.46032,10.1255)(-7.46032,10.1255)

\psline(-6.80975,8.55488)(-8.43107,9.06608)(-8.43107,9.06608)(-8.43107,9.06608)(-8.43107,9.06608)

\psline(-4.59792,6.52811)(-7.59507,6.65897)(-9.10299,7.44394)(-9.10299,7.44394)(-9.10299,7.44394)

\psline(-7.59507,6.65897)(-9.16566,6.00841)(-9.16566,6.00841)(-9.16566,6.00841)(-9.16566,6.00841)


\psline(0,0)(2.59808,-1.5)

\psline(2.59808,-1.5)(3.74613,-4.27164)(3.35455,-7.24597)(1.9693,-9.90701)(0.620601,-10.9419)

\psline(1.9693,-9.90701)(1.89515,-11.6054)(1.89515,-11.6054)(1.89515,-11.6054)(1.89515,-11.6054)

\psline(3.35455,-7.24597)(4.00387,-10.1749)(3.63592,-11.8346)(3.63592,-11.8346)(3.63592,-11.8346)

\psline(4.00387,-10.1749)(5.03876,-11.5236)(5.03876,-11.5236)(5.03876,-11.5236)(5.03876,-11.5236)

\psline(3.74613,-4.27164)(6.12619,-6.09792)(7.73809,-8.6281)(7.95998,-10.3136)(7.95998,-10.3136)

\psline(7.73809,-8.6281)(9.17185,-9.54151)(9.17185,-9.54151)(9.17185,-9.54151)(9.17185,-9.54151)

\psline(6.12619,-6.09792)(8.98734,-7.00004)(10.2407,-8.14854)(10.2407,-8.14854)(10.2407,-8.14854)

\psline(8.98734,-7.00004)(10.6728,-6.77815)(10.6728,-6.77815)(10.6728,-6.77815)(10.6728,-6.77815)

\psline(2.59808,-1.5)(5.57241,-1.10842)(8.34405,-2.25647)(10.5559,-4.28324)(11.2064,-5.85384)

\psline(10.5559,-4.28324)(12.1772,-4.79444)(12.1772,-4.79444)(12.1772,-4.79444)(12.1772,-4.79444)

\psline(8.34405,-2.25647)(11.3412,-2.38733)(12.8491,-3.1723)(12.8491,-3.1723)(12.8491,-3.1723)

\psline(11.3412,-2.38733)(12.9118,-1.73677)(12.9118,-1.73677)(12.9118,-1.73677)(12.9118,-1.73677)

\psline(5.57241,-1.10842)(7.95247,0.717863)(10.8136,1.61998)(12.4991,1.39809)(12.4991,1.39809)

\psline(10.8136,1.61998)(12.067,2.76848)(12.067,2.76848)(12.067,2.76848)(12.067,2.76848)

\psline(7.95247,0.717863)(9.56437,3.24804)(10.9981,4.16145)(10.9981,4.16145)(10.9981,4.16145)

\psline(9.56437,3.24804)(9.78626,4.93349)(9.78626,4.93349)(9.78626,4.93349)(9.78626,4.93349)


\psline(0,0)(-2.59808,-1.5)

\psline(-2.59808,-1.5)(-5.57241,-1.10842)(-7.95247,0.717863)(-9.56437,3.24804)(-9.78626,4.93349)

\psline(-9.56437,3.24804)(-10.9981,4.16145)(-10.9981,4.16145)(-10.9981,4.16145)(-10.9981,4.16145)

\psline(-7.95247,0.717863)(-10.8136,1.61998)(-12.067,2.76848)(-12.067,2.76848)(-12.067,2.76848)

\psline(-10.8136,1.61998)(-12.4991,1.39809)(-12.4991,1.39809)(-12.4991,1.39809)(-12.4991,1.39809)

\psline(-5.57241,-1.10842)(-8.34405,-2.25647)(-11.3412,-2.38733)(-12.9118,-1.73677)(-12.9118,-1.73677)

\psline(-11.3412,-2.38733)(-12.8491,-3.1723)(-12.8491,-3.1723)(-12.8491,-3.1723)(-12.8491,-3.1723)

\psline(-8.34405,-2.25647)(-10.5559,-4.28324)(-12.1772,-4.79444)(-12.1772,-4.79444)(-12.1772,-4.79444)

\psline(-10.5559,-4.28324)(-11.2064,-5.85384)(-11.2064,-5.85384)(-11.2064,-5.85384)(-11.2064,-5.85384)

\psline(-2.59808,-1.5)(-3.74613,-4.27164)(-6.12619,-6.09792)(-8.98734,-7.00004)(-10.6728,-6.77815)

\psline(-8.98734,-7.00004)(-10.2407,-8.14854)(-10.2407,-8.14854)(-10.2407,-8.14854)(-10.2407,-8.14854)

\psline(-6.12619,-6.09792)(-7.73809,-8.6281)(-9.17185,-9.54151)(-9.17185,-9.54151)(-9.17185,-9.54151)

\psline(-7.73809,-8.6281)(-7.95998,-10.3136)(-7.95998,-10.3136)(-7.95998,-10.3136)(-7.95998,-10.3136)

\psline(-3.74613,-4.27164)(-3.35455,-7.24597)(-4.00387,-10.1749)(-5.03876,-11.5236)(-5.03876,-11.5236)

\psline(-4.00387,-10.1749)(-3.63592,-11.8346)(-3.63592,-11.8346)(-3.63592,-11.8346)(-3.63592,-11.8346)

\psline(-3.35455,-7.24597)(-1.9693,-9.90701)(-1.89515,-11.6054)(-1.89515,-11.6054)(-1.89515,-11.6054)

\psline(-1.9693,-9.90701)(-0.620601,-10.9419)(-0.620601,-10.9419)(-0.620601,-10.9419)(-0.620601,-10.9419)

\end{pspicture}
\caption{Coloring a topograph with three colors.}
\label{topv}
\end{figure}
Then a {\em sign variant} of a Markov topograph consists of picking two colors and changing the signs of all regions with those colors. In this way, every topograph has  $3$  sign variants, (which may not all be distinct). Moving along the edges in the topograph in Figure \ref{topv}, we see that the adjacent region colors have a periodicity. This is captured by the simple graph on the right, with the colors appearing in the same order as we move along its edges. The graph on the right may also be closed up into a sphere with three stripes to be rolled along the topograph. Hence the topograph is a covering of this colored graph. The same patterns appear in \cite[Sect. 6.1]{nor} when considering quadratic form topographs modulo $2$. 

\begin{adef}\la{hei} \cite[p. 3]{hin}
{\rm Every topograph region  may be given a {\em height} from a fixed vertex $v$ as follows. The three regions meeting at $v$ have height $1$ and inductively, for an edge $e$ directed away from $v$,  the region pointed to by $e$  has height equal to the sum of the heights of the two  regions adjacent to $e$. For a region $m$ we may denote its height from $v$ by $h_v(m)$ or $h_{r,s,t}(m)$ if the regions $r,s,t$ meet at $v$.}
\end{adef}

\begin{lemma} \la{height}
Let $h_v$ and $h_w$ be the heights in a topograph from vertices $v$ and $w$. Then there exist $\alpha, \beta>0$ with $\alpha \beta=1$ so that, for all regions $m$ in the topograph,
\begin{equation} \la{ec}
  \alpha \cdot h_v(m) \lqs h_w(m) \lqs \beta \cdot h_v(m).
\end{equation}
\end{lemma}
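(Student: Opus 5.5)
By induction it suffices to treat $v$ and $w$ adjacent, i.e.\ joined by a single edge. If $v=v_0,v_1,\dots,v_k=w$ is the path between them and adjacent vertices satisfy \e{ec} with constants $\alpha_i,\beta_i$, $\alpha_i\beta_i=1$, then the products $\alpha=\prod\alpha_i$ and $\beta=\prod\beta_i$ satisfy \e{ec} for $v,w$. Their product is still $1$. So the whole task is to handle one edge.

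Let the edge joining $v$ and $w$ separate regions $s,t$, let $r$ be the third region at $v$, and let $u$ be the third region at $w$. Removing the open edge splits the regions, other than $s$ and $t$, into two families:
\begin{itemize}
\item the $v$-side regions, which lie beyond $v$ (including $r$);
\item the $w$-side regions, which lie beyond $w$ (including $u$).
\end{itemize}

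I will compute $h_w$ on each family in terms of $h_v$.
\begin{itemize}
\item From $v$ we have $h_v(r)=h_v(s)=h_v(t)=1$ and $h_v(u)=2$.
\item From $w$ we have $h_w(u)=h_w(s)=h_w(t)=1$ and $h_w(r)=2$.
\end{itemize}

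The key observation is that the height rule is additive along directed edges. On the $w$-side, every edge directed away from $w$ is also directed away from $v$. Hence the heights of $w$-side regions are generated from the seed values at $s,t,u$ by the same Farey-type recursion, for both $h_v$ and $h_w$.

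That recursion is linear in the seeds. So each $w$-side region $m$ has $h(m)=a\,h(s)+b\,h(t)+c\,h(u)$ for fixed nonnegative integer coefficients $a,b,c$, not all zero, determined by the combinatorial position of $m$ alone. Substituting the two sets of seeds gives
\[
h_v(m)=a+b+2c, \qquad h_w(m)=a+b+c .
\]
Therefore $\tfrac12 h_v(m)\lqs h_w(m)\lqs h_v(m)$ on the $w$-side. The same argument with $v,w$ swapped gives $h_v(m)\lqs h_w(m)\lqs 2h_v(m)$ on the $v$-side. On $s$ and $t$ the two heights are equal.

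Combining the three cases gives $\tfrac12 h_v(m)\lqs h_w(m)\lqs 2h_v(m)$ for every region $m$, so $\alpha=1/2$, $\beta=2$ and $\alpha\beta=1$.

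The main point needing care is the linearity claim. I would prove it by induction on the distance from the edge. Each new region $m$ beyond $w$ is pointed to by an edge whose two adjacent regions are either both earlier $w$-side regions or among $s,t,u$. Its height is then the sum of their heights, with the same combinatorics for $h_v$ and $h_w$, because the edge directions away from $v$ and away from $w$ agree there.
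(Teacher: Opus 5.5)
Your proof is correct and uses the same mechanism as the paper: every edge off the path joining $v$ and $w$ is directed away from both vertices, so the two height recursions coincide there, and whatever holds on a finite seed set propagates outward. The paper, however, does not reduce to adjacent vertices. It takes the finite set $X$ of regions adjacent to the whole path, including the three regions at each endpoint, and chooses $\alpha,\beta$ so that \e{ec} holds on $X$. It then pushes both inequalities outward by induction, using $h_w(m)=h_w(m_1)+h_w(m_2)\gqs \alpha (h_v(m_1)+h_v(m_2))=\alpha h_v(m)$ for a region $m$ pointed to by an off-path edge adjacent to $m_1,m_2$, and similarly for $\beta$.

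Your edge-by-edge reduction, with $h_v=a+b+2c$ and $h_w=a+b+c$ beyond the edge, gains explicit constants $\alpha=2^{-d}$ and $\beta=2^{d}$, where $d$ is the distance from $v$ to $w$. It also makes $\alpha\beta=1$ automatic, instead of resting on the paper's one-line symmetry remark.

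The paper's route is shorter, and its constants are sharp. Each ratio $h_w(m)/h_v(m)$ off $X$ is a mediant of two earlier ratios, so the extremes over all regions are attained on $X$. Your composed constants are generally not sharp: already for $d=2$ the best constants are $\tfrac13$ and $3$, not $\tfrac14$ and $4$. This does not matter for the later use in Proposition \ref{sim2}, which needs only the existence of such constants.
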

\begin{proof}
Let $X$ be the set of regions adjacent to the simple path $P$ between $v$ and $w$, including the three regions surrounding each of  $v$ and $w$. Choose  $\alpha, \beta>0$ so that  \e{ec} is true for all $m \in X$. By symmetry the optimal  $\alpha, \beta$ will be reciprocals. An induction argument completes the proof, since all edges not on $P$ may be directed away from both $v$ and $w$.
\end{proof}


\subsection{Markov trees} \la{asc}


\SpecialCoor
\psset{griddots=5,subgriddiv=0,gridlabels=0pt}
\psset{xunit=0.28cm, yunit=0.28cm, runit=0.28cm}
\psset{linewidth=1pt}
\psset{dotsize=4pt 0,dotstyle=*}
\begin{figure}[htb]
\centering
\begin{pspicture}(-8,-1.9)(35,8.5) 

\psset{arrowscale=1.4,arrowinset=0.3,arrowlength=1.1}
\newrgbcolor{light}{0.7 0.7 1.0}
\newrgbcolor{blue2}{0.3 0.3 0.8}

\rput(14,4.5){%
        \begin{pspicture}(1,-1)(7,8)

\psline(1,0)(4,2)
\psline(7,0)(4,2)
\psline(4,5)(7,7)
\psline(4,5)(1,7)

\psline(4,2)(4,5)

\rput(2,3.5){$s$}

\rput(6,3.5){$t$}

\rput(4,7){$u$}
\rput(4,0){$r$}

\rput(4,-2.1){Rule $r+u=st$}

\end{pspicture}}

\rput(30,4.5){%
        \begin{pspicture}(1,2)(7,8)

\psline[ArrowInside=->,ArrowInsidePos=0.6](4,5)(7,7)
\psline[ArrowInside=->,ArrowInsidePos=0.6](4,5)(1,7)

\psline[ArrowInside=->,ArrowInsidePos=0.6](4,2)(4,5)

\rput(6,4){$t$}
\rput(2,4){$s$}

\rput(4,7.2){$u$}

\rput(4,-0.6){Ordered triple $(s,u,t)$}

\end{pspicture}}

\rput(-2,4.5){%
        \begin{pspicture}(1,-1)(7,8)

\psline[linecolor=light,linestyle=dashed,dash=3pt 2pt](1,0)(4,2)(7,0)
\psline(4,5)(7,7)
\psline(4,5)(1,7)

\psline(4,2)(4,5)

\psdot(4,2)

\rput(2,3.5){$s_0$}
\rput(6,3.5){$t_0$}
\rput(4,7){$u_0$}

\rput(4,-2.1){Initial $(s_0,u_0,t_0)$ at the root}

\end{pspicture}}

\end{pspicture}
\caption{Region labels on a Markov tree.}
\label{root}
\end{figure}

\begin{adef}\la{mat}
{\rm A {\em Markov tree} is a rooted trivalent tree in the plane with all of its complementary regions given labels from a ring. The root vertex has degree $1$. Near the root, as on the left of Figure \ref{root}, is the {\em base triple} $(s_0,u_0,t_0)$.
Four regions surrounding an edge, as shown in the middle  of Figure \ref{root}, must satisfy the local rule $r+u=st$. }
\end{adef}

\SpecialCoor
\psset{griddots=5,subgriddiv=0,gridlabels=0pt}
\psset{xunit=0.23cm, yunit=0.23cm, runit=0.23cm}
\psset{linewidth=1pt}
\psset{dotsize=4pt 0,dotstyle=*}
\begin{figure}[htb]
\centering

\psset{arrowscale=1.4,arrowinset=0.3,arrowlength=1.1}
\newrgbcolor{light}{0.8 0.8 1.0}
\newrgbcolor{darkbrown}{0.5 0.324219 0.257813}
\newrgbcolor{pale}{1 0.7 0.4}

\begin{pspicture}(-30,-12)(30,13)

\rput(-15,1){
\begin{pspicture}(-15,-12)(15,14) 

\psset{linecolor=purple}

\psline(0,0)(0,3)

\psline(0,3)(1.82628,5.38006)(4.59792,6.52811)(7.59507,6.65897)(9.16566,6.00841)

\psline(7.59507,6.65897)(9.10299,7.44394)(9.10299,7.44394)(9.10299,7.44394)(9.10299,7.44394)

\psline(4.59792,6.52811)(6.80975,8.55488)(8.43107,9.06608)(8.43107,9.06608)(8.43107,9.06608)

\psline(6.80975,8.55488)(7.46032,10.1255)(7.46032,10.1255)(7.46032,10.1255)(7.46032,10.1255)

\psline(1.82628,5.38006)(2.21786,8.35439)(3.60311,11.0154)(4.95181,12.0503)(4.95181,12.0503)

\psline(3.60311,11.0154)(3.67726,12.7138)(3.67726,12.7138)(3.67726,12.7138)(3.67726,12.7138)

\psline(2.21786,8.35439)(1.56854,11.2833)(1.93649,12.943)(1.93649,12.943)(1.93649,12.943)

\psline(1.56854,11.2833)(0.53365,12.632)(0.53365,12.632)(0.53365,12.632)(0.53365,12.632)

\psline(0,3)(-1.82628,5.38006)(-2.21786,8.35439)(-1.56854,11.2833)(-0.53365,12.632)

\psline(-1.56854,11.2833)(-1.93649,12.943)(-1.93649,12.943)(-1.93649,12.943)(-1.93649,12.943)

\psline(-2.21786,8.35439)(-3.60311,11.0154)(-3.67726,12.7138)(-3.67726,12.7138)(-3.67726,12.7138)

\psline(-3.60311,11.0154)(-4.95181,12.0503)(-4.95181,12.0503)(-4.95181,12.0503)(-4.95181,12.0503)

\psline(-1.82628,5.38006)(-4.59792,6.52811)(-6.80975,8.55488)(-7.46032,10.1255)(-7.46032,10.1255)

\psline(-6.80975,8.55488)(-8.43107,9.06608)(-8.43107,9.06608)(-8.43107,9.06608)(-8.43107,9.06608)

\psline(-4.59792,6.52811)(-7.59507,6.65897)(-9.10299,7.44394)(-9.10299,7.44394)(-9.10299,7.44394)

\psline(-7.59507,6.65897)(-9.16566,6.00841)(-9.16566,6.00841)(-9.16566,6.00841)(-9.16566,6.00841)


\psset{linecolor=blue}

\psline(0,0)(2.59808,-1.5)

\psline(2.59808,-1.5)(3.74613,-4.27164)(3.35455,-7.24597)(1.9693,-9.90701)(0.620601,-10.9419)

\psline(1.9693,-9.90701)(1.89515,-11.6054)(1.89515,-11.6054)(1.89515,-11.6054)(1.89515,-11.6054)

\psline(3.35455,-7.24597)(4.00387,-10.1749)(3.63592,-11.8346)(3.63592,-11.8346)(3.63592,-11.8346)

\psline(4.00387,-10.1749)(5.03876,-11.5236)(5.03876,-11.5236)(5.03876,-11.5236)(5.03876,-11.5236)

\psline(3.74613,-4.27164)(6.12619,-6.09792)(7.73809,-8.6281)(7.95998,-10.3136)(7.95998,-10.3136)

\psline(7.73809,-8.6281)(9.17185,-9.54151)(9.17185,-9.54151)(9.17185,-9.54151)(9.17185,-9.54151)

\psline(6.12619,-6.09792)(8.98734,-7.00004)(10.2407,-8.14854)(10.2407,-8.14854)(10.2407,-8.14854)

\psline(8.98734,-7.00004)(10.6728,-6.77815)(10.6728,-6.77815)(10.6728,-6.77815)(10.6728,-6.77815)

\psline(2.59808,-1.5)(5.57241,-1.10842)(8.34405,-2.25647)(10.5559,-4.28324)(11.2064,-5.85384)

\psline(10.5559,-4.28324)(12.1772,-4.79444)(12.1772,-4.79444)(12.1772,-4.79444)(12.1772,-4.79444)

\psline(8.34405,-2.25647)(11.3412,-2.38733)(12.8491,-3.1723)(12.8491,-3.1723)(12.8491,-3.1723)

\psline(11.3412,-2.38733)(12.9118,-1.73677)(12.9118,-1.73677)(12.9118,-1.73677)(12.9118,-1.73677)

\psline(5.57241,-1.10842)(7.95247,0.717863)(10.8136,1.61998)(12.4991,1.39809)(12.4991,1.39809)

\psline(10.8136,1.61998)(12.067,2.76848)(12.067,2.76848)(12.067,2.76848)(12.067,2.76848)

\psline(7.95247,0.717863)(9.56437,3.24804)(10.9981,4.16145)(10.9981,4.16145)(10.9981,4.16145)

\psline(9.56437,3.24804)(9.78626,4.93349)(9.78626,4.93349)(9.78626,4.93349)(9.78626,4.93349)


\psset{linecolor=orange}

\psline(0,0)(-2.59808,-1.5)

\psline(-2.59808,-1.5)(-5.57241,-1.10842)(-7.95247,0.717863)(-9.56437,3.24804)(-9.78626,4.93349)

\psline(-9.56437,3.24804)(-10.9981,4.16145)(-10.9981,4.16145)(-10.9981,4.16145)(-10.9981,4.16145)

\psline(-7.95247,0.717863)(-10.8136,1.61998)(-12.067,2.76848)(-12.067,2.76848)(-12.067,2.76848)

\psline(-10.8136,1.61998)(-12.4991,1.39809)(-12.4991,1.39809)(-12.4991,1.39809)(-12.4991,1.39809)

\psline(-5.57241,-1.10842)(-8.34405,-2.25647)(-11.3412,-2.38733)(-12.9118,-1.73677)(-12.9118,-1.73677)

\psline(-11.3412,-2.38733)(-12.8491,-3.1723)(-12.8491,-3.1723)(-12.8491,-3.1723)(-12.8491,-3.1723)

\psline(-8.34405,-2.25647)(-10.5559,-4.28324)(-12.1772,-4.79444)(-12.1772,-4.79444)(-12.1772,-4.79444)

\psline(-10.5559,-4.28324)(-11.2064,-5.85384)(-11.2064,-5.85384)(-11.2064,-5.85384)(-11.2064,-5.85384)

\psline(-2.59808,-1.5)(-3.74613,-4.27164)(-6.12619,-6.09792)(-8.98734,-7.00004)(-10.6728,-6.77815)

\psline(-8.98734,-7.00004)(-10.2407,-8.14854)(-10.2407,-8.14854)(-10.2407,-8.14854)(-10.2407,-8.14854)

\psline(-6.12619,-6.09792)(-7.73809,-8.6281)(-9.17185,-9.54151)(-9.17185,-9.54151)(-9.17185,-9.54151)

\psline(-7.73809,-8.6281)(-7.95998,-10.3136)(-7.95998,-10.3136)(-7.95998,-10.3136)(-7.95998,-10.3136)

\psline(-3.74613,-4.27164)(-3.35455,-7.24597)(-4.00387,-10.1749)(-5.03876,-11.5236)(-5.03876,-11.5236)

\psline(-4.00387,-10.1749)(-3.63592,-11.8346)(-3.63592,-11.8346)(-3.63592,-11.8346)(-3.63592,-11.8346)

\psline(-3.35455,-7.24597)(-1.9693,-9.90701)(-1.89515,-11.6054)(-1.89515,-11.6054)(-1.89515,-11.6054)

\psline(-1.9693,-9.90701)(-0.620601,-10.9419)(-0.620601,-10.9419)(-0.620601,-10.9419)(-0.620601,-10.9419)


\rput(4.33013,2.5){$t_0$}

\rput(0,7.){$u_0$}


\rput(0,-5.){$r_0$}

\rput(6.06218,-3.5){$v_0$}


\rput(-4.33013,2.5){$s_0$}

\rput(-6.06218,-3.5){$w_0$}

\psset{linecolor=black}
\psdot(0,0)

\end{pspicture}}

\rput(20,1){
\begin{pspicture}(-10,-11)(10,14) 


\psset{linecolor=orange}

\rput(0,-4){
\begin{pspicture}(-10,0)(10,-14)
\psline(0,0)(0,-3)

\psline(0,-3)(1.82628,-5.38006)(4.59792,-6.52811)(7.59507,-6.65897)(9.16566,-6.00841)

\psline(7.59507,-6.65897)(9.10299,-7.44394)(9.10299,-7.44394)(9.10299,-7.44394)(9.10299,-7.44394)

\psline(4.59792,-6.52811)(6.80975,-8.55488)(8.43107,-9.06608)(8.43107,-9.06608)(8.43107,-9.06608)

\psline(6.80975,-8.55488)(7.46032,-10.1255)(7.46032,-10.1255)(7.46032,-10.1255)(7.46032,-10.1255)

\psline(1.82628,-5.38006)(2.21786,-8.35439)(3.60311,-11.0154)(4.95181,-12.0503)(4.95181,-12.0503)

\psline(3.60311,-11.0154)(3.67726,-12.7138)(3.67726,-12.7138)(3.67726,-12.7138)(3.67726,-12.7138)

\psline(2.21786,-8.35439)(1.56854,-11.2833)(1.93649,-12.943)(1.93649,-12.943)(1.93649,-12.943)

\psline(1.56854,-11.2833)(0.53365,-12.632)(0.53365,-12.632)(0.53365,-12.632)(0.53365,-12.632)

\psline(0,-3)(-1.82628,-5.38006)(-2.21786,-8.35439)(-1.56854,-11.2833)(-0.53365,-12.632)

\psline(-1.56854,-11.2833)(-1.93649,-12.943)(-1.93649,-12.943)(-1.93649,-12.943)(-1.93649,-12.943)

\psline(-2.21786,-8.35439)(-3.60311,-11.0154)(-3.67726,-12.7138)(-3.67726,-12.7138)(-3.67726,-12.7138)

\psline(-3.60311,-11.0154)(-4.95181,-12.0503)(-4.95181,-12.0503)(-4.95181,-12.0503)(-4.95181,-12.0503)

\psline(-1.82628,-5.38006)(-4.59792,-6.52811)(-6.80975,-8.55488)(-7.46032,-10.1255)(-7.46032,-10.1255)

\psline(-6.80975,-8.55488)(-8.43107,-9.06608)(-8.43107,-9.06608)(-8.43107,-9.06608)(-8.43107,-9.06608)

\psline(-4.59792,-6.52811)(-7.59507,-6.65897)(-9.10299,-7.44394)(-9.10299,-7.44394)(-9.10299,-7.44394)

\psline(-7.59507,-6.65897)(-9.16566,-6.00841)(-9.16566,-6.00841)(-9.16566,-6.00841)(-9.16566,-6.00841)

\end{pspicture}}

\psdot[linecolor=black](0.17,3)

\psset{linecolor=purple}

\psline(0,0)(0,3)

\psline(0,3)(1.82628,5.38006)(4.59792,6.52811)(7.59507,6.65897)(9.16566,6.00841)

\psline(7.59507,6.65897)(9.10299,7.44394)(9.10299,7.44394)(9.10299,7.44394)(9.10299,7.44394)

\psline(4.59792,6.52811)(6.80975,8.55488)(8.43107,9.06608)(8.43107,9.06608)(8.43107,9.06608)

\psline(6.80975,8.55488)(7.46032,10.1255)(7.46032,10.1255)(7.46032,10.1255)(7.46032,10.1255)

\psline(1.82628,5.38006)(2.21786,8.35439)(3.60311,11.0154)(4.95181,12.0503)(4.95181,12.0503)

\psline(3.60311,11.0154)(3.67726,12.7138)(3.67726,12.7138)(3.67726,12.7138)(3.67726,12.7138)

\psline(2.21786,8.35439)(1.56854,11.2833)(1.93649,12.943)(1.93649,12.943)(1.93649,12.943)

\psline(1.56854,11.2833)(0.53365,12.632)(0.53365,12.632)(0.53365,12.632)(0.53365,12.632)

\psline(0,3)(-1.82628,5.38006)(-2.21786,8.35439)(-1.56854,11.2833)(-0.53365,12.632)

\psline(-1.56854,11.2833)(-1.93649,12.943)(-1.93649,12.943)(-1.93649,12.943)(-1.93649,12.943)

\psline(-2.21786,8.35439)(-3.60311,11.0154)(-3.67726,12.7138)(-3.67726,12.7138)(-3.67726,12.7138)

\psline(-3.60311,11.0154)(-4.95181,12.0503)(-4.95181,12.0503)(-4.95181,12.0503)(-4.95181,12.0503)

\psline(-1.82628,5.38006)(-4.59792,6.52811)(-6.80975,8.55488)(-7.46032,10.1255)(-7.46032,10.1255)

\psline(-6.80975,8.55488)(-8.43107,9.06608)(-8.43107,9.06608)(-8.43107,9.06608)(-8.43107,9.06608)

\psline(-4.59792,6.52811)(-7.59507,6.65897)(-9.10299,7.44394)(-9.10299,7.44394)(-9.10299,7.44394)

\psline(-7.59507,6.65897)(-9.16566,6.00841)(-9.16566,6.00841)(-9.16566,6.00841)(-9.16566,6.00841)


\rput(4.33013,1.5){$t_0$}

\rput(0,7.){$u_0$}


\rput(0.17,-5.){$r_0$}



\rput(-4.33013,1.5){$s_0$}


\psset{linecolor=black}
\psdot(0,0)

\end{pspicture}}

\end{pspicture}

\caption{Markov topographs made up of two or three Markov trees}
\label{topb}
\end{figure}

In other words, a Markov tree is a rooted subtree of a Markov topograph. It is uniquely specified by its base triple. 
Figure \ref{topb} shows how a topograph breaks into two or three Markov trees. A decomposition into four trees is seen in Figure \ref{fatop}.

Label the edges of a Markov tree
as we did for topographs. The initial edge from the root to $u_0$  has label $2u_0-s_0t_0$ to be consistent with the local rule. 
Our main object of study is the following, generalizing the original Markov tree in Figure \ref{mart}.

\begin{adef}\la{eatx} 
{\rm A {\em  rising tree} is a real Markov tree $T$ with all regions $>2$. Furthermore, when given the positive edge direction, all edges are directed away from the root except for those on a finite simple path starting at the root. If this {\em descending path} is empty then $T$ is {\em strictly rising}.}
\end{adef}


The material in Section \ref{mtop} lets us understand the simple structure of these trees. 
If a rising tree $T$ has base triple $(s_0,u_0,t_0)$, then it is strictly rising exactly when $u_0 >s_0 t_0/2$, by the climbing lemma, and regions strictly increase (rise) moving away from the root.   It is an exercise to show that there are uncountably many strictly rising trees for each  discriminator $D \in \R$.

\SpecialCoor
\psset{griddots=5,subgriddiv=0,gridlabels=0pt}
\psset{xunit=0.35cm, yunit=0.35cm, runit=0.5cm}
\psset{linewidth=1pt}
\psset{dotsize=4pt 0,dotstyle=*}
\begin{figure}[ht]
\centering

\psset{arrowscale=1.4,arrowinset=0.3,arrowlength=1.1}
\newrgbcolor{light}{0.8 0.8 1.0}
\newrgbcolor{pale}{1 0.7 1}
\newrgbcolor{pale}{1 0.7 0.4}
\newrgbcolor{pale}{1 0.85 0.65}

\newrgbcolor{lightorange}{0.964844 0.730469 0.65625}

\begin{pspicture}(-15,-1.6)(15,17.5) 

\psset{arrowscale=1.8,arrowinset=0.3,arrowlength=1.1}
\newrgbcolor{light}{0.9 0.7 1.0}
\newrgbcolor{blue2}{0.3 0.3 0.8}
\newrgbcolor{light}{0.7 0.7 1.0}

\newrgbcolor{light}{0.87 0.7 0.8}

\psset{arrowscale=1.4,arrowinset=0.3,arrowlength=1.0}



\psset{linecolor=lightorange} 


\pscurve[linewidth=7pt,linecolor=light](5.38512,10.9094)
(3.36415,8.69223)(2.34549,5.87047)
(0,4)(0,2)(0,0)

\psline[linecolor=light,linestyle=dashed,dash=3pt 2pt](-2,-1.6)(0,0)(2,-1.6)

\psline[ArrowInside=->,ArrowInsidePos=0.6](0,4)(2.34549,5.87047)(5.32318,6.23569)(8.21397,5.4336)(10.7154,3.77746)(12.438,1.82995)

\psline[ArrowInside=->,ArrowInsidePos=0.6](10.7154,3.77746)(13.1809,2.95215)

\psline[ArrowInside=->,ArrowInsidePos=0.6](8.21397,5.4336)(11.2111,5.56406)(13.7494,5.00099)

\psline[ArrowInside=->,ArrowInsidePos=0.6](11.2111,5.56406)(13.6909,6.34558)

\psline[ArrowInside=->,ArrowInsidePos=0.6](5.32318,6.23569)(7.93444,7.71262)(10.8743,8.31034)(13.4694,8.15128)

\psline[ArrowInside=->,ArrowInsidePos=0.6](10.8743,8.31034)(13.2013,9.47015)

\psline[ArrowInside=->,ArrowInsidePos=0.6](7.93444,7.71262)(9.9615,9.92419)(12.1545,11.3209)

\psline[ArrowInside=->,ArrowInsidePos=0.6](9.9615,9.92419)(11.1623,12.2303)

\psline[ArrowInside=->,ArrowInsidePos=0.6](2.34549,5.87047)(3.36415,8.69223)(5.38512,10.9094)(7.9923,12.3935)(10.5078,13.0511)

\psline[ArrowInside=->,ArrowInsidePos=0.6](7.9923,12.3935)(9.84196,14.2207)

\psline[ArrowInside=->,ArrowInsidePos=0.6](5.38512,10.9094)(6.62203,13.6425)(8.27056,15.6531)

\psline[ArrowInside=->,ArrowInsidePos=0.6](6.62203,13.6425)(7.04442,16.208)

\psline[ArrowInside=->,ArrowInsidePos=0.6](3.36415,8.69223)(3.22543,11.689)(4.01956,14.582)(5.33328,16.8257)

\psline[ArrowInside=->,ArrowInsidePos=0.6](4.01956,14.582)(4.03543,17.182)

\psline[ArrowInside=->,ArrowInsidePos=0.6](3.22543,11.689)(2.16744,14.4963)(1.91145,17.0836)

\psline[ArrowInside=->,ArrowInsidePos=0.6](2.16744,14.4963)(0.652066,16.609)

\psline[ArrowInside=->,ArrowInsidePos=0.6](-3.22543,11.689)(-2.16744,14.4963)(-0.652066,16.609)

\psline[ArrowInside=->,ArrowInsidePos=0.6](-2.16744,14.4963)(-1.91145,17.0836)

\psline[ArrowInside=->,ArrowInsidePos=0.6](-4.01956,14.582)(-5.33328,16.8257)

\psline[ArrowInside=->,ArrowInsidePos=0.6](-3.36415,8.69223)(-5.38512,10.9094)(-6.62203,13.6425)(-7.04442,16.208)

\psline[ArrowInside=->,ArrowInsidePos=0.6](-6.62203,13.6425)(-8.27056,15.6531)

\psline[ArrowInside=->,ArrowInsidePos=0.6](-5.38512,10.9094)(-7.9923,12.3935)(-9.84196,14.2207)

\psline[ArrowInside=->,ArrowInsidePos=0.6](-7.9923,12.3935)(-10.5078,13.0511)

\psline[ArrowInside=->,ArrowInsidePos=0.6](-2.34549,5.87047)(-5.32318,6.23569)(-7.93444,7.71262)(-9.9615,9.92419)(-11.1623,12.2303)

\psline[ArrowInside=->,ArrowInsidePos=0.6](-9.9615,9.92419)(-12.1545,11.3209)

\psline[ArrowInside=->,ArrowInsidePos=0.6](-7.93444,7.71262)(-10.8743,8.31034)(-13.2013,9.47015)

\psline[ArrowInside=->,ArrowInsidePos=0.6](-10.8743,8.31034)(-13.4694,8.15128)

\psline[ArrowInside=->,ArrowInsidePos=0.6](-5.32318,6.23569)(-8.21397,5.4336)(-11.2111,5.56406)(-13.6909,6.34558)

\psline[ArrowInside=->,ArrowInsidePos=0.6](-11.2111,5.56406)(-13.7494,5.00099)

\psline[ArrowInside=->,ArrowInsidePos=0.6](-8.21397,5.4336)(-10.7154,3.77746)(-13.1809,2.95215)

\psline[ArrowInside=->,ArrowInsidePos=0.6](-10.7154,3.77746)(-12.438,1.82995)

\psline[ArrowInside=->,ArrowInsidePos=0.6]
(0,4)(-2.34549,5.87047)(-3.36415,8.69223)(-3.22543,11.689)(-4.01956,14.582)(-4.03543,17.182)

\psline[ArrowInside=->,ArrowInsidePos=0.6,linecolor=red](5.38512,10.9094)
(3.36415,8.69223)(2.34549,5.87047)
(0,4)(0,0)

\rput(3.2,4.8){$P$}
\rput(-2.34549,2){$1660$}
\rput(2.34549,2){$98$}


\rput(-8.30087,6.6009){$_{46682503}$}


\rput(-4.38281,11.514){$_{476414}$}
\rput(-4.69099,7.74094){$_{28122}$} 

\rput(0.,6.8){$17$}

\rput(3.0867,14.6858){$62$}
\rput(4.69099,7.74094){$6$}
\rput(4.38281,11.514){$4$}

\rput(7.40607,13.1265){$7$}

\rput(8.30087,6.6009){$_{571}$}
\rput(10.5457,9.18954){$_{3328}$}

\rput(11.1048,4.6315){$_{55952}$}

\psset{linecolor=gray}

\psset{linecolor=black}

\psdot[linecolor=black](0,0)
\psdot[linecolor=black](5.38512,10.9094)

\end{pspicture}
\caption{The rising  tree $(1660,17,98)$ with $D=-67$ and descending path $P$. All edges are shown positively directed (pointing to the larger region at their endpoints).}
\label{rtree}
\end{figure}

Next suppose $T$ has a nonempty descending path $P$. By {\em descending} we will always mean moving along edges against the positive edge direction, so that neighboring region labels decrease. Then $P$ descends from  the root to a vertex $v$ at the end of the path. If there is a $0$-edge on $P$ then it is an edge well (in the topograph $T$ extends to) and so this edge must have endpoint $v$. 
Otherwise, $v$ must be a vertex well. 
From this point of view, rising trees that are not strictly rising correspond to parts of Markov topographs that include an edge or vertex well. The descending path is just the path from the root to the well. By Lemma \ref{wells} these trees have discriminator $D<4$, so we have established:

\begin{lemma} \la{tr4} 
A rising tree with a nonempty descending path must have discriminator $D<4$.
\end{lemma}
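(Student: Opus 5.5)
The argument reduces to Lemma \ref{wells}: I would show that the endpoint of the descending path is a well of the topograph containing $T$. Let $T$ be a rising tree with nonempty descending path $P$, running from the root to a final vertex $v$. View $T$ as a rooted subtree of a Markov topograph $\mathcal G$, obtained by continuing past the root with the local rule.

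\textbf{Step 1: locate a well at $v$.} Every edge of $T$ off $P$ is positively directed away from the root, and regions of $T$ are $>2$. The edges at $v$ are the last edge of $P$ plus two forward edges. The forward edges are not on $P$, since $P$ ends at $v$. Hence they have positive labels when directed away from the root, i.e.\ away from $v$.

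\begin{itemize}
\item If the last edge $e$ of $P$ is positively directed towards the root, then it points away from $v$ too, so all three edges at $v$ point out of $v$ and $v$ is a vertex well.
\item If $e$ has label $0$, it is a $0$-edge whose two adjacent regions lie in $T$ and are $>2$, hence an edge well.
\end{itemize}

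Some care is needed to check that $e$ cannot instead point into $v$ with positive label. In that case $e$ would be directed away from the root, so it could not be on the descending path, by the definition of $P$ as a simple path containing exactly the exceptional edges. Likewise, if $P$ consists only of the initial root edge, the same reasoning applies, with the root edge label $2u_0-s_0t_0\lqs 0$.

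\textbf{Step 2: the regions-$\gqs 2$ hypothesis.} Lemma \ref{wells} is stated for topographs with all regions $\gqs 2$, while we only know regions of $T$ are $>2$. Here I would use the climbing lemma, Lemma \ref{climb}, from the well outward in $\mathcal G$.

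\begin{itemize}
\item From a vertex well, all three outgoing configurations $[a,b,c]$ have $a,c>2$ and $b>0$, so regions increase along every path moving away from $v$, and the whole of $\mathcal G$ is reached this way.
\item From an edge well, both directions give configurations $[s,0,t]$ with $s,t>2$, so part (ii) of Lemma \ref{climb} applies.
\end{itemize}

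Either way every region of $\mathcal G$ is $\gqs$ some region adjacent to the well, hence $>2$. Alternatively, one can note that the proof of Lemma \ref{wells} only uses the triple at the well, so this extension is cosmetic.

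\textbf{Step 3: conclude.} Lemma \ref{wells} now gives $D=D_{\mathcal G}<4$. Since $D_T=D_{\mathcal G}$ (they share a triple), the result follows.

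The main obstacle is the bookkeeping in Step 1: confirming that the terminal edge of $P$ is nonpositive when directed forward, so that $v$ really is a well. This is where the precise meaning of "descending path" (edges not directed away from the root) must be used.
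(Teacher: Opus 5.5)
Your proposal is correct and follows the paper's argument. The paper also notes that the end vertex $v$ of the descending path is a vertex well, or that a $0$-edge on $P$ (which must end at $v$) is an edge well, and then applies Lemma \ref{wells}. Your Step 2 check that all regions of the topograph exceed $2$ (or that the proof of Lemma \ref{wells} only uses the triple at the well) makes explicit a point the paper passes over, and reading $P$ as exactly the edges not directed away from the root is the interpretation the lemma needs.
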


Figure \ref{rtree} shows the rising tree $(1660,17,98)$. For the example of a tree with base triple $(s_0,u_0,t_0)$ and $2<s_0, t_0 \lqs u_0$ from the introduction, check that this tree must be rising with at most one edge in its descending path. In general, regions along any path moving away from the root in a rising tree will eventually start strictly increasing. Will will see in Proposition \ref{sim2} that this growth is always exponential, and this property will characterize rising trees.

\begin{adef}\la{pr} 
{\rm A {\em  progressing tree} is a real Markov tree with base triple $(s_0,u_0,t_0)$ where  
\begin{equation*}
  2\lqs s_0<u_0, \qquad 2\lqs t_0<u_0,  \qquad (s_0-2)(t_0-2)=0.
\end{equation*}}
\end{adef}

The next lemma is an easy exercise.

\begin{lemma} \la{pr4} 
A progressing tree has all regions $\gqs 2$, discriminator $D>4$, and all edges positively directed away from the root. The regions bordering a $2$-region increase (progress) in an arithmetic progression.
\end{lemma}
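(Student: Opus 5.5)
By symmetry of the roles of $s_0$ and $t_0$ we may assume $s_0=2$, so the base triple is $(2,u_0,t_0)$ with $2\lqs t_0<u_0$. The first step is the discriminator. Substituting into \e{suwx} gives
\[
D=4+u_0^2+t_0^2-2u_0t_0=4+(u_0-t_0)^2>4,
\]
since $t_0<u_0$.

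Next we look at the initial edge from the root to $u_0$. Its label is $2u_0-s_0t_0=2(u_0-t_0)>0$, so it is positively directed away from the root. Above it we have the configuration $[2,\,2(u_0-t_0),\,t_0]$ in the sense of Figure \ref{yy}, read in the direction pointing into $u_0$. In the notation of Figure \ref{yy} the regions are $a=2$ and $c=t_0$, both $\gqs 2$, and the edge label is $b>0$. Also $(ac+b)/2=(2t_0+2u_0-2t_0)/2=u_0$, which is consistent. Hence part (i) of the climbing lemma (Lemma \ref{climb}) applies. Every region met moving forward from this edge is strictly larger than the ones before it, and every forward edge is positive. Every vertex and edge of the tree other than the root edge lies forward of this first edge, so all edges are directed away from the root. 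All regions are $\gqs \min(2,t_0,u_0)=2$.

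Finally we treat the regions bordering a $2$-region. Take a $2$-region in $T$; it is either $s_0$ itself, or a region $r'=2$ that appears later. The latter is impossible, since regions strictly exceed the base ones, and $t_0$ could equal $2$ only in the symmetric case, which is handled the same way. Walk along the boundary path of this $2$-region away from the root. The successive regions on the other side are $x_0=t_0, x_1=u_0, x_2,\dots$, and the local rule across each edge of this path gives $x_{j-1}+x_{j+1}=2x_j$. This is exactly an arithmetic progression with common difference $u_0-t_0>0$, i.e. $x_j=t_0+j(u_0-t_0)$.

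There is no real obstacle here. The only care needed is to identify the correct orientation of the configuration $[a,b,c]$ so that the climbing lemma's hypotheses are met with $b>0$ rather than $b\gqs 0$, because $a=2$ is not strictly above $2$.
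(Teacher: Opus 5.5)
Your proof is correct and is the intended argument: the paper leaves this lemma as an easy exercise. You compute $D=4+(u_0-t_0)^2$ directly, apply part (i) of the climbing lemma (Lemma~\ref{climb}) to the root edge $[2,\,2(u_0-t_0),\,t_0]$ to get positivity of all edges and all regions $\geqslant 2$, and use the local rule $x_{j-1}+x_{j+1}=2x_j$ along the $2$-region to get the arithmetic progression of Proposition~\ref{stx}. One small imprecision: $t_0=2$ can occur not only ``in the symmetric case'' but also together with $s_0=2$, and then that second $2$-region is covered by the same reflection argument.
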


Apart from the constant tree, with all regions equal, we will see there is only one other type of tree in this context. We call it a {\em leveling tree} and it is specific to $D=4$.  This type is introduced and studied in Section \ref{d4}. Theorem \ref{class2} shows that every  
Markov tree  with 
labels $\gqs 2$  is either  rising, progressing, leveling or constant. 

A triple $(s,u,t)$ is {\em oriented} if $u$ is further from the root than $s$ and $t$. A triple $(s,u,t)$ is {\em ordered}  if it is oriented and $u > st/2$. This means that the edge between $s$ and $t$ is positively directed towards $u$.
By the climbing lemma, ordered triples $(s,u,t)$ with $s,t \gqs 2$ appear as on the right of Figure \ref{root}, with the arrows indicating both the positive edge direction and the direction away from the root. 
It may also be seen with the climbing lemma that every oriented triple in a progressing tree is ordered. The same is true for rising trees except that triples with central vertex on the descending path are not ordered -- see Figure \ref{rtree}.

For two adjacent regions $s$ and $t$, as in the middle of Figure \ref{root}, the other two regions $r$ and $u$ may be given by the following formulas in terms of $D$:
\begin{align}
  r, u & = \tfrac 12 \left(st \pm \sqrt{s^2t^2-4(s^2+t^2)+4D}\right)\notag \\
   & = \tfrac 12 \left(st \pm \sqrt{(s^2-4)(t^2-4)+4(D-4)}\right). \label{iv}
\end{align}
So one of $r$ and $u$ is $\gqs st/2$ and one is $\lqs st/2$.


Every Markov tree may be {\em Farey-indexed} by overlaying the Farey tree in Figure \ref{farey}. Each region of the tree corresponds to a unique fraction $q$ in $[0,1]$ and we write its label as $m_q$, beginning with
\begin{equation*}
  m_0 = s_0, \quad m_{1/2}=u_0, \quad m_1=t_0.
\end{equation*}

\subsection{Farey neighbors} \la{rey}
We recall here some facts about Farey fractions, see \cite[Chap. III]{hawr}. The Farey fractions of order $n$, denoted $\mathcal F_n$, are the reduced rationals in the interval $[0,1]$
with denominator at most $n$.
Two fractions $a/b < c/d$ are Farey neighbors if they appear consecutively in some $\mathcal F_n$. This is equivalent to having $bc-ad=1$. The mediant of two Farey neighbors lies between them, and all three are Farey neighbors. Every  fraction in $(0,1)$ is produced uniquely in this way as a mediant, starting with $0/1$ and $1/1$.

\SpecialCoor
\psset{griddots=5,subgriddiv=0,gridlabels=0pt}
\psset{xunit=0.4cm, yunit=0.4cm, runit=0.5cm}
\psset{linewidth=1pt}
\psset{dotsize=4pt 0,dotstyle=*}
\begin{figure}[!htb]
\centering

\psset{arrowscale=1.4,arrowinset=0.3,arrowlength=1.1}
\newrgbcolor{light}{0.8 0.8 1.0}
\newrgbcolor{pale}{1 0.7 1}
\newrgbcolor{pale}{1 0.7 0.4}
\newrgbcolor{darkbrown}{0.5 0.324219 0.257813}

\begin{pspicture}(-15,0)(15,8.5) 

\rput(9,4.5){%
        \begin{pspicture}(-6,0)(6,9)

\psset{linecolor=darkbrown}

\psline[ArrowInside=->,ArrowInsidePos=0.6](0,0)(0,2)
\psline(0,2)(2,3)(3.4,5)(4,7)(4.3,8.5)
\psline(2,3)(4.5,3.4)
\psline(3.4,5)(5.7,6)
\psline(4,7)(5.3,8)

\rput(0,4.5){$q$}
\rput(2,1.2){$q_0'$}
\rput(4.2,4.5){$q_1'$}
\rput(5.2,6.9){$q_2'$}

\psline(0,2)(-2,3)(-3.4,5)(-4,7)(-4.3,8.5)
\psline(-2,3)(-4.5,3.4)
\psline(-3.4,5)(-5.7,6)
\psline(-4,7)(-5.3,8)

\rput(-2,1.2){$q_0$}
\rput(-4.2,4.5){$q_1$}
\rput(-5.2,6.9){$q_2$}
        \end{pspicture}}

\rput(-9,3){%
        \begin{pspicture}(-6,-2)(6,1)
        
\psset{linecolor=gray}
        
\psline(-6,0)(6,0)
\psline(-5.5,0.5)(-5.5,-0.5)
\psline(-4,0.4)(-4,-0.4)
\psline(-2.8,0.28)(-2.8,-0.28)

\psline(-2,0.2)(-2,-0.2)
\psline(-1.5,0.15)(-1.5,-0.15)
\psline(-1.2,0.12)(-1.2,-0.12)
\psline(-1,0.1)(-1,-0.1)
\psline(-0.7,0.07)(-0.7,-0.07)

\psline(5,0.5)(5,-0.5)
\psline(0,0.5)(0,-0.5)
\psline(3.9,0.39)(3.9,-0.39)
\psline(2.5,0.25)(2.5,-0.25)

\psline(2,0.2)(2,-0.2)
\psline(1.7,0.17)(1.7,-0.17)
\psline(1.2,0.12)(1.2,-0.12)
\psline(1,0.1)(1,-0.1)
\psline(0.7,0.07)(0.7,-0.07)

\rput(0,-1.3){$q$}
\rput(-5.5,-1.3){$q_0$}
\rput(-4,-1.3){$q_1$}
\rput(-2.8,-1.3){$q_2$}

\rput(5,-1.3){$q_0'$}
\rput(3.9,-1.3){$q_1'$}
\rput(2.5,-1.3){$q_2'$}

        \end{pspicture}}

\psset{linecolor=black}

\end{pspicture}
\caption{The Farey neighbors of $q$, on the number line and on the Farey tree.}
\label{fqs}
\end{figure}

Given a reduced $q=k/n$ in the interval $(0,1)$, (fractions are always assumed to be reduced from this point), we will need  its least Farey neighbor $q_0=k_0/n_0$ and its greatest Farey neighbor $q_0'=k_0'/n_0'$. They may be found using the extended Euclidean algorithm, or by noting that they are adjacent to $k/n$ in $\mathcal F_n$; see  \cite[Sect. 4.5]{Knu}. We have the elementary results:

\begin{lemma} \la{qq} For rational $q=k/n$ with $0<q<1$, there exist $k_0, n_0, k_0', n_0'$, found as above, so that the complete set of Farey neighbors of $q$ is given by $q_i$, $q'_j$ with
\begin{equation*}
 0\lqs  q_i=\frac{k_i}{n_i}:=\frac{k_0 + ik}{n_0 +in} < q < \frac{k_0' + jk}{n_0' +jn} =: \frac{k'_j}{n'_j} =q_j' \lqs 1, \qquad \quad (i, j\in \Z_{\gqs 0}).
\end{equation*}
Then $q_i \to q$ and $q_j' \to q$.
Also, if $p<p'$ are Farey neighbors  with mediant $q$, then we must have $p=q_0$ and $p'=q_0'$. 
\end{lemma}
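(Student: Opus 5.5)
The plan is to derive everything from the unimodularity characterization of Farey neighbors: $a/b<c/d$ are neighbors exactly when $bc-ad=1$. We may assume $n\gqs 2$, since $0<k/n<1$.

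First, since $\gcd(k,n)=1$, the equation $nx-ky=1$ has integer solutions. The general solution is $(x,y)=(k_0+ik,\,n_0+in)$ with $i\in\Z$. Here $(k_0,n_0)$ is the solution with $0\lqs n_0<n$ given by the extended Euclidean algorithm, and $n_0\gqs 1$ because $n_0=0$ would force $-k y=1$ with $k\gqs 1$, hence $k=1$, $n_0$ impossible to match unless $n=1$. Similarly, $kx'-ny'=1$ has general solution $(k_0'+jk,\,n_0'+jn)$. These two families exactly parametrize the fractions $p$ with $|p\text{-denominator}\cdot k - p\text{-numerator}\cdot n|=1$ that lie to the left and to the right of $q$ respectively.

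Next, restrict to fractions in $[0,1]$ with positive denominator. For the left family we need $n_0+in>0$ and $0\lqs (k_0+ik)/(n_0+in)$. Since $0<n_0<n$, the first condition is exactly $i\gqs 0$. For nonnegativity, note $k_0 n - k n_0=-1$ gives $k_0=(kn_0-1)/n\gqs 0$ when $kn_0\gqs 1$, and $k_0>-1$ forces $k_0\gqs 0$. Then $k_0+ik\gqs 0$ for all $i\gqs 0$. The upper bound $q_i<q\lqs 1$ is automatic because $q_i$ lies to the left. Each such $q_i$ is automatically reduced, since $kn_i-nk_i=1$. The right family is handled symmetrically, using $q_j'>q$ and $q_j'\lqs 1$; the latter follows from $n_0'-k_0'\gqs 0$, which we get by the same integrality argument applied to $1-q$. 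This step, fixing the exact index range $i,j\gqs 0$ with the normalization $0<n_0,n_0'\lqs n$, is the main fiddly point. Finally, $q_i$ is the least left neighbor because $q-q_i=1/(n n_i)$ increases as $n_i$ decreases. So $q_0$ is the least neighbor, $q_0'$ is the greatest, and both are adjacent to $q$ in $\mathcal F_n$, in line with the cited description.

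For convergence, $|q-q_i|=1/(n(n_0+in))\to 0$ as $i\to\infty$, and likewise $|q_j'-q|\to 0$ as $j\to\infty$.

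For the last claim, suppose $p=a/b<p'=c/d$ are Farey neighbors with mediant $q$, so $k=a+c$, $n=b+d$ and $bc-ad=1$. Then $kb-na=cb-ad=1$, so $p$ is a left neighbor of $q$ with denominator $b<n$. Among the $n_i=n_0+in$, only $n_0$ is less than $n$, hence $p=q_0$. Symmetrically, $p'=q_0'$.
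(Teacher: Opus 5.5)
Your argument is correct, and it is the elementary derivation the paper leaves to its references: the paper gives no proof of this lemma, only citing the criterion $bc-ad=1$, the extended Euclidean algorithm, and the uniqueness of the parent pair of a mediant (Hardy--Wright and Knuth). You should repair the slips in your first step: the left family solves $ky-nx=1$ and the right family solves $nx'-ky'=1$ (your $kx'-ny'=1$ is not even invariant under $j\mapsto j+1$), and $n_0\neq 0$ simply because $n_0=0$ in $kn_0-nk_0=1$ would give $-nk_0=1$, forcing $n=1$.
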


From now on we will use the notation $q_0, q_0'$  for the unique Farey neighbors with mediant $q$,  as shown in Figure \ref{fqs}. 
Note  that $0$  only has Farey neighbors $1/(j+1)$ for $j \gqs 0$, while  $1$ only has  $j/(j+1)$ for $j \gqs 0$. We also record
\begin{equation}\label{nb}
  k n_i - n k_i =1 = n k_j'-k n_j'.
\end{equation}

Farey sequences may be divided into rows, as in \cite[Sect. 3.2]{aig}. Let the $0$th row be $\{0/1, 1/1\}$ and  row $1$ is $\{1/2\}$. Row $2$ is $\{1/3, 2/3\}$ and, in general, row $\ell$ is produced by taking mediants of every pair of consecutive fractions in the union of all lower rows. The fractions in row $\ell$ have denominators at least $\ell+1$ and there are $2^{\ell-1}$ of them for $\ell \gqs 1$. Fractions in row $\ell$ are those appearing  $\ell$ edges away from the root in the Farey tree, shown in Figure \ref{farey}.

\subsection{Tree numbers at Farey neighbors}

For $x,s,u,t \in \R$ with $x \gqs 2$ and $u>2$, set
\begin{alignat}{2}\label{lrh}
  \lambda_x & := (x+\sqrt{x^2-4})/2, &\qquad \rho(x) & :=\log(\lambda_x), \\
  f(s,u,t) &:= \frac{\lambda_u s-t}{\sqrt{u^2-4}}, &\qquad g(s,u,t) & := \frac{t-\lambda_u^{-1} s}{\sqrt{u^2-4}}. \label{fg}
\end{alignat}
 The entire theory rests on the following explicit formulas for tree numbers at Farey neighbors, see \cite[p. 715]{z82}.
We  focus on the real cases we will need, but of course this all works for complex Markov trees as well.

\begin{prop} \la{start}
Let $T$ be a real, Farey-indexed Markov tree  and let $(s,u,t)$ be an oriented  triple on it with $u=m_q>2$.  Denote the Farey neighbors of $q$ by $q_j$, $q_j'$ as in Lemma \ref{qq}. 
Then for $j \gqs 0$,
\begin{align}
  m_{q_j} & = f(s,u,t) \cdot e^{\rho(u) j} + g(s,u,t) \cdot e^{-\rho(u) j}, \la{m1}\\
  m_{q'_j} & =  f(t,u,s) \cdot  e^{\rho(u) j} + g(t,u,s)  \cdot e^{-\rho(u) j}. \la{m2}
\end{align}
\end{prop}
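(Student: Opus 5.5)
The plan is to show that the regions $m_{q_j}$ satisfy a linear second-order recurrence with constant coefficients, and then solve it explicitly. First we identify the regions $m_{q_j}$ in the tree. If $u=m_q$ sits in the oriented triple $(s,u,t)$, then $q$ is the mediant of $q_0$ and $q_0'$, with $s=m_{q_0}$ and $t=m_{q_0'}$ by Lemma \ref{qq}. Moving forward from the vertex where $s,u,t$ meet, the regions along the left-hand boundary of $u$ (the path hugging $u$ on its left) are exactly $m_{q_1}, m_{q_2},\dots$. Indeed $q_{i+1}$ is the mediant of $q_i$ and $q$, so in the Farey tree it is the child created from the pair $(q_i,q)$.

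Next we apply the local rule $r+u=st$ to each edge along this boundary. Each such edge is separated by the regions $u$ and $m_{q_{i}}$, and the two regions at its ends are $m_{q_{i-1}}$ and $m_{q_{i+1}}$. This gives
\begin{equation*}
  m_{q_{i+1}} + m_{q_{i-1}} = u\, m_{q_i} \qquad (i\gqs 1),
\end{equation*}
with the convention that $q_{-1}$ denotes the region $r$ across from $u$ on the first edge. It is cleaner to start the recurrence at $i=0$ using the two initial values $m_{q_0}=s$ and $m_{q_1}$. For the latter, the edge between $s$ and $u$ has ends at the vertex $(s,u,t)$ and at the next vertex, so its four surrounding regions are $s,u,t,m_{q_1}$, and the local rule gives $t+m_{q_1}=su$, i.e.\ $m_{q_1}=su-t$. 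So the sequence $x_j:=m_{q_j}$ satisfies $x_{j+1}=u x_j - x_{j-1}$ for $j\gqs 1$, with $x_0=s$ and $x_1=su-t$. Equivalently we may set a fictitious $x_{-1}=t$, and the recurrence then holds from $j=0$.

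The characteristic polynomial is $X^2-uX+1$, whose roots are $\lambda_u$ and $\lambda_u^{-1}$. Since $u>2$ these are distinct and real, with $\lambda_u=e^{\rho(u)}$. Hence $x_j=A e^{\rho(u)j}+Be^{-\rho(u)j}$, and we solve for $A,B$ from $x_{-1}=t$ and $x_0=s$: we need $A+B=s$ and $A\lambda_u^{-1}+B\lambda_u=t$. This yields
\begin{equation*}
  A=\frac{\lambda_u s-t}{\lambda_u-\lambda_u^{-1}}, \qquad B=\frac{t-\lambda_u^{-1}s}{\lambda_u-\lambda_u^{-1}},
\end{equation*}
and since $\lambda_u-\lambda_u^{-1}=\sqrt{u^2-4}$ these are $f(s,u,t)$ and $g(s,u,t)$, proving \e{m1}. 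Formula \e{m2} follows by the mirror-image argument on the right side of $u$, which interchanges the roles of $s$ and $t$.

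The only step needing care is the combinatorial identification: checking that the regions along the boundary of $u$ are precisely the $q_j$ in order, and that the first application of the local rule uses $t$ as the region opposite. Both follow from the mediant structure of the Farey tree together with Lemma \ref{qq}; the rest is routine.
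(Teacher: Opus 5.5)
Your proof is correct and is essentially the paper's. The paper writes the same local-rule recurrence as $\begin{pmatrix} u & -1 \\ 1 & 0 \end{pmatrix}^j \begin{pmatrix} t \\ s \end{pmatrix} = \begin{pmatrix} m_{q'_j} \\ m_{q'_{j-1}} \end{pmatrix}$, diagonalizes using the eigenvalues $\lambda_u^{\pm1}$, and then swaps $s$ and $t$; here $m_{q'_{-1}}=s$ is the mirror image of your fictitious $x_{-1}=t$. One small slip: your remark that $q_{-1}$ ``denotes the region $r$ across from $u$'' is wrong and unnecessary, since the value the local rule actually forces, and the one you go on to use, is $x_{-1}=t$.
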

\begin{proof}
By the local rule $m_{q_1'}=ut-s$, $m_{q_2'}=u(ut-s)-t$, and in general
\begin{equation*}
  \begin{pmatrix} u & -1 \\ 1 & 0 \end{pmatrix}^j \begin{pmatrix} t \\ s \end{pmatrix} = \begin{pmatrix} m_{q'_j} \\ m_{q'_{j-1}} \end{pmatrix}.
\end{equation*}
The eigenvalues of this matrix are the roots $\lambda_u$, $\lambda^{-1}_u$  of $x^2-ux+1=0$, and we obtain \e{m2} by diagonalizing. Swapping $s$ and $t$ gives \e{m1}.
\end{proof}

It is similar for $q$ at the endpoints of the interval $[0,1]$:

\begin{prop} \la{stx}
Let the above tree $T$ have base triple $(s_0, u_0, t_0)$.  
For $q=0$, so that $q_j'=1/(j+1)$,
\begin{equation*}
   m_{q'_{j+1}}  =  \begin{cases}
   f(u_0,s_0,t_0) \cdot e^{\rho({s_0}) j} + g(u_0,s_0,t_0) \cdot e^{-\rho({s_0}) j} & \text{if} \quad s_0 > 2,\\
   (u_0-t_0)j +t_0 & \text{if}  \quad s_0 = 2.
   \end{cases}
\end{equation*}
 For $q=1$, so that $q_j=j/(j+1)$, we have 
\begin{equation*}
   m_{q_{j+1}}  = \begin{cases}
   f(u_0,t_0,s_0) \cdot e^{\rho({t_0}) j} + g(u_0,t_0,s_0) \cdot e^{-\rho({t_0}) j} & \text{if} \quad t_0 > 2,\\
   (u_0-s_0)j +s_0 & \text{if}  \quad t_0 = 2.
   \end{cases}
\end{equation*}
\end{prop}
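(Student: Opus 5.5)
The plan is to copy the proof of Proposition \ref{start}, using the tree structure near the endpoint $0$ (the case $q=1$ then follows by the left-right symmetry that swaps $s_0$ and $t_0$).

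First, identify the regions. In the Farey-indexed tree the region $m_0=s_0$ runs along the entire left boundary. Its neighbours, moving outward from the root, are
\[
t_0=m_{1},\quad u_0=m_{1/2},\quad m_{1/3},\quad m_{1/4},\ \dots,
\]
so that $m_{q'_j}=m_{1/(j+1)}$. Each consecutive pair $m_{1/j}, m_{1/(j+1)}$ is separated by an edge with $s_0$ on one side. The region on the far side of that edge is $m_{1/(j+2)}$ (the mediant of $0/1$ and $1/(j+1)$), and the region on the near side is $m_{1/j}$. The local rule $r+u=st$ therefore gives
\[
m_{1/(j+2)} = s_0\, m_{1/(j+1)} - m_{1/j},\qquad j\gqs 1,
\]
with initial values $m_1=t_0$ and $m_{1/2}=u_0$. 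The one point to check is that the root edge gives no exception. The first step $m_{1/3}=s_0u_0-t_0$ comes from the edge between $s_0$ and $u_0$ at the top of the base vertex, which is an ordinary edge of the tree, so the rule applies there.

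In matrix form this reads
\[
\begin{pmatrix} s_0 & -1\\ 1 & 0\end{pmatrix}^{j}\begin{pmatrix} u_0\\ t_0\end{pmatrix}=\begin{pmatrix} m_{q'_{j+1}}\\ m_{q'_j}\end{pmatrix}.
\]

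Case $s_0>2$. The matrix has distinct eigenvalues $\lambda_{s_0}^{\pm1}$, so diagonalizing it as in Proposition \ref{start} gives $m_{q'_{j+1}}=Ae^{\rho(s_0)j}+Be^{-\rho(s_0)j}$. The constants are fixed by the initial values: $j=0$ gives $A+B=u_0$, and $j=-1$ gives $A\lambda^{-1}+B\lambda=t_0$. Solving yields
\[
A=\frac{\lambda u_0-t_0}{\lambda-\lambda^{-1}},\qquad B=\frac{t_0-\lambda^{-1}u_0}{\lambda-\lambda^{-1}}.
\]
Since $\lambda-\lambda^{-1}=\sqrt{s_0^2-4}$, these are exactly $f(u_0,s_0,t_0)$ and $g(u_0,s_0,t_0)$. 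Equivalently, this is Proposition \ref{start} applied with $u=s_0$, $s=u_0$, $t=t_0$; we just verify it directly because $(u_0,s_0,t_0)$ is not an oriented triple.

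Case $s_0=2$. The matrix is a Jordan block with eigenvalue $1$. The recurrence becomes $m_{1/(j+2)}-m_{1/(j+1)}=m_{1/(j+1)}-m_{1/j}$, so the differences are constant and equal to $u_0-t_0$. Hence the sequence is the arithmetic progression $(u_0-t_0)j+t_0$, indexed so that the term at $j=1$ is $u_0$.

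The $q=1$ statement follows by the mirror argument, swapping $s_0\leftrightarrow t_0$ and $q'_j\leftrightarrow q_j$. The only real obstacle is bookkeeping: confirming the indexing offset $q'_{j+1}$ versus $j$ and the initial term. For that, check that at $j=0$ the formula returns $u_0=m_{1/2}$, and that the $j=-1$ extrapolation returns $t_0=m_1$.
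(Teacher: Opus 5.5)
For $s_0>2$ your argument is correct and is the one the paper intends; it gives no separate proof, only that the endpoint case is ``similar'' to Proposition \ref{start}. You use the local rule along the border of $m_0=s_0$ and the transfer matrix with eigenvalues $\lambda_{s_0}^{\pm1}$, then match the constants to $f(u_0,s_0,t_0)$ and $g(u_0,s_0,t_0)$ through the values $t_0,u_0$ at $j=-1,0$. One wording slip: the edge you apply the local rule to is the one bordered by $s_0$ and $m_{1/(j+1)}$, with $m_{1/j}$ and $m_{1/(j+2)}$ at its two ends. It is not an edge separating $m_{1/j}$ from $m_{1/(j+1)}$. The recurrence you write down is nonetheless the right one.

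The case $s_0=2$ is where the proposal goes wrong: you smooth over an indexing mismatch instead of resolving it.

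\begin{itemize}
\item Your recurrence has constant differences $u_0-t_0$ starting from $m_{q'_0}=m_1=t_0$. So what it proves is $m_{q'_j}=(u_0-t_0)j+t_0$, i.e.\ $m_{q'_{j+1}}=(u_0-t_0)j+u_0$.
\item That is not the displayed formula $m_{q'_{j+1}}=(u_0-t_0)j+t_0$. At $j=0$ the displayed formula gives $t_0$ instead of $m_{q'_1}=m_{1/2}=u_0$ whenever $u_0\neq t_0$, which always holds for a progressing tree.
\item So the final check you propose (that the formula returns $u_0$ at $j=0$) fails for the linear formula. Your phrase ``indexed so that the term at $j=1$ is $u_0$'' is exactly the off-by-one.
\end{itemize}

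The printed statement is wrong by one index in both linear cases. It should read $m_{q'_j}=(u_0-t_0)j+t_0$ when $s_0=2$, and $m_{q_j}=(u_0-s_0)j+s_0$ when $t_0=2$. Two consistency checks confirm this.
\begin{itemize}
\item Write the exponential formula as $\bigl(u_0(\lambda^{j+1}-\lambda^{-j-1})-t_0(\lambda^{j}-\lambda^{-j})\bigr)/(\lambda-\lambda^{-1})$ with $\lambda=\lambda_{s_0}$. Letting $\lambda\to 1$ gives $(j+1)u_0-jt_0$.
\item The proof of Proposition \ref{sih} uses $m_{1/n}=u_0-2\delta+\delta n$ with $\delta=u_0-t_0$, which is the corrected version.
\end{itemize}
The shift is harmless for the later uses in Propositions \ref{shut2} and \ref{sjs2}, which only need $\log m_{q'_{j+1}}$ up to $o(1)$. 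Still, you should state and prove the corrected formula rather than claim the printed one.
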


\subsection{Some properties of $f$, $g$ and $\rho$}

Recall \e{lrh}. These useful identities are noted for $x\gqs 2$:
\begin{equation}\label{use}
  \lambda_x + \lambda_x^{-1} = x, \qquad \lambda_x - \lambda_x^{-1} = \sqrt{x^2-4}, \qquad \rho(x) = \cosh^{-1}(x/2).
\end{equation}

\begin{lemma}
Let $s, u, t$ be any real numbers with $2< s, t\lqs u$. For $f$ and $g$ defined in \e{fg},
\begin{alignat}{2}
 s-1 & < f(s,u,t) < s, &\qquad \quad 0  < g(s,u,t) & <1, \la{wa}\\
  u/2 & < f(u,s,t),  &\qquad \quad   g(u,s,t) & <u/2.  \la{wb}
\end{alignat}
\end{lemma}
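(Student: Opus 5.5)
We write $\lambda=\lambda_u$ and $\delta=\sqrt{u^2-4}=\lambda-\lambda^{-1}$, using \e{use}. Then $\lambda>1$ and $\lambda+\lambda^{-1}=u$. All four estimates are elementary, and the plan is to clear the positive denominator $\delta$ in each and reduce everything to the relations $\lambda>1$, $\lambda\lambda^{-1}=1$, together with the hypotheses $2<s,t\lqs u$.

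For \e{wa}, the upper bound $f(s,u,t)<s$ is equivalent to $\lambda s-t<(\lambda-\lambda^{-1})s$, that is, to $\lambda^{-1}s<t$. We get this from $s\lqs u$: it gives $\lambda^{-1}s\lqs\lambda^{-1}u=1+\lambda^{-2}<2<t$. The lower bound $f>s-1$ is equivalent to $\lambda^{-1}s-t>-\delta$, that is, to $t<\lambda^{-1}s+\lambda-\lambda^{-1}$. Since $s>2$, the right side exceeds $\lambda+\lambda^{-1}=u\gqs t$, which is what we need. For $g$, positivity is $t>\lambda^{-1}s$, which is the same inequality already proved. The bound $g<1$ is $t-\lambda^{-1}s<\lambda-\lambda^{-1}$, that is, $t<\lambda+\lambda^{-1}(s-1)$. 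Using $s>2$, the right side exceeds $\lambda+\lambda^{-1}=u\gqs t$.

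For \e{wb}, set $\mu=\lambda_s$ and $\eta=\sqrt{s^2-4}$. We have $f(u,s,t)=(\mu u-t)/\eta$ and $g(u,s,t)=(t-\mu^{-1}u)/\eta$. The two inequalities read $\mu u-t>u\eta/2$ and $t-\mu^{-1}u<u\eta/2$. Substituting $\eta=\mu-\mu^{-1}$, both reduce to the same statement, $t<u(\mu+\mu^{-1})/2=us/2$. This holds because $s>2$ and $t\lqs u$ give $us/2>u\gqs t$.

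There is no real obstacle here. The only care needed is to track which of the hypotheses $s>2$, $t>2$, $t\lqs u$ or $s\lqs u$ each step uses, and in particular that strictness comes from $s>2$ or $t>2$.
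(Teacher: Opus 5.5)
Your proof is correct and is essentially the paper's elementary verification. The paper proves $f(s,u,t)>s-1$ by writing $f(s,u,t)=\tfrac s2+\tfrac{us-2t}{2\sqrt{u^2-4}}$ and using $\sqrt{u^2-4}<u$. It gets $g(s,u,t)=f(t,u,s)/\lambda_u>0$ by symmetry, then reads off the other two bounds from the identity $f(s,u,t)+g(s,u,t)=s$, and dismisses \eqref{wb} as ``similar''. Your reductions use that same identity implicitly: $f<s$ and $g>0$ both become $t>\lambda_u^{-1}s$, and $f>s-1$ and $g<1$ both become $t<\lambda_u+\lambda_u^{-1}(s-1)$. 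Your reduction of both parts of \eqref{wb} to $t<us/2$ also supplies the step the paper leaves to the reader.
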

\begin{proof}
We have
\begin{equation} \la{fsu}
   f(s,u,t) =  \frac s2 + \frac{us -2t}{2\sqrt{u^2-4}}> \frac s2 + \frac{us -2u}{2\sqrt{u^2}} = s-1.
\end{equation}
Hence
\begin{equation} \la{ib}
   g(s,u,t) =  f(t,u,s)/\lambda_u > (t-1)/\lambda_u >0.
\end{equation}
Then \e{wa} follows from \e{fsu}, \e{ib} since $f(s,u,t)+g(s,u,t)=s$.
The bounds \e{wb} are shown similarly. Note that $g(u,s,t)$ can be negative.
\end{proof}

The function $\rho(x):[2,\infty) \to [0,\infty)$ is strictly increasing, has the power series expansion
\begin{equation} \la{rs}
  \rho(x) = \log x - \sum_{m=1}^\infty \frac 1{2m} \binom{2m}{m} x^{-2m}, \qquad (x \gqs 2),
\end{equation} and easily satisfies 
\begin{equation} \la{cru}
  \rho(x) = \log(x) -\rho^*(x) \qquad \text{for} \qquad 0<\rho^*(x) \lqs 4 \log 2/x^2, \qquad (x\gqs 2).
\end{equation}

\begin{lemma} \la{rox}
For 
 $x\gqs x_0>2$ and $\delta>0$ we have
\begin{equation} \la{rr}
  \rho(x) < \rho(x+\delta)<\rho(x)+  \frac{x_0^2 \cdot \delta}{(x_0^2-4) x}.
\end{equation}
For 
$x_0>2$, $0<\delta \lqs 1$   and $x \gqs x_0+\delta$ we also have
\begin{equation} \la{rrb}
  \rho(x) -  \frac{2x_0^2 \cdot \delta}{(x_0^2-4) x} < \rho(x-\delta)<\rho(x).
\end{equation}
\end{lemma}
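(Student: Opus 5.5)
The plan is to use that $\rho$ is concave on $[2,\infty)$ with derivative $\rho'(x)=1/\sqrt{x^2-4}$, which follows from $\rho(x)=\cosh^{-1}(x/2)$ in \e{use}. The left inequality in \e{rr}, and the right inequality in \e{rrb}, are immediate because $\rho$ is strictly increasing. Everything else comes from the mean value theorem applied to $\rho$ on the relevant interval, followed by bounding $1/\sqrt{y^2-4}$ from above.

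For \e{rr}, the mean value theorem gives $\rho(x+\delta)-\rho(x)=\delta/\sqrt{y^2-4}$ for some $y\in(x,x+\delta)$. Since $1/\sqrt{y^2-4}$ is decreasing, this is less than $\delta/\sqrt{x^2-4}$. The key elementary step is to compare $\sqrt{x^2-4}$ with $x$ uniformly for $x\gqs x_0$. We have $x^2-4=x^2(1-4/x^2)\gqs x^2(1-4/x_0^2)$, so
\begin{equation*}
\frac{1}{\sqrt{x^2-4}}\lqs \frac{1}{x}\cdot\frac{x_0}{\sqrt{x_0^2-4}}\lqs \frac{x_0^2}{(x_0^2-4)\,x}.
\end{equation*}
The last step uses $x_0/\sqrt{x_0^2-4}\lqs x_0^2/(x_0^2-4)$, which is equivalent to $\sqrt{x_0^2-4}\lqs x_0$. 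This gives \e{rr}. The inequality is strict because $y>x$ and $1/\sqrt{y^2-4}$ is strictly decreasing.

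For \e{rrb}, we similarly have $\rho(x)-\rho(x-\delta)=\delta/\sqrt{y^2-4}$ for some $y\in(x-\delta,x)$. This is less than $\delta/\sqrt{(x-\delta)^2-4}$, and $x-\delta\gqs x_0>2$, so the expression is well defined. Applying the previous bound at the point $x-\delta\gqs x_0$ gives at most $x_0^2\delta/\big((x_0^2-4)(x-\delta)\big)$. It then remains to show $1/(x-\delta)\lqs 2/x$, that is, $x\gqs 2\delta$. This holds because $x\gqs x_0+\delta>2+\delta\gqs 2\delta$, using $\delta\lqs 1$. Combining these yields \e{rrb}, again with strict inequality from the strict monotonicity.

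I expect no real obstacle here. The only point needing care is keeping the inequalities strict and checking that the hypotheses $\delta\lqs 1$ and $x\gqs x_0+\delta$ are exactly what is needed to absorb the factor $x/(x-\delta)\lqs 2$.
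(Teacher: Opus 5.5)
Your proof is correct, but it takes a different route from the paper. The paper does not differentiate $\rho$. Instead it expands $\rho(x+\delta)-\rho(x)$ using the power series \e{rs} and bounds the terms with $\log(1+y)\lqs y$, $1-(1+y)^{-r}\lqs ry$ and $\binom{2m}{m}<4^m$. Summing the resulting geometric series $\frac{\delta}{x}\sum_{m\gqs 0}(4/x_0^2)^m$ produces exactly the factor $x_0^2/(x_0^2-4)$. It then obtains \e{rrb} by replacing $x$ with $x-\delta$, leaving implicit the step $x/(x-\delta)\lqs 2$ that you spell out.

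Your mean value theorem argument with $\rho'(x)=1/\sqrt{x^2-4}$ is shorter and avoids termwise series estimates. It also gives the sharper constant $x_0/\sqrt{x_0^2-4}$, the square root of the paper's, which you then weaken to match the statement. The paper's approach has the minor advantage of staying inside the expansion \e{rs} that it also uses for \e{cru}.

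You also correctly identify what makes the last step work: the hypotheses $x\gqs x_0+\delta$ and $\delta\lqs 1$ guarantee $x\gqs 2\delta$. In fact $\delta\lqs 2$ would suffice.
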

\begin{proof}
With \e{rs},
\begin{equation} \la{hg}
   \rho(x+\delta)-\rho(x) = \log(1+\delta/x)+  \sum_{m=1}^\infty \frac 1{2m \cdot x^{2m}} \binom{2m}{m} \left(1-\frac 1{(1+\delta/x)^{2m}} \right).
\end{equation}
The inequalities
\begin{equation*}
  \log(1+y) \lqs y \quad \text{for} \quad y>-1, \qquad 1-(1+y)^{-r} \lqs r y \quad \text{for} \quad r, y \gqs 0,
\end{equation*}
may be verified by calculus. Using the bound $\binom{n}{k} < 2^n$ also, \e{hg} implies
\begin{equation*}
   \rho(x+\delta)-\rho(x) < \frac{\delta}{x} +  \sum_{m=1}^\infty \left(\frac {4}{x_0^2} \right)^m \frac{\delta}{x}. 
\end{equation*}
This gives \e{rr}. Replacing $x$ by $x-\delta$ then gives \e{rrb}.
\end{proof}

\section{Tree number bounds} \la{bou}

\subsection{Estimates for $m_{k/n}$} \la{uni}

\begin{lemma} \la{s}
Let $T$ be a rising tree with base triple $(s_0,u_0,t_0)$ and a descending path that is either empty or consists of a single $0$-edge. Set 
$
  \omega_T:= \log(\min(s_0,t_0)/2)$ and $\omega'_T:= \log(u_0)$.
Then 
\begin{equation*}
  2e^{\omega_T n} \lqs m(T)_{k/n} < e^{\omega'_T n} \qquad \text{for all} \qquad k/n \in \Q \cap [0,1].
\end{equation*}

\end{lemma}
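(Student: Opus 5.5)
The plan is to induct along the Farey tree: each region $m_{k/n}$ beyond the base triple is the top region $u$ of an oriented triple $(s,u,t)$ with $s=m_{a/b}$, $t=m_{c/d}$, $k/n=(a+c)/(b+d)$, so $n=b+d$. Both bounds are multiplicative, $e^{\omega n}=e^{\omega b}e^{\omega d}$, so it suffices to show that $u$ is squeezed between $st/2$ and $st$.

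\textbf{Edge directions.} First I pin down the edges, using the hypothesis on the descending path.
\begin{itemize}
\item If $T$ is strictly rising, every edge is positively directed away from the root. In particular the initial edge has label $2u_0-s_0t_0>0$.
\item If the descending path is a single $0$-edge, it must be the initial edge, so $u_0=s_0t_0/2$. The configuration $[s_0,0,t_0]$ has $s_0,t_0>2$, so Lemma \ref{climb}(ii) makes every later edge positive when directed away from the root.
\end{itemize}
In either case every oriented triple $(s,u,t)$ satisfies $2u-st\gqs 0$, that is, $u\gqs st/2$.

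\textbf{Upper bound.} For an oriented triple $(s,u,t)$ other than the base triple, let $r$ be the fourth region at the edge between $s$ and $t$, on the root side. The local rule gives $u=st-r$, and $r>2>0$ because all regions of a rising tree exceed $2$. Hence $u<st$.

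For the base, $m_{1/2}=u_0=e^{\omega'_T}<e^{2\omega'_T}$ since $u_0>1$. We also need $s_0,t_0<u_0$, so that $m_0<e^{\omega'_T}$ and $m_1<e^{\omega'_T}$. This follows from $u_0\gqs s_0t_0/2$ together with $t_0>2$, respectively $s_0>2$.

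Induction on the Farey row then gives
\[
m_{k/n}<m_{a/b}\,m_{c/d}<e^{\omega'_T b}e^{\omega'_T d}=e^{\omega'_T n}.
\]

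\textbf{Lower bound.} Set $c_T=\min(s_0,t_0)/2=e^{\omega_T}$, and prove $m_{k/n}/2\gqs c_T^{\,n}$ by the same induction.
\begin{itemize}
\item Base: $s_0/2\gqs c_T$, $t_0/2\gqs c_T$, and $u_0/2\gqs (s_0/2)(t_0/2)\gqs c_T^2$.
\item Inductive step: $u/2\gqs (s/2)(t/2)\gqs c_T^{\,b}c_T^{\,d}=c_T^{\,n}$.
\end{itemize}
This gives $m_{k/n}\gqs 2e^{\omega_T n}$.

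\textbf{Main obstacle.} The only delicate point is justifying $u\gqs st/2$ for every oriented triple, including the one at the $0$-edge. The single-$0$-edge hypothesis is exactly what allows this, and it is why the lemma excludes longer descending paths: there, triples on the path are not ordered.
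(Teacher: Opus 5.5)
Your proposal is correct and follows the paper's proof: both run an induction along the Farey tree using $st/2 \le u < st$ for oriented triples, where the lower inequality comes from the hypothesis on the descending path and the climbing lemma, and the upper one from the local rule with a positive fourth region. The only difference is that you write out the $n=1,2$ base cases (including $s_0,t_0<u_0$ via $u_0\ge s_0t_0/2$) that the paper leaves to be verified directly.
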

\begin{proof}
We have $s_0, t_0>2$ and, by our assumptions, $u_0\gqs s_0 t_0/2$.
Then the cases with $n=1, 2$ 
can be verified directly. Since the descending path has at most one edge, every oriented triple $(s,u,t)$, outside of the base triple, is ordered: $u>st/2$.   Suppose $k/n$ has $n\gqs 3$ and is the mediant of Farey neighbors $k_0/n_0$ and $k_0'/n_0'$. Then by induction, 
\begin{equation*}
  m_{k/n} > \tfrac 12 m_{k_0/n_0} m_{k_0'/n_0'} \gqs \tfrac 12 \cdot 2 e^{\omega_T n_0} \cdot 2 e^{\omega_T n_0'} = 2 e^{\omega_T n}, 
\end{equation*}
giving the lower bound. Similarly for the upper bound, using the fact that every oriented triple $(s,u,t)$, outside of the base triple, satisfies $ st > u$ by the local rule.
\end{proof}

\begin{prop} \la{sim2}
Let $T$ be any rising tree. There exist $\omega_T, \omega'_T >0$ so that
\begin{equation} \la{kk}
  e^{\omega_T n} < m(T)_{k/n} < e^{\omega'_T n}  \qquad \text{for all} \qquad k/n \in \Q \cap [0,1].
\end{equation}
\end{prop}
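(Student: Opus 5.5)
The plan is to reduce to Lemma \ref{s} by moving from the root of $T$ to finitely many subtrees that satisfy its hypotheses, and then to convert bounds in terms of local denominators back into bounds in terms of $n$ by a height comparison in the spirit of Lemma \ref{height}.

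\emph{Step 1: decomposition.} If the descending path of $T$ is empty or a single $0$-edge, Lemma \ref{s} applies directly. There we can take $\omega'_T=\log u_0>0$ and any $\omega_T$ with $0<\omega_T\lqs\log(\min(s_0,t_0)/2)$; this choice works when $\min(s_0,t_0)\gqs 4$. In general $\min(s_0,t_0)/2$ may be at most $1$, so the decomposition below is needed anyway.

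Otherwise, choose $L$ larger than the length of the descending path. Consider all vertices at distance $L$ from the root, that is, the oriented triples $(s,u,t)$ whose $u=m_q$ lies in Farey row $L$. Each such vertex is the root of a subtree $T'$ whose regions are those indexed by fractions in $[p,p']$, where $p<p'$ are the Farey neighbours with mediant $q$. Because the path has been passed, every edge of $T'$ points away from its root, so $T'$ is strictly rising. It may still have small $s,t$, since these are regions near the original root.

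To handle that, go one step further: by the climbing lemma (Lemma \ref{climb}) all regions strictly increase away from the well or root. So for $L$ large, every region first appearing at depth at least $L$ exceeds $4$. Apply Lemma \ref{s} to the finitely many subtrees rooted at depth $L$ whose two base regions both appear at depth at least $L-1$.

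The regions adjacent to the boundary branches need separate treatment. These are the sequences $m_{q_j}$ and $m_{q'_j}$ along the Farey neighbours of the finitely many small-row fractions. For them I will use Proposition \ref{start} and Proposition \ref{stx}. Since $u>2$ (all regions of a rising tree exceed $2$) and $f(s,u,t)>0$, each sequence grows like $c\,e^{\rho(u)j}$ with $\rho(u)>0$. Its denominator $n_0+jn$ grows linearly in $j$. Together these give two-sided exponential bounds in the denominator along each such sequence.

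\emph{Step 2: reindexing.} A subtree rooted at a vertex with Farey interval $[a/b,c/d]$ carries its own Farey indexing, with local denominator $n'$. The map $(k',n')\mapsto$ global $(k,n)$ is linear with positive coefficients, through the matrix $\begin{pmatrix} a & c\\ b & d\end{pmatrix}$. So $n=bn'_1+dn'_2$ for some $n'_1,n'_2\gqs 0$ with $n'_1+n'_2=n'$, which gives $\min(b,d)\,n'\lqs n\lqs \max(b,d)\,n'$. This is exactly the height comparison of Lemma \ref{height}. Since only finitely many subtrees and finitely many exceptional regions occur, the constants can be uniformly adjusted: shrink $\omega_T$ and enlarge $\omega'_T$ to cover everything. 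The finitely many regions of row less than $L$ are all $>2$, so they are covered by choosing $\omega_T$ small and $\omega'_T$ large, with strict inequalities.

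\emph{Main obstacle.} The hard part is the bookkeeping in Step 1: showing that finitely many applications of Lemma \ref{s}, plus Proposition \ref{start} along the boundary chains, exhaust the tree. The point needing care is that Lemma \ref{s} requires $\min(s_0,t_0)>2$ for a positive lower exponent, while regions near the original root may be close to $2$.

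If the covering argument gets messy, I would try an alternative for the lower bound: a direct induction like that of Lemma \ref{s}. It would use $m_q\gqs \tfrac12 m_{q_0}m_{q'_0}$ once past depth $L$, together with the observation that $\tfrac12 xy\gqs e^{\omega(n_0+n_0')}$ whenever $x\gqs e^{\omega n_0}$, $y\gqs e^{\omega n_0'}$ and $\min(x,y)\gqs 2e^{\omega\cdot 0}$ fails only for finitely many small regions. For the upper bound, $m_q<m_{q_0}m_{q'_0}$ holds for every ordered triple (via $r>0$), and by induction this gives $m_{k/n}<C^n$ for $C=\max$ of the row-$\lqs L$ labels.
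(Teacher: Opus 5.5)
Your Step 1 rests on a false premise. Every region of a rising tree exceeds $2$, so $\log(\min(s_0,t_0)/2)>0$ for \emph{every} strictly rising subtree, however close its base regions are to $2$. Lemma \ref{s} therefore always gives a positive lower exponent; no condition like $\min(s_0,t_0)\gqs 4$ is needed.

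Because of this premise, you discard the depth-$L$ subtrees that have a base region from a low row, and try to recover their regions from the chains $m_{q_j}$, $m_{q'_j}$ of Proposition \ref{start}. That covering fails, because a discarded subtree is a whole tree, not a chain. For example, the subtree with base $(m_0,m_{1/(L+1)},m_{1/L})$ is discarded, since $m_0$ lies in row $0$. Yet it contains $m_{5/(5L+2)}$. The only Farey neighbours of $5/(5L+2)$ in lower rows are its parents $2/(2L+1)$ and $3/(3L+1)$, which lie in rows $L+1$ and $L+2$. So this region lies on no chain attached to a fraction of row $\lqs L$, and your covering does not exhaust $T$. Separately, the claim that deep regions exceed $4$ does not follow from the climbing lemma, which only gives monotonicity, not growth.

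The repair is simply to drop the restriction. Apply Lemma \ref{s} to all $2^{L-1}$ strictly rising subtrees rooted at depth $L$, and use your Step 2 bound $\min(b,d)\,n'\lqs n\lqs \max(b,d)\,n'$. Since there are finitely many subtrees, this is a complete proof. It is the paper's idea with a different choice of roots. The paper uses the three trees rooted at the well $v$ at the end of the descending path, which satisfy Lemma \ref{s}, and converts $h_v$ to $h_r=n$ with Lemma \ref{height}. Your version stays inside $T$ and makes the height comparison explicit through the unimodular matrix.

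Your fallback induction is also viable, and more elementary still, but its invariant is wrong as stated. From $x\gqs e^{\omega n_0}$ and $y\gqs e^{\omega n_0'}$ you only get $\tfrac12 xy\gqs \tfrac12 e^{\omega n}$, losing a factor of $2$ at every step. Carry the invariant $m_p\gqs 2e^{\omega n_p}$ instead, as in Lemma \ref{s}. It holds for the finitely many fractions in rows $\lqs L$ once $0<\omega\lqs \min_p \log(m_p/2)/n_p$, and this bound is positive because all $m_p>2$. It then propagates exactly, $m_q>\tfrac12 m_{q_0}m_{q_0'}\gqs 2e^{\omega n}$, because every oriented triple beyond depth $L$ is ordered. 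For the upper bound, take $C$ strictly larger than the largest label in rows $\lqs L$, so that the inequality is also strict at $n=1$.
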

\begin{proof}
If $T$ is strictly rising then we are in the case of Lemma \ref{s}. Otherwise let $v$ be the vertex at the  end of the descending path. It is a vertex well or an endpoint of an edge well. The three trees with root $v$ satisfy the conditions of Lemma \ref{s} and so, written in terms of height (see Definition \ref{hei}),
\begin{equation*}
  e^{C \cdot h_v(m)} < m  <  e^{C'  \cdot h_v(m)},
\end{equation*}
for some $C, C'>0$ and all regions $m$ of $T$. The last step is to change the base point to the root $r$ of $T$. By Lemma \ref{height}, $\alpha  \cdot h_r(m) \lqs h_v(m) \lqs \beta \cdot  h_r(m)$ and the required constants are $\omega_T = C\alpha$, $\omega'_T = C'\beta$.
\end{proof}

Progressing trees have an upper bound with the same form as \e{kk}, but necessarily a much smaller lower bound since they contain $2$-regions. 
The following result will be needed in Lemma \ref{yk} and Theorem \ref{mc}.

\begin{prop} \la{sih}
Let $T$ be any progressing tree.  There exist $\alpha_T, \beta_T >0$ so that
\begin{equation} \la{wek}
  (2+\alpha_T \cdot n)e^{\beta_T \min(k,n-k)}  < m(T)_{k/n}  \qquad \text{for all} \qquad k/n \in \Q \cap (0,1).
\end{equation}
\end{prop}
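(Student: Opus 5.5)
We will prove \eqref{wek} by induction along the Farey tree, using the multiplicative inequality $u>st/2$ for ordered triples. The only real work is the boundary families $1/n$ and $(n-1)/n$, which must be checked directly.

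\emph{Setup.} By Lemma \ref{pr4} every edge of a progressing tree $T$ is positively directed away from the root. So every oriented triple $(s,u,t)$ is ordered, i.e.\ $u=st-r$ with $u>r$, which gives $u>st/2$. Write
\[
G(k/n):=(2+\alpha n)e^{\beta\min(k,n-k)}.
\]

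\emph{Inductive step.} Let $q=k/n\in(0,1)$ be the mediant of Farey neighbors $q_0=k_0/n_0$ and $q_0'=k_0'/n_0'$, and suppose both of these lie in $(0,1)$ and satisfy \eqref{wek}. Then
\[
m_q>\tfrac12 m_{q_0}m_{q_0'}>\tfrac12(2+\alpha n_0)(2+\alpha n_0')e^{\beta(\min(k_0,n_0-k_0)+\min(k_0',n_0'-k_0'))}.
\]
Two elementary facts finish the step:
\[
\tfrac12(2+a)(2+b)=2+a+b+ab/2\gqs 2+a+b,
\]
so the polynomial factor is at least $2+\alpha n$; and $\min(k_0,n_0-k_0)+\min(k_0',n_0'-k_0')\lqs\min(k,n-k)$, because $k=k_0+k_0'$ and $n-k=(n_0-k_0)+(n_0'-k_0')$. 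Hence $m_q>G(q)$ for any $\alpha,\beta>0$.

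\emph{Boundary families.} The induction cannot pass through the endpoints, since $m_0=s_0$ or $m_1=t_0$ may equal $2$ and then fails $m>2+\alpha$. So every fraction whose mediant parents include $0$ or $1$ is treated directly. These are exactly $1/n$ (parents $0$ and $1/(n-1)$) and $(n-1)/n$, and for them $\min(k,n-k)=1$ once $n\gqs2$.

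By Proposition \ref{stx}, if $s_0=2$ then $m_{1/(j+1)}=(u_0-t_0)j+t_0$, with slope $c:=u_0-t_0>0$. For $j\gqs1$ this is at least $u_0>2$ and grows linearly in $n=j+1$. If instead $s_0>2$, the same proposition gives exponential growth $f e^{\rho(s_0)j}+g e^{-\rho(s_0)j}$ with $f>u_0/2>0$ by \eqref{wb}, which is at least as good. The family $(n-1)/n$ is identical with $t_0$ in place of $s_0$.

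\emph{Choice of constants.} We need $\alpha,\beta>0$ so small that
\[
(2+\alpha(j+1))e^{\beta}<m_{1/(j+1)}\quad\text{and}\quad (2+\alpha(j+1))e^{\beta}<m_{j/(j+1)}\qquad\text{for all } j\gqs1.
\]
First take $\alpha<c/2$ (and similarly for the other side), so the linear growth dominates for large $j$. Then shrink $\alpha,\beta$ further to handle finitely many small $j$, using $m_{1/2}=u_0>2$ and the strict inequalities $m>2$ there.

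\emph{Conclusion.} With these base cases, every other $q\in(0,1)$ has both Farey parents in $(0,1)$ at lower rows, so the inductive step applies and gives \eqref{wek} for all $k/n\in\Q\cap(0,1)$.

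The main obstacle is this uniform choice of $\alpha,\beta$ on the boundary families, particularly when $s_0=t_0=2$ and growth along both edges is only linear. The estimate $\tfrac12(2+a)(2+b)\gqs2+a+b$ is what makes the linear factor $2+\alpha n$ propagate, so no loss accumulates in the induction.
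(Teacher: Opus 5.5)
Your architecture matches the paper's: base cases on the boundary families $1/n$ and $(n-1)/n$, then induction up the Farey tree using $m_q>\tfrac12 m_{q_0}m_{q_0'}$ and $\tfrac12(2+a)(2+b)\gqs 2+a+b$. However, the inductive step does not close as written.

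Put $\mu_0=\min(k_0,n_0-k_0)$ and $\mu_0'=\min(k_0',n_0'-k_0')$. The parent bounds give $m_q>(2+\alpha n)e^{\beta(\mu_0+\mu_0')}$. To conclude $m_q>(2+\alpha n)e^{\beta\min(k,n-k)}$ you need $\mu_0+\mu_0'\gqs\min(k,n-k)$. The inequality you state, $\mu_0+\mu_0'\lqs\min(k,n-k)$, is just superadditivity of $\min$, and it points the wrong way. The needed direction does not follow from $k=k_0+k_0'$ and $n-k=(n_0-k_0)+(n_0'-k_0')$ alone: for the (non-neighbour) pair $1/3,2/3$ one gets $1+1<3$. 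It holds only because of the Farey structure.

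The repair is short. $1/2$ cannot lie strictly between two Farey neighbours $a/b<c/d$ in $(0,1)$, since any fraction strictly between them has denominator at least $b+d\gqs 4$. Hence if $q\lqs 1/2$, both parents lie in $(0,1/2]$, so $\mu_0=k_0$, $\mu_0'=k_0'$ and $\mu_0+\mu_0'=k=\min(k,n-k)$; the case $q\gqs 1/2$ is symmetric. With this equality your step goes through. The paper builds this in by restricting to $1\lqs k\lqs n/2$ with both parents $\lqs 1/2$, tracking $e^{\beta k}$, and appealing to left--right symmetry.

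The rest of your argument is sound. Your non-explicit choice of $\alpha,\beta$ for the boundary families is valid, because the left side of the required inequality is monotone in $\alpha$ and $\beta$. The paper instead writes down explicit constants in \eqref{ey}.

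One small citation slip: when $s_0>2$ you have $t_0=2$, so \eqref{wb} (which assumes $2<s,t$) does not literally apply. The bound still holds directly from $f(u_0,s_0,t_0)=u_0/2+(u_0s_0-2t_0)/(2\sqrt{s_0^2-4})$ with $u_0s_0>4$, and in any case you only need $f>0$.
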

\begin{proof}
Suppose $T$ has base $(s_0,u_0,t_0)$ and $s_0=2$. Then $m_{1/n}=u_0-2\delta +\delta n$, an arithmetic progression, for $\delta=u_0-t_0>0$. We first seek $\alpha, \beta>0$ so that
\begin{equation}\label{sek}
  (2+\alpha \cdot n)e^{\beta} <m_{1/n} \qquad \text{for all} \qquad n\gqs 2.
\end{equation}
Then \e{sek} is equivalent to
\begin{equation} \la{22}
  2e^{\beta} +2\delta-u_0 < (\delta - \alpha e^{\beta})n.
\end{equation}
We will make $\delta - \alpha e^{\beta}>0$, and so just need to ensure \e{22} is true for $n=2$. This boils down to
\begin{equation*}
   \alpha e^{\beta} < \delta, \qquad (1+\alpha) e^{\beta} < u_0/2.
\end{equation*}
If $e^{\beta} =\sqrt{u_0/2}$ then we require $\alpha< \sqrt{u_0/2} -1, \delta/\sqrt{u_0/2}$. So we may take the explicit choice
\begin{equation}\label{ey}
  \alpha =  \alpha_T = \tfrac 12 \min\left(\sqrt{u_0/2}-1, (u_0-t_0)/\sqrt{u_0/2}\right), \qquad \beta = \beta_T = \tfrac 12 \log\left( u_0/2\right).
\end{equation}
 The inequality \e{sek} is true with this choice, and it remains true for $s_0>2$ as well. 

We prove \e{wek} for $1\lqs k \lqs n/2$ next by induction on $k$, using that all oriented triples in the tree are ordered. Suppose $k/n$  is the mediant of Farey neighbors $k_0/n_0, k_0'/n_0' \lqs 1/2$  with $n_0, n_0'\gqs 2$. Then  
\begin{align*}
   m_{k/n} & > \tfrac 12 m_{k_0/n_0} m_{k_0'/n_0'} \\
   & > \tfrac 12 (2+\alpha \cdot n_0)e^{\beta k_0} (2+\alpha \cdot n_0')e^{\beta k_0'}\\
   & =  \left(2+\alpha (n_0+n_0') + \tfrac 12 \alpha^2 n_0 n_0'\right) e^{\beta (k_0+k_0')}\\
   &> (2+\alpha \cdot n)e^{\beta k}.
\end{align*}
This covers all numbers on the left side of the tree; the same bounds are valid on the right  by symmetry.
\end{proof}

\begin{cor} \la{fin}
There are only finitely many numbers below any given bound on each rising or progressing tree. In particular, a rising tree $T$ has a minimal region label $\xi_T>2$.
\end{cor}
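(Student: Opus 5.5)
The finiteness claim follows from the growth bounds already established, Proposition \ref{sim2} for rising trees and Proposition \ref{sih} for progressing trees. Each region of a tree $T$ carries a unique Farey index $k/n \in \Q\cap[0,1]$, and for each $n$ there are at most $n+1$ such fractions. It is therefore enough to show that a bound $m(T)_{k/n} \lqs X$ forces $n$ to be bounded in terms of $X$ and $T$.

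\emph{Rising trees.} By Proposition \ref{sim2} there is $\omega_T>0$ with $e^{\omega_T n} < m(T)_{k/n}$. So $m(T)_{k/n}\lqs X$ gives $n < \log X/\omega_T$, and only finitely many indices remain.

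\emph{Progressing trees.} By Proposition \ref{sih}, for $0<k/n<1$ we have
\begin{equation*}
(2+\alpha_T n)e^{\beta_T\min(k,n-k)} \gqs 2+\alpha_T n .
\end{equation*}
The exponential factor is at least $1$, which is all we need here. Hence $m(T)_{k/n}\lqs X$ forces $n \lqs (X-2)/\alpha_T$. The two endpoint indices $0$ and $1$ add only two more regions, so again only finitely many regions lie below $X$.

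\emph{The minimal label.} Let $T$ be rising. All its labels exceed $2$ by Definition \ref{eatx}, and the set of labels below $u_0+1$, say, is finite and nonempty, since it contains $u_0$. So the minimum of all labels exists, is attained at some region, and equals a label exceeding $2$. Call it $\xi_T>2$.

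No step here is a real obstacle: the only care needed is that Proposition \ref{sih} covers only $k/n$ strictly inside $(0,1)$, so the endpoints $0$ and $1$ are handled separately as above.
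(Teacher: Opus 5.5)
Your proposal is correct and matches the paper, which states this corollary immediately after Propositions \ref{sim2} and \ref{sih} as a direct consequence of them. As you do, those growth bounds force the Farey denominator $n$ to be bounded for labels below a given bound, and $\xi_T$ is then the minimum over a finite nonempty set of labels, each exceeding $2$.
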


The minimal regions of a rising tree are easy to find: they occur in the base triple if it is strictly rising, or otherwise at the well at the end of the descending path. The three smallest regions in a progressing tree are $2\lqs t_0<u_0$ if $s_0=2$.


\subsection{Key inequalities for $\rho$ on triples}
\begin{lemma} \la{kyw}
Let $(s,u,t)$ be an oriented triple  in a rising tree $T$ with base triple $(s_0,u_0,t_0)$ and minimal region $\xi_T$. Define $A_T := (1-4/\xi_T^2) \cdot \min(1,s_0 t_0/u_0)>0$. Then
\begin{gather}
st  \gqs \min(1,s_0 t_0/u_0) \cdot  u, \la{wus}\\
  \sqrt{(s^2-4)(t^2-4)}  > A_T \cdot u. \la{wus2}
\end{gather}
\end{lemma}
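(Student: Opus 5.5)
Both inequalities come from the local rule and the structure of rising trees. The plan is to prove \e{wus} first by splitting oriented triples into three kinds, and then to obtain \e{wus2} from \e{wus} together with the minimality of $\xi_T$.

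\textbf{Proof of \e{wus}.} There are three cases.

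\emph{Base triple.} If $(s,u,t)=(s_0,u_0,t_0)$, then $st=(s_0t_0/u_0)\,u\gqs \min(1,s_0t_0/u_0)\,u$ directly.

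\emph{Any other oriented triple.} The region $u$ is adjacent to the edge separating $s$ and $t$. The fourth region $r$ around that edge lies on the side nearer the root, and it is still a region of $T$. The local rule gives $u=st-r$. Since every region of $T$ exceeds $2$, we get $r>0$ and hence $u<st$. So $st>u\gqs \min(1,s_0t_0/u_0)\,u$.

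The only point needing care is that $r$ really is a region of $T$ for every oriented triple other than the base one. This holds because the edge between $s$ and $t$ is then an interior edge of the tree, so the usual four-region configuration exists. At the base triple the edge runs from the root and the fourth region is missing, which is why that case was treated separately.

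\textbf{Proof of \e{wus2}.} Every region of $T$ is at least $\xi_T$, so $s,t\gqs \xi_T>2$. This gives
\begin{equation*}
  s^2-4=s^2(1-4/s^2)\gqs s^2(1-4/\xi_T^2),
\end{equation*}
and the same bound for $t$. Multiplying and taking square roots,
\begin{equation*}
  \sqrt{(s^2-4)(t^2-4)}\gqs (1-4/\xi_T^2)\,st\gqs (1-4/\xi_T^2)\min(1,s_0t_0/u_0)\,u=A_T\,u,
\end{equation*}
where the second step uses \e{wus}.

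To make the inequality strict, note that $s=\xi_T$ and $t=\xi_T$ cannot both give equality with $1-4/\xi_T^2$ unless equality also holds in \e{wus}. Equality in \e{wus} is impossible for non-base triples, since there $st>u$. For the base triple, I would handle equality differently: replace the crude bound by the strict inequality $1-4/s^2>1-4/\xi_T^2$ whenever $s>\xi_T$. If instead $s=t=\xi_T$ at the base, equality in the chain could genuinely occur. In that event I would note that strictness only matters up to shrinking $A_T$ slightly, or check the base case directly.

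Positivity of $A_T$ is immediate from $\xi_T>2$, which is guaranteed by Corollary \ref{fin}.
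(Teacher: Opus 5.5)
Your argument is the paper's own proof: the local rule gives $st>u$ at every non-base oriented triple, the base triple is covered by the factor $\min(1,s_0t_0/u_0)$, and then $s^2-4\gqs s^2(1-4/\xi_T^2)$ combined with \eqref{wus} gives the bound. The paper stops there and never addresses strictness in \eqref{wus2}. Your worry about the base triple is justified: the strict inequality is actually false there in some cases, so neither your proposal nor the paper's proof can establish it as stated.

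Your chain is an equality exactly when $s=t=\xi_T$ and \eqref{wus} is an equality.

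\begin{itemize}
\item For a non-base triple, $st>u\gqs \min(1,s_0t_0/u_0)\,u$, so \eqref{wus2} is strict.
\item At the base triple, \eqref{wus} is an equality whenever $u_0\gqs s_0t_0$.
\end{itemize}

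A concrete counterexample is $T=(3,10,3)$. Its root edge has label $2u_0-s_0t_0=11>0$, so by the climbing lemma $T$ is strictly rising, with $D=28$ and $\xi_T=3$. Then $A_T=(1-4/9)\cdot\tfrac{9}{10}=\tfrac12$, and at the base triple $\sqrt{(s_0^2-4)(t_0^2-4)}=5=A_T u_0$. The tree $(3,9,3)$ behaves the same way.

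So ``checking the base case directly'' would only confirm equality. Shrinking $A_T$ works as a repair, but it changes the constant fixed in the statement. What can honestly be proved is this: \eqref{wus2} holds strictly for every oriented triple, except at the base triple when $s_0=t_0=\xi_T$ and $u_0\gqs s_0t_0$, where only $\gqs$ holds. The non-strict version is all that Proposition \ref{zag2} needs, because the strict inequalities there already come from \eqref{im} and \eqref{im2}. If you want strictness uniformly, replace $A_T$ by $A_T/2$.
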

\begin{proof}
 The inequality $st>u$ is true for all oriented triples, except possibly the base triple, by the local rule. Then \e{wus} makes sure we can include the base triple.
Since $s\gqs \xi_T>2$, we have 
\begin{equation*}
  s^2-4 \gqs s^2 (1-4/\xi_T^2).
\end{equation*}
The same is true for $t$, and with \e{wus} we obtain \e{wus2}.
\end{proof}

Zagier's lemma \cite[Lemma 2]{z82} for the original Markov tree, may be generalized as follows. This gives the fundamental inequalities we will need to show the convexity, linearity and concavity of the encoding functions.

\begin{prop} \la{zag2}
For each rising tree $T$ there exists $\kappa_T>0$ so that the following holds. Let $(s,u,t)$ be  any oriented triple  in $T$. If $D=4$ then $\rho(u)=\rho(s)+\rho(t)$. Otherwise, 
\begin{alignat}{2}
  \rho(u) & < \rho(s)+\rho(t)  < \rho(u)+ \frac{\kappa_T}{u^2}  && \qquad \text{if} \qquad D<4,  \la{d4a}\\
  \rho(u)- \frac{\kappa_T}{u^2} & < \rho(s)+\rho(t)  < \rho(u) && \qquad \text{if} \qquad D>4. \la{d4b}
\end{alignat}
\end{prop}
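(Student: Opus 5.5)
The plan is to convert the sum $\rho(s)+\rho(t)$ into the logarithm of a single quantity and compare it with $\lambda_u$. By \e{use}, $\lambda_s\lambda_t = \tfrac14\bigl(s+\sqrt{s^2-4}\bigr)\bigl(t+\sqrt{t^2-4}\bigr)$, and $\lambda_s\lambda_t+\lambda_s^{-1}\lambda_t^{-1}$ is the trace-type quantity
\begin{equation*}
  y := \lambda_s\lambda_t+(\lambda_s\lambda_t)^{-1} = \tfrac12\left(st+\sqrt{(s^2-4)(t^2-4)}\right).
\end{equation*}
Since $\rho(s)+\rho(t)=\log(\lambda_s\lambda_t)=\cosh^{-1}(y/2)=\rho(y)$, everything reduces to comparing $y$ with $u$, and then using the monotonicity and the increment bounds of Lemma \ref{rox}.

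First, \e{iv} gives $u=\tfrac12\bigl(st\pm\sqrt{(s^2-4)(t^2-4)+4(D-4)}\bigr)$. I would check that for an oriented triple in a rising tree the $+$ sign is the relevant one. For ordered triples this holds because $u>st/2$. For triples centered on the descending path, only $D<4$ is possible (Lemma \ref{tr4}). I would handle these by noting that $u$ and the other root $r$ of \e{iv} both satisfy $u\lqs y$, and $u\gqs$ the minus root, which is all that is needed below. If $D=4$, the descending path is empty, all triples are ordered, $u=y$, and $\rho(u)=\rho(s)+\rho(t)$ exactly.

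For $D\neq4$, write $P=(s^2-4)(t^2-4)$. In the ordered case,
\begin{equation*}
  y-u=\tfrac12\left(\sqrt P-\sqrt{P+4(D-4)}\right)=\frac{2(4-D)}{\sqrt P+\sqrt{P+4(D-4)}}.
\end{equation*}
This has the sign of $4-D$, which gives the strict inequalities $\rho(u)<\rho(s)+\rho(t)$ for $D<4$ and the reverse for $D>4$. For the magnitude, Lemma \ref{kyw} gives $\sqrt P>A_T u$, so $|y-u|\lqs 2|D-4|/(A_T u)$. Here I use $\sqrt{P+4(D-4)}\gqs 0$ when $D>4$, and $\sqrt{P+4(D-4)}=2u-st\gqs0$ otherwise.

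Then Lemma \ref{rox} with $x_0=\xi_T$ bounds $|\rho(y)-\rho(u)|$ by a constant times $\delta/x$, with $\delta=|y-u|$ and $x=u$ or $y$. This is $O(1/u^2)$, since $y$ is comparable to $u$. The hypothesis $\delta\lqs1$ in \e{rrb} holds except for finitely many $u$, by Corollary \ref{fin}. Those finitely many triples, together with the finitely many non-ordered triples on the descending path, are absorbed by enlarging $\kappa_T$, which is possible because the strict inequality already holds for each of them. For a non-ordered triple with $D<4$, $u$ is the smaller root, so $u<y$ still, and the finitely-many argument again supplies the upper bound.

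The main obstacle is the bookkeeping on the descending path: confirming the sign of $y-u$ there, and checking that $P+4(D-4)\gqs0$ so that the roots are real. The analytic estimate itself is routine once $\rho(s)+\rho(t)=\rho(y)$ is observed.
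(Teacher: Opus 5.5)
Your proposal is correct and follows essentially the same route as the paper. The paper's $u_1$ is your $y$, and its identity $(u_1-u)(u_1+u-st)=4-D$ is your rationalized difference of square roots; it then bounds the denominator below via \e{wus2}, applies Lemma \ref{rox} with $x_0=\xi_T$, and absorbs the finitely many unordered descending-path triples into $\kappa_T$. The one small difference is the case $D>4$: the paper applies \e{rr} at $x=u_1$, using $u_1\gqs st/2$ and \e{wus}, which gives the bound for every triple. That route avoids your detour through \e{rrb} and the finite exceptional set from Corollary \ref{fin}, and it also makes precise your unproved remark that $y$ is comparable to $u$.
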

\begin{proof}
There is a unique $u_1 >2$ so that $\rho(u_1)=\rho(s)+\rho(t)$. As in \cite[(13)]{z82},
\begin{align} 
  u_1 & =\rho^{-1}(\rho(s)+\rho(t)) = 2\cosh (\rho(s)+\rho(t)) \notag\\
  & = e^{\rho(s)+\rho(t)}+  e^{-\rho(s)-\rho(t)} = \lambda_s \lambda_t + \lambda^{-1}_s \lambda^{-1}_t  \notag\\
  & =\left(s t+\sqrt{(s^2-4)(t^2-4)} \right)/2. \la{u1}
\end{align}
It follows that
$
  u_1^2 +s^2 + t^2 -s t u_1 = 4.
$
Subtracting the equation $u^2+s^2 + t^2 -s t u = D$  from this finds
\begin{equation} \la{zg}
  (u_1-u)(u_1+u- st) = 4-D.
\end{equation}

Now assume that $(s,u,t)$ is ordered, meaning that $u>st/2$. Hence by \e{iv},
\begin{equation}\label{us}
  u = \left(st+\sqrt{(s^2-4)(t^2-4)+4(D-4)} \right)/2.
\end{equation}
If $D=4$ then comparing \e{u1} and \e{us} finds $u_1=u$ and so $\rho(u)=\rho(s)+\rho(t)$.
If  $D<4$ then $u < u_1$ and \e{zg} implies that
\begin{equation*}
  u_1 = u + \frac{4-D}{u_1+u- st} < u+ \frac{2(4-D)}{\sqrt{(s^2-4)(t^2-4)}},
\end{equation*} 
and so 
\begin{equation} \label{im}
  u<u_1  < u+ \frac{2|D-4|}{\sqrt{(s^2-4)(t^2-4)}}.
\end{equation}
Apply $\rho$ to \e{im}, and use Lemma \ref{rox} with $x_0= \xi_T$, the minimal region number, to see that
\begin{equation*}
  \rho(u)  < \rho(u_1) < \rho\left(u+ \frac{2|D-4|}{\sqrt{(s^2-4)(t^2-4)}}\right) < \rho(u) +  \frac{2 \xi_T^2 |D-4|}{u(\xi_T^2 -4)\sqrt{(s^2-4)(t^2-4)}}.
\end{equation*}
With \e{wus2}, this  establishes \e{d4a}. 

Similarly, when  $D>4$,
\begin{equation} \label{im2}
  u_1<u  < u_1+ \frac{2|D-4|}{\sqrt{(s^2-4)(t^2-4)}}.
\end{equation}
Apply $\rho$ to \e{im2} and use Lemma \ref{rox} as before. From the second inequality above we obtain
\begin{equation*}
  \rho(u)   < \rho(u_1) +  \frac{2 \xi_T^2 |D-4|}{u_1(\xi_T^2 -4)\sqrt{(s^2-4)(t^2-4)}}.
\end{equation*}
The $u_1$ in the denominator   may be replaced by the smaller $st/2$. Then apply \e{wus}, \e{wus2} to produce \e{d4b}.

We have assumed that  $(s,u,t)$ is an  ordered triple. This will always be the case if $T$ is strictly rising. Otherwise,  the finite set $S$ of oriented triples $(s,u,t)$ with central vertex on the descending path of $T$  are the only possible unordered ones.  Let $(s,u,t)$ be one of these with $u\lqs s t/2$. Lemma \ref{tr4} informs us that $D<4$ since the descending path is nonempty. By \e{u1}, $u_1> s t/2$, and hence $u <u_1$, giving $\rho(u)< \rho(s)+\rho(t)$ as desired. Finally, increase  $\kappa_T$ if necessary to ensure that \e{d4a} is true for all triples in $S$.
\end{proof}

The version of the last proposition for progressing trees is shown similarly. These have $D>4$ by Lemma \ref{pr4}, and a slightly weaker inequality:

\begin{prop} \la{zag2b}
For each progressing tree $T$ there exists $\kappa_T>0$ so that, for all of its oriented triples $(s,u,t)$,
\begin{equation*}
   \rho(u)- \frac{\kappa_T}{u^\delta}  < \rho(s)+\rho(t)  < \rho(u),
\end{equation*}
for $\delta=2$, except that if $s=2$ or $t=2$ we must take $\delta=1$.
\end{prop}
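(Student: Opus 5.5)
We follow the proof of Proposition \ref{zag2}, using the quantity $u_1=\rho^{-1}(\rho(s)+\rho(t))$ from \e{u1} and the identity \e{zg}. By Lemma \ref{pr4}, $D>4$, every region is $\gqs 2$, and all edges point away from the root. Hence every oriented triple $(s,u,t)$ is ordered, $u>st/2$, and $u$ is given by \e{us}. Since $D>4$, comparing \e{u1} with \e{us} gives $u_1<u$, and therefore $\rho(s)+\rho(t)<\rho(u)$. This is the right-hand inequality, and it holds for every oriented triple with no exceptions.

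For the left-hand inequality we rearrange \e{zg} as
\begin{equation*}
u-u_1=\frac{D-4}{u+u_1-st},
\end{equation*}
so everything depends on bounding $u+u_1-st$ from below. From \e{u1} and \e{us},
\begin{equation*}
u+u_1-st=\tfrac12\left(\sqrt{(s^2-4)(t^2-4)}+\sqrt{(s^2-4)(t^2-4)+4(D-4)}\right)>\sqrt{D-4}.
\end{equation*}
We split into two cases.

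\emph{Case $s,t>2$.} We need an analogue of Lemma \ref{kyw}. First, $st\gqs c\,u$ for some $c>0$; this follows from the local rule for all triples except the base triple, which is absorbed into $c$. Second, only finitely many regions equal $2$. Indeed, any region of $T$ adjacent to two regions equal to $2$ has value at most $4$, and combined with Proposition \ref{sih} this gives finiteness; more directly, $2$-regions occur only along the boundary rays $m_0$ or $m_1$, or at the start. So away from the $2$-regions, the labels satisfy $s,t\gqs\xi>2$ for some minimum $\xi$, and the $2$-regions themselves are handled in the second case. Then $\sqrt{(s^2-4)(t^2-4)}\gqs A u$, which gives $u-u_1\ll 1/u$. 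Applying $\rho$ and Lemma \ref{rox} to $u_1$ and $u=u_1+\delta$ gives $\rho(u)-\rho(u_1)\ll \delta/u_1$. Since $u_1\gqs st/2\gg u$, this yields the bound $\kappa/u^2$.

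\emph{Case $s=2$ (or $t=2$).} Now $\rho(s)=0$ and $u_1=t$, and by Lemma \ref{pr4} we have $u=t+\delta_0$, where $\delta_0=u_0-t_0$ is the constant arithmetic-progression step. Then $\rho(u)-\rho(t)=\rho(t+\delta_0)-\rho(t)$. If $t>2$ is bounded below by the next-smallest label, Lemma \ref{rox} bounds this by $\ll 1/t\ll 1/u$. The single triple where $t=2$ as well does not occur, since $(s_0-2)(t_0-2)=0$ together with $t_0<u_0$ leaves only finitely many exceptional triples, and these are absorbed into $\kappa_T$. This gives $\delta=1$.

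The main obstacle is the uniform lower bound on $(s^2-4)(t^2-4)$ relative to $u$ in the first case. Neighbors of $2$-regions can be close to $2$, but only finitely often, since labels along each Farey-neighbor sequence grow. We handle this by taking $\xi_T$ to be the minimum of all labels strictly greater than $2$, which is positive by Corollary \ref{fin}, in place of the minimal region in Lemma \ref{kyw}. We then increase $\kappa_T$ to cover the finitely many remaining triples.
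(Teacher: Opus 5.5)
Your proof is correct and follows essentially the paper's route: the paper says this is proved ``similarly'' to Proposition~\ref{zag2}, namely via $u_1$ and \eqref{zg} with the smallest label exceeding $2$ in place of $\xi_T$, and the cases $s=2$ or $t=2$ reduce to $u_1=t$, $u=t+\sqrt{D-4}$, which gives $\delta=1$. Two side remarks are wrong but not load-bearing: a region adjacent to two $2$-regions need not be at most $4$ (take $u_0$ in $(2,u_0,2)$), because by the climbing lemma the only possible $2$-regions are $m_0$ and $m_1$; and the triple $(2,u_0,2)$ does occur when $s_0=t_0=2$, though you correctly absorb it into $\kappa_T$.
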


\section{The encoding function $\phi_T$} \la{y4}

\begin{adef} \la{phid}
{\rm Let $T$ be a Farey-indexed, real Markov tree. 
The {\em encoding function over the rationals} for $T$ is $\phi = \phi_T$, defined by
\begin{equation} \la{phi}
  \phi(k/n):= \rho(m_{k/n})/n,
\end{equation}
for $\rho(x) = \cosh^{-1}(x/2)$. 
Then $\phi:\Q \cap [0,1] \to \R_{\gqs 0}$ if regions of $T$ are at least $2$.
}
\end{adef}

Our main goal in this section is to prove:

\begin{theorem} \la{cont}
The function $\phi_T$ is continuous when $T$ is a rising or progressing tree. For $D<4$, $D=4$ and $D>4$ its graph is strictly convex, a straight line and strictly concave, respectively.
\end{theorem}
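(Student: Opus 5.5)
We would work with the homogeneous form of $\phi$. For a primitive lattice vector $v=(n,k)$ with $0\lqs k\lqs n$ set $\psi(v):=\rho(m_{k/n})=n\,\phi(k/n)$. Convexity of the graph of $\phi$ over $\Q\cap[0,1]$ is equivalent to $\psi$ behaving like a positively homogeneous convex function on the cone. We would verify this through the Farey structure rather than for arbitrary triples.

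The plan is to show that for each $\ell$ the piecewise linear function $L_\ell$ interpolating the points $(q,\phi(q))$, for $q$ in rows $0,\dots,\ell$, is convex (for $D<4$). Two facts are needed.

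\textbf{(a) Refinement.} Passing from $L_\ell$ to $L_{\ell+1}$ inserts each mediant $q$ of neighbors $q_0,q_0'$ strictly below the chord. In homogeneous form this is $\psi(v)<\psi(v_0)+\psi(v_0')$, which is exactly Proposition \ref{zag2}, respectively Proposition \ref{zag2b} (with the inequality reversed for $D>4$ and equality for $D=4$).

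\textbf{(b) Convexity at a vertex.} At a vertex $q$ of $L_\ell$ the two adjacent vertices are Farey neighbors $q_i$ and $q'_j$ of the form in Lemma \ref{qq}. Since $v_i=v_0+iv$ and $v'_j=v_0'+jv$, we have $v_i+v'_j=(i+j+1)v$. Convexity at $q$ is therefore the inequality
\begin{equation*}
\psi(v_i)+\psi(v'_j)>(i+j+1)\psi(v).
\end{equation*}

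To prove (b), apply Proposition \ref{zag2} to the oriented triples whose central region is $m_{q_{i+1}}$, namely $m_{q_{i+1}}$ as the mediant of $q_i$ and $q$. This gives $\psi(v_{i+1})<\psi(v_i)+\psi(v)$, so $i\mapsto\psi(v_i)-i\psi(v)$ is strictly decreasing, and likewise for $j\mapsto\psi(v'_j)-j\psi(v)$. By Proposition \ref{start}, $\rho(m_{q_i})=\rho(u)i+\log f(s,u,t)+o(1)$ with $u=m_q$, and similarly on the right. Hence the sum of the two decreasing sequences has limit
\begin{equation*}
\log\bigl(f(s,u,t)f(t,u,s)\bigr)-\rho(u).
\end{equation*}

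The key computation uses $u^2+s^2+t^2-ust=D$ and $\lambda_u^2+1=u\lambda_u$:
\begin{equation*}
f(s,u,t)f(t,u,s)=\frac{(\lambda_u s-t)(\lambda_u t-s)}{u^2-4}=\frac{\lambda_u(ust-s^2-t^2)}{u^2-4}=\lambda_u\,\frac{u^2-D}{u^2-4}.
\end{equation*}
So the limit equals $\log\frac{u^2-D}{u^2-4}$, which is $>0$, $=0$ or $<0$ according as $D<4$, $D=4$ or $D>4$. Because the sequences decrease strictly to this limit, (b) holds strictly for $D<4$ and becomes an equality for $D=4$. For $D>4$ we run the argument with all inequalities reversed: the sequences increase to a negative limit, giving strict concavity. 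The endpoints $0,1$ are handled with Proposition \ref{stx} in the same way. When $s_0$ or $t_0$ equals $2$ (progressing trees), the linear growth there makes the endpoint inequality immediate.

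With (a) and (b), each $L_\ell$ is convex and $L_{\ell+1}\lqs L_\ell$, with strict decrease at the new vertices. Every rational eventually appears as a vertex, so any three rationals $p<q<r$ lie on a common convex polygon $L_\ell$. We would then argue that $q$ lies strictly below the chord from $p$ to $r$. Indeed, the mediant-insertion step puts strictly-below points between any two vertices, and since $L_\ell$ is convex, a point strictly below one edge forces strictness along the chord. This gives strict convexity of $\phi$ on $\Q\cap[0,1]$.

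Continuity then follows. A convex function on the rationals is continuous on the interior, and it is bounded there by Proposition \ref{sim2}, respectively Proposition \ref{sih} and the upper bound. At the endpoints we need the boundary values to match the limits. For this we use Proposition \ref{stx} together with the monotone convergence from (b), which shows $\phi(q_i)\to\phi(q)$ and $\phi(q'_j)\to\phi(q)$ at every rational $q$, including $0$ and $1$. For $D=4$ the equalities make every $L_\ell$ the same segment, so the graph is a straight line.

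The main obstacle I expect is step (b). It needs both the monotonicity from Proposition \ref{zag2} along the neighbor chains and the exact identity $ff'=\lambda_u(u^2-D)/(u^2-4)$ to pin down the sign. A secondary difficulty is bookkeeping: the finitely many unordered triples on the descending path, and the $2$-regions in progressing trees, where $\rho(2)=0$ and Proposition \ref{start} degenerates. These must be checked separately using Proposition \ref{stx}.
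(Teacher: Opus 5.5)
Your proposal is correct and is essentially the paper's argument: your inequality (b), with $\psi(v_i)-i\psi(v)$ made monotone by Proposition \ref{zag2} (or \ref{zag2b}) and its limit fixed by $f(s,u,t)f(t,u,s)=\lambda_u(u^2-D)/(u^2-4)$, is exactly the chain $S_i<S<S'<S'_j$ of Propositions \ref{sjs0} and \ref{sjs}, since $S_i=n_0\rho(m_q)-n\bigl(\psi(v_i)-i\psi(v)\bigr)$. After that, the paper likewise gets convexity from slopes increasing across consecutive Farey points and continuity from convexity together with the limits \e{to}. The remaining differences are cosmetic (Farey rows $\mathcal R_\ell$ instead of $\mathcal F_n$, and homogeneous notation), and your refinement step (a) is not needed, since (b) alone already makes every $L_\ell$ strictly convex and gives the straight line when $D=4$.
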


\subsection{$\phi_T$ at Farey neighbors}

Recall the Farey neighbor notation of Lemma \ref{qq}.

\begin{prop} \la{shutx}
Let $T$ be a rising tree or a progressing tree and fix $q$  in $\Q \cap (0,1)$. Write the oriented triple $(m_{q_0}, m_q, m_{q'_0})$ as $(s, u, t)$ 
and define $\varepsilon_j$, $\varepsilon_j'$ by means of
\begin{equation} \la{pha}
  \rho(m_{q_j})  = \rho(u) j  +\log f(s,u,t) + \varepsilon_j, \qquad
  \rho(m_{q'_j})  = \rho(u) j +\log f(t,u,s) + \varepsilon'_j. 
\end{equation}
Then $\varepsilon_j, \varepsilon'_j \to 0$ as $j \to \infty$.
\end{prop}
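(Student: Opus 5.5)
Apply Proposition \ref{start} directly to the oriented triple $(s,u,t)=(m_{q_0},m_q,m_{q_0'})$. This requires $u=m_q>2$, which holds: $q\in(0,1)$ is a mediant, so $m_q$ is not in the base positions $0$ or $1$. In a rising tree all regions exceed $2$. In a progressing tree every region other than $m_0,m_1$ is strictly larger than $2$, because regions strictly increase moving away from the root (Lemma \ref{pr4}). Then \e{m1} gives
\begin{equation*}
  m_{q_j} = f(s,u,t)\, e^{\rho(u) j}\Bigl(1 + \tfrac{g(s,u,t)}{f(s,u,t)}\, e^{-2\rho(u) j}\Bigr).
\end{equation*}
We need $f(s,u,t)>0$ so that $\log f$ makes sense. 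Rather than rely on \e{wa}, which requires $2<s,t\lqs u$, argue directly from the growth of the sequence. Since $\rho(u)>0$ and $m_{q_j}\to\infty$ (by Corollary \ref{fin}, only finitely many regions lie below any bound, and the $q_j$ are distinct), the coefficient of $e^{\rho(u)j}$ must be nonnegative. It cannot be zero: otherwise $m_{q_j}=g\,e^{-\rho(u)j}\to 0$, contradicting $m_{q_j}\gqs 2$. So $f(s,u,t)>0$. Alternatively, when $s,t\lqs u$, use $f(s,u,t)>s-1>0$ from \eqref{fsu}; this works once $s\gqs 2$, since $us-2t\gqs 2u-2t\gqs 0$ then gives $f\gqs s/2\gqs 1$.

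Next, take logarithms. Since $m_{q_j}\to\infty$, we have $\rho(m_{q_j})=\log m_{q_j}-\rho^*(m_{q_j})$ with $0<\rho^*(m_{q_j})\lqs 4\log 2/m_{q_j}^2\to 0$ by \e{cru}. Therefore
\begin{equation*}
  \varepsilon_j = \log\Bigl(1+\tfrac{g(s,u,t)}{f(s,u,t)}e^{-2\rho(u)j}\Bigr) - \rho^*(m_{q_j}).
\end{equation*}
Both terms tend to $0$: the first because $g/f$ is a fixed real constant and $e^{-2\rho(u)j}\to 0$, and the second as just noted. The claim $\varepsilon'_j\to 0$ follows identically from \e{m2}, with $s$ and $t$ swapped.

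The only delicate point is the positivity of $f$ (and the divergence $m_{q_j}\to\infty$ used to obtain it). I would handle this with the growth argument above, since it uses only the tree's lower bounds (Proposition \ref{sim2} and Proposition \ref{sih}, or Corollary \ref{fin}) and so avoids case analysis on whether $(s,u,t)$ is ordered or lies on a descending path.
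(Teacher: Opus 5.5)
Your proof is correct and is essentially the paper's argument: apply Proposition \ref{start}, deduce $f(s,u,t)>0$ from the lower bound on $m_{q_j}$, and then use $\rho(x)=\log x+O(x^{-2})$ from \e{cru}. The differences are cosmetic. The paper gets $f>0$ directly from $m_{q_j}\gqs 2$, so Corollary \ref{fin} is not needed. It also passes through Lemma \ref{rox}, whereas you factor out $\log\bigl(1+\tfrac{g}{f}e^{-2\rho(u)j}\bigr)$ explicitly.
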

\begin{proof} 
 We have $u>2$ and by Proposition \ref{start},
\begin{equation} \la{opk}
  \rho(m_{q_j}) =  \rho\left(f(s,u,t) \cdot e^{\rho(u) j} + g(s,u,t) \cdot e^{-\rho(u) j}\right).
\end{equation}
Note that $f(s,u,t)>0$ necessarily, since $m_{q_j}\gqs 2$. Then Lemma \ref{rox} and \e{cru} imply, as $j\to \infty$,
\begin{align*} 
  \rho(m_{q_j}) & = \rho\left(f(s,u,t) \cdot e^{\rho(u) j}\right) + O\left(e^{-2\rho(u) j}\right) \\
   & = \log\left(f(s,u,t) \cdot e^{\rho(u) j}\right) + O\left(e^{-2\rho(u) j}\right).
\end{align*}
The arguments  for $\varepsilon_j'$ are the same, with $s$ and $t$ switching places.
\end{proof}

The   endpoint version of the last result has a similar proof, based on Proposition \ref{stx}.
\begin{prop} \la{shut2}
Let $T$ be a rising or progressing tree with base triple $(s_0,u_0,t_0)$.
When $q=0$, so that $q_j'=1/(j+1)$, write 
\begin{equation*}
   \rho(m_{q'_{j+1}})  = \begin{cases}
   \rho(s_0) j +\log f(u_0,s_0,t_0) + \varepsilon'_j & \text{if} \quad s_0 \neq 2,\\
   \log((u_0-t_0)j +t_0) + \varepsilon'_j & \text{if}  \quad s_0 = 2.
   \end{cases}
\end{equation*}
When $q=1$, so that $q_j=j/(j+1)$, write 
\begin{equation*}
   \rho(m_{q_{j+1}})  = \begin{cases}
   \rho({t_0}) j  +\log f(u_0,t_0,s_0) + \varepsilon_j & \text{if} \quad t_0 \neq 2,\\
   \log((u_0-s_0)j +s_0) + \varepsilon_j & \text{if}  \quad t_0 = 2.
   \end{cases}. 
\end{equation*}
Then $\varepsilon_j, \varepsilon'_j \to 0$ as $j \to \infty$.
\end{prop}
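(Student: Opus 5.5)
The plan is to mirror the proof of Proposition \ref{shutx}, with Proposition \ref{stx} replacing Proposition \ref{start}. We treat $q=0$; the case $q=1$ follows by swapping $s_0$ and $t_0$, which is the reflection symmetry of the Farey-indexed tree.

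\emph{Case $s_0>2$.} Proposition \ref{stx} gives
\[
m_{q'_{j+1}}=f(u_0,s_0,t_0)e^{\rho(s_0)j}+g(u_0,s_0,t_0)e^{-\rho(s_0)j}.
\]
First we need $f(u_0,s_0,t_0)>0$. Since $\rho(s_0)>0$, if $f\lqs 0$ the right side would either tend to $-\infty$ or tend to $0$ (when $f=0$). Both are impossible because every region is $\gqs 2$. Hence the term $g e^{-\rho(s_0)j}$ is $O(e^{-\rho(s_0)j})$, while $m_{q'_{j+1}}\asymp e^{\rho(s_0)j}\to\infty$. Lemma \ref{rox}, applied with $x=f e^{\rho(s_0)j}$ for $j$ large, $x_0$ fixed and $\delta=|g|e^{-\rho(s_0)j}$, gives
\[
\rho(m_{q'_{j+1}})=\rho\left(f e^{\rho(s_0)j}\right)+O\left(e^{-2\rho(s_0)j}\right).
\]
(The bound \e{rrb} covers negative $g$, since $\delta\lqs 1$ eventually.) Then \e{cru} gives $\rho(x)=\log x+O(x^{-2})$, so
\[
\rho(m_{q'_{j+1}})=\rho(s_0)j+\log f(u_0,s_0,t_0)+O\left(e^{-2\rho(s_0)j}\right),
\]
and therefore $\varepsilon'_j\to0$.

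\emph{Case $s_0=2$.} Here $m_{q'_{j+1}}=(u_0-t_0)j+t_0$, which tends to $\infty$ because $u_0>t_0$. This holds in the progressing case by Definition \ref{pr}. A rising tree has $s_0>2$, so it never falls into this case. By \e{cru},
\[
\rho(m)-\log m=-\rho^*(m)=O(m^{-2})\to0,
\]
so $\varepsilon'_j=O(j^{-2})\to0$.

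The only step needing care is the sign of the leading coefficient $f(u_0,s_0,t_0)$. The bounds \e{wb} would give $f>u_0/2$, but only under the ordering hypotheses $s_0,t_0\lqs u_0$, which may fail in a rising tree that is not strictly rising. The positivity argument from the growth of the sequence avoids needing that hypothesis. The rest consists of the same routine estimates as in Proposition \ref{shutx}.
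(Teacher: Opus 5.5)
Your proof is correct and is essentially the paper's: the paper handles this case by rerunning the proof of Proposition \ref{shutx} with Proposition \ref{stx} in place of Proposition \ref{start}, which means getting positivity of $f(u_0,s_0,t_0)$ from $m\gqs 2$ and then applying Lemma \ref{rox} and \e{cru}. One small correction, inherited from the paper: the linear formula in Proposition \ref{stx} is off by one index, since in fact $m_{q'_{j+1}}=m_{1/(j+2)}=(u_0-t_0)j+u_0$ (compare $m_{1/n}=u_0+(n-2)(u_0-t_0)$ in the proof of Proposition \ref{sih}), and likewise for $q=1$, so when $s_0=2$ you get $\varepsilon'_j=O(1/j)$ rather than $O(j^{-2})$, which still tends to $0$.
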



It follows directly from Propositions \ref{shutx} and \ref{shut2} that, for all $q  \in \Q \cap [0,1]$ as $j \to \infty$,
\begin{equation} 
 \label{to}
  \phi(q_j)  \to \phi(q) \qquad(q\neq 0),  \qquad \qquad
  \phi(q'_j)  \to \phi(q)  \qquad (q \neq 1),
\end{equation}
for the encoding functions of all rising and progressing trees.

\subsection{Slopes between Farey neighbors}

For unequal rationals $p, q \in [0,1]$, let $\g(p,q)$ denote the slope of the secant line  meeting the graph of $\phi$ at $(p,\phi(p))$ and $(q,\phi(q))$. 
If $q \in \Q \cap (0,1)$ has Farey neighbors $q_i$, $q_j'$, as in Lemma \ref{qq}, it is convenient to write $S_i$ and $S'_j$ for the slopes between $q$ and these neighbors:
\begin{equation}\label{sde}
  \g(p,q): = \frac{\phi(q)-\phi(p)}{q-p}, \qquad S_i=S_i(q):=\g(q_i,q), \qquad S'_j=S'_j(q):=\g(q,q'_j).
\end{equation}
 Figure \ref{slo} illustrates $S_i$ and $S'_j$ in an example with $D<4$.

\SpecialCoor
\psset{griddots=5,subgriddiv=0,gridlabels=0pt}
\psset{xunit=0.6cm, yunit=0.5cm, runit=0.5cm}
\psset{linewidth=1pt}
\psset{dotsize=4pt 0,dotstyle=*}
\begin{figure}[!htb]
\centering

\psset{arrowscale=1.4,arrowinset=0.3,arrowlength=1.1}
\newrgbcolor{light}{0.8 0.8 1.0}
\newrgbcolor{light}{0.7 0.7 1.0}
\newrgbcolor{pale}{1 0.7 1}
\newrgbcolor{pale}{1 0.7 0.4}
\newrgbcolor{darkbrown}{0.5 0.324219 0.257813}

\begin{pspicture}(-7,-1.7)(4,6) 

\psset{linecolor=light}
\psline(0,0)(2.59808, 1.5)

\psline(0,0)(-2.89778, -0.776457)

\psline(0,0)(-5.47668, 1.12072)
\psline(0,0)(-4.27781, 0.510205)
\psline(0,0)(-3.13071, 0.0928761)

\psline(0,0)(2.14808, 2.27942)
\psline(0,0)(2.6641, 3.38564)
\psline(0,0)(3.08013, 4.66506)

\psset{linecolor=red}

\psline(-6.7273, 1.92442)(-5.47668, 1.12072)(-4.27781, 0.510205)
(-3.13071, 0.0928761)(-2.03538, -0.131268)

\psline(-2.03538, -0.131268)(-1.82252, -0.152914)(-1.61174, -0.166833)
(-1.40302, -0.173025)(-1.19638, -0.17149)(-0.991808, -0.162226)(-0.789305, -0.145236)
(-0.588873, -0.120518)(-0.390511, -0.0880728)(-0.19422, -0.0479001)(0., 0.)

\psline(1.53205, 1.34641)(2.14808, 2.27942)(2.6641, 3.38564)(3.08013, 4.66506)(3.39615, 6.11769)

\psline(0., 0.)(0.171205, 0.103464)(0.33841, 0.213856)(0.501615, 
0.331177)(0.66082, 0.455426)(0.816025, 0.586603)(0.96723, 
0.724708)(1.11444, 0.869741)(1.25764, 1.0217)(1.39685, 
1.18059)(1.53205, 1.34641)

\psdot[linecolor=black](0,0)

\psdots[linecolor=black](-5.47668, 1.12072)(-4.27781, 0.510205)
(-3.13071, 0.0928761)(-2.03538, -0.131268)

\psdots[linecolor=black](1.53205, 1.34641)(2.14808, 2.27942)(2.6641, 3.38564)(3.08013, 4.66506)

\rput(0.5,-0.9){$(q,\phi(q))$}

\rput(-6.5,1.1){$S_0$}
\rput(-5.2,0.3){$S_1$}
\rput(-4,-0.4){$S_2$}
\rput(-2,-1.1){$S$}

\rput(2.2,0.7){$S'$}
\rput(3.75,5.4){$S_0'$}
\rput(3.5,4){$S_1'$}
\rput(3,2.7){$S_2'$}

\end{pspicture}
\caption{The slopes $S_i$ and $S_j'$ of secant lines for the graph of $\phi$.}
\label{slo}
\end{figure}

\begin{prop} \la{sjs0}
Let $T$ be a rising or progressing tree and fix $q=k/n \in \Q \cap (0,1)$. In the above notation, for all $i, j \gqs 0$ we have  $S_i=S'_j$ if $D=4$. If $D<4$ then the sequence $S_i$ is strictly increasing and $S'_j$ is strictly decreasing.  If $D>4$ then this is reversed: the sequence $S_i$ is strictly decreasing and $S'_j$ is strictly increasing.
There exists $\kappa_T>0$ so that
\begin{equation}
\begin{aligned} \la{dw4}
   0 & < S'_0 - S_0 < \kappa_T n/m_q^2 && \qquad  \text{if} \quad D<4,  \\
  -\kappa_T n/m_q^\delta & < S'_0 - S_0 < 0 && \qquad  \text{if}  \quad D>4,
\end{aligned}
\end{equation}
for  $\delta=2$, except that if $m_{q_0}=2$ or $m_{q'_0}=2$ we must take $\delta=1$.
\end{prop}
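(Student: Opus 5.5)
The plan is to express every slope $S_i$, $S'_j$ through $\rho$ of tree numbers. Consecutive differences of these slopes, and the gap $S'_0-S_0$, then become $n$ times the defect $\rho(s)+\rho(t)-\rho(u)$ of an oriented triple. After that, Propositions \ref{zag2} and \ref{zag2b} do all the work.

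\textbf{Step 1: slope formulas.} By \e{nb} we have $q-q_i=1/(nn_i)$ and $q'_j-q=1/(nn'_j)$. Hence, with $u=m_q$,
\begin{equation*}
S_i = nn_i\left(\frac{\rho(u)}{n}-\frac{\rho(m_{q_i})}{n_i}\right) = n_i\rho(u)-n\rho(m_{q_i}), \qquad S'_j = n\rho(m_{q'_j})-n'_j\rho(u).
\end{equation*}

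\textbf{Step 2: consecutive differences.} Since $n_{i+1}=n_i+n$, we get
\begin{equation*}
S_{i+1}-S_i = n\bigl[\rho(m_{q_i})+\rho(u)-\rho(m_{q_{i+1}})\bigr].
\end{equation*}
Here $q_{i+1}$ is the mediant of the Farey neighbors $q_i<q$. So $(m_{q_i},m_{q_{i+1}},m_q)$ is an oriented triple of $T$, with $m_{q_{i+1}}$ the region furthest from the root. Likewise
\begin{equation*}
S'_{j+1}-S'_j=-n\bigl[\rho(u)+\rho(m_{q'_j})-\rho(m_{q'_{j+1}})\bigr],
\end{equation*}
with $(m_q,m_{q'_{j+1}},m_{q'_j})$ oriented.

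For a rising tree, Proposition \ref{zag2} applies to every oriented triple, including those on the descending path. It gives the bracket as zero when $D=4$, positive when $D<4$, and negative when $D>4$. For a progressing tree ($D>4$), Proposition \ref{zag2b} gives the negative sign. This yields the claimed monotonicity: for $D<4$, $S_i$ increases and $S'_j$ decreases, and the reverse holds for $D>4$.

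\textbf{Step 3: the central gap and the case $D=4$.} Using $n_0+n'_0=n$ and writing $(s,u,t)=(m_{q_0},m_q,m_{q'_0})$,
\begin{equation*}
S'_0-S_0 = n\rho(t)-n'_0\rho(u)-n_0\rho(u)+n\rho(s) = n\bigl[\rho(s)+\rho(t)-\rho(u)\bigr].
\end{equation*}
The triple $(s,u,t)$ is oriented, since $q$ is the mediant of $q_0$ and $q'_0$. Therefore \e{d4a} and \e{d4b} give \e{dw4} with $\delta=2$ for rising trees. For progressing trees, Proposition \ref{zag2b} gives \e{dw4} with the stated $\delta$, which becomes $\delta=1$ when $s$ or $t$ equals $2$.

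When $D=4$, Step 2 shows all $S_i$ are equal and all $S'_j$ are equal, and Step 3 gives $S'_0=S_0$. Hence $S_i=S'_j$ for all $i,j\gqs 0$.

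\textbf{Where care is needed.} The computations are routine. The step to check carefully is the orientation and ordering of the triples in Step 2: the region furthest from the root must be the mediant $q_{i+1}$ or $q'_{j+1}$, and $m_q>2$ must hold so that the propositions apply. The case $m_q=2$ cannot occur for $q\in(0,1)$ in these trees, since every such $m_q$ lies strictly above its smaller parent, which is $\gqs 2$.
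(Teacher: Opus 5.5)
Your proposal is correct and is essentially the paper's proof. Like the paper, you use \eqref{nb} to express the consecutive slope differences and the central gap as $n$ times the defect $\rho(s)+\rho(t)-\rho(u)$ of an oriented triple, which gives \eqref{zu} and \eqref{zu2}, and then read off the signs and the bound \eqref{dw4} from Propositions \ref{zag2} and \ref{zag2b}; you also write out the slope formulas $S_i=n_i\rho(m_q)-n\rho(m_{q_i})$ and $S'_j=n\rho(m_{q'_j})-n'_j\rho(m_q)$, which the paper leaves implicit.
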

\begin{proof}
Unwinding definitions and using \e{nb} shows
\begin{equation} \la{zu}
  S_{i+1}-S_i = n\left(\rho(m_q)+\rho(m_{q_i}) - \rho(m_{q_{i+1}}) \right).
\end{equation}
Since $(m_{q_i},m_{q_{i+1}},m_q)$ is an oriented triple, it follows from Proposition \ref{zag2} that the right side of \e{zu} is positive, zero or negative according as $D<4$, $D=0$ or $D>4$, respectively.
Similarly for $S_j'$. We also have 
\begin{equation} \la{zu2}
  S'_0-S_0 = n\left(\rho(m_{q_0})+\rho(m_{q_0'}) - \rho(m_q) \right),
\end{equation}
 which implies $S_0=S_0'$ when $D=4$ and implies \e{dw4} using Propositions \ref{zag2}, \ref{zag2b}.
\end{proof}

Going further, with the same notation in place and recalling $f$ from \e{fg}:

\begin{prop} \la{sjs}
Let $T$ be a rising or progressing tree and fix $q \in \Q \cap (0,1)$.  Put
\begin{equation} \la{ss}
  S =S(q):=  \rho(u) n_0   -n \log f(s,u,t), \qquad  S'=S'(q) :=  -\rho(u) n_0'   +n \log f(t,u,s).
\end{equation}
Then 
\begin{equation} \la{sps}
  S'-S = n \log\left(1-\frac{D-4}{u^2-4}\right).
\end{equation}
For all $i, j \gqs 0$ we have  $S_i=S=S'=S'_j$ if $D=4$, and
\begin{align}
   S_i<S<S'<S'_j & \qquad \text{if} \qquad D<4,  \la{<}\\
  S_i>S>S'>S'_j & \qquad \text{if} \qquad D>4. \la{>}
\end{align}
For $D \neq 4$, we have  $S_i \to S$ and $S_j' \to S'$ strictly monotonically as $i, j \to \infty$. 
\end{prop}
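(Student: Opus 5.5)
We begin by identifying $S$ and $S'$ as the limits of $S_i$ and $S'_j$. By definition $S_i = (\phi(q)-\phi(q_i))/(q-q_i)$ with $\phi(q)=\rho(u)/n$ and $\phi(q_i)=\rho(m_{q_i})/n_i$. Using \eqref{nb}, $q-q_i = 1/(n n_i)$, so
\begin{equation*}
  S_i = n_i\rho(u) - n\,\rho(m_{q_i}).
\end{equation*}
Substituting the expansion \eqref{pha} from Proposition \ref{shutx}, and using $n_i=n_0+in$, the terms $in\rho(u)$ cancel and we get $S_i = \rho(u)n_0 - n\log f(s,u,t) - n\varepsilon_i = S - n\varepsilon_i$. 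In the same way $S'_j = -n'_j\rho(u)+n\rho(m_{q'_j}) = S' + n\varepsilon'_j$. Since $\varepsilon_i,\varepsilon'_j\to 0$, we conclude $S_i\to S$ and $S'_j\to S'$.

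Next comes the identity \eqref{sps}. By \eqref{ss}, $S'-S = n\log\bigl(f(s,u,t)f(t,u,s)\bigr) - \rho(u)(n_0+n_0')$, and $n_0+n_0'=n$ because $q$ is the mediant of $q_0,q_0'$. Hence we need
\begin{equation*}
  f(s,u,t)f(t,u,s)\lambda_u^{-1} = 1-\frac{D-4}{u^2-4}.
\end{equation*}
To check this, expand $(\lambda_u s - t)(\lambda_u t - s)/\lambda_u = (\lambda_u+\lambda_u^{-1})st - s^2-t^2 = ust-s^2-t^2$. By \eqref{suwx} this equals $u^2-D$. Dividing by $u^2-4$ gives $(u^2-D)/(u^2-4)$, which is the claimed expression. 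This step is a routine use of \eqref{use}.

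Finally we handle the ordering. Proposition \ref{sjs0} already gives monotonicity of $S_i$ and $S'_j$: for $D<4$, $S_i$ is strictly increasing to its limit $S$, so $S_i<S$ for all $i$. Likewise $S'_j$ strictly decreases to $S'$, giving $S'<S'_j$. For $D<4$ we have $u>2$, so the right side of \eqref{sps} is positive, and thus $S<S'$; this yields \eqref{<}. The case $D>4$ is symmetric and gives \eqref{>}. For $D=4$, Proposition \ref{sjs0} makes all $S_i$ and $S'_j$ equal to $S_0$, so their limits give $S=S'=S_i=S'_j$, consistent with \eqref{sps} giving $0$.

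The only point needing care is the algebraic bookkeeping in the first step: one must confirm that $n\varepsilon_i$ enters with the right sign and that the $i$-dependence cancels exactly. The strict monotone convergence then comes for free from Proposition \ref{sjs0}. I expect no real obstacle.
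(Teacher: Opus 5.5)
Your proof is correct and takes essentially the same route as the paper: you write $S_i=S-n\varepsilon_i$ and $S'_j=S'+n\varepsilon'_j$ using \eqref{nb} and \eqref{pha}, verify $f(s,u,t)f(t,u,s)/\lambda_u=(u^2-D)/(u^2-4)$, and take the strict monotonicity from Proposition \ref{sjs0}. One detail is worth stating explicitly, as the paper does: $f(s,u,t),f(t,u,s)>0$ (from the proof of Proposition \ref{shutx}), which forces $u^2>D$, so the logarithm in \eqref{sps} is defined and is negative when $D>4$.
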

\begin{proof}
We have 
\begin{align*}
  S'-S & =  -\rho(u) n_0'   +n \log f(t,u,s)  -\rho(u) n_0   +n \log f(s,u,t)\\
   & = -n \rho(u) +n \log(f(s,u,t)f(t,u,s)) = n \log (f(s,u,t)f(t,u,s)/\lambda_u).
\end{align*}
Then
\begin{equation} \la{thu}
  \frac{f(s,u,t)f(t,u,s)}{\lambda_u} =  \frac{(\lambda_u s-t)(\lambda_u t-s)}{\lambda_u (u^2-4)}
  = \frac{\lambda_u st-s^2 -t^2 +\lambda_u^{-1} s t}{u^2-4} = \frac{u^2-D}{u^2-4},
\end{equation} 
and we have established $S'-S = n \log((u^2-D)/(u^2-4))$, which is equivalent to \e{sps}. 
Recall from the proof of Proposition \ref{shutx} that $f(s,u,t), f(t,u,s)>0$. It follows that $u^2-D>0$.
Hence we also obtain from \e{sps} that $D<4 \iff S<S'$ and $D=4 \iff S=S'$.

A calculation using \e{nb} and \e{pha} finds 
\begin{align*} 
  S_i & = -n \rho(m_{q_i}) + n_i \rho(u) \\
  & = \rho(u) n_0   -n \log f(s,u,t) - n \varepsilon_i \\
  & = S - n \varepsilon_i.
\end{align*}
For $D \neq 4$ we know from Proposition \ref{sjs0} that $S_i$ is strictly increasing or decreasing, and from Proposition \ref{shutx} that $\varepsilon_i \to 0$. Hence $S_i \to S$  strictly monotonically. Similarly for $S_j'$ where
$
  S'_j =   S' + n \varepsilon'_j$.
For $D = 4$ all the $S_i, S_j'$ are equal and therefore equal to the limits $S, S'$. (It may also be shown directly with \e{ejx}, \e{ejxb} that $\varepsilon_j = \varepsilon_j'=0$ when $D=4$.) 
\end{proof}

Similar work, identifying $S$, $S'$ in \e{ss} and the difference $S'-S$ in \e{sps}, is found in \cite[pp. 6-8]{hin} for the $D=0$ case. The quantities $S$ and $S'$ will become the left and right derivatives at $q$. See also \cite[Sect. 3]{ga22}.

\begin{proof}[Proof of Theorem \ref{cont}]
Assume first that $T$ is a rising tree with discriminator $D=4$. Consider the points $(q,\phi(q))$ for $q\in \mathcal F_n$, the Farey fractions of order $n$ for large $n$. Proposition \ref{sjs0} shows that the slopes between every pair of these points is always the same. So the graph of $\phi$ is a continuous straight line.

Suppose next that $D<4$. Strict convexity of the graph is equivalent to showing $\g(p_1,p_2)<\g(p_2,p_3)$ for all rationals $0\lqs p_1<p_2<p_3\lqs 1$. To prove this, choose $n$ large enough  that the $p_i$ we are checking are all in $\mathcal F_n$. Draw the line segments connecting all of these consecutive Farey neighbor points  on the graph. By \e{<}, the slopes of these segments strictly increase moving right from $p_1$ to $p_3$.
Viewed from $p_2$, each segment to its left must make a triangle with one corner at $p_2$. Hence  $\g(p_1,p_2)\lqs m$ for $m$ the slope of the segment meeting $p_2$ on the left. Similarly,  $m' \lqs \g(p_2,p_3)$ for $m'$ the slope of the segment meeting $p_2$ on the right. Since $m<m'$, this confirms that $\phi$ is a strictly convex function. Similarly, $\phi$ is strictly concave when $D>4$.

When $D<4$, we claim that there exists $\alpha \in [0,1]$ so that $\phi$ is strictly decreasing on $\Q\cap [0,\alpha]$ and strictly increasing on $\Q\cap [\alpha,1]$. To see this, take the Farey fraction points of order $n$ on the graph for some large $n$ and draw the line segments between consecutive points. These slopes strictly increase, moving from left to right. As $n \to \infty$, we converge on a minimum at  $\alpha \in [0,1]$ which may be rational or irrational. Similarly, for $D>4$ there exists $\alpha \in [0,1]$ so that $\phi$ is strictly increasing on $\Q\cap [0,\alpha]$ and strictly decreasing on $\Q\cap [\alpha,1]$. 

Next let $q \in \Q\cap [0,1)$ and we will show that $\phi$ is continuous at $q$ from the right. Take any $\varepsilon>0$. By \e{to} there exist right Farey neighbors $q'_j$ of $q$ with $|\phi(q)-\phi(q'_j)|<\varepsilon$. Choose $j$ large enough that $\phi$ is strictly increasing or strictly decreasing between $q$ and $q'_j$. In other words, $\alpha$ above should not be in the interval $(q,q'_j)$. Hence $|\phi(q)-\phi(p)|<\varepsilon$
for all $p \in \Q\cap [q,q'_j]$ as desired. Continuity on the left at $q \in \Q\cap (0,1]$ is proven in the same way.
\end{proof}

We have demonstrated that $\phi$ is continuous on $\Q\cap [0,1]$. It does not automatically follow that $\phi$ extends to a continuous function on all of $[0,1]$. 
We show that 
it does 
in the next section using its concavity/convexity.
The differentiability properties of $\phi$ will require the next endpoint version of Proposition \ref{sjs}.

\begin{prop} \la{sjs2}
Suppose a rising or progressing tree has base triple $(s_0, u_0, t_0)$.
For $q=0$ put
\begin{equation}\label{pu}
  S'=S'(0) :=  -2\rho(s_0)    + \log f(u_0,s_0,t_0).
\end{equation}
Then for all $j \gqs 0$ we have, writing $S'_j$ for $S'_j(0)$,
\begin{equation*}
   S'<S'_j  \quad \text{if} \quad D<4,  \qquad S'=S'_j \quad \text{if} \quad D=4, \qquad S'>S'_j  \quad \text{if} \quad D>4.
\end{equation*}
For $q=1$ put
\begin{equation}\label{pu2}
  S=S(1) :=  2\rho(t_0)    - \log f(u_0,t_0,s_0).
\end{equation}
Then for all $i \gqs 0$ we have, writing $S_i$ for $S_i(1)$,
\begin{equation*}
   S_i<S  \quad \text{if} \quad D<4,  \qquad S=S_i \quad \text{if} \quad D=4, \qquad S_i>S  \quad \text{if} \quad D>4.
\end{equation*}
The formulas are undefined in two cases we must clarify:  $S'(0)=\infty$ if $s_0=2$ and $S(1)=-\infty$ if $t_0=2$.
For $D \neq 4$, we have  $S_i \to S$ and $S_j' \to S'$ strictly monotonically as $i, j \to \infty$.
\end{prop}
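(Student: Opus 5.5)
We follow the proof of Proposition \ref{sjs}, now working from the endpoint formulas of Proposition \ref{shut2} in place of Proposition \ref{shutx}. Take $q=0$ and write $\phi(0)=\rho(s_0)$ (since $n=1$). The right Farey neighbours are $q'_j=1/(j+1)$, so $q'_{j+1}=1/(j+2)$, and
\begin{equation*}
S'_{j+1} = \frac{\rho(m_{q'_{j+1}})/(j+2) - \rho(s_0)}{1/(j+2)} = \rho(m_{q'_{j+1}}) - (j+2)\rho(s_0).
\end{equation*}
Suppose $s_0\neq 2$. Substituting the expansion from Proposition \ref{shut2} gives
\begin{equation*}
S'_{j+1} = \rho(s_0)j+\log f(u_0,s_0,t_0)+\varepsilon'_j-(j+2)\rho(s_0) = S'+\varepsilon'_j,
\end{equation*}
with $S'$ as in \e{pu}. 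The index is $j+1$ here, but this only affects bookkeeping.

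Next we need monotonicity of $S'_j$. As in \e{zu}, a direct computation gives
\begin{equation*}
S'_{j}-S'_{j+1} = \rho(s_0)+\rho(m_{q'_{j+1}})-\rho(m_{q'_j}).
\end{equation*}
Here $(m_{q'_j},m_{q'_{j+1}},s_0)$ is an oriented triple, with the region $1/(j+2)$ as the far one, up to the order of the left and right entries. For rising trees, Proposition \ref{zag2} says this difference is negative, zero or positive according as $D<4$, $D=4$ or $D>4$. Hence $S'_j$ is strictly decreasing when $D<4$ and strictly increasing when $D>4$. Since $\varepsilon'_j\to0$, we get $S'_j\to S'$ monotonically, which yields the strict inequalities $S'<S'_j$ (for $D<4$) and $S'>S'_j$ (for $D>4$) for every $j\gqs 0$, including $j=0$. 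When $D=4$ all the $S'_j$ are equal, so each equals the limit $S'$. For progressing trees with $s_0\neq 2$ we use Proposition \ref{zag2b} in place of Proposition \ref{zag2}, and then $D>4$. One must check that the endpoint triple $(m_{q'_0},m_{q'_1},s_0)=(t_0,u_0,s_0)$ is covered: it is an oriented triple, so both propositions apply to it.

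Now suppose $s_0=2$, so $T$ is progressing and $D>4$. Then
\begin{equation*}
S'_{j+1}=\log((u_0-t_0)j+t_0)+\varepsilon'_j-(j+2)\rho(2),
\end{equation*}
and since $\rho(2)=0$ this tends to $+\infty=S'(0)$. The monotonicity from Proposition \ref{zag2b}, with $\delta=1$, shows that $S'_j$ is strictly increasing, which is consistent with $S'=\infty>S'_j$.

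The case $q=1$ follows by the symmetric argument with $s$ and $t$ interchanged. There $S_{i+1}=(i+2)\rho(t_0)-\rho(m_{q_{i+1}})=S-\varepsilon_i$, with $S(1)=-\infty$ when $t_0=2$. Alternatively, reflect the tree by $q\mapsto1-q$, which swaps $s_0$ and $t_0$ and negates slopes.

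The main obstacle is the index bookkeeping: confirming that the shifted index in Proposition \ref{shut2} matches $S'_{j+1}$, and that the monotonicity covers $j=0$, so that the strict inequality holds for all $j\gqs0$.
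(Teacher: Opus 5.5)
Your proposal is correct and follows essentially the paper's route. You redo the argument of Proposition~\ref{sjs} with Proposition~\ref{shut2} supplying $S'_{j+1}=S'+\varepsilon'_j$, and you use $\rho(2)=0$ to get $S'_{j+1}=\rho(m_{q'_{j+1}})\to\infty$ when $s_0=2$.

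One sign slip needs fixing in the monotonicity step. The correct difference is $S'_j-S'_{j+1}=\rho(s_0)+\rho(m_{q'_j})-\rho(m_{q'_{j+1}})$; you have the last two terms swapped. By Propositions~\ref{zag2} and~\ref{zag2b}, this is positive, zero or negative according as $D<4$, $D=4$ or $D>4$. That is the opposite of the sign pattern you wrote, but it is exactly what gives your stated conclusion: $S'_j$ strictly decreasing for $D<4$ and strictly increasing for $D>4$.
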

\begin{proof}
Its proof is similar to that of Proposition \ref{sjs}, but based on Proposition \ref{shut2} now. If the tree is progressing, with $s_0=2$, then $D>4$ by Lemma \ref{pr4}, and a calculation finds
\begin{equation*}
  S_{j+1}'(0) = \frac{\phi(q_{j+1}') -\phi(0)}{q_{j+1}' -0} = \frac{\rho(m_{q_{j+1}'})/(j+2) -\rho(m_0)/1}{1/(j+2)}= \rho(m_{q_{j+1}'}).
\end{equation*}
Similarly for $S_{i+1}(1)$ and therefore, as $j \to \infty$,
\begin{align*}
  S_{j+1}'(0) & = \log((u_0-t_0)j +t_0) + \varepsilon'_j \to \infty, \\
   S_{i+1}(1) & = 
 -\log((u_0-s_0)i +s_0) - \varepsilon_i \to -\infty,
\end{align*}
by Proposition \ref{shut2}. This requires the limits  $S'(0)=\infty$ if $s_0=2$ and $S(1)=-\infty$ if $t_0=2$.
\end{proof}

One further required result  quantifies the curvature: 

\begin{cor} \la{sldi}
Let $T$ be a rising tree with $D\neq 4$ or a progressing tree. 
Suppose $p, q, p'$ are in $\Q\cap[0,1]$ with $p<q<p'$ and let   $n$ be the denominator of $q$. Then the difference of the slopes on the graph of $\phi_T$ between these points can be estimated precisely with
\begin{equation} \la{cq}
 \frac{\kappa'_T n}{m_q^2} < |\g(p,q) -\g(q,p')| < \frac{\kappa_T n}{m_q^\delta},
\end{equation}
where the upper bound requires that  $p$ and $p'$ are each Farey neighbors of $q$.
Here  $\kappa_T'$ and $\kappa_T$ are positive constants depending only on $T$. Also $\delta=2$, except that if $m_{q_0}=2$ or $m_{q'_0}=2$ we must take $\delta=1$.
\end{cor}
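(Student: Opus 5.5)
Assume $D<4$; the case $D>4$ is symmetric with all inequalities reversed. The plan is to reduce both bounds to the slopes $S_i$, $S'_j$ attached to $q$, using convexity to control secants in terms of neighbor slopes. Strict convexity of the graph of $\phi$ (Theorem \ref{cont}) shows that $\g(p,q)$ is increasing in $p$ for $p<q$ and $\g(q,p')$ is increasing in $p'$ for $p'>q$. So $\g(p,q)<\g(q,p')$, and we must bound $\g(q,p')-\g(p,q)$.

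\emph{Upper bound.} Here $p=q_i$ and $p'=q'_j$ for some $i,j\gqs 0$. By Proposition \ref{sjs0}, $S_i$ is increasing and $S'_j$ is decreasing. Hence
\[
0<S'_j-S_i\lqs S'_0-S_0<\kappa_T n/m_q^\delta
\]
by \e{dw4}, with $\delta$ chosen as stated. If $q=0$ or $q=1$ the statement is vacuous, since $p<q<p'$ forces $q\in(0,1)$.

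\emph{Lower bound.} Here $p$ and $p'$ are arbitrary rationals with $p<q<p'$. Convexity gives $\g(p,q)\lqs \lim_{i}S_i$ from the left. Indeed $q_i\to q$, and $\g(p,q)\lqs \g(q_i,q)$ once $q_i>p$, which holds for large $i$ because the left slopes increase toward $q$. Proposition \ref{sjs} identifies this limit as $S$. In the same way $\g(q,p')\gqs S'$. Therefore
\[
\g(q,p')-\g(p,q)\gqs S'-S=n\log\Bigl(1+\frac{4-D}{u^2-4}\Bigr)
\]
by \e{sps}, with $u=m_q$.

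It remains to bound this logarithm below by a multiple of $1/u^2$. Let $x=(4-D)/(u^2-4)>0$. By Corollary \ref{fin}, $u\gqs \xi_T>2$, so $x\lqs X_T:=(4-D)/(\xi_T^2-4)$. Using $\log(1+x)\gqs x/(1+X_T)$ for $0<x\lqs X_T$, together with $u^2-4<u^2$, we get
\[
S'-S> \frac{(4-D)\,n}{(1+X_T)\,u^2}.
\]
This gives the lower bound with $\kappa'_T=(4-D)/(1+X_T)$.

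For $D>4$ we have $x=(D-4)/(u^2-4)\in(0,1)$, since $u^2-D>0$ (shown in the proof of Proposition \ref{sjs}). Then $-\log(1-x)\gqs x>(D-4)/u^2$, so $\kappa'_T=D-4$ works. For progressing trees the minimal region may equal $2$, but this case needs no lower bound on $u$. Note also that $q\in(0,1)$ forces $u=m_q>2$ for progressing trees, because every oriented triple is ordered and the $2$-regions occur only at endpoints.

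The main obstacle is the limiting step in the lower bound. One must check that the monotone convergence $S_i\to S$ combined with convexity really yields $\g(p,q)\lqs S$, with the inequality strict. Strictness follows because $\g(p,q)<\g(q_i,q)<S$ for a suitable large $i$, the second inequality coming from \e{<}. No strictness is lost in the final bound anyway, since $S'-S$ already exceeds $\kappa'_T n/u^2$ strictly.
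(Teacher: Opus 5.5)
Your proposal is correct and follows the paper's proof essentially step for step. The upper bound comes from the monotonicity of $S_i$, $S'_j$ together with \e{dw4}. The lower bound comes from placing Farey neighbors $q_i$, $q'_j$ strictly between $p$ and $p'$, then bounding $|S'-S|$ via \e{sps}. The only difference is the elementary logarithm inequality: the paper uses $|\log(1+x)|\ge \log(2)\min(1,|x|)$ to get $\kappa'_T=\log(2)\min(4,|D-4|)$ uniformly in $D$, while your bound $\log(1+x)\ge x/(1+x)$ with the minimal region $\xi_T$ yields a different but equally valid constant.
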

\begin{proof} First assume that  $p$ and $p'$ are each Farey neighbors of $q$. By Lemma \ref{qq} this means $p=q_i$ and $p'=q'_j$ for some $i,j \gqs 0$. In the notation of Propositions \ref{sjs0}, \ref{sjs} we have $\g(p,q) -\g(q,p') = S_i-S_j'$ and these results show that
\begin{equation*}
  |S-S'|< |S_i-S_j'| \lqs |S_0-S_0'|.
\end{equation*}
See Figure \ref{slo} for an illustration. The upper bound in \e{cq} now follows from \e{dw4}. For the lower bound use \e{sps} and that $|\log(1+x)|\gqs \log(2) \cdot \min(1,  |x|) $ for $1+x>0$. Hence
\begin{align*}
   |S-S'| = \left| n \log\left(1+\frac{4-D}{m_q^2-4}\right)\right| & > n  \log(2) \cdot \min \left(1,  \frac{|4-D|}{m_q^2}\right) \\
   & =  n \cdot m_q^{-2}  \cdot \log(2) \min \left(m_q^2,  |4-D|\right).
\end{align*}
This implies the lower bound in \e{cq} for $\kappa_T' = \log(2) 
  \min(4, |D-4|)$.

To obtain the lower bound for general 
$p, p'$  with $p<q<p'$, use that there exist Farey neighbors $q_i$ and $q'_j$ of $q$ with  $p<q_i<q<q_j'<p'$. The strict concavity or convexity of $\phi$ then implies $|\g(p,q) -\g(q,p')| > |S_i-S_j'|$ and the previous work applies.
\end{proof}

\section{Extending $\phi$ to a function $\Phi$ on $\R\cap [0,1]$} \la{y5}

\subsection{Differentiability}
Recall Propositions \ref{sjs}, \ref{sjs2} and their notation as well as $\g(p,q)$ from \e{sde}. 

\begin{prop} \la{der}
 Let a rising or progressing tree have encoding function $\phi$. Take $q \in \Q\cap [0,1]$. Then $\phi$ has left derivative $S(q)$ at $q\neq 0$,
  and   right derivative $S'(q)$ at $q\neq 1$. 
\end{prop}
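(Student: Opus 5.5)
We prove the claim for the right derivative at $q\neq 1$; the left derivative at $q\neq 0$ follows by the mirror argument with $q_i$, $S_i$, $S$ in place of $q'_j$, $S'_j$, $S'$. The case $D=4$ is immediate: by the proof of Theorem \ref{cont} the graph of $\phi$ is a straight line, and by Propositions \ref{sjs}, \ref{sjs2} every secant slope $S'_j$ equals $S'$. So assume $D\neq 4$, say $D<4$, where $\phi$ is strictly convex on $\Q\cap[0,1]$; the case $D>4$ is identical with all inequalities reversed.

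The key step is a squeeze of secant slopes using convexity. Fix a rational $p\in(q,q'_0]$. Since $q'_j\to q$ from the right by Lemma \ref{qq}, there is $j$ with $q<q'_{j+1}\lqs p\lqs q'_j$. Convexity means $\g(q,\cdot)$ is nondecreasing on rationals to the right of $q$. Hence
\[
S'_{j+1}=\g(q,q'_{j+1})\lqs \g(q,p)\lqs \g(q,q'_j)=S'_j .
\]
As $p\to q^+$ we have $j\to\infty$, and Propositions \ref{sjs} (for $0<q<1$) and \ref{sjs2} (for $q=0$) give $S'_j\to S'$ monotonically. Therefore $\g(q,p)\to S'(q)$. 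This is exactly the statement that the right derivative of $\phi$, taken over rational $p$, equals $S'(q)$.

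The point needing the most care is the endpoint $q=0$ for a progressing tree with $s_0=2$. There $S'(0)=\infty$, and the squeeze only shows $\g(0,p)\to\infty$, so the right derivative is $+\infty$ in the extended sense. We state it that way, and likewise $S(1)=-\infty$ when $t_0=2$.

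A second subtlety is that $\phi$ is so far defined only on rationals. The derivative here is therefore understood as the limit over rational $p$. Once $\Phi$ is defined as the continuous extension, the same limit passes to real $p$, because $\g(q,\cdot)$ is monotone and $\Phi$ is continuous. For $D>4$ the squeeze holds with reversed inequalities, and the limit is again $S'$.
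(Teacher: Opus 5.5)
Your proof is correct and follows essentially the same route as the paper. You squeeze the secant slope $\gamma(q,p)$ between consecutive Farey-neighbor slopes using the convexity or concavity of $\phi$ from Theorem \ref{cont}, then apply the monotone limits $S'_j\to S'$ (or $S_i\to S$) from Propositions \ref{sjs} and \ref{sjs2}. The paper does the same with an explicit $\varepsilon$, writes it out for the left derivative, and uses the same conventions $S'(0)=\infty$, $S(1)=-\infty$ in the progressing case.
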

\begin{proof}
Assume $D<4$. 
We show next that the left derivative of $\phi$ at $q \in \Q\cap (0,1]$ exists and is $S = S(q)$.
 By Propositions \ref{sjs}, \ref{sjs2} we know $S_i$ is strictly increasing with limit $S$.
For any $\varepsilon>0$, choose $i$ large enough that $S-S_i<\varepsilon$.  We claim  that $S-\varepsilon< \g(p,q) <S$ for all rational $p$ with $q_i < p<q$. By convexity, $\g(p,q)$ is a strictly increasing function of $p$. Since $q_j \to q$ monotonically by Lemma \ref{qq}, there exists $j$ so that $p<q_j$. Hence
\begin{equation*}
  S - \varepsilon <S_i < \g(p,q) < S_j <S,
\end{equation*}
confirming the claim.
Similarly for the right derivative and for $D>4$. When $D=4$ then $S(q)=S'(q)$, the slope of the line.
\end{proof}

Therefore  $\phi$ is  semi-differentiable. 
 For $D \neq 4$
 the left and right derivatives are different at every $q\in \Q\cap(0,1)$ by \e{sps}.
 So $\phi$ is differentiable exactly  when $D=4$.


\begin{lemma} \la{yk}
  Let a rising or progressing tree have encoding function $\phi$. For each irrational $\alpha \in(0,1)$ there exists $\g_\alpha \in \R$ with the following property. Given any $\varepsilon >0$,  there is an open neighborhood $\mathcal N$ of $\alpha$ where $|\g(q,q')- \g_\alpha|<\varepsilon$ for all unequal rationals $q, q' \in \mathcal N$.
\end{lemma}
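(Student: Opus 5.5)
The plan is to obtain $\g_\alpha$ as the common limit of nested Farey intervals shrinking to $\alpha$. We use convexity or concavity (Theorem \ref{cont}) to trap every secant slope inside such an interval, and the curvature bound in Corollary \ref{sldi} to show the trapping interval has length tending to $0$. Take $D<4$; the case $D>4$ is symmetric with inequalities reversed, and for $D=4$ all slopes are equal, so the statement is trivial.

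First, for each $N$ let $a_N<\alpha<b_N$ be the consecutive Farey fractions of order $N$ surrounding $\alpha$. These are Farey neighbors, so their mediant $c_N$ has denominator $n(c_N)=n(a_N)+n(b_N)>N$. By convexity, every rational secant slope $\g(q,q')$ with $q,q'\in[a_N,b_N]$ satisfies
\[
\g(a_N^-,a_N)\lqs S'(a_N)\lqs \g(q,q')\lqs S(b_N),
\]
where I use the one-sided derivatives from Proposition \ref{der}. Convexity makes secant slopes monotone in each endpoint, so any secant inside the interval lies between the right derivative at $a_N$ and the left derivative at $b_N$. Set $I_N=[S'(a_N),S(b_N)]$. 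These intervals are nested as $N$ increases, since $[a_N,b_N]$ shrinks and convexity again gives monotonicity. We then define $\g_\alpha$ as the common point, once we show that $|I_N|\to 0$.

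The main obstacle is to bound $S(b_N)-S'(a_N)$. Write $c=c_N$. Its Farey neighbors generate the points $a_N$ and $b_N$: in the notation of Lemma \ref{qq}, $a_N=q_0$ and $b_N=q_0'$ for $q=c$. Convexity gives
\[
S(b_N)-S'(a_N)\lqs \g(c,b_N)-\g(a_N,c) + \bigl(S(b_N)-\g(c,b_N)\bigr)+\bigl(\g(a_N,c)-S'(a_N)\bigr).
\]
That decomposition is awkward. A cleaner route is to note that $S(b_N)-S'(a_N)$ is the total jump in slope across $[a_N,b_N]$. I would instead iterate: the next Farey interval containing $\alpha$ is either $[a_N,c]$ or $[c,b_N]$, so the slope-range width decreases at each step. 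It is therefore enough to show that the width at step $N$ is at most the sum of the slope jumps at all later mediants, and that this sum tends to $0$.

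To control that sum, I would use the upper bound in Corollary \ref{sldi}: the jump at a rational $q$ of denominator $n$ is at most $\kappa_T n/m_q^\delta$. Proposition \ref{sim2} gives $m_q\gqs e^{\omega_T n}$ for rising trees, and Proposition \ref{sih} gives the corresponding bound for progressing trees. The $\delta=1$ cases occur only at mediants adjacent to $2$-regions, and those sit near the endpoints $0,1$, away from an irrational $\alpha$ once $N$ is large. Hence the sum of jumps over rationals of denominator $>N$ in $[a_N,b_N]$ is bounded by $\sum_{n>N} C n e^{-\omega n}$ times the number of such rationals of each denominator, which is at most $n$. This tail tends to $0$.

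A possible worry is whether the total slope variation over the interval really equals the sum of these jumps, because $\phi$ might also carry a continuous part of its slope variation. To avoid needing that, I would argue directly. By Proposition \ref{sjs}, $S(b_N)$ and $S'(a_N)$ are limits of the secant slopes $S_i(b_N)$ and $S'_j(a_N)$. Repeatedly subdividing by mediants down to the Farey points of order $M\gg N$, the difference between the largest and smallest consecutive segment slopes is a telescoping sum of the jumps $S'_0-S_0$ at the order-$M$ mediants in the interval, and each of these is bounded by \e{dw4}. Letting $M\to\infty$ gives $|I_N|\lqs \sum_{n>N} n\cdot \kappa_T n e^{-2\omega_T n}\to 0$. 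Taking the neighborhood $\mathcal N=(a_N,b_N)$ with $|I_N|<\varepsilon$ then completes the proof.
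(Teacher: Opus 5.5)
Your proof is correct, but its key width estimate differs from the paper's.

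The paper works with consecutive convergents $C_j<\alpha<C_{j+1}$, which are Farey neighbours just like your $a_N,b_N$. It adds one further Farey neighbour $p$ of $C_j$ on the left and $p'$ of $C_{j+1}$ on the right. By convexity, every secant slope in $(C_j,C_{j+1})$ lies between $\gamma(p,C_j)$ and $\gamma(C_{j+1},p')$. That range is the sum of only \emph{two} single-point jumps, at $C_j$ and at $C_{j+1}$, and each is controlled by the upper bound in Corollary \ref{sldi}. The paper then gets $\gamma_\alpha$ by a Cauchy-sequence argument, which is equivalent to your nested intervals.

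The decomposition you set aside as awkward already does this for your own $I_N$. Since $a_N,b_N$ are Farey neighbours, $\gamma(a_N,b_N)$ is one of the $S_i(b_N)$ and also one of the $S'_j(a_N)$. Propositions \ref{sjs0} and \ref{sjs} then give $|S(b_N)-\gamma(a_N,b_N)|<|S'_0(b_N)-S_0(b_N)|$ and $|\gamma(a_N,b_N)-S'(a_N)|<|S'_0(a_N)-S_0(a_N)|$. So $|I_N|$ is at most two jumps at points whose denominators tend to infinity.

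Your route bounds the width instead by the sum of the jumps over all rationals in $(a_N,b_N)$. The telescoping over $\mathcal F_M$ followed by $M\to\infty$ is sound: the jump at each interior point relative to its $\mathcal F_M$-neighbours is at most its $|S'_0-S_0|$, by the monotonicity in Proposition \ref{sjs0}. What this costs you is summability, which needs genuine exponential decay of $n/m_q^\delta$ uniformly over the interval.

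For a progressing tree, Proposition \ref{sih} supplies that decay only after you use $\min(k,n-k)\ge c\,n$ for $k/n$ in a fixed compact subinterval of $(0,1)$ containing $[a_N,b_N]$. In your final bound, $e^{-2\beta_T c n}$ then replaces $e^{-2\omega_T n}$. Near an endpoint carrying a $2$-region the growth of $m_q$ is only linear, so your sum would diverge there. State this localization explicitly rather than only gesturing at Proposition \ref{sih}.

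The paper's two-jump argument avoids all of this. It needs only that each individual jump tends to $0$, which is why the weak bound $O(1/v_j)$ suffices there for progressing trees. Your version does give an explicit rate $\sum_{n>N}\kappa n^2 e^{-2cn}$, but the two-jump bound $O(n e^{-2\omega_T n})$ is already sharper.
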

\begin{proof}
 First we fix any $\varepsilon >0$. Let $C_j=u_j/v_j$ be the sequence of continued fraction convergents to $\alpha$. Consecutive members of this sequence are Farey neighbors,  alternating  above and below $\alpha$, with $v_j$ strictly increasing to $\infty$. Choose $j$ so that $C_j<\alpha< C_{j+1}$, and large enough that the slope differences to points on the graph of $\phi$ at Farey neighbors on each side of $C_j$, $C_{j+1}$ are at most $\varepsilon/2$. This is possible since, for example on the left, Corollary \ref{sldi} shows the difference is $O(m^{-1}_{C_j})$, and Propositions \ref{sim2}, \ref{sih} find this is $O(e^{-\omega_T v_j})$ and $O(1/v_j)$, respectively. Fix $p$ as any left Farey neighbor of $C_j$ and $p'$ any right Farey neighbor of $C_{j+1}$. Let
\begin{equation*}
  \g_1=\g(p,C_{j}), \qquad \g_2=\g(C_{j},C_{j+1}), \qquad \g_3=\g(C_{j+1},p'),
\end{equation*}
be the slopes of the line segments. For definiteness, we may assume $D<4$, ($D=4$ is trivial). 
 Then
\begin{equation*}
  \g_1<\g_2<\g_1+\varepsilon/2, \qquad \g_2<\g_3<\g_2+\varepsilon/2.
\end{equation*}
As $\phi$ is strictly convex by Theorem \ref{cont}, any unequal rationals $q, q' \in (C_j, C_{j+1})$ must have 
\begin{equation*} 
  \g_1< \g(q,q') < \g_3 < \g_1+\varepsilon.
\end{equation*}

Consider a positive sequence $\varepsilon_i \to 0$. The above argument for $\varepsilon = \varepsilon_i$ produces a $t_i$ $(=\g_1)$ and a neighborhood $\mathcal N_i$ $(=(C_j, C_{j+1}))$ of $\alpha$ so that $|\g(q,q')-t_i|< \varepsilon_i$ for all unequal rationals $q, q' \in \mathcal N_i$. Then $t_i$ is a Cauchy sequence, converging to $\g_\alpha$ with the desired property.
\end{proof}

\begin{lemma} \la{yu}
  Let a rising or progressing tree have encoding function $\phi$. Fix any irrational $\alpha \in(0,1)$. For every $\varepsilon >0$,  there exists an open neighborhood $\mathcal N$ of $\alpha$ so that $|\phi(q')-\phi(q)|<\varepsilon$ for all rational $q, q' \in \mathcal N$.
\end{lemma}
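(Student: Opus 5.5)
The plan is to deduce the statement directly from Lemma \ref{yk}, which controls secant slopes near $\alpha$. Fix an irrational $\alpha\in(0,1)$ and $\varepsilon>0$. Apply Lemma \ref{yk} with tolerance $1$ to obtain an open neighborhood $\mathcal N_1$ of $\alpha$ such that
\begin{equation*}
|\g(q,q')-\g_\alpha|<1 \qquad \text{for all unequal rationals } q,q'\in \mathcal N_1 .
\end{equation*}
Consequently every secant slope between rational points of $\mathcal N_1$ is bounded in absolute value by $M:=|\g_\alpha|+1$.

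Next, shrink the neighborhood. Choose an open interval $\mathcal N\subseteq \mathcal N_1$ containing $\alpha$ whose length $\ell$ satisfies $M\ell<\varepsilon$. For any rationals $q,q'\in\mathcal N$ with $q\neq q'$, the definition \e{sde} of the secant slope gives
\begin{equation*}
|\phi(q')-\phi(q)| = |\g(q,q')|\cdot|q'-q| \lqs M\ell<\varepsilon .
\end{equation*}
When $q=q'$ the bound is trivial. This proves the lemma.

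Essentially all of the work lies in Lemma \ref{yk}, which we are allowed to assume; its input is the quantitative curvature bound of Corollary \ref{sldi} together with the growth estimates of Propositions \ref{sim2} and \ref{sih}. The present step is a Lipschitz-type consequence of it. The only point to watch is that $\mathcal N$ must be taken inside the neighborhood from Lemma \ref{yk} and also be short enough, which the choice above ensures. This uniform Cauchy property of $\phi$ on rationals near irrational points is what will allow $\phi$ to be extended continuously to $\Phi$ on all of $[0,1]$.
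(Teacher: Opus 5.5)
Your proof is correct and matches the paper's argument: apply Lemma \ref{yk} with tolerance $1$ to bound the secant slopes by $|\g_\alpha|+1$ near $\alpha$, then shrink the neighborhood to width below $\varepsilon/(|\g_\alpha|+1)$. Requiring $M\ell<\varepsilon$ strictly, rather than taking width exactly $\varepsilon/M$ as the paper does, is an immaterial difference.
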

\begin{proof}
By Lemma \ref{yk},  there exist $\g_\alpha$ and a neighborhood $\mathcal N$ of $\alpha$ so that $|\g(q,q')- \g_\alpha|<1$ for all unequal rationals $q, q' \in \mathcal N$. Hence
\begin{equation*}
  |\phi(q')-\phi(q)| < (|\g_\alpha| +1)|q'-q|.
\end{equation*}
Shrinking $\mathcal N$ if necessary to have width $\varepsilon/(|\g_\alpha|+1)$ gives the result.
\end{proof}

It follows from Lemma \ref{yu} that if a sequence of rationals $x_j$  converges to an irrational $x \in(0,1)$, then  $\phi(x_j)$ is a Cauchy sequence and must have a limit.  Define $\Phi(x)$ to be this limit, necessarily the same for all rational sequences  converging to $x$.

\begin{adef} \la{phid2}
{\rm Let $T$ be a rising tree or a progressing tree.
The {\em encoding function over the reals} for $T$ is $\Phi = \Phi_T$, well defined by
\begin{equation}\label{pph}
  \Phi(x) := \lim_{x_j \in \Q \to x}\phi(x_j) \qquad \qquad x \in \R \cap [0,1].
\end{equation}
}
\end{adef}


\begin{proof}[Proof of Theorem \ref{mthm}]
If $D=4$ then it is clear that points $(x,\Phi(x))$, as limits of points on a line by Theorem \ref{cont}, must be on this same line. So the graph of $\Phi$ is a continuous straight line.

For $D<4$ we claim now that the graph of $\Phi$ is strictly convex. For any real $u_1<u_2<u_3$ in $[0,1]$, with  two rational and one irrational,  by taking  $x_j \in \Q$ approaching the  irrational $u_i$, and using the strict convexity of $\phi$, it is straightforward to show that $\g(u_1,u_2) \lqs \g(u_2,u_3)$. Next take any real $r_1 <r_2<r_3$ in $[0,1]$. Strict convexity follows if it can be shown that $\g(r_1,r_2) < \g(r_2,r_3)$. Include any six rational numbers $p_i$ between them as follows,
\begin{equation*}
  \bm{r_1} \ < p_1<p_2<p_3< \ \bm{r_2} \ <p_4<p_5<p_6<\ \bm{r_3}.
\end{equation*}
Then 
$\g(r_1,p_1)\lqs \g(p_1,p_2) < \g(p_2,p_3)\lqs \g(p_3,r_2)$ and as a consequence $\g(r_1,r_2)<\g(p_3,r_2)$. Similarly on the right  $\g(r_2,p_4)<\g(r_2,r_3)$. Combined with $\g(p_3,r_2) \lqs \g(r_2,p_4)$, we have verified  $\g(r_1,r_2) < \g(r_2,r_3)$ and that $\Phi$ is strictly convex.

In the same way, $\Phi$ is strictly concave for $D>4$. Now the arguments in the proof of Theorem \ref{cont} go through to show that, for all $D$, $\Phi$ is continuous on its domain $\R \cap [0,1]$. The arguments for Proposition \ref{der} and Lemma \ref{yk} also go through, including all reals in the constructed intervals instead of only rationals. So the left and right derivatives of $\Phi(x)$ at rational $x=q$ are $S(q)$ and $S'(q)$, respectively. At irrational $x=\alpha$ the derivative of $\Phi(x)$ exists and is $\g_\alpha$.
\end{proof}


\subsection{Summary of how to compute left and right derivatives of $\Phi$ at rationals.}
Let $T$ be a rising or progressing tree with base triple $(s_0,u_0,t_0)$. For $q=k/n \in \Q \cap (0,1)$, let $q_0=k_0/n_0<q_0'=k_0'/n_0'$ be the Farey neighbors with mediant $q$. Write the oriented triple $(m_{q_0},m_q,m_{q_0'})$ as $(s,u,t)$ for convenience, and
recall our notation
\begin{equation*}
   \lambda_u  := (u+\sqrt{u^2-4})/2, \qquad \rho(u)  :=\log(\lambda_u), \qquad f(s,u,t) := (\lambda_u s-t)/\sqrt{u^2-4}.
\end{equation*}
The left and right derivatives of $\Phi_T(x)$ (and $\phi_T(x)$) at $x = q$ are then given by
\begin{align*}
  \left.\tfrac{d}{dx}\right|_{x=q^-}\Phi(x) & = S(q) =  \rho(u) n_0   -n \log f(s,u,t), \\
  \left.\tfrac{d}{dx}\right|_{x=q^+}\Phi(x) & = S'(q) =  -\rho(u) n_0'   +n \log f(t,u,s),
\end{align*}
respectively, as in  Proposition \ref{sjs}. At $x=0, 1$, see Proposition \ref{sjs2},
\begin{align*}
 \left.\tfrac{d}{dx}\right|_{x=0^+}\Phi(x) & = S'(0) =   -2\rho(s_0)    + \log f(u_0,s_0,t_0),  \\
 \left.\tfrac{d}{dx}\right|_{x=1^-}\Phi(x) & = S(1) =  2\rho(t_0)    - \log f(u_0,t_0,s_0),
\end{align*}
where we understand $S'(0)=\infty$ if $s_0=2$ and $S(1)=-\infty$ if $t_0=2$.

For example, with $T=(2.1,5,2.4)$ and $q=2/3$ use that $q_0=1/2, q_0'=1/1$ and $(s,u,t)=(5,9.9,2.4)$ to compute the left and right derivatives of $\Phi_T(x)$ at $x=2/3$ as approximately $-0.1447, -0.3415$, respectively. See the right of Figure \ref{phplots}.
The right derivative at $0$ and the left derivative at $1$ are $1.3090, -0.4490$, respectively.
Derivatives  of $\Phi(x)$ at irrational $x$ are computed in Section \ref{seri}.

\subsection{Infinite flatness} \la{ite}
 Suppose that $f(x)$ is defined on an open interval of $\R$ containing $a$, and $f'(a)$ exists. Similarly to \cite[Sect. 2]{mr95b}, we may say $f$ is {\em infinitely flat} 
 at $a$ if for every  integer $n \gqs 2$ there is a $\delta_n>0$ so that 
 \begin{equation}\label{hy}
 f(x+a)-f(a)-f'(a) x =O\left(|x|^n\right) \quad \text{for all} \quad x \in (-\delta_n, \delta_n).
 \end{equation}
 A well-known example is $e^{-1/x^2}$ at $0$. We show next that $\Phi$ for a rising tree is infinitely flat at
  irrationals $\alpha$ with bounded continued fraction coefficients. This means that $\alpha = [a_0, a_1, a_2,  \dots]$ and $a_i \leq K$ for some $K$ and all $i$. The set of such irrationals is dense in $\R$ since it contains the quadratic irrationals. Note that all the continued fractions in this paper are standard ones with integer coefficients $a_i\gqs 1$ for $i\gqs 1$. 

\begin{theorem} \la{et}
The encoding function $\Phi$ for a rising tree $T$ is infinitely flat at any irrational $\alpha$ that has  continued fraction coefficients that are bounded by some $K$.
In fact, there exist $A, B >0$ depending only on $T$ and  $K$, 
so that for $x$ in a neighborhood of $0$,
\begin{equation} \la{abd}
  \left|\Phi(x+\alpha)-\Phi(\alpha)-\Phi'(\alpha)x \right| \lqs A \sqrt{|x|} \cdot e^{-B/\sqrt{|x|}}.
\end{equation}
\end{theorem}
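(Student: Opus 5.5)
Since $\Phi$ is convex or concave (Theorem \ref{mthm}), the quantity $E(x):=\Phi(x+\alpha)-\Phi(\alpha)-\Phi'(\alpha)x$ has constant sign. Its size is governed by how far the one-sided slopes of $\Phi$ between $\alpha$ and $\alpha+x$ can drift from $\Phi'(\alpha)$. Concretely,
\[
|E(x)| \lqs |x| \cdot \sup_{y \text{ between } \alpha,\alpha+x} |\Phi'_\pm(y)-\Phi'(\alpha)|,
\]
so the plan is to show that the total slope variation on a small interval around $\alpha$ is exponentially small in the denominators of the continued fraction convergents.

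\emph{Step 1: localize with convergents.} Let $C_j=u_j/v_j$ be the convergents to $\alpha$. Given small $x$, choose $j$ maximal so that $\alpha+x$ still lies in the interval $I_j$ between $C_j$ and $C_{j+1}$. Then $\alpha$ and $\alpha+x$ both lie in $I_j$. By convexity/concavity, every slope of $\Phi$ in $I_j$ is pinched between the slopes at Farey neighbors just outside $I_j$, exactly as in the proof of Lemma \ref{yk}. That argument, combined with Corollary \ref{sldi} and Proposition \ref{sim2}, bounds the slope variation on $I_j$ by
\[
\ll v_j/m_{C_j}^2+v_{j+1}/m_{C_{j+1}}^2 \ll v_{j+1} e^{-2\omega_T v_j}.
\]
The descending path only affects finitely many triples, and those are absorbed into constants.

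\emph{Step 2: relate $|x|$ to $v_j$ using bounded partial quotients.} Maximality of $j$ means $\alpha+x\notin I_{j+1}$. Hence $|x|\gg |\alpha-C_{j+2}|\gg 1/v_{j+2}^2$. Since $a_i\lqs K$, we have $v_{j+2}\lqs (K+1)^2 v_j$, so $|x|\gg_K v_j^{-2}$, that is, $v_j \gg_K |x|^{-1/2}$. Also $v_{j+1}\lqs (K+1)v_j$.

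\emph{Step 3: combine.} Steps 1 and 2 give
\[
|E(x)|\ll |x|\, v_j e^{-2\omega_T v_j}.
\]
The function $v e^{-2\omega_T v}$ is decreasing for large $v$, so we may substitute $v_j\gg |x|^{-1/2}$. This yields
\[
|E(x)|\ll |x|\cdot|x|^{-1/2}e^{-B/\sqrt{|x|}}=A\sqrt{|x|}\,e^{-B/\sqrt{|x|}},
\]
which is \eqref{abd}. The $\sqrt{|x|}$ factor in the statement is a strong hint that this is the intended route. Infinite flatness then follows immediately, because $e^{-B/\sqrt{|x|}}=O(|x|^n)$ for every $n$.

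The main obstacle is Step 1. The Lemma \ref{yk} argument pins slopes only inside $(C_j,C_{j+1})$, bounding them by the slopes to Farey neighbors outside. I must make sure that both $\alpha$ and $\alpha+x$ lie in a common interval whose endpoints have denominators comparable to $v_j$, with the same $j$ used in Step 2. If $\alpha+x$ falls on the far side of $C_{j+1}$, I will first try passing to $I_{j-1}$, at the cost of another factor $K+1$ in the denominators. The remaining care is in the one-sided slope estimate at rationals inside $I_j$, which comes from the bound $|S-S'|\lqs |S_0-S_0'|$ already used in Corollary \ref{sldi}.
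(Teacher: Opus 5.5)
Your proposal is correct and follows essentially the same route as the paper. Both arguments localize $\alpha$ between consecutive convergents, pinch the secant slopes there between Farey-neighbor slopes just outside, bound the variation by $\ll (v_j+v_{j+1})e^{-2\omega_T v_j}$ via Corollary \ref{sldi} and Proposition \ref{sim2}, and use $a_i\lqs K$ to get $v_j\gg_K |x|^{-1/2}$. You bound the prefactor using monotonicity of $ve^{-2\omega_T v}$, whereas the paper uses the upper bound $v_j<|x|^{-1/2}$; this difference is immaterial. The worry in your last paragraph is moot: you defined $j$ so that $\alpha+x\in I_j$, and $\g(\alpha,\alpha+x)$ stays pinched between the outer slopes even when $\alpha+x$ is an endpoint of $I_j$.
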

\begin{proof}
Let $C_j=u_j/v_j=[a_0,a_1, \dots, a_j]$ be the sequence of continued fraction convergents to $\alpha$, each a Farey neighbor of the next. Suppose $p<C_{j+1}<\alpha<C_{j}<p'$ with $p$ a left Farey neighbor of $C_{j+1}$ and $p'$ a right neighbor of $C_{j}$.  Let
\begin{equation*}
  \g_1=\g(p,C_{j+1}), \qquad \g_2=\g(C_{j+1},C_{j}), \qquad \g_3=\g(C_{j},p'),
\end{equation*}
be the slopes of the line segments. Assume for now that $D<4$, with $\Phi$  strictly convex by Theorem \ref{mthm}. Then  Corollary \ref{sldi} gives an upper bound for slope differences, and with Proposition \ref{sim2},
\begin{equation*}
  \g_2<\g_1+ \kappa_T v_{j+1} e^{-2 \omega_T v_{j+1}}, \qquad \g_3<\g_2+ \kappa_T v_{j}  e^{-2 \omega_T  v_{j}}.
\end{equation*}
We have $\g_1 < \Phi'(\alpha)$, and hence
\begin{equation} \la{ky}
  \g_3 < \Phi'(\alpha) + \kappa_T (v_j+v_{j+1}) e^{-2 \omega_T  v_{j}}.
\end{equation}
Now let $\g_4 = \g(\alpha,C_{j})$. By convexity,
\begin{equation} \la{kyb}
 \g_4 < \g_3, \qquad \Phi(x+\alpha) \lqs \Phi(\alpha) + \g_4 x \quad \text{for} \quad 0 \lqs x \lqs C_j-\alpha.
\end{equation}
Together \e{ky}, \e{kyb} give
\begin{equation}\label{cv}
  \Phi(x+\alpha) \lqs \Phi(\alpha) + \Phi'(\alpha) x +  \kappa_T (v_j+v_{j+1}) e^{-2 \omega_T  v_{j}}x  \quad \text{for} \quad 0 \lqs x \lqs C_j-\alpha.
\end{equation}

The convergents $C_j$ approach $\alpha$ with, as in \cite[Chap. 1]{khi},
\begin{equation} \la{khh}
  \frac{1}{v_j(v_j+v_{j+1})} < C_j -\alpha < \frac{1}{v_j v_{j+1}}.
\end{equation}
The recurrence $v_{j+1} =v_j a_{j+1} +v_{j-1}$ implies that
\begin{equation}\label{bc}
  v_{j+1} <(K+1) v_j, \qquad v_{j+2} < (K+1)^2 v_j.
\end{equation}
Hence
\begin{equation} \label{bc2}
  \frac{1}{(K+2)v_j^2} < C_j -\alpha < \frac{1}{v_j^2}.
\end{equation}
The bound on $\Phi(x+\alpha)$ in \e{cv} will be used only for $x$ in the range $C_{j+2}-\alpha \lqs x \lqs C_j-\alpha$.
Then by \e{bc} and \e{bc2}, these $x$ values satisfy
\begin{equation*}
  \frac{1}{(K+2)^5v_{j}^2} <  x  < \frac{1}{v_j^2} \quad  \implies  \quad  \frac{x^{-1/2}}{(K+2)^{5/2}} <  v_j  < x^{-1/2}.
\end{equation*}
Set the positive constants $A :=\kappa_T(K+2)$ and $B:=2\omega_T (K+2)^{-5/2}$. With \e{cv}, we have shown that
\begin{equation}\label{cv2}
  \Phi(x+\alpha) \lqs \Phi(\alpha) + \Phi'(\alpha) x + A \sqrt{x} \cdot e^{-B/\sqrt{x}} \qquad \text{for} \qquad C_{j+2} \lqs x +\alpha \lqs C_j.
\end{equation}
The same arguments go through on the left of $\alpha$ with $C_{j+1} \lqs x +\alpha \lqs C_{j+3}$. Also the case where $D>4$ is similar, reversing inequalities. This proves \e{abd} for all $x$ in the interval $(C_{2}-\alpha,  C_3-\alpha)$ containing $\alpha$, and it follows that $\Phi$ is infinitely flat at $\alpha$.  
(For $D=4$ these results are trivially true.)
\end{proof}

As discussed in the introduction, McShane and Rivin claimed in \cite[Thm. 2.1]{mr95b} that the  boundary for the stable norm ball is  infinitely flat in all irrational directions. As we will see in Section \ref{norm}, there is a direct connection between this boundary and the graph of $\Phi$ when $D=0$. It follows that Theorem \ref{et} implies the boundary is infinitely flat in the dense set of directions that are irrational with bounded continued fraction coefficients.
 The preprint \cite{nxv} of  Doan,  Li, and  Nguyen appeared while this section was being written, giving an elegant exact criterion for infinite flatness on the McShane-Rivin norm ball boundary.   Here we translate the work in \cite[Sect. 5]{nxv} to our more general context and give an example of a point where $\Phi$ is not infinitely flat, and does not even have a second derivative.  
 
 Define the irrational number
\begin{equation} \la{bet}
  \beta=[b_0, b_1, b_2, \dots] = [0, 1, 2^{1\cdot 1}, 2^{2\cdot 3},2^{3\cdot 193}, \dots],
\end{equation}
recursively with $b_{j+1}= 2^{j \cdot v_j}$ where  $u_j/v_j = [b_0,b_1, \dots, b_j]$ is the $j$th convergent $C_j$. This number lies at the other extreme to the irrationals $\alpha$ in Theorem \ref{et}. From its construction it follows  that $v_{j+1}>2^{j \cdot v_j}$ and hence, as in \e{khh}, the convergents  approach $\beta$ extremely rapidly  with 
\begin{equation}\label{bh}
  |\beta-C_j| < e^{-A \cdot v_j},
\end{equation}
for an $A>0$ that can be taken arbitrarily large as $j \to \infty$.

\begin{theorem} (Based on \cite[Sect. 5]{nxv}) \la{etq}
Assume  that $T$ is a rising tree with $D\neq 4$ or a progressing tree. Let $\beta$ be given by \e{bet}. Then there exists  a sequence $x_j \to 0$ and an arbitrarily large $B >0$  such that
\begin{equation} \la{abd2}
  \left|\Phi(x_j+\beta)-\Phi(\beta)-\Phi'(\beta)x_j \right| > B |x_j|^2 \qquad \text{as} \qquad j \to \infty.
\end{equation}
\end{theorem}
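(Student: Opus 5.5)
Take $x_j = C_j-\beta$, so $x_j+\beta=C_j$ is a convergent, and use the lower bound in Corollary \ref{sldi} at the rational point $C_j$. The idea: at $C_j$ the graph of $\Phi$ has a corner whose slope jump is at least $\kappa'_T v_j/m_{C_j}^2$. Because $\beta$ lies super-exponentially close to $C_j$ (by \e{bh}), the tangent line at $\beta$ cannot absorb this corner, and the deviation at $C_j$ is roughly the slope jump times the distance from $C_j$ to the point where the deviation starts to grow. The task is to compare a quantity of size $e^{-O(v_j)}$ with $|x_j|^2 < e^{-2A v_j}$, where $A$ is arbitrarily large.

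Treat $D<4$ (strict convexity); the case $D>4$ follows by reversing inequalities. Fix $j$ and suppose, say, $\beta<C_j$. Pick an auxiliary point $p'=C_j+h$ to the right of $C_j$, with $h=|x_j|$, so that $\beta<C_j<p'$ and $p'-\beta=2|x_j|$. Convexity gives $\Phi'(\beta)\lqs \g(\beta,C_j)$. The slope jump at $C_j$ gives $\g(C_j,p')-\g(\beta,C_j) > \kappa'_T v_j/m_{C_j}^2$, and by Proposition \ref{sim2} (or the upper bound $m<e^{\omega' n}$, which also holds for progressing trees) this is at least $\kappa'_T e^{-2\omega'_T v_j}$. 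Then
\begin{equation*}
\Phi(p')-\Phi(\beta)-\Phi'(\beta)(p'-\beta) \gqs \bigl(\g(C_j,p')-\g(\beta,C_j)\bigr) h > \kappa'_T e^{-2\omega'_T v_j}|x_j|.
\end{equation*}
So it is more convenient to let the sequence be $y_j=p'-\beta=2|x_j|$, which still tends to $0$. Then the left side of \e{abd2} at $y_j$ exceeds $\tfrac12\kappa'_T e^{-2\omega'_T v_j} y_j$. By \e{bh}, $y_j<2e^{-Av_j}$, so $e^{-2\omega'_T v_j} > (y_j/2)^{2\omega'_T/A}$. Hence the deviation is $\gg y_j^{1+2\omega'_T/A}$, and this is $>B y_j^2$ once $A>2\omega'_T$ and $j$ is large; indeed the ratio to $y_j^2$ tends to infinity, so any $B$ works. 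If $\beta>C_j$, use the mirror-image argument with $p'=C_j-h$ to the left.

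The main obstacle is verifying that the corner estimate really applies, that is, that $\g(C_j,p')-\g(\beta,C_j)$ inherits the lower bound of Corollary \ref{sldi}. That corollary is stated for rationals $p<q<p'$, while here one point is irrational and one is an arbitrary rational. Passing to the limit through rationals $p\to\beta$ gives only a non-strict bound, which is enough. Equivalently, one can use the left and right derivatives $S(C_j)$ and $S'(C_j)$ from Theorem \ref{mthm}, whose difference satisfies \e{sps}, together with monotonicity of secant slopes. Some care is also needed to confirm that the bound in \e{bh} relating $v_j$ to $|x_j|$ holds with $A\to\infty$; this follows from $v_{j+1}>2^{jv_j}$ and \e{khh}. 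For progressing trees $m_{C_j}$ is still at most exponential in $v_j$, so the same argument applies.
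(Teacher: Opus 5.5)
Your proof is correct and is essentially the paper's argument. With $\delta=C_j-\beta$ you use the same three points $\beta<C_j<\beta+2\delta$, transfer the lower bound of Corollary \ref{sldi} to $\Phi$ via convexity, bound $m_{C_j}$ exponentially in $v_j$, replace $\Phi'(\beta)$ by the secant slope $\gamma(\beta,C_j)$, and take $x_j=2\delta$ together with \eqref{bh} for large $A$. One small slip: your auxiliary point $C_j+|x_j|$ is irrational, not an arbitrary rational. This costs nothing, since your justification through the one-sided derivatives $S(C_j)$, $S'(C_j)$ and monotone secant slopes works for arbitrary real points.
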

\begin{proof}
Without losing generality, suppose again  that $D<4$, making $\Phi$ strictly convex. Let $\delta= C_j-\beta$, which we may assume is positive. 
The slope differences between $(C_j,\Phi(C_j))$ and any adjacent points on the graph can be bounded from below by Corollary \ref{sldi}. This corollary was proved for $\phi$, but using the strict concavity/convexity of $\Phi$ in the last paragraph of its proof shows its lower bound is also valid for $\Phi$. Combining this with the upper bound from Proposition \ref{sim2}, (valid for progressing trees), finds
\begin{equation}\label{ew}
  |\g(\beta, \beta+\delta) -\g(\beta+\delta, \beta+2\delta)| > c'  e^{-c  \cdot v_j},
\end{equation}
for some $c, c'>0$. 
This is equivalent to
\begin{equation*}
  \Phi(\beta +2\delta)   > 2(\Phi(\beta+\delta)-\Phi(\beta))+ \Phi(\beta) + c' \delta   e^{-c \cdot v_j}.
\end{equation*}
By convexity $(\Phi(\beta +\delta) - \Phi(\beta))/\delta >\Phi'(\beta)$ and this means
\begin{equation*}
  \Phi(\beta +2\delta)   -  \Phi(\beta) -2\delta \Phi'(\beta) >  c' \delta  e^{-c \cdot v_j}.
\end{equation*}
Take $x_j = 2\delta$ and include \e{bh} to produce
\begin{equation} \la{t80}
  \frac{ \left|\Phi(x_j+\beta)-\Phi(\beta)-\Phi'(\beta)x_j \right|}{|x_j|^2} > \frac{c'  e^{(A-c) v_j}}4.
\end{equation}
Choose $A>c$ and then the right side of \e{t80} grows without bound.
\end{proof}

Hence \e{hy} is not true even for $n=2$ at some irrationals that are very well approximated by rationals. They further show in \cite[Sect. 5]{nxv} that there are uncountably many such irrationals in any interval. In summary, the existence of higher derivatives of $\Phi(x)$ depends  on the arithmetic properties of  the number $x$. For $x=\alpha$ in Theorem \ref{et} all higher derivatives exist and are $0$, while for $x=\beta$ in Theorem \ref{etq} the second derivative does not exist or is infinite.

\subsection{Additional results}

For a Farey-indexed Markov tree $T$ and any $q \in \Q \cap (0,1)$ define the difference
\begin{equation} \la{de}
  \Delta(q) = \Delta_T(q):= \rho(m_q)-\rho(m_{q_0})-\rho(m_{q_0'}),
\end{equation}
for $q_0$ and $q_0'$ the Farey neighbors with mediant $q$. 
As we move up the Farey tree, $\Delta(q)$ becomes exponentially small for rising trees. Combining Propositions \ref{sim2} and \ref{zag2} finds:

\begin{cor} \la{fgh}
For every rising tree $T$ there exist $\kappa_T, \omega_T>0$ so that
\begin{equation} \la{deb}
  \left|  \Delta \left(\frac kn \right)\right| < \frac{\kappa_T}{m^2_{k/n}} < \kappa_T e^{-2\omega_T  n}  \qquad \text{for all} \qquad k/n \in \Q \cap (0,1),
\end{equation}
when $D\neq 4$. For $D = 4$ we have $\Delta(k/n) =0$.
\end{cor}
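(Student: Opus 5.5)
The plan is to combine Proposition \ref{zag2} with Proposition \ref{sim2}. For $q=k/n\in\Q\cap(0,1)$ with Farey neighbors $q_0<q_0'$ whose mediant is $q$, the triple $(m_{q_0},m_q,m_{q_0'})$ is an oriented triple of $T$. Write it as $(s,u,t)$, so that $\Delta(q)=\rho(u)-\rho(s)-\rho(t)$.

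When $D=4$, Proposition \ref{zag2} gives $\rho(u)=\rho(s)+\rho(t)$ directly, so $\Delta(k/n)=0$.

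When $D\neq 4$, the two cases \e{d4a} and \e{d4b} of Proposition \ref{zag2} give, in either order,
\begin{equation*}
  0<|\rho(s)+\rho(t)-\rho(u)|<\kappa_T/u^2 .
\end{equation*}
This is exactly $|\Delta(k/n)|<\kappa_T/m_{k/n}^2$, which is the first inequality of \e{deb}. The key point to check is that the constant $\kappa_T$ in Proposition \ref{zag2} is uniform over all oriented triples of $T$, including the finitely many unordered ones on the descending path. The statement of that proposition already guarantees this.

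For the second inequality I will apply the lower bound of Proposition \ref{sim2}, $m_{k/n}>e^{\omega_T n}$ with $\omega_T>0$. Squaring gives $m_{k/n}^{-2}<e^{-2\omega_T n}$, and hence $\kappa_T/m_{k/n}^2<\kappa_T e^{-2\omega_T n}$.

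No step here is a genuine obstacle. The only care needed is bookkeeping: we use the same constant names $\kappa_T,\omega_T$ as in the earlier results, and we check that every $q\in(0,1)$ really sits at the center of an oriented triple built from its two parent Farey neighbors. That second point follows from Lemma \ref{qq} and the way the Farey indexing is laid over the tree.
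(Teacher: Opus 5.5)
Your proposal is correct and is essentially the paper's own argument. The paper obtains the corollary by applying the uniform bound of Proposition \ref{zag2} to the oriented triple $(m_{q_0},m_q,m_{q_0'})$ and then using the lower bound $m_{k/n}>e^{\omega_T n}$ from Proposition \ref{sim2}.
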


A stronger form of Proposition \ref{shutx} is given next for rising trees. Its uniformity  will be required for Theorems \ref{hnn} and \ref{hn}.

\begin{prop} \la{shut}
Let $T$ be a rising tree.
For $q$  in $\Q \cap (0,1)$ write the oriented triple $(m_{q_0}, m_q, m_{q'_0})$ as $(s, u, t)$, and 
define $\varepsilon_j$, $\varepsilon_j'$ as before with \e{pha}.
Then there exists $C_T$, depending only on $T$, so that
\begin{equation} \la{phu}
  |\varepsilon_j|, |\varepsilon'_j| \lqs  C_T e^{-2\rho(u) j} \qquad \text{for all} \qquad j \gqs 0.
\end{equation}
\end{prop}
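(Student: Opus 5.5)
We will make the asymptotic argument in the proof of Proposition \ref{shutx} quantitative, tracking which constants depend on $q$. By Proposition \ref{start},
\begin{equation*}
  \varepsilon_j = \rho\left(f e^{\rho(u) j} + g e^{-\rho(u) j}\right) - \log\left(f e^{\rho(u) j}\right), \qquad f=f(s,u,t),\ g=g(s,u,t),
\end{equation*}
and similarly for $\varepsilon'_j$ with $s,t$ swapped. Write $M=m_{q_j}=fe^{\rho(u)j}+ge^{-\rho(u)j}$. Using \e{cru} we split
\begin{equation*}
  \varepsilon_j = \log\left(1+\frac{g}{f}e^{-2\rho(u)j}\right) - \rho^*(M).
\end{equation*}
Each term is then bounded by a multiple of $e^{-2\rho(u)j}$ with a constant depending only on $T$.

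For the first term, apply \e{wa} when $2<s,t\lqs u$, which gives $0<g<1$ and $f>s-1$. This bound holds for all but finitely many oriented triples of a rising tree: moving away from the root regions increase, so $u$ exceeds both neighbors except near the root and near the descending path. For the finitely many exceptional $q$, the needed bounds on $f,g$ are simply finite constants. Since $s\gqs\xi_T>2$, we have $f>\xi_T-1>1$. Hence $|g/f|\lqs C$ uniformly, and
\begin{equation*}
  \left|\log\left(1+\tfrac{g}{f}e^{-2\rho(u)j}\right)\right| \lqs C' e^{-2\rho(u)j}.
\end{equation*}
Here we also use that $1+\frac gf e^{-2\rho(u)j}=M/(fe^{\rho(u)j})$ stays bounded below. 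This follows because $M\gqs \xi_T$ and $fe^{\rho(u)j}\lqs M+|g|$.

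For the second term, \e{cru} gives $0<\rho^*(M)\lqs 4\log 2/M^2$. We need $M^2\gg e^{2\rho(u)j}$ uniformly, that is, $M\gqs c\, e^{\rho(u)j}$. This holds because $M\gqs f e^{\rho(u)j}-|g|$ with $f$ bounded below and $|g|$ bounded. It also holds directly since $m_{q_j}\gqs \xi_T$ handles small $j$.

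The main obstacle is this uniformity over all $q$ and all $j\gqs0$, especially $j=0$, where $\varepsilon_0=\rho(s)-\log f$ must be $O(1)$ uniformly. That is the same estimate as above, so uniformity reduces to the uniform lower bound $f(s,u,t)\gqs c_T>0$ and upper bound $|g(s,u,t)|\lqs C_T$. The first thing I would check is \e{fsu}, namely $f>s-1$, which needs $t\lqs u$. This holds for oriented triples except finitely many near the descending path and base, and those are absorbed into $C_T$.
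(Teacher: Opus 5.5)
Your proof is correct, and it takes a different and somewhat shorter route than the paper.

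\textbf{The paper's route.} For ordered triples, the paper compares $\rho(m_{q_j})$ with $\rho(fe^{\rho(u)j})$ using Lemma \ref{rox}. That comparison needs $fe^{\rho(u)j}\gqs\sqrt5$, which can fail for small $j$ because $f$ may be only slightly above $\xi_T-1$. The paper therefore treats $0\lqs j<J$ separately. It bounds $|\varepsilon_0|<\log 2$ and writes $\varepsilon_j-\varepsilon_0=\Delta(q_1)+\cdots+\Delta(q_j)$, which is bounded uniformly by Corollary \ref{fgh}.

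\textbf{Your route.} Your exact splitting $\varepsilon_j=\log\bigl(1+\tfrac gf e^{-2\rho(u)j}\bigr)-\rho^*(m_{q_j})$ applies \e{cru} at the actual tree value $m_{q_j}\gqs\xi_T>2$ rather than at $fe^{\rho(u)j}$. So it covers every $j\gqs 0$ at once. For an ordered triple, \e{wa} gives $0<g/f<1$ and $m_{q_j}>fe^{\rho(u)j}>e^{\rho(u)j}$, hence $|\varepsilon_j|<(1+4\log 2)e^{-2\rho(u)j}$ immediately.

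\textbf{What each buys.} Your argument avoids the case split, Lemma \ref{rox}, and any dependence on Corollary \ref{fgh}, and hence on Propositions \ref{sim2} and \ref{zag2}. The paper's route in exchange expresses $\varepsilon_j$ as a partial sum of the $\Delta(q_i)$. These are the same quantities that drive the series in Section \ref{seri}. Both arguments absorb the finitely many unordered triples centred on the descending path into the constant.

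\textbf{Two small tightenings.}
\begin{itemize}
\item The precise reason \e{wa} applies is that every oriented triple not centred on the descending path is ordered. Then $u>st/2>\max(s,t)$, which is a cleaner justification than ``regions increase away from the root.''
\item For the exceptional triples, state that $f(s,u,t)>0$; otherwise $m_{q_j}$ could not stay $\gqs\xi_T$. This is what makes $\log f$ meaningful and gives your uniform lower bound $f\gqs c_T>0$.
\end{itemize}
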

\begin{proof} 
We focus on $\varepsilon_j$ and the arguments  for $\varepsilon_j'$ are the same, with $s$ and $t$ switching places. First assume that $(s,u,t)$ is ordered. By  \e{wa} this implies that $f(s,u,t)>s-1>1$ and $0<g(s,u,t)<1$.  For $j$ large enough that $f(s,u,t) \cdot e^{\rho(u) j} \gqs \sqrt{5}$, say, Lemma \ref{rox} may be applied to \e{opk} to obtain
\begin{equation} \la{ao}
  \rho(m_{q_j}) = \rho\left(f(s,u,t) \cdot e^{\rho(u) j}\right) + E_1, \qquad |E_1| \lqs 5 \left|\frac{g(s,u,t)}{f(s,u,t)}\right| e^{-2\rho(u) j}.
\end{equation}
 Also, with \e{cru},
\begin{equation} \la{ao2}
  \rho\left(f(s,u,t) \cdot e^{\rho(u) j}\right) = \log\left(f(s,u,t) \cdot e^{\rho(u) j}\right)  + E_2, \qquad |E_2| \lqs \frac{4\log 2}{f(s,u,t)^2} e^{-2\rho(u) j}.
\end{equation}
Set $J := \log(\sqrt{5})/\rho(u)$.
Since $\varepsilon_j =E_1+E_2$, this confirms \e{phu} with $C_T=5+4\log 2$ for every $j\gqs J$.

For  $0 \lqs j < J$ use
\begin{equation*}
   \varepsilon_j = \rho(m_{q_j})  - \rho(u) j  -\log f(s,u,t).
\end{equation*}
Then
\begin{equation} \la{ejx}
   \varepsilon_0 = \rho(s)  -\log f(s,u,t) = \log\left( \frac{\lambda_s}{f(s,u,t)}\right).
\end{equation}
The bounds $s/2\lqs \lambda_s\lqs s$ and \e{wa}  show that  $|\varepsilon_0| < \log 2$. Recalling \e{de}, 
\begin{equation} \la{ejxb}
   \varepsilon_j -   \varepsilon_0 = \Delta(q_j) +  \Delta(q_{j-1}) + \cdots +  \Delta(q_1).
\end{equation}
With \e{deb} the right side of \e{ejxb} is bounded by some $B_T$, depending only on $T$. Therefore
\begin{equation*}
  |\varepsilon_j| \lqs B_T  e^{2\rho(u) j} e^{-2\rho(u) j} < B_T  e^{2\rho(u) J} e^{-2\rho(u) j} 
  \lqs B_T  e^{2\log \sqrt{5}} e^{-2\rho(u) j},
\end{equation*}
confirming \e{phu} with $C_T=B_T  e^{2\log \sqrt{5}}$ for every $j < J$.

For the finite number of unordered oriented triples $(s,u,t)$ on the descending path of $T$, we argue similarly. The difference is that now we know less about $f(s,u,t)$ and $g(s,u,t)$. It must be true that  $f(s,u,t)>0$ for regions to remain $>2$, while $g(s,u,t)$ can be positive or negative.
We have $E_1, E_2, \varepsilon_0$ in \e{ao}, \e{ao2} and \e{ejx} depending on $s,u,t$. Increasing $C_T$, if necessary,  accommodates these cases.
\end{proof}

\section{Rising tree numbers below a given bound} \la{num}
The original motivation for this paper was to generalize Zagier's work in \cite{z82}, and that is achieved in this section. For a rising or progressing tree $T$ let 
\begin{equation*}
  M(X)=M_T(X) := \#\{ q\in \Q \cap [0,1] \, : \, m_q \lqs X \},
\end{equation*}
count (with their multiplicities) all numbers on $T$ that are at most $X$. Then $m_{k/n} = 2 \cosh(n \cdot \phi(k/n))$
means
\begin{equation} \la{eqmx}
  m_{k/n} \lqs X \iff n \cdot \phi(k/n) \lqs \rho(X).
\end{equation}
To convert from the coordinates of the graph $(k/n,\phi(k/n))$ to coordinates $(n,k)$, note that
\begin{equation*}
  (n,k)= \rho(X)\left(\frac 1{\phi(k/n)}, \frac{k/n}{\phi(k/n)}\right)
\end{equation*}
when there is equality in \e{eqmx}.
So we require the transformation
\begin{equation}\label{f}
  F:(x,y) \mapsto (1/y,x/y),
\end{equation}
 sending the vertical strip with $0\lqs x\lqs 1$ in the first quadrant to the triangular region $y\lqs x$ in the first quadrant. Note that $F$ maps the line $\ell$ given by $y=ax+b$ to another line:
  \begin{equation}\label{f2}
    F: \ell \mapsto y=-bx/a+1/a \quad (a\neq 0), \qquad F: \ell \mapsto x=1/b \quad (a=0, b\neq 0).
  \end{equation}

\begin{adef}\la{enc} 
{\rm The {\em  lattice  curve} for a Markov tree $T$ is denoted $\Gamma_T$. It is the image of the graph of $\Phi_T$ under the above transformation $F$.}
\end{adef}

 Considering dilates of lattice curves gives a direct way to picture the numbers $m(T)_{k/n}$, and the next lemma is an easy exercise; examples of the curves $\Gamma_T$ appear in Figures \ref{wplot}, \ref{wplot2ee}.

\begin{lemma} \la{latt}
Let $T$ be a rising or progressing tree. The dilated set $t \cdot \Gamma_T$ for $t>0$ intersects a lattice point $(n,k) \in \Z^2$ with $\gcd(n,k)=1$ exactly when $m(T)_{k/n}=2\cosh(t)$.
\end{lemma}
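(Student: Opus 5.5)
The plan is to unwind the definition of $\Gamma_T$ as $F$ applied to the graph of $\Phi_T$ and reduce everything to the identity $m(T)_{k/n}=2\cosh(n\Phi(k/n))$ from Theorem \ref{mthm}.

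First, a point $(n,k)$ with $\gcd(n,k)=1$ lies on $t\cdot\Gamma_T$ exactly when $(n/t,k/t)\in\Gamma_T$. Since $F(x,y)=(1/y,x/y)$, this means $(n/t,k/t)=F(x,\Phi(x))=(1/\Phi(x),x/\Phi(x))$ for some $x\in[0,1]$ with $\Phi(x)>0$. Taking the ratio of the two coordinates forces $x=k/n$. The first coordinate then gives $n/t=1/\Phi(k/n)$, that is, $t=n\Phi(k/n)$. We also need $0\lqs k\lqs n$, which holds because $F$ maps the strip into the triangle $0\lqs y\lqs x$. Conversely, if $t=n\Phi(k/n)$, the same computation read backwards puts $(n,k)$ on $t\cdot\Gamma_T$.

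Second, Theorem \ref{mthm} (equivalently, the definition $\phi(k/n)=\rho(m_{k/n})/n$ with $\rho=\cosh^{-1}(\cdot/2)$ and $\Phi=\phi$ on rationals) gives $m_{k/n}=2\cosh(n\Phi(k/n))$. Because $\cosh$ is strictly increasing on $[0,\infty)$ and $t>0$, the condition $t=n\Phi(k/n)$ is equivalent to $2\cosh(t)=m_{k/n}$.

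The only real obstacle is the degenerate case $\Phi(x)=0$, where $F$ is undefined. For a rising tree this cannot happen: all regions exceed $2$, so $\rho(m_q)>0$ and hence $\phi(q)>0$ at every rational. For a progressing tree, $\Phi(q)=0$ occurs exactly at a $2$-region. Such a point is simply absent from $\Gamma_T$, while $2\cosh(t)>2$ for $t>0$, so both sides of the equivalence fail together and the statement holds there too. I would note this explicitly and otherwise regard the lemma as a direct computation.
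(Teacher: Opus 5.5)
Your proof is correct and is the argument the paper intends. The paper leaves this lemma as an exercise, but its proof of the topograph analogue, Lemma \ref{latt2}, runs exactly as yours does: a primitive lattice point on $t\cdot\Gamma_T$ must lie on the ray of slope $k/n$, where $\Gamma_T$ passes through $(n,k)/\rho(m_{k/n})$, and this forces $t=\rho(m_{k/n})$. Your explicit handling of the points where $\Phi$ vanishes for progressing trees is a sensible addition that the paper does not spell out.
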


In polar coordinates, $(r\cos \theta, r \sin \theta) = (1/y,x/y)$ means $x=\tan \theta$ and $y=1/(r \cos \theta)$. Hence  $\Gamma_T$ is the polar graph $r=f(\theta)$ for  
\begin{equation} \la{polar}
  f(\theta)=\frac 1{\cos(\theta) \cdot \Phi(\tan \theta)} \qquad \quad (0\lqs \theta \lqs \pi/4).
\end{equation}
By \e{eqmx}, we have
\begin{equation} \la{av}
   m_{k/n} \lqs X  \iff \sqrt{n^2+k^2} \lqs f(\tan^{-1}(k/n)) \cdot \rho(X).
\end{equation}

It is easy to see that the arc length of the graph of $\Phi_T$ for a rising tree $T$ is well-defined and finite as follows. Taking any points on the graph to get a  polygonal approximation, we see that the slopes are bounded in absolute value by $m=\max(|S'(0)|, |S(1)|)$ by the concavity/convexity of Theorem \ref{mthm}. Hence the length of such an approximation is at most $m$. Including more points gives a strictly increasing arc length that must have a limit $L \lqs m$. Thus, the graph of $\Phi$ is rectifiable.
Let $\mathcal W = \mathcal W_T$ be the wedge shaped domain bounded by $\theta=0$, $\theta=\pi/4$ and $r=f(\theta)$. This boundary is a closed, rectifiable Jordan curve and so $\mathcal W$ has a well-defined area.

\SpecialCoor
\psset{griddots=5,subgriddiv=0,gridlabels=0pt}
\psset{xunit=7cm, yunit=7cm, runit=7cm}
\psset{linewidth=1pt}
\psset{dotsize=1.4pt,dotstyle=*}
\begin{figure}[!htb]
\centering

\psset{arrowscale=1.4,arrowinset=0.3,arrowlength=1.1}
\newrgbcolor{light}{0.8 0.8 1.0}
\newrgbcolor{pale}{1 0.7 1}
\newrgbcolor{pale}{1 0.7 0.4}

\newrgbcolor{markov}{0.902344 0.710938 0.945313}
\psset{linecolor=markov}

\begin{pspicture}(-0.0,-0.04)(1.45,0.79) 

\psset{linecolor=lightgray}

\psline{->}(0,0)(1.45,0)
\psline{->}(0,0)(0,0.72)
\psline(0,0)(0.72,0.72)

\rput(1.5,0){$n$}
\rput(-0.06,0.7){$k$}

\psset{dotsize=1.3pt,dotstyle=*}
\psset{linewidth=0.9pt}


\pspolygon[fillstyle=solid,
fillcolor=markov,linecolor=orange](0,0)(0.567296, 0.567296)
(0.625684, 0.500547)(0.642208, 0.481656)(0.654555, 0.46754)(0.671776, 0.44785)
(0.687573, 0.429733)(0.697413, 0.418448)(0.70901, 0.405148)
(0.739761, 0.369881)(0.771771, 0.330759)(0.785365, 0.314146)
(0.797657, 0.299121)(0.819022, 0.273007)(0.844617, 0.241319)
(0.851877, 0.23233)(0.864888, 0.216222)(0.881306, 0.195846)
(0.894897, 0.178979)(0.91608, 0.15268)
(1.03904, 0.)(0,0)

\psline[linecolor=white,linewidth=0.3pt,linestyle=dashed](0.567296, 0.567296)(1.03904, 0.)

\pspolygon[linecolor=orange](0,0)(0.567296, 0.567296)
(0.625684, 0.500547)(0.642208, 0.481656)(0.654555, 0.46754)(0.671776, 0.44785)
(0.687573, 0.429733)(0.697413, 0.418448)(0.70901, 0.405148)
(0.739761, 0.369881)(0.771771, 0.330759)(0.785365, 0.314146)
(0.797657, 0.299121)(0.819022, 0.273007)(0.844617, 0.241319)
(0.851877, 0.23233)(0.864888, 0.216222)(0.881306, 0.195846)
(0.894897, 0.178979)(0.91608, 0.15268)
(1.03904, 0.)(0,0)


\savedata{\mydatab}[
{{1.03904, 0.}, {0.567296, 0.567296}, {0.739761, 0.369881}, {0.819022,
   0.273007}, {0.671776, 0.44785}, {0.864888, 0.216222}, {0.785365, 
  0.314146}, {0.697413, 0.418448}, {0.642208, 0.481656}, {0.894897, 
  0.178979}, {0.844617, 0.241319}, {0.797657, 0.299121}, {0.771771, 
  0.330759}, {0.70901, 0.405148}, {0.687573, 0.429733}, {0.654555, 
  0.46754}, {0.625684, 0.500547}, {0.91608, 0.15268}, {0.881306, 
  0.195846}, {0.851877, 0.23233}, {0.836772, 0.251032}, {0.803373, 
  0.292136}, {0.792884, 0.304955}, {0.777378, 0.323907}, {0.764421, 
  0.339743}, {0.71562, 0.397567}, {0.704131, 0.410743}, {0.691325, 
  0.425431}, {0.683192, 0.434758}, {0.659628, 0.46174}, {0.650011, 
  0.472735}, {0.632922, 0.492273}, {0.615132, 0.51261}, {0.931834, 
  0.133119}, {0.906328, 0.164787}, {0.886113, 0.189881}, {0.876189, 
  0.202197}, {0.855308, 0.228082}, {0.849039, 0.235844}, {0.839985, 
  0.247054}, {0.832608, 0.256187}, {0.806676, 0.288098}, {0.800956, 
  0.295089}, {0.794696, 0.302741}, {0.790781, 0.307526}, {0.77971, 
  0.321057}, {0.775303, 0.326443}, {0.767619, 0.335833}, {0.759816, 
  0.345371}, {0.719892, 0.392668}, {0.712713, 0.400901}, {0.705921, 
  0.408691}, {0.702142, 0.413025}, {0.693005, 0.423503}, {0.689891, 
  0.427075}, {0.68503, 0.43265}, {0.680713, 0.437601}, {0.662392, 
  0.458579}, {0.65753, 0.464139}, {0.651771, 0.470723}, {0.647912, 
  0.475135}, {0.635751, 0.489039}, {0.630318, 0.49525}, {0.619884, 
  0.507178}, {0.60781, 0.52098}, {0.944009, 0.118001}, {0.924496, 
  0.14223}, {0.909746, 0.160543}, {0.902725, 0.169261}, {0.888408, 
  0.187033}, {0.884226, 0.192223}, {0.878275, 0.199608}, {0.873503, 
  0.20553}, {0.857307, 0.225607}, {0.853853, 0.229884}, {0.850113, 
  0.234514}, {0.847796, 0.237383}, {0.841331, 0.245388}, {0.838792, 
  0.248531}, {0.834414, 0.253952}, {0.830027, 0.259383}, {0.808827, 
  0.285468}, {0.805219, 0.289879}, {0.801841, 0.294008}, {0.799976, 
  0.296287}, {0.79551, 0.301745}, {0.794002, 0.303589}, {0.791662, 
  0.30645}, {0.789597, 0.308973}, {0.780988, 0.319495}, {0.778743, 
  0.322239}, {0.776105, 0.325463}, {0.774349, 0.327609}, {0.768878, 
  0.334295}, {0.766465, 0.337245}, {0.761881, 0.342847}, {0.75666, 
  0.349228}, {0.722879, 0.389242}, {0.717963, 0.39488}, {0.713757, 
  0.399704}, {0.711582, 0.402198}, {0.70675, 0.40774}, {0.705227, 
  0.409487}, {0.702964, 0.412082}, {0.701062, 0.414264}, {0.693959, 
  0.42241}, {0.6923, 0.424313}, {0.690438, 0.426447}, {0.68925, 
  0.42781}, {0.685781, 0.431788}, {0.684355, 0.433425}, {0.681801, 
  0.436353}, {0.679118, 0.43943}, {0.664132, 0.45659}, {0.661188, 
  0.459957}, {0.658305, 0.463252}, {0.656659, 0.465134}, {0.652548, 
  0.469835}, {0.651102, 0.471488}, {0.648798, 0.474122}, {0.646703, 
  0.476518}, {0.637258, 0.487315}, {0.63459, 0.490365}, {0.631334, 
  0.494088}, {0.629092, 0.496651}, {0.621685, 0.505119}, {0.618199, 
  0.509105}, {0.611168, 0.517142}, {0.602433, 0.527128}, {0.953701, 
  0.105967}, {0.938288, 0.125105}, {0.927051, 0.139058}, {0.921821, 
  0.145551}, {0.91139, 0.158503}, {0.9084, 0.162214}, {0.904189, 
  0.167442}, {0.900849, 0.17159}, {0.889752, 0.185365}, {0.887433, 
  0.188243}, {0.884939, 0.191338}, {0.883403, 0.193244}, {0.879153, 
  0.198518}, {0.877499, 0.200571}, {0.874665, 0.204088}, {0.871849, 
  0.207583}, {0.858616, 0.223987}, {0.856424, 0.226701}, {0.854385, 
  0.229225}, {0.853265, 0.230612}, {0.850598, 0.233914}, {0.849702, 
  0.235024}, {0.848316, 0.236739}, {0.847099, 0.238247}, {0.84207, 
  0.244472}, {0.840772, 0.24608}, {0.839252, 0.247961}, {0.838245, 
  0.249208}, {0.835127, 0.253069}, {0.833761, 0.25476}, {0.831182, 
  0.257953}, {0.828269, 0.261559}, {0.81034, 0.283619}, {0.807854, 
  0.286658}, {0.805741, 0.28924}, {0.804654, 0.290569}, {0.802251, 
  0.293506}, {0.801497, 0.294428}, {0.800381, 0.295793}, {0.799445, 
  0.296937}, {0.795974, 0.301179}, {0.795168, 0.302164}, {0.794267, 
  0.303265}, {0.793693, 0.303967}, {0.792022, 0.306009}, {0.791338, 
  0.306845}, {0.790117, 0.308338}, {0.788838, 0.309901}, {0.781795, 
  0.318509}, {0.78043, 0.320177}, {0.7791, 0.321802}, {0.778343, 
  0.322728}, {0.77646, 0.32503}, {0.7758, 0.325836}, {0.774751, 
  0.327117}, {0.773801, 0.328279}, {0.769551, 0.333472}, {0.768361, 
  0.334927}, {0.766915, 0.336694}, {0.765923, 0.337907}, {0.762668, 
  0.341885}, {0.761147, 0.343744}, {0.758103, 0.347464}, {0.754362, 
  0.352036}, {0.725085, 0.386712}, {0.721507, 0.390816}, {0.718646, 
  0.394096}, {0.717235, 0.395716}, {0.714249, 0.399139}, {0.713349, 
  0.400172}, {0.712046, 0.401667}, {0.71098, 0.402889}, {0.707228, 
  0.407192}, {0.7064, 0.408142}, {0.705491, 0.409185}, {0.704921, 
  0.409838}, {0.703305, 0.411691}, {0.70266, 0.412431}, {0.701532, 
  0.413724}, {0.700383, 0.415042}, {0.694573, 0.421705}, {0.69354, 
  0.42289}, {0.692559, 0.424016}, {0.692011, 0.424643}, {0.690683, 
  0.426166}, {0.690229, 0.426687}, {0.689519, 0.427502}, {0.688887, 
  0.428227}, {0.68619, 0.431319}, {0.685471, 0.432145}, {0.684616, 
  0.433124}, {0.684042, 0.433783}, {0.682225, 0.435866}, {0.68141, 
  0.436801}, {0.679838, 0.438605}, {0.678007, 0.440704}, {0.665327, 
  0.455224}, {0.663351, 0.457483}, {0.661622, 0.45946}, {0.660714, 
  0.460498}, {0.658662, 0.462844}, {0.658005, 0.463595}, {0.65702, 
  0.464721}, {0.656183, 0.465678}, {0.652986, 0.469334}, {0.652222, 
  0.470207}, {0.651358, 0.471195}, {0.650802, 0.471831}, {0.649158, 
  0.47371}, {0.648474, 0.474493}, {0.647236, 0.475909}, {0.645917, 
  0.477417}, {0.638195, 0.486244}, {0.636604, 0.488063}, {0.635021, 
  0.489873}, {0.634105, 0.49092}, {0.63178, 0.493578}, {0.630949, 
  0.494528}, {0.629611, 0.496057}, {0.628379, 0.497467}, {0.622632, 
  0.504036}, {0.62095, 0.505959}, {0.61886, 0.508349}, {0.617396, 
  0.510023}, {0.612414, 0.515717}, {0.609989, 0.51849}, {0.60493, 
  0.524273}, {0.598315, 0.531836}, {0.961599, 0.0961599}, {0.949116, 
  0.111661}, {0.94027, 0.122644}, {0.936225, 0.127667}, {0.928286, 
  0.137524}, {0.926043, 0.140309}, {0.922906, 0.144204}, {0.920437, 
  0.14727}, {0.912356, 0.157303}, {0.910691, 0.159371}, {0.908908, 
  0.161584}, {0.907815, 0.162941}, {0.904807, 0.166675}, {0.903644, 
  0.16812}, {0.901659, 0.170584}, {0.899698, 0.173019}, {0.890635, 
  0.184269}, {0.889157, 0.186103}, {0.887789, 0.187801}, {0.887039, 
  0.188732}, {0.885261, 0.190939}, {0.884665, 0.191677}, {0.883747, 
  0.192817}, {0.882942, 0.193816}, {0.879637, 0.197918}, {0.878788, 
  0.198971}, {0.877798, 0.2002}, {0.877144, 0.201012}, {0.875125, 
  0.203517}, {0.874244, 0.20461}, {0.872588, 0.206666}, {0.870728, 
  0.208975}, {0.85954, 0.222844}, {0.858024, 0.22472}, {0.856741, 
  0.226309}, {0.856082, 0.227124}, {0.854632, 0.228919}, {0.854179, 
  0.229481}, {0.853508, 0.230312}, {0.852947, 0.231006}, {0.850873, 
  0.233573}, {0.850394, 0.234167}, {0.849859, 0.234829}, {0.849518, 
  0.235251}, {0.848529, 0.236475}, {0.848125, 0.236976}, {0.847405, 
  0.237868}, {0.846652, 0.238799}, {0.842538, 0.243893}, {0.841747, 
  0.244872}, {0.840978, 0.245824}, {0.840541, 0.246365}, {0.839456, 
  0.247708}, {0.839077, 0.248178}, {0.838476, 0.248923}, {0.837931, 
  0.249597}, {0.835509, 0.252596}, {0.834834, 0.253432}, {0.834015, 
  0.254445}, {0.833455, 0.255139}, {0.831623, 0.257407}, {0.830771, 
  0.258462}, {0.829072, 0.260565}, {0.826996, 0.263135}, {0.811462, 
  0.282248}, {0.809645, 0.28447}, {0.808199, 0.286237}, {0.807487, 
  0.287107}, {0.805988, 0.288939}, {0.805537, 0.28949}, {0.804885, 
  0.290287}, {0.804354, 0.290936}, {0.802488, 0.293217}, {0.802078, 
  0.293719}, {0.801628, 0.294268}, {0.801346, 0.294613}, {0.800549, 
  0.295587}, {0.800231, 0.295976}, {0.799676, 0.296654}, {0.799112, 
  0.297344}, {0.796272, 0.300814}, {0.79577, 0.301428}, {0.795294, 
  0.30201}, {0.795028, 0.302335}, {0.794385, 0.303121}, {0.794166, 
  0.303389}, {0.793822, 0.303809}, {0.793517, 0.304182}, {0.792219, 
  0.305769}, {0.791873, 0.306191}, {0.791463, 0.306692}, {0.791188, 
  0.307028}, {0.790319, 0.308091}, {0.78993, 0.308566}, {0.78918, 
  0.309482}, {0.78831, 0.310546}, {0.782351, 0.31783}, {0.781432, 
  0.318952}, {0.780631, 0.319931}, {0.780212, 0.320444}, {0.779265, 
  0.321601}, {0.778962, 0.321971}, {0.778509, 0.322525}, {0.778124, 
  0.322995}, {0.77666, 0.324785}, {0.776311, 0.325211}, {0.775916, 
  0.325693}, {0.775663, 0.326003}, {0.774915, 0.326917}, {0.774604, 
  0.327297}, {0.774042, 0.327984}, {0.773445, 0.328714}, {0.76997, 
  0.33296}, {0.769259, 0.333829}, {0.768553, 0.334692}, {0.768145, 
  0.335191}, {0.767113, 0.336453}, {0.766744, 0.336903}, {0.766152, 
  0.337626}, {0.765608, 0.338292}, {0.763082, 0.341379}, {0.762346, 
  0.342278}, {0.761435, 0.343392}, {0.760798, 0.34417}, {0.75864, 
  0.346807}, {0.757595, 0.348084}, {0.755427, 0.350734}, {0.752615, 
  0.354172}, {0.726781, 0.384767}, {0.724059, 0.387889}, {0.721988, 
  0.390264}, {0.720998, 0.391399}, {0.718972, 0.393723}, {0.718378, 
  0.394404}, {0.717532, 0.395375}, {0.716852, 0.396155}, {0.714536, 
  0.398811}, {0.714041, 0.399379}, {0.713504, 0.399994}, {0.713171, 
  0.400376}, {0.71224, 0.401444}, {0.711873, 0.401864}, {0.711241, 
  0.402589}, {0.710606, 0.403317}, {0.707539, 0.406835}, {0.707017, 
  0.407434}, {0.706528, 0.407995}, {0.706257, 0.408305}, {0.705609, 
  0.409049}, {0.70539, 0.4093}, {0.705049, 0.409691}, {0.704749, 
  0.410035}, {0.703492, 0.411476}, {0.703164, 0.411853}, {0.702777, 
  0.412296}, {0.70252, 0.412591}, {0.701717, 0.413512}, {0.701362, 
  0.413919}, {0.700688, 0.414693}, {0.699918, 0.415576}, {0.695002, 
  0.421213}, {0.694296, 0.422023}, {0.69369, 0.422718}, {0.693377, 
  0.423077}, {0.692679, 0.423878}, {0.692458, 0.424131}, {0.69213, 
  0.424507}, {0.691855, 0.424823}, {0.690822, 0.426007}, {0.690581, 
  0.426284}, {0.690309, 0.426596}, {0.690136, 0.426794}, {0.689629, 
  0.427376}, {0.68942, 0.427615}, {0.689046, 0.428044}, {0.688653, 
  0.428495}, {0.686447, 0.431025}, {0.686012, 0.431524}, {0.685586, 
  0.432013}, {0.685342, 0.432292}, {0.684732, 0.432992}, {0.684517, 
  0.433238}, {0.684174, 0.433631}, {0.683862, 0.433989}, {0.682451, 
  0.435607}, {0.682052, 0.436066}, {0.681563, 0.436626}, {0.681226, 
  0.437013}, {0.68011, 0.438293}, {0.679583, 0.438897}, {0.678517, 
  0.440119}, {0.677188, 0.441644}, {0.666199, 0.454227}, {0.66478, 
  0.455849}, {0.663628, 0.457166}, {0.663054, 0.457823}, {0.661826, 
  0.459226}, {0.661453, 0.459654}, {0.660909, 0.460276}, {0.660461, 
  0.460787}, {0.658868, 0.462609}, {0.658512, 0.463016}, {0.658119, 
  0.463464}, {0.657873, 0.463746}, {0.657169, 0.464551}, {0.656887, 
  0.464874}, {0.656391, 0.465441}, {0.655883, 0.466022}, {0.653267, 
  0.469012}, {0.652794, 0.469553}, {0.652342, 0.47007}, {0.652089, 
  0.470359}, {0.651472, 0.471065}, {0.65126, 0.471307}, {0.650928, 
  0.471687}, {0.650631, 0.472026}, {0.649354, 0.473487}, {0.64901, 
  0.47388}, {0.6486, 0.474349}, {0.648323, 0.474665}, {0.647442, 
  0.475672}, {0.647044, 0.476127}, {0.646272, 0.47701}, {0.645365, 
  0.478048}, {0.638834, 0.485514}, {0.637776, 0.486724}}
]

\psset{dotsize=0.9pt,dotstyle=*}

\dataplot[plotstyle=dots,linecolor=blue]{\mydatab}

\savedata{\mydatab}[
{{0.05, 0.05}, {0.05, 0.}, {0.1, 0.05}, {0.15, 0.05}, {0.15, 0.1}, {0.2, 
  0.05}, {0.2, 0.15}, {0.25, 0.05}, {0.25, 0.1}, {0.25, 0.15}, {0.25, 
  0.2}, {0.3, 0.05}, {0.3, 0.25}, {0.35, 0.05}, {0.35, 0.1}, {0.35, 
  0.15}, {0.35, 0.2}, {0.35, 0.25}, {0.35, 0.3}, {0.4, 0.05}, {0.4, 
  0.15}, {0.4, 0.25}, {0.4, 0.35}, {0.45, 0.05}, {0.45, 0.1}, {0.45, 
  0.2}, {0.45, 0.25}, {0.45, 0.35}, {0.45, 0.4}, {0.5, 0.05}, {0.5, 
  0.15}, {0.5, 0.35}, {0.5, 0.45}, {0.55, 0.05}, {0.55, 0.1}, {0.55, 
  0.15}, {0.55, 0.2}, {0.55, 0.25}, {0.55, 0.3}, {0.55, 0.35}, {0.55, 
  0.4}, {0.55, 0.45}, {0.55, 0.5}, {0.6, 0.05}, {0.6, 0.25}, {0.6, 
  0.35}, {0.6, 0.55}, {0.65, 0.05}, {0.65, 0.1}, {0.65, 0.15}, {0.65, 
  0.2}, {0.65, 0.25}, {0.65, 0.3}, {0.65, 0.35}, {0.65, 0.4}, {0.65, 
  0.45}, {0.65, 0.5}, {0.65, 0.55}, {0.65, 0.6}, {0.7, 0.05}, {0.7, 
  0.15}, {0.7, 0.25}, {0.7, 0.45}, {0.7, 0.55}, {0.7, 0.65}, {0.75, 
  0.05}, {0.75, 0.1}, {0.75, 0.2}, {0.75, 0.35}, {0.75, 0.4}, {0.75, 
  0.55}, {0.75, 0.65}, {0.75, 0.7}, {0.8, 0.05}, {0.8, 0.15}, {0.8, 
  0.25}, {0.8, 0.35}, {0.8, 0.45}, {0.8, 0.55}, {0.8, 0.65}, {0.85, 
  0.05}, {0.85, 0.1}, {0.85, 0.15}, {0.85, 0.2}, {0.85, 0.25}, {0.85, 
  0.3}, {0.85, 0.35}, {0.85, 0.4}, {0.85, 0.45}, {0.85, 0.5}, {0.85, 
  0.55}, {0.85, 0.6}, {0.85, 0.65}, {0.85, 0.7}, {0.9, 0.05}, {0.9, 
  0.25}, {0.9, 0.35}, {0.9, 0.55}, {0.9, 0.65}, {0.95, 0.05}, {0.95, 
  0.1}, {0.95, 0.15}, {0.95, 0.2}, {0.95, 0.25}, {0.95, 0.3}, {0.95, 
  0.35}, {0.95, 0.4}, {0.95, 0.45}, {0.95, 0.5}, {0.95, 0.55}, {0.95, 
  0.6}, {0.95, 0.65}, {0.95, 0.7}, {1., 0.05}, {1., 0.15}, {1., 
  0.35}, {1., 0.45}, {1., 0.55}, {1., 0.65}, {1.05, 0.05}, {1.05, 
  0.1}, {1.05, 0.2}, {1.05, 0.25}, {1.05, 0.4}, {1.05, 0.5}, {1.05, 
  0.55}, {1.05, 0.65}, {1.1, 0.05}, {1.1, 0.15}, {1.1, 0.25}, {1.1, 
  0.35}, {1.1, 0.45}, {1.1, 0.65}, {1.15, 0.05}, {1.15, 0.1}, {1.15, 
  0.15}, {1.15, 0.2}, {1.15, 0.25}, {1.15, 0.3}, {1.15, 0.35}, {1.15, 
  0.4}, {1.15, 0.45}, {1.15, 0.5}, {1.15, 0.55}, {1.15, 0.6}, {1.15, 
  0.65}, {1.15, 0.7}, {1.2, 0.05}, {1.2, 0.25}, {1.2, 0.35}, {1.2, 
  0.55}, {1.2, 0.65}, {1.25, 0.05}, {1.25, 0.1}, {1.25, 0.15}, {1.25, 
  0.2}, {1.25, 0.3}, {1.25, 0.35}, {1.25, 0.4}, {1.25, 0.45}, {1.25, 
  0.55}, {1.25, 0.6}, {1.25, 0.65}, {1.25, 0.7}, {1.3, 0.05}, {1.3, 
  0.15}, {1.3, 0.25}, {1.3, 0.35}, {1.3, 0.45}, {1.3, 0.55}, {1.35, 
  0.05}, {1.35, 0.1}, {1.35, 0.2}, {1.35, 0.25}, {1.35, 0.35}, {1.35, 
  0.4}, {1.35, 0.5}, {1.35, 0.55}, {1.35, 0.65}, {1.35, 0.7}, {1.4, 
  0.05}, {1.4, 0.15}, {1.4, 0.25}, {1.4, 0.45}, {1.4, 0.55}, {1.4, 
  0.65}}
]

\psset{dotsize=1.2pt,dotstyle=*}

\dataplot[plotstyle=dots,linecolor=black]{\mydatab}

\rput(0.3,0.5){$20\mathcal W$}

\end{pspicture}
\caption{The wedge $\mathcal W$  for the Markov tree $(3,15,6)$. Here it has been dilated by a factor of $20$. More Markov numbers are included as the dilation factor increases. The right  side of the wedge is the  strictly convex lattice curve $\G_T$, going from slope $\approx -1.143$ at the top to slope  $\approx -1.242$ at the bottom.}
\label{wplot}
\end{figure}

\begin{theorem} \la{zzz}
Let $T$ be a rising tree. Then
\begin{equation} \la{mzx}
  M_T(X) = \frac{6\text{\rm Area}(\mathcal W_T)}{\pi^2}  (\log X)^2 +O\left(\log X \log \log X \right).
\end{equation}
\end{theorem}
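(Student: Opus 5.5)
By \e{av}, $M_T(X)$ counts the primitive lattice points $(n,k)$ with $0\lqs k\lqs n$ lying in the dilate $\rho(X)\cdot\mathcal W_T$; the two boundary rays contribute only the single point $(1,0)$ and $(1,1)$ among primitive points. So the plan is to count primitive lattice points in the dilate $R\mathcal W$, with $R=\rho(X)=\log X+O(X^{-2})$, and show that this count is $\frac{6}{\pi^2}\text{Area}(\mathcal W)R^2+O(R\log R)$. Replacing $R$ by $\log X$ then costs only $O(1)$ in the main term.

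\textbf{Step 1: count all lattice points.} First I count all lattice points, primitive or not, in $t\mathcal W$ for $t>0$. Call this $N(t)$. The region $\mathcal W$ is bounded by two segments from the origin and by the curve $\Gamma_T$. By Theorem \ref{mthm}, $\Phi$ is continuous, positive and strictly convex or concave (or linear), so by \e{f2} the transformation $F$ sends convex or concave graphs to convex or concave curves, since $F$ is projective and maps lines to lines. Hence $\Gamma_T$ is a rectifiable, piecewise monotone curve of finite length. The standard Gauss/Lipschitz-principle estimate then gives
\begin{equation*}
N(t)=\text{Area}(\mathcal W)t^2+O(t+1),
\end{equation*}
with the error controlled by the boundary length of $t\mathcal W$. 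Concretely, I would cover the boundary by $O(t)$ unit squares using rectifiability. No finer input is needed, because the target error is already $O(R\log R)$.

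\textbf{Step 2: Möbius inversion.} Let $N^*(t)$ be the number of primitive points. Each lattice point $(n,k)\neq 0$ is uniquely $d\cdot(n',k')$ with $(n',k')$ primitive and $d\gqs 1$, and $(n,k)\in t\mathcal W$ exactly when $(n',k')\in (t/d)\mathcal W$, since $\mathcal W$ is star-shaped with respect to the origin. Therefore
\begin{equation*}
N(t)=\sum_{d\gqs 1}N^*(t/d), \qquad N^*(t)=\sum_{d\lqs t/c}\mu(d)N(t/d),
\end{equation*}
where $c>0$ is chosen so that $t\mathcal W$ contains no nonzero lattice point for $t<c$; this $c$ exists because $\Phi$ is bounded. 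Substituting Step 1 gives
\begin{equation*}
\text{Area}(\mathcal W)t^2\sum_{d\lqs t/c}\frac{\mu(d)}{d^2}+O\Big(\sum_{d\lqs t/c}\big(\tfrac td+1\big)\Big).
\end{equation*}
The first sum equals $6/\pi^2+O(c/t)$, and the error sum is $O(t\log t)$. This gives the main term and the $\log X\log\log X$ error after setting $t=R\approx\log X$.

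\textbf{Step 3: bookkeeping.} I must check that the points counted with multiplicity on $T$ correspond bijectively to primitive $(n,k)$ with $0\lqs k\lqs n$, which is Lemma \ref{latt}, and handle the ray edge cases. I must also check that the equivalence \e{av} uses $\Phi=\phi$ at rationals.

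\textbf{Main obstacle.} I expect the main obstacle to be Step 1, specifically justifying the boundary error $O(t)$ rigorously for the curve $\Gamma_T$, which is only known to be convex and not smooth. My first approach is to split $\Gamma_T$ at the point where $\Phi$ attains its extremum, identified in the proof of Theorem \ref{cont}, into finitely many arcs on which each coordinate is monotone. On each such arc, the number of unit lattice squares meeting $t\Gamma_T$ is at most $2(\text{projection lengths})t+O(1)$. This yields the $O(t+1)$ bound directly.
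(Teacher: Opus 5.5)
Your proposal follows the paper's proof: you count all nonzero lattice points in $t\mathcal W$ with an $O(t)$ boundary error, remove the gcd condition by M\"obius inversion to get $O(t\log t)$, and finish with $\rho(X)=\log X+O(X^{-2})$. The only real difference is that the paper cites Jarnik's theorem (area minus lattice-point count is bounded by the length of the rectifiable boundary), whereas you derive the $O(t)$ bound directly by splitting the convex curve $\Gamma_T$ into monotone arcs and covering it with unit squares.

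One small correction: the cutoff $c$ exists because $\Phi$ is bounded \emph{below} by a positive constant, not merely because it is bounded. This lower bound follows from $\rho(m)\gqs \log m-\log 2$ together with Proposition~\ref{sim2} and $\xi_T>2$.
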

\begin{proof}

As we have seen, $M(X)$ is given by the number of lattice points inside dilates of the wedge $\mathcal W = \mathcal W_T$. Precisely, \e{av} implies $M(X)=N(\rho(X))$ for
\begin{equation} \la{mx}
  N(t) :=  \#\{ (n,k)\in \Z^2 \, : \, \gcd(n,k)=1, (n,k)\in t \mathcal W \}.
\end{equation}
Defining $N^*(t)$ as in \e{mx}, but with the $\gcd$ condition removed, we have
\begin{equation} \la{mxs}
  N^*(t)= \text{Area}(\mathcal W) \cdot t^2  +O(t).
\end{equation}
This follows from a classical 
result of Jarnik \cite{ste}: the absolute difference between the area inside a closed, rectifiable Jordan curve and the number of lattice points it contains is at most the length of the curve, (this length must be at least $1$). 

The domain $\mathcal W$ is star-shaped, meaning that if the point $(x, y)$ is in $\mathcal W$ then so is $(\lambda x, \lambda y)$ for all $\lambda \in [0,1]$. In this case there is a standard way, using M\"obius inversion, to adjust $N^*(t)$ to include the gcd condition. See  \cite[Lemma 4]{z82} for this in the example of a triangle, or \cite{mrz85} more generally. The result here is
 \begin{equation} \la{mxt}
  N(t)= (6/\pi^2)\text{Area}(\mathcal W) \cdot t^2  +O(t \log t).
\end{equation}
Noting that $\rho(X)=\log(X)+O(1/X^2)$ by \e{cru} completes the proof.
\end{proof}

\begin{lemma} \la{cfv}
As a polar curve, the set $\Gamma_T$ is strictly convex  for $D<4$ (bulges out from the origin) and strictly concave  for $D>4$ (bulges towards the origin).
\end{lemma}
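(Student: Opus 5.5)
The plan is to use the transformation $F(x,y)=(1/y,x/y)$ from \e{f}, which maps the graph of $\Phi_T$ onto $\Gamma_T$. By \e{f2}, $F$ sends lines to lines. The key observation is that $F$ is the restriction of a projective transformation: in homogeneous coordinates it is $[x:y:1]\mapsto[1:x:y]$. It therefore preserves incidence and collinearity, and on the half-plane $y>0$ it also preserves the side of a line on which a point lies, in a sense made precise below. Since $\Phi_T>0$ on $[0,1]$ (strictly rising trees have regions $>2$, and for progressing trees $\Phi$ is strictly concave with positive endpoint values), the whole graph lies in $y>0$, where $F$ is a smooth bijection onto the open sector $0\lqs k< n$, or more precisely onto the region $\{(n,k): n>0, 0\lqs k/n\lqs 1\}$.

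Strict convexity of $\Gamma_T$ as a polar curve will mean the following: for any three points of $\Gamma_T$ with polar angles $\theta_1<\theta_2<\theta_3$, the middle point lies strictly on the far side, away from the origin, of the chord joining the other two. Concavity is the reverse. Translated back through $F^{-1}(n,k)=(k/n,1/n)$, the ray from the origin at angle $\theta$ corresponds to the vertical line $x=\tan\theta$, so the angular order becomes the order of $x$-coordinates. Now take $x_1<x_2<x_3$ on the graph of $\Phi$ and let $\ell$ be the chord through the outer two points. By \e{f2}, $F(\ell)$ is the chord of $\Gamma_T$ through their images. For $D<4$, strict convexity of $\Phi$ (Theorem \ref{mthm}) places $(x_2,\Phi(x_2))$ strictly below $\ell$, at height $\Phi(x_2)<\ell(x_2)$.

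Along the vertical line $x=x_2$, the map $F$ sends $(x_2,y)$ to $(1/y)(1,x_2)$, a point on the ray at angle $\tan^{-1}x_2$ at distance $\sqrt{1+x_2^2}/y$ from the origin. This distance is strictly decreasing in $y$. A smaller height therefore gives a point strictly farther from the origin along the same ray, and this ray meets $F(\ell)$ exactly at $F(x_2,\ell(x_2))$. So $F(x_2,\Phi(x_2))$ lies strictly beyond the chord, which is convexity (the curve bulges outward). For $D>4$ the inequality reverses and the middle point lies strictly between the origin and the chord. At $D=4$, $\Gamma_T$ is a line segment, consistent with \e{f2}.

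The step needing the most care is the claim that the ray at angle $\tan^{-1}x_2$ meets the chord $F(\ell)$ at $F(x_2,\ell(x_2))$, and that the comparison of distances along this ray is what polar convexity requires. This needs $\ell(x_2)>0$. That holds because $\ell(x_2)$ is a convex combination of the positive values $\Phi(x_1)$ and $\Phi(x_3)$. It also needs the ray to actually cross the chord segment, which holds because $x_1<x_2<x_3$ puts the angle strictly between the two endpoint angles. Everything else follows directly from the monotone reciprocal relation $r=\sqrt{1+x^2}/y$, which is essentially \e{polar}: there $f(\theta)=1/(\cos\theta\,\Phi(\tan\theta))$.
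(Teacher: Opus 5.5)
Your proposal is correct and follows essentially the paper's route: $F$ sends lines to lines and each vertical line $x=c$ to the ray of slope $c$, so the graph of $\Phi_T$ lying below (above) its chords becomes $\Gamma_T$ lying on the far (near) side of its chords relative to the origin, and your formula $r=\sqrt{1+x_2^2}/y$ along that ray is what makes the paper's ``opposite side to the origin'' step rigorous. One slip: a progressing tree has $s_0=2$ or $t_0=2$, so $\Phi_T$ vanishes at that endpoint (a point that $F$ sends to infinity) rather than having ``positive endpoint values''; this is harmless, since your argument only uses $\Phi_T>0$ at the three chosen points of $\Gamma_T$ together with $\ell(x_2)>0$.
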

\begin{proof}
We may argue geometrically, noting that $F$ in \e{f} sends triangles to triangles and is invertible.
The effect of $F$  is to map horizontal lines $y=c>0$ to vertical lines $x=1/c$, and to map vertical lines $x=c$ for $0\lqs c\lqs 1$ to lines $y=cx$ of slope $c$. It follows that the strictly convex graph of $\Phi$ for $D<4$, with its curve below any secant line, get mapped to a convex polar curve $\Gamma_T$, with any secant cutting off a piece of $\Gamma_T$ that is on the opposite side to the origin. The strictly concave graph of $\Phi$ for $D>4$, with its curve above any secant, get mapped to a concave polar $\Gamma_T$ with any secant cutting off a piece of it that is on the same side as the origin.
\end{proof}

Zagier's formula \cite[(3)]{z82} for the coefficient of the main term in \e{mzx} for the Markov numbers may be given in general, reinterpreted as a sum of triangle areas.

\begin{prop} \la{wedg}
For every rising tree $T$,
\begin{equation} \la{2w}
  2\text{Area}(\mathcal W_T) = \frac 1{\rho(m_0) \rho(m_1)} +\sum_{q \in \Q \cap (0,1)} \frac{\rho(m_{q_0}) + \rho(m_{q_0'}) - \rho(m_{q})}{\rho(m_{q_0})  \rho(m_{q_0'})  \rho(m_{q})}.
\end{equation}
\end{prop}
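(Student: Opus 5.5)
Let $\Gamma_T$ be the lattice curve, i.e.\ the polar graph $r=f(\theta)$ bounding $\mathcal W_T$. I will approximate $\mathcal W_T$ by inscribed polygons whose vertices are the points $F(q,\phi(q))$ for $q$ in the Farey fractions. The vertex attached to $q=k/n$ is
\[
P_q = \left(\frac{n}{\rho(m_q)},\frac{k}{\rho(m_q)}\right),
\]
since $F(k/n,\rho(m_q)/n)=(n/\rho(m_q),k/\rho(m_q))$. This point lies on $\Gamma_T$, because $\Phi=\phi$ on rationals.

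\emph{Step 1: an exact area identity at each level.} For each row $\ell$, let $W_\ell$ be the polygon with vertices $0$ and $P_q$ for $q$ in rows $0,\dots,\ell$, taken in angular order. Consecutive vertices $P_a,P_c$ with $a/b<c/d$ Farey neighbors form, together with the origin, a triangle of area
\[
\tfrac12\,\frac{bc-ad}{\rho(m_{a/b})\rho(m_{c/d})}=\frac{1}{2\rho(m_{a/b})\rho(m_{c/d})},
\]
by the determinant formula. At row $0$ this gives $2\,\mathrm{Area}(W_0)=1/(\rho(m_0)\rho(m_1))$. Passing from row $\ell-1$ to row $\ell$ inserts each mediant $q$ between $q_0$ and $q_0'$. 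This replaces one triangle by two, and by the determinant computation (using $kn_0-nk_0=1$ etc., as in \e{nb}) the change in twice the area is
\[
\frac{1}{\rho(m_{q_0})\rho(m_q)}+\frac{1}{\rho(m_q)\rho(m_{q_0'})}-\frac{1}{\rho(m_{q_0})\rho(m_{q_0'})}
=\frac{\rho(m_{q_0})+\rho(m_{q_0'})-\rho(m_q)}{\rho(m_{q_0})\rho(m_{q_0'})\rho(m_q)}.
\]
Summing over levels, $2\,\mathrm{Area}(W_\ell)$ equals the first term in \e{2w} plus the sum over all $q$ in rows $1,\dots,\ell$. This is a telescoping identity and holds exactly.

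\emph{Step 2: convergence of the series.} Each numerator is $-\Delta(q)$, and by Corollary \ref{fgh} its absolute value is at most $\kappa_T e^{-2\omega_T n}$. The denominators are bounded below by $\rho(\xi_T)^3>0$ via Corollary \ref{fin}. The number of $q$ with denominator $n$ is at most $n$, so the series converges absolutely. It is therefore independent of ordering and equals $\lim_{\ell\to\infty}2\,\mathrm{Area}(W_\ell)$. For $D<4$ the terms are all negative and the polygons shrink; for $D=4$ they vanish; for $D>4$ they are positive.

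\emph{Step 3 (main obstacle): $\mathrm{Area}(W_\ell)\to\mathrm{Area}(\mathcal W_T)$.} The vertices of $W_\ell$ lie on $\Gamma_T$, and the vertex angles $\tan^{-1}q$ become dense because the mesh of row-$\ell$ Farey fractions tends to $0$. The region between $W_\ell$ and $\mathcal W_T$ is a union of thin sectors $\{\theta_1\le\theta\le\theta_2\}$, each squeezed between a chord and the arc of $\Gamma_T$. $\Gamma_T$ is the continuous image of the graph of the continuous $\Phi$ (Theorem \ref{mthm}), and $f$ is bounded on $[0,\pi/4]$. Since $\Phi$ is uniformly continuous, the radial gap $|r_{\text{chord}}(\theta)-f(\theta)|$ tends to $0$ uniformly as the mesh shrinks. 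Hence the symmetric difference has area at most $\tfrac{\pi}{4}\cdot 2\sup f\cdot\sup|\text{gap}|\to0$. To make the chord comparison concrete, I would use that $F$ maps segments to segments by \e{f2}. The chord between $P_{q_0}$ and $P_{q_0'}$ is then the $F$-image of the secant of $\Phi$ over $[q_0,q_0']$, and uniform continuity of $\Phi$ controls the gap between that secant and $\Phi$. Combining Steps 1 through 3 gives \e{2w}.
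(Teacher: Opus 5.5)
Your proof is correct and essentially matches the paper's: each increment of your fan area equals the signed Shoelace area of the triangle $P_{q_0}P_qP_{q_0'}$, which the paper adds to the main triangle for $D<4$ or subtracts for $D>4$, and your Step 3 makes explicit the limiting argument that the paper leaves implicit. One correction to a side remark in Step 2, which your argument never uses: for $D<4$, Proposition \ref{zag2} gives $\rho(m_q)<\rho(m_{q_0})+\rho(m_{q_0'})$, so the terms are positive and the inscribed polygons grow toward the convex curve $\Gamma_T$; the terms are negative for $D>4$, not the other way around.
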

\begin{proof}
The main triangle in $\mathcal W$ has vertices at $(0,0)$, $(1/\rho(m_0),0)$ and $(1/\rho(m_1),1/\rho(m_1))$. In general, points on the side $\Gamma_T$ are
\begin{equation} \la{ve}
  \left(\frac 1{\phi(k/n)}, \frac{k/n}{\phi(k/n)}\right) = \left(\frac n{\rho(m_{k/n})}, \frac k{\rho(m_{k/n})}\right).
\end{equation}
Recall that for three points $(x_i,y_i)$ for $i=0,1,2 \bmod 3$, listed in a positive orientation, 
the Shoelace formula gives twice the area of the triangle they make as
\begin{equation} \la{lace}
   \sum_{i=0,1,2}(x_i y_{i+1}-x_{i+1}y_i).
\end{equation}

After the main triangle, the next has vertices \e{ve} for $k/n=0/1, 1/2$ and $1$, with $1/2$ the mediant of $0, 1$. Applying the Shoelace formula gives plus or minus its area. When $D<4$ these points correspond to points on the convex graph of $\Phi$, with this ordering giving a positive orientation. Since $F$ preserves this orientation,  the area value is positive. 
Continuing, the triangle corresponding to $q=k/n$ in the interval $(0,1)$ has vertices at
\begin{equation} \la{oo}
  \left(\frac {n_0}{\rho(m_{q_0})}, \frac {k_0}{\rho(m_{q_0})}\right), \quad \left(\frac n{\rho(m_{q})}, \frac k{\rho(m_{q})}\right), \quad 
  \left(\frac {n_0'}{\rho(m_{q_0'})}, \frac {k_0'}{\rho(m_{q_0'})}\right),
\end{equation}
where, in our usual notation, $q_0=k_0/n_0 < q_0'=k_0'/n_0'$ are the Farey neighbors with mediant $q$.
 By \e{lace} and \e{nb}, we therefore obtain \e{2w} when $D<4$. The series converges rapidly since the numerator in \e{2w} is $\ll e^{-\omega_T n}$ by \e{deb} and the denominator is $\gg n\cdot n_0 \cdot n_0'$ by \e{cru} and Proposition \ref{sim2}.
 
For $D>4$ the points \e{oo} are now listed with a negative orientation. This gives \e{2w} correctly as these areas must be subtracted from the main triangle to give the convex polar wedge from Lemma \ref{cfv}, as on the right of Figure \ref{wplot2ee}. When $D=4$ the numerators in the series are all zero and the wedge is just the main triangle.
\end{proof}

\SpecialCoor
\psset{griddots=5,subgriddiv=0,gridlabels=0pt}
\psset{xunit=1cm, yunit=1cm, runit=1cm}
\psset{linewidth=1pt}
\psset{dotsize=1.4pt,dotstyle=*}
\begin{figure}[!htb]
\centering

\psset{arrowscale=1.4,arrowinset=0.3,arrowlength=1.1}
\newrgbcolor{light}{0.8 0.8 1.0}
\newrgbcolor{pale}{1 0.7 1}
\newrgbcolor{pale}{1 0.7 0.4}

\newrgbcolor{markov}{0.902344 0.710938 0.945313}
\psset{linecolor=markov}

}

\end{pspicture}
\caption{Wedges for the rising trees $T_1=(2.4, 2.5, 2.1)$ and $T_2=(2.1, 5, 2.4)$ from Figure \ref{phplots}.}
\label{wplot2ee}
\end{figure}


The wedge $\mathcal W$  for the Markov tree $(3,15,6)$ in Figure \ref{wplot} has area $\approx 0.297267722$, while the wedges for $T_1$ and $T_2$ in Figure \ref{wplot2ee} have  approximate areas $3.9055$, $1.3626$, respectively.

\begin{proof}[Proof of Corollary \ref{zm}]
This follows from Theorem \ref{zzz} applied to the tree $(3,15,6)$. Note that  $M(\lambda X)$ and $M(X)$ have the same asymptotic expansion \e{mzx} for fixed $\lambda>0$, so multiplying the Markov numbers by $3$ has no effect. 
\end{proof}

We could also have used the rising trees $(3,6,3)$ or $(3,3,3)$ to obtain Corollary \ref{zm}. Their wedges have areas twice and $4$ times that of $(3,15,6)$, reflecting that numbers appear with multiplicity $2$ and $4$, respectively, on these trees.

For another example, take the tree $T=(3,25,16)$ with $D=-310$. Its numbers are 
\begin{equation} \la{16}
  3, 16, 25, 59, 152, 397, 397, 1039, 1472, 2720, 6327, 7121, \dots
\end{equation}
with $397$ appearing in two places on the tree, in $m_{1/5}$ and $m_{2/3}$, so that its multiplicity is $2$. (By the climbing lemma there can be no further instances of $397$.) The number of these numbers \e{16} less than $X$ is given by \e{mzx} with $\text{\rm Area}(\mathcal W_T) \approx 0.2221$.

It would be interesting to extend the results in this section to progressing trees $T$. However, their lattice curves have infinite length and their wedges have infinite area. By Proposition \ref{stx}, 
$M_T(X) \gg X$.

\section{Uniqueness} \la{niqu}

\subsection{The uniqueness conjecture}
For $X$ a real Markov topograph or a real Markov tree, we are interested in two related notions.  At each non-root vertex of $X$ the labels of the three regions that meet there form a triple. For each region, its label occurs in the infinitely many triples  along its border. Consider the set of all triples of $X$ with region labels listed in increasing order. 
We say $X$ has {\em uniqueness} if the maximal element in each of these triples 
always  determines the other two. In other words, if there are two increasing triples  $a\lqs b\lqs c$ and  $a'\lqs b'\lqs c$ then $a=a'$ and $b=b'$. The second notion is simpler to state: if the same label appears in two different regions of $X$ then we say $X$ has {\em repeated numbers}.

For example, the strictly rising tree with base $(3,5,3)$ has uniqueness among its labels that are $<10^{250}$, while having many repeated numbers by symmetry. The rising tree $(15,6,3)$ also seems to have uniqueness (see below) and does have repeated numbers since the descending path leads to $(3,3,3)$.

\begin{lemma} \la{uniq}
Let $T$ be a 
strictly rising tree or a progressing tree. Suppose its base triple $(s_0,u_0,t_0)$ has $s_0, u_0, t_0$ all distinct. Then $T$ has no repeated numbers exactly when it has uniqueness.
\end{lemma}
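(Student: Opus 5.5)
The plan is to attach to every non-root vertex the region that is ``born'' there, and compare triples through that region. Let $v$ be a non-root vertex of $T$, let $e$ be the edge entering $v$ from the root side, and let $u(v)$ be the region pointed to by $e$. The triple at $v$ is then the oriented triple $(s,u(v),t)$. Since $T$ is strictly rising or progressing, every oriented triple is ordered, so by the climbing lemma (Lemma \ref{climb}) we have $u(v)>s,t$. This needs a word at the first vertex, where $u_0>s_0t_0/2\gqs s_0,t_0$ for rising trees, and Definition \ref{pr} gives it directly for progressing trees. Hence the largest label of the triple at $v$ is strictly attained, and it is attained only at the region $u(v)$. The map $v\mapsto u(v)$ is a bijection from non-root vertices onto all regions other than $s_0$ and $t_0$. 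The two regions $s_0,t_0$ are never the maximum of any triple, because every triple on their borders contains a newer, larger region.

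For (no repeats $\Rightarrow$ uniqueness), suppose two triples $a\lqs b\lqs c$ and $a'\lqs b'\lqs c$ share the same maximum $c$. If there are no repeated numbers, then $c$ labels a single region. By the bijection above, that region is $u(v)$ for a single vertex $v$, so both triples are the triple at $v$, and $a=a'$, $b=b'$.

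For (uniqueness $\Rightarrow$ no repeats), argue by contradiction and choose a repeated label $x$, carried by regions $R\neq R'$, with $x$ minimal among repeated labels.
\begin{itemize}
\item[(a)] Suppose first that $R=u(v)$ and $R'=u(v')$, with $v\neq v'$. Uniqueness forces the triples at $v$ and $v'$ to agree as multisets $\{a,b,x\}$.
\begin{itemize}
\item[$\bullet$] Next I descend. From the ordered triple at $v$ with maximum $x$, the previous vertex has triple multiset $\{a,b,ab-x\}$ by the local rule, and this depends only on the multiset. So the descents from $v$ and $v'$ produce identical sequences of multisets.
\item[$\bullet$] If $v$ and $v'$ have different depths, then after $\operatorname{depth}(v)$ steps the descent from $v'$ sits at a non-root vertex whose multiset is $\{s_0,u_0,t_0\}$. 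One more step gives a vertex whose triple contains $s_0t_0-u_0$. For a rising tree this is $<s_0t_0/2$ and $<u_0$. I then compare it with the minimal region $\min(s_0,t_0)$ of a strictly rising tree, and with the value $2$ for a progressing tree, to contradict the ordering or minimality. I expect this comparison to be the main obstacle, and I would try the inequality $s_0t_0-u_0<\min(s_0,t_0)$ first, which follows from $u_0>s_0t_0/2$ together with the distinctness of the base labels.
\item[$\bullet$] If $v$ and $v'$ have the same depth, I climb back up from the root. At each step the choice of left or right child is determined by which of the two lower labels is discarded, provided those two labels differ. At the root they differ because $s_0\neq t_0$. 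If they coincide at some later step, then two distinct regions of smaller label are equal, which contradicts the minimality of $x$. So the two paths coincide and $v=v'$, a contradiction.
\end{itemize}
\item[(b)] Suppose instead that $R$ is $s_0$ or $t_0$, say $x=s_0$. Then $R'=u(v')$ for a non-root $v'$ whose lower regions have labels strictly smaller than $s_0$. In a strictly rising tree the smallest labels occur in the base triple, and the base labels are distinct, so this forces a repeat of a smaller label, again contradicting minimality. For a progressing tree the same argument works, using Lemma \ref{pr4}.
\end{itemize}
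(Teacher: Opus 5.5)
\textbf{Gap.} Case (a) breaks down when $v$ and $v'$ have different depths, and the justification you give for case (b) does not work as written.

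The inequality you plan to try, $s_0t_0-u_0<\min(s_0,t_0)$, is false in general. For the strictly rising tree $(3,10,6)$ it reads $8<3$. For the Markov tree $(3,15,6)$ it gives equality $3=3$. For the progressing tree $(2,5,4)$ you get $s_0t_0-u_0=3>2$. So comparing $s_0t_0-u_0$ with the minimal label, or with $2$, produces no contradiction.

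In (b) you know two facts: the lower labels at $v'$ are $<s_0$, and the minimal label is $\min(s_0,t_0)$. When $t_0<s_0$ these do not force any repeat, because nothing you have established rules out non-base regions with labels in $[t_0,s_0)$.

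\textbf{Missing idea.} It is a simple observation, and it is what the paper invokes when it says a repeated $c$ cannot be $s_0$, $t_0$ or $u_0$. Along a descent toward the root, the largest label of the triple strictly decreases; this is the climbing lemma, Lemma \ref{climb}, read backwards. The reason is that the region born at a vertex is one of the two lower regions at each of its children. Every descent ends at the first vertex, whose largest label is $u_0$. Hence every region other than $s_0,u_0,t_0$ has label $>u_0>\max(s_0,t_0)$.

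This disposes of both problem cases at once:
\begin{itemize}
\item In the different-depth subcase, the vertex of positive depth with multiset $\{s_0,u_0,t_0\}$ would have born label $u_0$, which is impossible.
\item In (b), $R'$ is a non-base region, so its label exceeds $u_0>s_0$.
\end{itemize}

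\textbf{Comparison with the paper.} With this fact inserted, the rest of your argument is correct and runs parallel to the paper's proof.
\begin{itemize}
\item Your direction ``no repeats implies uniqueness'' is the paper's contrapositive argument.
\item In the same-depth climb you use minimality of $x$ to guarantee the two lower labels differ; such an $x$ exists by Corollary \ref{fin}. The paper instead proves by induction that every triple has distinct components.
\item Your multiset bookkeeping handles the paper's two cases, $(a,c,b)$ and its mirror $(b,c,a)$, in a single argument.
\end{itemize}
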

\begin{proof}
First note that every oriented triple $(s,u,t)$ in $T$ is ordered, with $u>st/2$. Also, all triples of $T$ have distinct components; this follows by induction since, moving away from the root, an oriented triple $(s,u,t)$ has $u>s,t$ with  $s \neq t$  known already.

Suppose $T$ does not have uniqueness, with different triples $a\lqs b\lqs c$ and  $a'\lqs b'\lqs c$. The first must correspond to either of  the ordered triples $(a,c,b)$ or $(b,c,a)$.  Similarly for the second. This means that the edge leading from region $c$ towards the root has different adjacent regions in each case. Hence $c$ is a repeated number.

Next suppose $c$ is a repeated number on $T$. It cannot be $s_0$, $t_0$ or $u_0$ since that would have $c$ appearing twice on a strictly increasing sequence of regions, according to the climbing lemma, Lemma \ref{climb}. Hence we must have two ordered triples $(a,c,b)$ and $(a',c,b')$. Assume that $(a',c,b')$ equals  $(a,c,b)$ or $(b,c,a)$; otherwise $T$ does not have uniqueness and we are done. 
Now consider the path of edges from each region $c$ to the root. In the first case, where $(a',c,b')=(a,c,b)$, following the local rule along the two paths shows they must look the same locally and have the same length. Suppose the paths meet first at a vertex $v$ and write $(s,u,t)$ for the ordered triple  surrounding $v$. Then we must have $(s,u,t)=(u,t,s)$ so that $s=u=t$, contradicting  distinctness. 
  The second case has $(a',c,b')=(b,c,a)$ and the paths are now mirror images of each other. This implies that $(s,u,t)=(t,u,s)$ where they meet, and so $s=t$ which again contradicts   distinctness. Therefore the repeated number $c$ implies that $T$ does not have uniqueness.
\end{proof}

The Markov uniqueness conjecture was first raised as a question by Frobenius in  1913,  \cite[p. 461]{fro}. The conjecture states  that the original Markov tree $(3,15,6)$, shown in Figure \ref{mart}, has uniqueness. By Lemma \ref{uniq} this is equivalent to it  having no repeated numbers. Though progress has been made, as described in \cite[Chap. 10]{aig} for example, this conjecture  remains open. 

We saw in \e{16} that strictly rising trees can have nontrivial repeated numbers and exceptions to  uniqueness in general. These exceptions seem to be rare. For example, computer searches have not located any other repeated numbers in the tree $(3,25,16)$.  Trees that do have uniqueness are found in Section \ref{d4}.

\subsection{Aigner's conjectures}
Following Aigner, we next examine rising tree numbers $m_{k/n}$ along `index lines' as described before Corollary \ref{zak}. Given a line $\ell$, let $(n_i,k_i) \in \Z^2$ be all the points on it satisfying $0\lqs k_i\lqs n_i$ and $\gcd(n_i,k_i)=1$. If we list the points in order, say with increasing $n_i$ or $k_i$, then how are the numbers $m_{k_i/n_i}$ ordered?

Our work in Section \ref{num} can be used to provide a clear answer. Consider how $\Gamma_T$ dilated by increasing $t>0$ intersects $\ell$. If $D<4$ for example, then by Lemma \ref{cfv} the polar curve $\Gamma_T$ is strictly concave, bulging outward from the origin. Then, if they intersect at all, $t \cdot \Gamma_T$ initially meets $\ell$ at one point and, as $t$ increases, further intersects the line in one or two places. By Lemma \ref{latt}, whenever $t \cdot \Gamma_T$ meets a point $(n_i,k_i)$, then that implies  $m_{k_i/n_i} = 2\cosh(t)$. So as $t$ increases, $t \cdot \Gamma_T$ meets lattice points corresponding to tree numbers that are getting bigger.

To make this discussion precise requires some notation. Define $I_T \subset \R \cup\{\pm\infty\}$ to be the set of all   slopes between pairs of distinct points of $\Gamma_T$. We may call this set the {\em slope interval} of the rising tree $T$. Define a sequence to be {\em monotone} if it is strictly increasing or decreasing. Empty and singleton sequences  are trivially monotone. 
Call a sequence {\em modal} if it satisfies
\begin{equation} \la{pui}
  m_1< m_2 < \dots < m_r \lqs m_{r+1} > \dots > m_{t-1} > m_{t},
\end{equation}
  and {\em  antimodal} for \e{pui} with the inequalities reversed.
In this
 non-standard language we have  one or two maximal (or minimal) numbers in the middle.
Monotone sequences are trivially modal and antimodal.

The proof of the next theorem has a similar method to that used in  \cite[Sect. 4]{ga22}.

\begin{theorem} \la{ag}
Let $T$ be a rising tree with slope interval $I_T$. Let $\ell$ be an index line of slope $s$, with $s$ possibly infinite. If $s \not\in I_T$ then  tree numbers are monotone along the line. If $s \in I_T$ then the numbers are antimodal along the line if $D<4$,  constant if $D=4$, and modal if $D>4$. For each $s \in I_T \cap (\Q \cup \{\pm \infty\})$ there exist index lines of that slope that are not monotone.
\end{theorem}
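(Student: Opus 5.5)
The plan is to work entirely with the lattice curve $\Gamma_T$ and its dilates, using Lemma \ref{latt} and Lemma \ref{cfv}. For $t>0$ let $g(t)$ be the ordering parameter: a coprime point $(n_i,k_i)$ on $\ell$ lies on $t\cdot\Gamma_T$ exactly when $m_{k_i/n_i}=2\cosh(t)$. Since $\Gamma_T$ is a polar graph $r=f(\theta)$ over $0\lqs\theta\lqs\pi/4$, each point of the sector meets exactly one dilate. So the ordering of tree numbers along $\ell$ is the ordering of the function $\tau(P):=|P|/f(\arg P)$ restricted to lattice points $P\in\ell$ in the sector. The whole problem therefore reduces to the monotonicity behaviour of $\tau$ along the segment $\ell\cap\{0\lqs y\lqs x\}$.

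First I treat the case $s\notin I_T$. I claim $\tau$ is strictly monotone along $\ell$. If it were not, two distinct points $P,Q$ on $\ell$ would have $\tau(P)=\tau(Q)=t$. Both then lie on $t\cdot\Gamma_T$, so the segment $PQ$, of slope $s$, is a chord of $t\Gamma_T$. Hence $s$ is a slope between two points of $\Gamma_T$, which contradicts $s\notin I_T$. Since $\tau$ is continuous, injectivity gives strict monotonicity.

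For $s\in I_T$, suppose $D<4$, so $\Gamma_T$ bulges outward. The sublevel set $\{\tau\lqs t\}$ is then the dilated wedge $t\mathcal W$, which is convex. Its intersection with $\ell$ is therefore an interval growing with $t$. It follows that $\tau$ restricted to $\ell$ is quasiconvex: it decreases and then increases, with at most one minimal plateau. Strict convexity of $\Gamma_T$ rules out a plateau of positive length, since $\ell$ cannot contain an arc of $t\Gamma_T$, so the minimum is a single point. On the lattice points this gives an antimodal sequence, with at most two equal minimal entries. For $D>4$ the complement side is convex instead, so $\tau$ is quasiconcave along $\ell$ and we get modal sequences. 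For $D=4$, $\Gamma_T$ is a segment by \e{f2} and Theorem \ref{mthm}. Then $I_T$ is the single slope of that segment, $\ell$ is parallel to it, and $\tau$ is constant on $\ell$.

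The main obstacle is the final sharpness claim, namely producing a non-monotone index line for each rational or infinite $s\in I_T$. When $s\in I_T$ with $D\neq 4$, there is a point $\theta_0$ where the supporting line of $\Gamma_T$ has slope $s$. The step I expect to be hardest is that rationality of $s$ lets me choose lines of slope $s$ containing many coprime lattice points, spaced at a fixed lattice gap along the line. For a dilate $t\Gamma_T$ with $t$ large, the chord of slope $s$ near the tangency point at $\theta_0$ has length tending to infinity as we move inward. I would take a line of slope $s$ cutting $t\mathcal W$ in a chord much longer than the lattice period, passing near the tangency point. On this chord $\tau$ attains its extremum in the interior, so there are lattice points on both sides where $\tau$ exceeds (respectively, falls below) its value at a middle lattice point. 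The coprimality condition needs care: the lattice points on a rational line form an arithmetic progression, and I would pick the line's intercept so that infinitely many of these points are primitive. A suitable choice is a line through a primitive point whose direction vector is primitive, using a Dirichlet-type argument. That gives three primitive points with a non-monotone pattern. The infinite slope $s=\pm\infty$ is handled the same way with vertical lines $x=n$, $n$ prime.
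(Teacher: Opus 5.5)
Your treatment of the first two assertions is correct and is essentially the paper's argument in different language. The paper follows the two points of $\ell\cap t\cdot\Gamma_T$ as they separate from $\ell\cap L$. You instead use that the sublevel sets $\{\tau\le t\}=t\mathcal W$ (for $D<4$), respectively the superlevel sets (for $D>4$), are convex, and that strict convexity leaves no flat stretch of $\tau$ on $\ell$. The convexity holds because these sets are the images under the projective map $F$ of the epigraph, respectively the hypograph $\{0<y\le\Phi(x)\}$, of $\Phi$. This works, and it spares you checking that $\ell$ actually crosses the ray $L$ inside the sector.

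The gap is in the last assertion, at the step you flagged. Infinitely many primitive points on $\ell$ is the wrong target. What you need is at least two primitive points in each window of $\ell\cap\{0\le k\le n\}$ on either side of the extremum of $\tau$ (or one primitive point inside the chord $\ell\cap t\mathcal W$ and one outside it on each side). At least one of these windows is bounded, and for $s\notin[0,1]$ the whole of $\ell\cap\{0\le k\le n\}$ is a bounded segment. Dirichlet's theorem gives infinitely many primes in the progression $n^*+vj$ but says nothing about a prescribed window. Periodicity does not rescue it either: on the line $vk-un=c$ primitivity is periodic in $j$ with period $|c|$, while for finite $s\notin[0,1]$ the part of that line in the sector carries at most about $|c|/2$ lattice points.

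The paper closes this with the quantitative prime number theorem in progressions of de la Vall\'ee Poussin. It produces many primes $n^*+vj\equiv 1 \bmod v$ with $1\le j\le\delta n^*$ and with $1\le -j\le\delta' n^*$, each giving a primitive point $(n^*+vj,k^*+uj)$ on the required side.

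An elementary repair in your framework also works. Take $\ell$ to be $vk-un=\pm p$ with $p\nmid v$ a large prime, the sign chosen so that $\ell$ is a positive dilate, by a factor $\asymp p$, of the supporting line of slope $s$ at your tangency point. Since $\tau$ is homogeneous, its extremum on $\ell$ is the dilated tangency point. Then $\gcd(n,k)\in\{1,p\}$, and it equals $p$ only when $p\mid n$, which happens for one lattice point in every $p$ consecutive ones. Meanwhile each side of the extremum carries $\gg p$ lattice points, because the tangency point lies strictly inside the sector. The same lines also settle the case $D=4$, which you set aside: two primitive points on a line of slope $s$ already give an equal, hence non-monotone, pair.
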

\begin{proof}
If $s \not\in I_T$ then $\ell$ and $t \cdot \Gamma_T$ can intersect at most once. If they do meet, then this point of intersection moves continuously and in one direction along $\ell$ with $t$. Hence Lemma \ref{latt} implies that the tree numbers are monotone along this line.

Next suppose  $s \in I_T$. There exists a line with this slope intersecting $\Gamma_T$ in two places. Assume $D\neq 4$. By continuity and strict concavity/convexity there is a parallel line meeting $\Gamma_T$ at the single `tangent' point $z_s$. Let $L$ be the line from the origin to $z_s$, having slope strictly between $0$ and $1$. Now for any line $\ell$ with the slope $s$, if there is a dilate of $\Gamma_T$ that ever meets $\ell$ then there is a $t_0>0$ so that $t_0 \cdot \Gamma_T$ meets $\ell$ at exactly one point: $\ell \cap L$. If $D<4$ then by Lemma \ref{cfv}, for increasing $t>t_0$, $\ell \cap t \cdot \Gamma_T$ consists of two points moving away from $\ell \cap L$ in each direction. If $D>4$ then the two intersection points move away from $\ell \cap L$  for decreasing $t<t_0$. Lemma \ref{latt} then informs us that the tree numbers must be antimodal if $D<4$ and modal if $D>4$. If $D=4$ then $\Gamma_T$ is a line, $I_T =\{s\}$ for $s$ the slope of $\Gamma_T$, and tree numbers on index lines with slope $s$ are constant by Lemma  \ref{latt}.

We next  show that the situation described in the last paragraph always has nontrivial examples for rational or infinite $s\in I_T$. 
Suppose  $s=u/v \in \Q$ with $v>0$. Choose a large $n^* \equiv 1 \bmod v$ and let $k^*\in \Z$ give a close lattice point $(n^*,k^*)$ to the line $L$. Let $\ell$ be the line through $(n^*,k^*)$ of slope $u/v$ and containing the lattice points $(n^*+v j,k^*+u j)$ for $j \in \Z$. Such points satisfying the two conditions 
$$
(a) \quad 0\lqs k^*+u j \lqs n^*+v j  \quad \qquad \text{and}  \quad \qquad (b) \quad \gcd(n^*+v j,k^*+u j)=1,
$$
 correspond to tree numbers. Finding at least two of these on each side of $L$  will show the numbers are not monotone. Condition (a) can be simplified by choosing $\delta, \delta'>0$ and independent of $n^*$ so that  $1\lqs j \lqs \delta n^*$ and $1\lqs -j \lqs \delta' n^*$ imply it. If we can find $n^*+v j$ that are prime numbers, then that will give  condition (b). They are supplied by the classical theorem of de La Vall\'ee Poussin. To state it, fix integers $d, a$ with $d\gqs 1$ and $\gcd(a,d)=1$. Then $\pi(x;d,a)$, the number of primes $\lqs x$ and $\equiv a \bmod d$, satisfies
 \begin{equation*}
   \pi(x;d,a) \sim \frac {x}{\phi_{\rm Euler}(d) \log x} \qquad \text{as} \qquad x \to \infty.
 \end{equation*}
Hence the number of primes that are $\equiv 1 \bmod v$ in $[n^*, n^*(1+ \delta)v]$ and in $[n^*(1- \delta')v, n^*]$  can be made as large as we please by  choosing $n^*$ large enough. This demonstrates that the tree numbers on this line are not monotone. If $\ell$ is vertical then the arguments are easier; just choose $n^*$ large and prime.
\end{proof}

Our modal (and antimodal) sequences allow for two equal maximal (minimal) numbers in the middle. This could occur if $T$ has repeated numbers. By strict convexity/concavity there cannot be  more than two repeated numbers on an index line. Two minimal numbers should also be allowed in \cite[Rem. 1.3]{ga22}.


To determine $I_T$, recall that the right derivative at $0$ and the left derivative at $1$ of $\Phi_T$
are given by \e{pu}, \e{pu2}, as
\begin{align}
  S'(0)   & =  -2\rho(s_0)    + \log f(u_0,s_0,t_0) 
   = \log\left( \frac{\lambda_{s_0} u_0 -t_0}{\lambda^2_{s_0} \sqrt{s_0^2 -4}}\right), \la{sg}\\
   S(1)   &  = 2\rho(t_0)    - \log f(u_0,t_0,s_0) 
     = -\log\left( \frac{\lambda_{t_0} u_0 -s_0}{\lambda^2_{t_0} \sqrt{t_0^2 -4}}\right). \la{sgb}
\end{align}
As a check on our work, when $D=4$  we know by Lemma \ref{tr4} that $T$ is strictly rising with $(s_0,u_0,t_0)$  an ordered triple. Hence  $\lambda_{u_0} = \lambda_{s_0} \lambda_{t_0}$ by Proposition \ref{zag2}, and therefore with \e{use},
\begin{equation}\label{u0}
  u_0 = \lambda_{u_0} + \lambda_{u_0}^{-1} = \lambda_{s_0} \lambda_{t_0} + \lambda_{s_0}^{-1} \lambda_{t_0}^{-1}.
\end{equation}
Then for $S'(0)$,
\begin{equation*}
  \frac{\lambda_{s_0} u_0 -t_0}{\lambda^2_{s_0} \sqrt{s_0^2 -4}} = \frac{\lambda_{s_0} (\lambda_{s_0} \lambda_{t_0} + \lambda_{s_0}^{-1} \lambda_{t_0}^{-1}) -(\lambda_{t_0}+\lambda_{t_0}^{-1})}{\lambda^2_{s_0} (\lambda_{s_0} - \lambda_{s_0}^{-1})} =  \frac{\lambda_{t_0}}{\lambda_{s_0}}.
\end{equation*}
Similarly for $S(1)$ and we obtain, as expected,
\begin{equation*}
  S'(0) = S(1) = \log(\lambda_{t_0}/\lambda_{s_0}) = \rho(t_0)-\rho(s_0) \qquad \text{for} \qquad D=4.
\end{equation*}

Next set
\begin{equation} \la{psi}
  \sigma(0) := -\frac{\rho(s_0)}{S'(0)},  \qquad \sigma(1) := 1-\frac{\rho(t_0)}{S(1)}.
\end{equation}

\begin{prop} \la{tan}
Let $T$ be a rising tree with base triple $(s_0,u_0,t_0)$. Define $\sigma(0)$, $\sigma(1)$ be means of \e{psi} and \e{sg}, \e{sgb}. Then $\Gamma_T$, the image of the graph of $\Phi_T$ under $F$,  has  tangents with slope $\sigma(0)$ where it meets the line $y=0$ and slope $\sigma(1)$ where it meets the line $y=x$.
\end{prop}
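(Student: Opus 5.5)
The plan is to parametrize $\Gamma_T$ by $x\in[0,1]$ through $F$, that is $x\mapsto (1/\Phi(x),\, x/\Phi(x))$, and compute the slope of the tangent by differentiating. The endpoint $x=0$ is where $\Gamma_T$ meets $y=0$, and $x=1$ is where it meets $y=x$. We have $\Phi(0)=\phi(0)=\rho(m_0)=\rho(s_0)>0$ and $\Phi(1)=\rho(t_0)>0$, since $s_0,t_0>2$ for a rising tree. The one-sided derivatives $\Phi'(0^+)=S'(0)$ and $\Phi'(1^-)=S(1)$ are given by Proposition \ref{sjs2} together with the proof of Theorem \ref{mthm}, with the explicit forms \e{sg}, \e{sgb}.

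Near $x=0$, write $X(x)=1/\Phi(x)$ and $Y(x)=x/\Phi(x)$. The one-sided derivatives are
\begin{equation*}
X'(0^+)=-S'(0)/\rho(s_0)^2, \qquad Y'(0^+)=\big(\Phi(0)-0\cdot S'(0)\big)/\Phi(0)^2=1/\rho(s_0).
\end{equation*}
Hence the tangent slope is
\begin{equation*}
Y'/X' = -\rho(s_0)/S'(0)=\sigma(0).
\end{equation*}
At $x=1$ we have $X'(1^-)=-S(1)/\rho(t_0)^2$ and $Y'(1^-)=(\rho(t_0)-S(1))/\rho(t_0)^2$. The slope is therefore
\begin{equation*}
(\rho(t_0)-S(1))/(-S(1)) = 1-\rho(t_0)/S(1)=\sigma(1).
\end{equation*}
An alternative check uses \e{f2}: the tangent line $y=\Phi(0)+S'(0)x$ maps under $F$ to the line $y=-\rho(s_0)x/S'(0)+1/S'(0)$. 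Because $F$ sends lines to lines, it sends tangent lines of the graph to tangent lines of the image curve, which gives the same answer.

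The main care point is justifying that one-sided derivatives of $\Phi$ translate into genuine tangents of the image curve. This holds because $F$ is a diffeomorphism on $y>0$ and $\Phi>0$ on $[0,1]$, so the chain rule applies to the one-sided derivatives.

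Degenerate cases need separate treatment. If $S'(0)=0$, then $X'(0^+)=0$ while $Y'(0^+)\neq 0$, so the tangent is vertical; this matches $\sigma(0)=\pm\infty$ under the paper's convention that slopes may be infinite. If $S(1)=0$, the slope is likewise infinite, consistent with \e{psi}. Finiteness of $S'(0)$ and $S(1)$ holds because $s_0,t_0>2$ for rising trees, so the infinite cases in Proposition \ref{sjs2} do not arise. I do not expect any deeper obstacle: the substance is contained in Proposition \ref{sjs2}, and the rest is this computation plus the degenerate-slope remark.
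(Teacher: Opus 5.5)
Your proposal is correct and follows the paper's route: the paper's proof is the one-line observation that $F$ sends the endpoint slopes $S'(0)$, $S(1)$ of $\Phi_T$ to $\sigma(0)$, $\sigma(1)$, which is exactly your alternative check via \eqref{f2}, and your chain-rule computation just makes it explicit. One small citation point: the existence of the one-sided derivatives $\Phi'(0^+)=S'(0)$ and $\Phi'(1^-)=S(1)$ is Proposition~\ref{der}, carried over to $\Phi$ in the proof of Theorem~\ref{mthm}, while Proposition~\ref{sjs2} supplies their values.
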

\begin{proof}
The transformation $F$ from \e{f} maps the slopes $S'(0)$, $S(1)$ to $\sigma(0)$ and $\sigma(1)$, respectively.
\end{proof}


The slope interval $I_T$ may then be expressed in terms of $\sigma(0)$ and $\sigma(1)$. This depends on how $\Gamma_T$ curves and is also complicated by its slopes possibly becoming infinite.  If $D=4$ then $\sigma(0) = \sigma(1)$ and $I_T=\{\sigma(0)\}$.

\begin{ex}
{\rm For the Markov tree generated by $(s_0,u_0,t_0)=(3,15,6)$ we obtain \e{sigs} from \e{sg}, \e{sgb} and \e{psi}. Then the slope interval  is the open interval $(\sigma(0),\sigma(1)) \approx (-1.242, -1.143)$; see Figure \ref{wplot}. Corollary \ref{zak} now follows from Theorem \ref{ag}, with the monotone sequences of Markov numbers easily determined to be increasing as described. 
This shows that any repeated Markov numbers must have indices on index lines in this narrow slope range.}
\end{ex}

\begin{ex}
{\rm For the rising tree $T_1=(2.4, 2.5, 2.1)$ we find $\sigma(0)\approx 1.058$, and $\sigma(1)\approx -0.981$. Then its slope interval is $I_{T_1} = (\sigma(0),\infty]\cup [-\infty,\sigma(1))$ as can be seen in Figure \ref{wplot2ee}.
Hence $m_{k/n}$ and $m_{k'/n'}$ are distinct for $T_1$ if $\sigma(1) \lqs (k-k')/(n-n') \lqs \sigma(0)$.}
\end{ex}

\section{Approximations to $\Phi$} \la{seri}

Let $\mathcal R_{\ell}$ be the increasing sequence that includes the Farey rows up to row $\ell$, as in Section \ref{rey}, with
\begin{equation} \la{bl}
  \mathcal R_{\ell} =\{b_0=0, b_1=\tfrac 1{\ell+1}, \dots, b_i, \dots, b_{2^\ell-1}=\tfrac {\ell}{\ell+1}, b_{2^\ell}=1\}.
\end{equation}
 For example, Figure \ref{farey} displays $\mathcal R_5$, ordered by angle.
For a rising tree with encoding function $\Phi$, define $\Phi(\ell,x)$ to be the piecewise linear function connecting the $2^\ell+1$ points $(b_i,\Phi(b_i))$. For increasing $\ell$  it converges uniformly to $\Phi(x)$ as shown next. Recall the constants $\omega_T$, $\kappa_T$ from Propositions \ref{sim2}, \ref{zag2}, respectively. They may be found explicitly with $\omega_T=0.4$, $\kappa_T =52$ for $T=(3,15,6)$, for example.

\begin{prop} \la{pah}
For a rising tree $T$ and $\ell \gqs 1$ 
we have
\begin{equation} \la{fh}
  \left| \Phi(x) - \Phi(\ell,x)\right| \lqs \kappa_T (\ell+1) e^{-2 \omega_T(\ell+1)} \qquad \text{for} \qquad x \in [0,1].
\end{equation}
\end{prop}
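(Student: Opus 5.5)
The plan is to work on each subinterval $[b_i,b_{i+1}]$ of $\mathcal R_\ell$ separately, compare $\Phi$ there with the chord through its endpoints, and control the gap by the slope jumps from Corollary \ref{sldi} together with the growth bound of Proposition \ref{sim2}. Consecutive elements of $\mathcal R_\ell$ are Farey neighbors, and every rational strictly between them appears first in a row $>\ell$. The mediant $q=b_i\oplus b_{i+1}$ lies in row $\ell+1$. Its denominator $n$ is at least $\ell+2$, and $q_0=b_i$, $q_0'=b_{i+1}$. The case $D=4$ is trivial, since $\Phi$ is then linear and $\Phi(\ell,x)=\Phi(x)$. So assume $D<4$; the case $D>4$ is the same with the inequalities reversed.

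By convexity the graph of $\Phi$ on $[b_i,b_{i+1}]$ lies below the chord, which is the graph of $\Phi(\ell,\cdot)$ there. It also lies above the two lines through $(q,\Phi(q))$ with slopes $S_0=\g(b_i,q)$ and $S_0'=\g(q,b_{i+1})$. Indeed, on $[b_i,q]$ the graph lies above the extension of the chord from $q$ to $b_{i+1}$, and symmetrically on $[q,b_{i+1}]$. These two lines meet the chord only at the endpoints. So the maximal vertical gap between the chord and the lower envelope is at most the gap at $x=q$. Writing $h=b_{i+1}-b_i$, that gap equals
\[
\frac{(q-b_i)(b_{i+1}-q)}{h}\,(S_0'-S_0)\lqs \frac h4\,(S_0'-S_0).
\]
It is cleaner to bound it crudely by $(q-b_i)(S_0'-S_0)\lqs S_0'-S_0$, using $h\lqs 1$.

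Corollary \ref{sldi} (or directly \e{dw4}) gives $S_0'-S_0<\kappa_T n/m_q^2$. Proposition \ref{sim2} gives $m_q>e^{\omega_T n}$, so the bound is at most $\kappa_T n e^{-2\omega_T n}$.

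The main obstacle is that $n$ varies with $i$ and is only bounded below by $\ell+2$. The fix is to note that $n e^{-2\omega_T n}$ is decreasing once $n\gqs 1/(2\omega_T)$. For the remaining small $\ell$ I would instead use the extra factor $h/4$: here $h=1/(n_0n_0')$ and $n=n_0+n_0'$, and I would check that $(h/4)\,n e^{-2\omega_T n}\lqs (\ell+1)e^{-2\omega_T(\ell+1)}$ holds regardless of monotonicity. Failing that, I would reduce $\omega_T$, which is harmless because Proposition \ref{sim2} remains valid with a smaller $\omega_T$, until $2\omega_T\lqs 1/(\ell+1)$ is never needed, or equivalently use $\sup_{n\gqs \ell+1} n e^{-2\omega_T n}$.

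With that settled, taking the supremum over $i$ and over $x\in[b_i,b_{i+1}]$ gives \e{fh}. For irrational $x$ the bound passes to $\Phi$ by continuity, since the estimates were obtained for $\Phi$ itself via its convexity from Theorem \ref{mthm}.
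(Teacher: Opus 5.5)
Your strategy is sound and close to the paper's. The paper pivots at $q=b_i$ with $i$ odd (row $\ell$, so $b_{i\pm1}$ are $q_0,q_0'$). It bounds $\Phi$ from below by the one-sided tangent lines of slopes $S(q)$, $S'(q)$, and gets $\Phi(\ell,x)-\Phi(x)<m'-S'<m'-m=S_0'-S_0$. You pivot at the mediant of each consecutive pair (row $\ell+1$) and use extended secants instead.

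However, one step you state is false. On $[b_i,q]$ your lower bound is the line of slope $S_0'$ through $(q,\Phi(q))$, and on $[q,b_{i+1}]$ it is the line of slope $S_0$. These lines meet the chord at $(b_{i+1},\Phi(b_{i+1}))$ and at $(b_i,\Phi(b_i))$, that is, outside the halves on which you use them. So chord minus envelope is linear on each half, decreasing on $[b_i,q]$ and increasing on $[q,b_{i+1}]$. It is \emph{smallest} at $q$, and its maxima are at the endpoints, namely $(q-b_i)(S_0'-S_0)$ and $(b_{i+1}-q)(S_0'-S_0)$.

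Nor is the true gap maximized at $q$. Take the convex function through $(0,0)$, $(\tfrac14,-\tfrac32a)$, $(\tfrac12,-a)$, $(1,0)$ with $a>0$, linear in between. Its gap is $\tfrac32a$ at $x=\tfrac14$, but only $a=\tfrac h4(S_0'-S_0)$ at $q=\tfrac12$. So the factor $h/4$, on which your small-$\ell$ fix relies, is not available. Your ``crude'' bound $(q-b_i)(S_0'-S_0)$ is exactly the correct envelope bound on $[b_i,q]$, and its mirror handles $[q,b_{i+1}]$. So the estimate $\sup|\Phi-\Phi(\ell,\cdot)|\lqs S_0'-S_0$ on $[b_i,b_{i+1}]$ does survive.

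The obstacle you flagged is real. The paper's proof also passes from $\kappa_T n e^{-2\omega_T n}$ with $n\gqs\ell+1$ to $\kappa_T(\ell+1)e^{-2\omega_T(\ell+1)}$ without comment. That step relies on $ne^{-2\omega_Tn}$ being maximized at the smallest admissible $n$, which is not automatic when $\omega_T$ is small. Shrinking $\omega_T$ goes the wrong way: it raises the threshold $1/(2\omega_T)$ beyond which $ne^{-2\omega_Tn}$ decreases.

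The clean repair is to keep the interval lengths instead of discarding them. Since $q-b_i=1/(nn_0)$ and $b_{i+1}-q=1/(nn_0')$, on $[b_i,q]$ you get
\[
0\lqs \Phi(\ell,x)-\Phi(x)\lqs \frac{S_0'-S_0}{n n_0}<\frac{\kappa_T}{n_0\, m_q^2}<\kappa_T e^{-2\omega_T n}\lqs \kappa_T e^{-2\omega_T(\ell+2)},
\]
and symmetrically on $[q,b_{i+1}]$. This proves \eqref{fh} for every $\ell\gqs 1$, even without the factor $\ell+1$, and needs no monotonicity. The same device repairs the paper's version, where the gap is at most $(m'-S')(b_{i+1}-q)$ with $b_{i+1}-q=1/(nn_0')$.
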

\begin{proof}
The result is trivially true for $D=4$ by Theorem \ref{mthm}. Assume $D<4$, making the graph of $\Phi$ strictly convex. Let $q=b_i$ for $i$ odd, so that $q$ is in Farey row $\ell$ and has denominator at least $\ell+1$. Let $m=\g(b_{i-1},q)$ and $m'=\g(q,b_{i+1})$ be the slopes of the segments of  $\Phi(\ell,x)$ for $b_{i-1} \lqs x \lqs b_{i+1}$. Let $S$ and $S'$ be the left and right derivatives of $\Phi$ at $q$. By \e{<}, Corollary \ref{sldi} and Proposition \ref{sim2} we know
\begin{equation*}
  0< S'-S, \qquad m'-m < \kappa_T (\ell+1) e^{-2 \omega_T(\ell+1)}, \qquad m<S.
\end{equation*}
Adding these inequalities implies $m'-S'<\kappa_T (\ell+1) e^{-2 \omega_T(\ell+1)}$ which will be needed for \e{boz}. Now
\begin{equation*}
  S'(x-q)+\Phi(q) \lqs \Phi(x) \lqs m'(x-q)+\Phi(q)=\Phi(\ell,x) \qquad \text{for} \qquad q\lqs x \lqs b_{i+1},
\end{equation*}
by convexity. Therefore, for these $x$ values,
\begin{align}
 0\lqs \Phi(\ell,x) -  \Phi(x)  & \lqs m'(x-q)-\Phi(q)-(S'(x-q)+\Phi(q)) \notag\\
   & \lqs (m'-S')(x-q) < (m'-S'). \la{boz}
\end{align}
Similarly on the left of $q$ and we obtain \e{fh}. The case $D>4$ is handled the same way.
\end{proof}

Proposition \ref{pah} may be  used to find values of $\Phi$  at irrationals to within a prescribed error.
We give another more explicit method for this next that also applies to $\Phi'$. 
First, a simple example gives the idea.
For $q_0$ and $q_0'$ the Farey neighbors with mediant $q$, we have
\begin{equation} \la{ppd}
  \rho(m_q) = \rho(m_{q_0})+\rho(m_{q_0'})+\Delta(q),
\end{equation}
in the notation of \e{de}, and as we move up the Farey tree $\Delta(q)$ becomes exponentially small as in \e{deb}.
The identity  \e{ppd} may  be used repeatedly to express $\rho(m_q)$ ultimately in terms of $\rho(m_0)$ and $\rho(m_1)$. To see this for $\phi(3/7)$, 
start with
\begin{equation} \la{del}
\begin{aligned}
  \rho(\tfrac 12) & = \rho(0)+\rho(1) +  \Delta(\tfrac 12), \\
  \rho(\tfrac 13) & = \rho(0)+\rho(\tfrac 12) +  \Delta(\tfrac 13) = 2\rho(0)+\rho(1) +  \Delta(\tfrac 12) +  \Delta(\tfrac 13),
\end{aligned}
\end{equation}
using the shorthand $\rho(q)$ for $\rho(m_q)$. Following the sequence $1/2, 1/3, 2/5, 3/7$ on the Farey tree:
\begin{equation} \la{delb}
 \begin{aligned}
  \rho(\tfrac 25) & = 3\rho(0)+2\rho(1) +  2\Delta(\tfrac 12)+  \Delta(\tfrac 13)+  \Delta(\tfrac 25), \\
  \rho(\tfrac 37) & = 4\rho(0)+3\rho(1) +  3\Delta(\tfrac 12)+  \Delta(\tfrac 13)+  \Delta(\tfrac 25)+  \Delta(\tfrac 37).
\end{aligned}
\end{equation}
Hence
\begin{equation}\label{512}
  \phi(\tfrac 3{7})  = \tfrac 47 \rho(0)+ \tfrac 37 \rho(1) +   \tfrac 37 \Delta(\tfrac 1{2})+   \tfrac 17\Delta(\tfrac 1{3})+   \tfrac 17\Delta(\tfrac 2{5})+   \tfrac 1{7}\Delta(\tfrac 3{7}).
\end{equation}

Some notation is required for the general case.
Let $C_j=u_j/v_j=[a_0, a_1,  \dots, a_j]$ be the sequence of convergents to some $\alpha =[a_0, a_1, a_2, \dots ]\in \R \cap (0,1)$. For $k \gqs 1$ set 
\begin{equation} \la{yuk}
  C_{j,k}:=[a_0,a_1, \dots, a_j, k] =\frac{k u_j+u_{j-1}}{k v_j+v_{j-1}},
\end{equation}
 and we will later need the remainders $r_j =[a_j,a_{j+1}, \dots]$. 
 Note that any discussion of $C_j$ or $C_{j,k}$ assumes that all the coefficients $a_1, \dots, a_j$ are  non-zero.
 Define the linear function of $x$
\begin{equation} \la{cjk}
  \Lambda(x,C_{n, k}):= (1-x)\rho(m_0)+ x \cdot \rho(m_1) +\sum_{j=0}^n \sideset{}{'}\sum_{\ell=1}^{a_{j+1}}
  (-1)^j (v_j x-u_j) \Delta(C_{j,\ell})
\end{equation}
where the prime signifies omitting  the $\ell =1$ term when $j=0$ and summing over $1\lqs \ell \lqs k$ when $j=n$. It can be seen that the fractions $C_{j,\ell}$ in the sum start at $1/2$ and move along the Farey tree towards $\alpha$ one edge at a time, going left for $j$ even and right for $j$ odd. Each $C_{j,\ell}$ is the mediant of  $C_{j}$ and $C_{j,{\ell-1}}$, (or $C_{j}$ and $C_{j-1}$ if $\ell=1$).

\begin{prop} \la{awok}
For a rising tree and $n, k\gqs 1$, the functions $\Phi(x)$ and $\Lambda(x,C_{n,k})$ agree at $x=C_n$ and  $x=C_{n,k}$.
\end{prop}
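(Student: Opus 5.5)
We argue by induction along the path in the Farey tree from $1/2$ towards $\alpha$, using the identity \e{ppd}, $\rho(m_q)=\rho(m_{q_0})+\rho(m_{q_0'})+\Delta(q)$, exactly as in the worked example \e{del}, \e{delb}. Since $\Phi=\phi$ at rationals and $\phi(k/n)=\rho(m_{k/n})/n$, the claim at a rational $x=k/n$ is equivalent to
\begin{equation*}
n\Lambda(k/n,C_{n',k'})=\rho(m_{k/n}),
\end{equation*}
where $C_{n',k'}$ denotes the endpoint of the sum. This reformulation makes everything linear. For a fraction $p=a/b$ write $L_p(x)=b x-a$, so that $n\Lambda(k/n,\cdot)$ has coefficients $n(v_jx-u_j)=\pm(v_jk-u_jn)$, which are integers. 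The key observation is that, by \e{nb}, $(-1)^j(v_j k-u_j n)$ is a determinant between $C_j$ and the target fraction.

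\textbf{Main lemma.} We would prove the following statement by induction on the Farey row. For every fraction $q=k/n$ on the path, i.e.\ $q=C_{j,\ell}$ or $q=C_j$,
\begin{equation*}
\rho(m_q)=(n-k)\rho(m_0)+k\rho(m_1)+\sum_{p}c_p(q)\Delta(p),
\end{equation*}
where the sum runs over the earlier path fractions $p$ (including $q$ itself when $q\ne 0,1$). The coefficient $c_p(q)$ should be the unsigned determinant $|\det(p\text{'s far parent},q)|$, which we will identify with $(-1)^j(v_jk-u_jn)$ for $p=C_{j,\ell}$.

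\textbf{Checking the base cases.} At $1/2$ this reads $\rho=\rho(0)+\rho(1)+\Delta(1/2)$. The $j=0$ terms have coefficient $v_0x-u_0=x$ when $a_0=0$, and for $x=1/2$ this gives $\tfrac12\Delta(1/2)$, matching.

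\textbf{Inductive step.} Adding the expansions for the two parents $q_0,q_0'$ plus $\Delta(q)$ gives the expansion for $q$. This requires the coefficient identity
\begin{equation*}
c_p(q_0)+c_p(q_0')=c_p(q).
\end{equation*}
That identity holds because $L_{C_j}(x)$ evaluated at numerator/denominator pairs is additive under mediants: $n(v_j\cdot k/n-u_j)=v_jk-u_jn$ is linear in $(n,k)$. The main-term part $(n-k)\rho(m_0)+k\rho(m_1)$ is likewise additive.

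\textbf{Main obstacle.} The real issue is to show that the coefficient of $\Delta(C_{j,\ell})$ in $\rho(m_q)$ is exactly $(-1)^j(v_jk-u_jn)$ for every $q$ lying beyond $C_{j,\ell}$ on the path, and $0$ for irrelevant terms. Two things must be checked:
\begin{enumerate}
\item[(a)] the $\Delta(p)$ term first appears with coefficient $1$ at $q=p$;
\item[(b)] at $q=p=C_{j,\ell}$ one has $(-1)^j(v_j k-u_j n)=1$.
\end{enumerate}
Point (b) follows from \e{yuk}: with $k=\ell u_j+u_{j-1}$ and $n=\ell v_j+v_{j-1}$ we get $(-1)^j(v_ju_{j-1}-u_jv_{j-1})=1$.

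Beyond that, the coefficient of $\Delta(p)$ in later $\rho(m_q)$ equals the number of copies of $p$ generated, which is $n\cdot(\text{coefficient of }\Delta(p)\text{ in }\Phi)$. This is additive and vanishes at the fraction $C_j$ on the other side. The fraction $C_j$ is the parent not "carrying" $p$, and indeed $L_{C_j}(C_j)=0$. This vanishing explains why the sum can stop at $C_{n,k}$.

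\textbf{Evaluating at the two points.} For both $x=C_n$ and $x=C_{n,k}$, the omitted later terms do not enter $\rho(m_x)$, because $\Delta(p)$ contributes only to descendants of $p$. The terms $C_{n,\ell}$ with $\ell\le k$ do enter $C_{n,k}$. At $x=C_n$, all $j=n$ terms vanish since $L_{C_n}(C_n)=0$, so both endpoints are consistent with the prescribed summation range.
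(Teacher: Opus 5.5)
Your argument is correct in substance, but it is organized differently from the paper's proof.

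\textbf{The paper's route.} The paper inducts over the terms of the sum defining $\Lambda(x,C_{n,k})$. Its invariant is that the partial sum equals the secant line $-(v'x-u')\rho(m_b)+(vx-u)\rho(m_{b'})$ through the current pair of Farey neighbours $b=u/v<b'=u'/v'$. Each new term, $(vx-u)\Delta(q)$ or $-(v'x-u')\Delta(q)$ with $q$ the mediant, is absorbed by one use of $\rho(m_q)=\rho(m_b)+\rho(m_{b'})+\Delta(q)$. The fact that each coefficient vanishes at the retained endpoint is built into the invariant, so no bookkeeping about ancestors is needed.

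\textbf{Your route.} You expand each $\rho(m_q)$ on the path all the way down to $\rho(m_0),\rho(m_1)$. You then identify the coefficient of $\Delta(C_{j,\ell})$ with the determinant $(-1)^j(v_jk-u_jn)$ of $C_j$ and $q$, using additivity under mediants together with boundary values. This gives a pleasant combinatorial reading of the worked example for $\phi(3/7)$.

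\textbf{The step you assert but do not prove.} Your route needs the following fact: if $p=C_{j,\ell}$ and $q$ is a later path fraction, then any parent of $q$ that is neither $p$ nor a later path fraction must be $C_j$. This is not cosmetic. Your linear form takes the value $1$, not $0$, at the other parent of $p$; indeed it equals $1$ at every $C_{j,m}$, including $C_{j,\ell-1}$ and $C_{j,0}=C_{j-1}$. So additivity would break if that parent ever reappeared. It does not reappear: the path after $p$ stays in the open interval between the Farey neighbours $p$ and $C_j$. Hence the parents of later path fractions lie in the closed interval, and so are $C_j$, $p$, or later path fractions.

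For the same reason, replace ``$p$'s far parent'' by $C_j$ explicitly. When $\ell=1$, the older parent of $C_{j,1}$ is $C_{j-1}$, where your form equals $1$. Your explicit formula, which uses $C_j$, is the right one.
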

\begin{proof}
Write the Farey neighbors $C_n$ and $C_{n,k}$ as $b<b'$ with $b=u/v$ and $b'=u'/v'$.
We claim that the right side of \e{cjk} reduces to
\begin{equation} \la{fx}
  f(x) := -(v' x-u')\rho(m_b) + (v x-u)\rho(m_{b'}).
\end{equation}
If this is true then
\begin{align*}
f(b) & = f(u/v) = -(v' u/v -u')\rho(m_b) = \rho(m_b)/v = \Phi(b),\\
f(b') & = f(u'/v') = (v u'/v'-u)\rho(m_{b'}) = \rho(m_{b'})/v' = \Phi(b'),
\end{align*}
as we wanted. The claim may be established by using induction on the length of the sum. Let $q$ be the mediant of $b$ and $b'$ and assume the sum so far equals \e{fx}, giving the desired linear function on the interval $[b,b']$. To go to the interval $[b,q]$ use
\begin{align*}
  f(x) + (v x-u)\Delta(q) & = f(x) + (v x-u)(\rho(m_q) -\rho(m_b)-\rho(m_{b'})), \\
   &  = -((v+v') x-(u+u'))\rho(m_b) + (v x-u)\rho(m_{q}).
\end{align*}
This has the correct form \e{fx}, and will agree with $\Phi(x)$ at $x=b$ and $x=q$.  To go to the interval $[q,b']$ use
$
  f(x) -(v' x-u')\Delta(q)$.
\end{proof}

For a convergent $C_{j+1}$, set
\begin{equation} \la{el}
  \Delta^*(C_{j+1})  :=  \sum_{\ell=1}^{a_{j+1}}
   \Delta(C_{j,\ell})
    = \rho(m_{C_{j+1}}) - a_{j+1}\rho(m_{C_{j}}) - \rho(m_{C_{j-1}}).
\end{equation}

\begin{theorem} \la{rat}
For a rising tree and an irrational $\alpha =[0, a_1, a_2, \dots ]$ with convergents $C_j=u_j/v_j$  we have
\begin{align}\la{iy}
  \Phi(\alpha) & = (u_1-v_1 \alpha)\rho(m_0) + \alpha \cdot \rho(m_{C_1}) + \sum_{j \gqs 1} 
    (-1)^j(v_j \alpha -u_j)\Delta^*(C_{j+1}),\\
  \Phi'(\alpha) & = \rho(m_{C_1}) - a_1\rho(m_0) +\sum_{j \gqs 1} 
  (-1)^j v_j  \Delta^*(C_{j+1}). \la{iz}
\end{align}
Restricting each series in \e{iy}, \e{iz} to $1\lqs j\lqs m$  makes an error of size $O(e^{-2\omega_T v_m})$.
\end{theorem}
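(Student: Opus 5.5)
We apply Proposition \ref{awok} with $k=a_{n+1}$, so that $C_{n,a_{n+1}}=C_{n+1}$. Then the linear function $\Lambda(x,C_{n,a_{n+1}})$ interpolates $\Phi$ at the consecutive convergents $C_n$ and $C_{n+1}$. Grouping the inner sums in \e{cjk} with \e{el} gives
\begin{equation*}
\Lambda(x,C_{n,a_{n+1}})=(1-x)\rho(m_0)+x\rho(m_1)+\sum_{\ell=2}^{a_1}\Delta(C_{0,\ell})\,x+\sum_{j=1}^{n}(-1)^j(v_jx-u_j)\Delta^*(C_{j+1}).
\end{equation*}
For the $j=0$ term, $C_0=0/1$, so $v_0x-u_0=x$. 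The $\ell$-sum telescopes to $\rho(m_{C_1})-\rho(m_0)-\rho(m_1)-(a_1-1)\rho(m_0)$, since $C_1=1/a_1$ and $C_{0,\ell}=1/\ell$. Combining this with $(1-x)\rho(m_0)+x\rho(m_1)$ gives $(1-a_1x)\rho(m_0)+x\rho(m_{C_1})$. This is exactly $(u_1-v_1x)\rho(m_0)+x\rho(m_{C_1})$, because $u_1=1$ and $v_1=a_1$. So the partial sum $P_n(x)$ of \e{iy}, truncated at $j=n$, is a secant line of the graph of $\Phi$ through $(C_n,\Phi(C_n))$ and $(C_{n+1},\Phi(C_{n+1}))$. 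Its slope $P_n'(x)$ is the partial sum of \e{iz}.

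Next we show convergence. Since $\alpha$ lies between $C_n$ and $C_{n+1}$, and the secant slope $\g(C_n,C_{n+1})$ converges to $\Phi'(\alpha)$, it suffices to bound the distance between the secant and the graph. We reuse the argument of Lemma \ref{yk} and Theorem \ref{et}. Let $p$ be a Farey neighbor of the left endpoint on its left and $p'$ one of the right endpoint on its right. By Corollary \ref{sldi} and Proposition \ref{sim2}, the slopes $\g(p,\cdot)$, $\g(C_n,C_{n+1})$ and $\g(\cdot,p')$ differ by at most $\kappa_T v_n e^{-2\omega_T v_n}$. Convexity (or concavity, or linearity if $D=4$) then places $\Phi'(\alpha)$ within $O(v_ne^{-2\omega_T v_n})$ of the secant slope. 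This gives the tail bound for \e{iz} after adjusting $\omega_T$ slightly to absorb the factor $v_n$. Alternatively, we can bound the tail directly: the $j$th term is $v_j\Delta^*(C_{j+1})$, and by Corollary \ref{fgh} we have $|\Delta^*(C_{j+1})|\le\sum_\ell\kappa_Te^{-2\omega_T v(C_{j,\ell})}\ll e^{-2\omega_T(v_j+v_{j-1})}$. The denominators grow at least like Fibonacci numbers, so the tail is dominated by its first term.

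For \e{iy}, $P_n(\alpha)-\Phi(\alpha)$ is the gap at an interior point between a chord and a convex function. It is bounded by $|C_{n+1}-C_n|$ times the slope spread, which is $O(e^{-2\omega_Tv_n})$. The direct tail bound also works here, since $|v_j\alpha-u_j|<1/v_{j+1}$. This gives the stated $O(e^{-2\omega_T v_m})$ error. We get both the limit identification (the partial sums converge, and to the right value) and the error rate.

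The main obstacle is the bookkeeping in the first step: matching the primed sum conventions of \e{cjk} to the grouped form \e{iy}, particularly the $j=0$ block and the sign $(-1)^j$. We also need to check that the interval orientation ($C_n<C_{n+1}$ or the reverse) does not affect Proposition \ref{awok}. Since the proposition only requires $C_n$ and $C_{n,k}$ to be Farey neighbors, orientation should not matter.
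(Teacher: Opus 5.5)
Your proposal is correct and follows the paper's proof. Both arguments take Proposition \ref{awok} with $k=a_{n+1}$, regroup via \e{el} and telescope the $j=0$ block, then get convergence from the differentiability of $\Phi$ at $\alpha$. The tail estimate comes from \e{deb} in the form $|\Delta^*(C_{j+1})|\ll e^{-2\omega_T(v_j+v_{j-1})}$. That is your ``alternative'' route, and it is the one that gives $O(e^{-2\omega_T v_m})$ without shrinking $\omega_T$.

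There is one slip. The sum $\sum_{\ell=2}^{a_1}\Delta(C_{0,\ell})$ telescopes to $\rho(m_{C_1})-\rho(m_1)-(a_1-1)\rho(m_0)$, not with the extra $-\rho(m_0)$ you wrote. The corrected value is what your (correct) combined expression $(1-a_1x)\rho(m_0)+x\rho(m_{C_1})$ actually requires.
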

\begin{proof}
Letting $k=a_{n+1}$ in Proposition \ref{awok} has   $\Lambda(x,C_{n+1}) = \Phi(x)$  when $x$ equals the convergents $C_n$ and  $C_{n+1}$. Then as  $n\to \infty$,
\begin{equation*}
  \Lambda(\alpha,C_{n+1}) \to \Phi(\alpha), \qquad \left.\frac{d}{dx}\right|_{x=\alpha} \Lambda(x,C_{n+1})  \to \Phi'(\alpha),
\end{equation*}
since $\Phi(x)$ is continuous and differentiable at $\alpha$ by Theorem \ref{mthm}. Using \e{el} in \e{cjk} and simplifying with
\begin{equation*}
  \sum_{\ell=2}^{a_{1}}  \Delta(C_{0,\ell}) = (1-a_1)\rho(m_0)-\rho(m_1)+\rho(m_{C_1})
\end{equation*}
gives \e{iy}, \e{iz}. To bound the tails of these series, we have by \e{deb},
\begin{multline*}
  \left|\Delta^*(C_{j+1})\right|  <  \sum_{\ell=1}^{\infty} | \Delta(C_{j,\ell})| \lqs \kappa_T\sum_{\ell=1}^{\infty}  e^{-2 \omega_T (\ell v_j+v_{j-1})}\\
  = \kappa_T e^{-2 \omega_T v_{j-1}} \frac{e^{-2 \omega_T  v_j}}{1-e^{-2 \omega_T  v_j}} \lqs 
  \frac{ \kappa_T}{1-e^{-2 \omega_T}} e^{-2 \omega_T (v_j+v_{j-1})},
\end{multline*}
as the denominator of $C_{j,\ell}$ is $\ell v_j+v_{j-1}$ according to \e{yuk}. The tails in \e{iy}, \e{iz} are now bounded by
\begin{equation*}
  \frac{ \kappa_T}{1-e^{-2 \omega_T}} \sum_{j > m} v_j e^{-2 \omega_T v_j} \cdot e^{-2 \omega_T v_{j-1}}.
\end{equation*}
By calculus, $x e^{-2 \omega_T x} \lqs e/(2 \omega_T)$, and using $v_{m+j}-v_m \gqs j$ also finds
\begin{equation*}
  \sum_{j \gqs m}  e^{-2 \omega_T v_{j}} =  e^{-2 \omega_T v_{m}} \sum_{j \gqs 0}  e^{-2 \omega_T (v_{m+j}-v_m)} \lqs 
   \frac{e^{-2 \omega_T v_{m}} }{1-e^{-2 \omega_T}}.
\end{equation*}
Altogether, an explicit bound for the sum of terms with $j>m$ in \e{iy}, \e{iz} is
\begin{equation} \la{exp}
  \frac{ e \cdot \kappa_T}{2 \omega_T(1-e^{-2 \omega_T})^2}  e^{-2 \omega_T v_{m}}. 
\end{equation}
\end{proof}
The Wallis recurrence $v_{n+1}=a_{n+1}v_n+v_{n-1}$ implies $v_{n+1} \gqs v_n+v_{n-1}$ so that $v_n $ is greater than the $n$th Fibonacci number. Hence $v_n$ has exponential growth and \e{exp} implies that, for the finite sum approximations to \e{iy} and \e{iz},  the number of correct decimal places grows exponentially for every term included.
For example, applying   Theorem \ref{rat} to the Markov tree $T=(3,15,6)$ at $\alpha = 2^{-1/3} \approx 0.794$, we obtain
\begin{align}
  \Phi(2^{-1/3})  & = 1.59306957359666129259760487635162761800212750536181, \la{15g}\\
  \Phi'(2^{-1/3}) & = 0.82247969498758584376728693112908805189093979679153, \la{16g}
\end{align}
correct to the accuracy shown,  using only the terms  $1\lqs j \lqs 4$. By comparison, $\phi(C_5)=\rho(m_{C_5})/v_5 = 1.59341\ldots$ gives a much cruder approximation to $\Phi(2^{-1/3})$. Using terms with $1\lqs j \lqs 10$ to compute \e{15g}, \e{16g} makes the error $<10^{-1909}$ by \e{exp} since $v_{10}=5429$.

The factor $(-1)^j(v_j \alpha -u_j)$ appearing in \e{iy} can also be expressed in terms of the remainders $r_j=[a_j,a_{j+1}, \dots]$, showing that it is positive, less than $1$ and decreasing with $j$:
\begin{lemma}
For $j\gqs 0$ and irrational $\alpha \in (0,1)$ we have $(-1)^j(v_j \alpha -u_j) = 1/(r_1 r_2 \cdots r_{j+1})$.
\end{lemma}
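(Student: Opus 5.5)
We will prove the identity $(-1)^j(v_j \alpha -u_j) = 1/(r_1 r_2 \cdots r_{j+1})$ by induction on $j$, using only the standard continued fraction recurrences $u_{j+1}=a_{j+1}u_j+u_{j-1}$, $v_{j+1}=a_{j+1}v_j+v_{j-1}$, with the conventions $u_{-1}=1$, $v_{-1}=0$, $u_0=a_0=0$, $v_0=1$ (since $\alpha\in(0,1)$). We also use the remainder relation $r_j = a_j + 1/r_{j+1}$ together with $\alpha = 1/r_1$, both immediate from $\alpha=[0,a_1,a_2,\dots]$ and $r_j=[a_j,a_{j+1},\dots]$.

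Set $E_j := (-1)^j(v_j\alpha-u_j)$. The base case $j=0$ reads $E_0 = v_0\alpha - u_0 = \alpha = 1/r_1$, which is exactly the claim. It is convenient to also record $E_{-1} = -(v_{-1}\alpha - u_{-1}) = 1$, matching the empty product. From the recurrences we get the linear relation
\begin{equation*}
E_{j+1} = (-1)^{j+1}\bigl(a_{j+1}(v_j\alpha-u_j) + (v_{j-1}\alpha-u_{j-1})\bigr) = -a_{j+1}E_j + E_{j-1},
\end{equation*}
valid for $j\gqs 0$. It therefore suffices to check that $P_j := 1/(r_1\cdots r_{j+1})$ satisfies the same recurrence $P_{j+1} = P_{j-1} - a_{j+1}P_j$ with $P_{-1}=1$, $P_0=1/r_1$. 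Multiplying through by $r_1\cdots r_{j+1}$ reduces this to $1/r_{j+2} = r_{j+1} - a_{j+1}$, which is just the remainder relation. Two-term induction then gives $E_j=P_j$ for all $j\gqs 0$.

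The consequences stated before the lemma follow at once. Since each $r_i>1$ for $i\gqs 1$ (as $a_i\gqs 1$ and $\alpha$ is irrational), the quantity $E_j$ is positive and less than $1$. It also strictly decreases in $j$, because $E_{j+1}/E_j = 1/r_{j+2}<1$. There is no real obstacle in this argument. The only step needing care is getting the index conventions right: $u_{-1},v_{-1}$, and the fact that $a_0=0$ makes $r_1=1/\alpha$. With these fixed, the sign bookkeeping in the recurrence for $E_j$ is routine.
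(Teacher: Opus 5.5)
Your proof is correct and follows essentially the paper's own argument: the three-term recurrence $E_{j+1}=-a_{j+1}E_j+E_{j-1}$ from the Wallis recurrences, matched against the remainder identity $r_{j+1}=a_{j+1}+1/r_{j+2}$, followed by induction. The only difference is cosmetic. You start the induction at $j=-1,0$, using $u_{-1}=1$, $v_{-1}=0$ and the empty product, whereas the paper checks the base cases $j=0,1$ directly.
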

\begin{proof}
Let $f_j(x) := (-1)^j(v_j x -u_j)$ and note the useful identity $r_j=a_j+1/r_{j+1}$. The lemma is true in the base cases $j=0, 1$. By the Wallis recurrences for $u_j$ and $v_j$ we have $f_j(x)=-a_j f_{j-1}(x)+f_{j-2}(x)$. Combine this with the above $r_j$ identity to give the induction.
\end{proof}

We obtain the neat alternate expression for \e{iy}:
\begin{equation}\label{alt}
  \Phi(\alpha)  = \frac{\rho(m_0)}{r_1  r_{2}} + \alpha \cdot \rho(m_{C_1}) + \sum_{j \gqs 2} 
  \frac{\Delta^*(C_{j})}{r_1 \cdots r_{j}}.
\end{equation}

\section{Markov topographs and identities} \la{tops}

\subsection{Bowditch's trees of Markov triples} \la{bowd}
McShane's influential 1991 identity concerns the lengths of simple closed geodesics on a once-punctured torus $\mathbb T$, and has inspired a great deal of research; see the introductions to \cite{twz08,bt16} for example. 
Bowditch   gave a simpler proof  after first reducing it to a more combinatorial statement. 
The reduction is contained in the following theorem and, as explained in \cite[Sects. 1, 2]{bow96}, involves representations of the fundamental group $\pi_1(\mathbb T)$, which is free on two generators, into $\PSL(2,\R)$, 

\begin{theorem} \la{bowl} \cite{bow96} Let $L$ be the multiset of lengths of  simple
closed geodesics on a once-punctured torus $\mathbb T$, (with a complete finite-area hyperbolic
structure). Let $L'$ be the corresponding multiset $\{2\cosh(\ell/2) \ : \ \ell \in L\}$. Then $L'$ gives the region labels of a real Markov topograph $\mathcal G$ with regions $>2$ and discriminator $D=0$. The labels of any such topograph $\mathcal G$ arise in this way.  
\end{theorem}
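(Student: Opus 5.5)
The plan is to build the correspondence between closed geodesics and Markov topograph regions through traces in a $\PSL(2,\R)$ representation, following \cite{bow96}.

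\textbf{Setting up the representation.} Give $\mathbb T$ its complete finite-area hyperbolic structure. The holonomy yields a discrete faithful representation of $\pi_1(\mathbb T)=\langle A,B\rangle$ into $\PSL(2,\R)$, and we lift it to $\SL(2,\R)$. We choose signs so that $x=\tr A$, $y=\tr B$, $z=\tr AB$ are all positive. Since the commutator $[A,B]$ corresponds to the loop around the cusp, it is parabolic. Fricke's trace identity
\begin{equation*}
 \tr[A,B]=x^2+y^2+z^2-xyz-2
\end{equation*}
then gives $\tr[A,B]=-2$; the value $+2$ would force $A,B$ to be reducible, which is impossible for a faithful discrete representation of a free group. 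Hence
\begin{equation*}
 x^2+y^2+z^2-xyz=0,
\end{equation*}
so $D=0$.

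\textbf{Building the topograph.} Nontrivial free homotopy classes of simple closed curves on $\mathbb T$ are in bijection with $\Q\cup\{\infty\}$, by their slopes. Pairs of classes with intersection number one correspond to Farey neighbours, and triples of classes that are pairwise intersecting once correspond to Farey triangles. These are exactly the regions, edges and vertices of the trivalent tree dual to the Farey tessellation. We label each region by $|\tr|$ of any lift of the corresponding primitive element.

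Changing generators $(A,B)\mapsto(A,AB)$ and so on, together with the identity $\tr(AB)+\tr(AB^{-1})=\tr A\,\tr B$, gives the local rule $r+u=st$ across every edge. This makes the labelling a Markov topograph, and every triple satisfies the equation above. Each primitive element is hyperbolic, because the only parabolics are conjugates of powers of the commutator, so $|\tr|>2$. The standard relation $|\tr|=2\cosh(\ell/2)$ then identifies the labels with $L'$. The multiset statement follows because distinct simple closed geodesics correspond to distinct regions; multiplicities arise only from coincidental equal lengths.

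\textbf{Converse.} Start from a real Markov topograph with all regions $>2$ and $D=0$, and take any triple $(x,y,z)$. Sign choices aside, there are $A,B\in\SL(2,\R)$ with these traces; this follows from Fricke's theorem that the character map is onto the real points with this irreducibility. Their commutator then has trace $-2$ and is parabolic.

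The main obstacle is showing that this representation is discrete and faithful with quotient a once-punctured torus, rather than merely a representation. The first approach to try is to build an explicit fundamental domain. Use the hyperbolic elements $A$, $B$, and check that the axes of $A$ and $B$ cross. This crossing should follow from $\tr[A,B]=-2<2$. The domain would be a quadrilateral (ideal at the commutator fixed point) whose side pairings are $A$ and $B$, and Poincar\'e's polygon theorem would then give discreteness and the punctured-torus quotient.

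To make the domain convenient, move to a vertex where the triple is well-behaved. The climbing lemma (Lemma \ref{climb}) and Lemma \ref{wells} guarantee a vertex or edge well, since $D=0<4$. The topograph constructed from this surface then agrees with the given one, because the two share a triple.
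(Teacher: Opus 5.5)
Your route is the one the paper points to. The paper gives no proof of its own and defers to \cite{bow96}, whose reduction is exactly this passage through the holonomy representation and trace identities.

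The forward direction is sound, with two small repairs.
\begin{enumerate}
\item Labelling each region by $|\tr|$ of an independently chosen lift does not by itself give $r+u=st$. Fix one homomorphic lift $\pi_1(\mathbb T)\to\SL(2,\R)$ with $\tr A,\tr B>0$; then $\tr AB>0$, since $xyz=x^2+y^2+z^2$. Note that $D=0$ keeps every trace positive, because $u=st-r$ gives $ur=s^2+t^2>0$, so the traces themselves are the labels.
\item The curves indexed by $\Q\cup\{\infty\}$ are the essential \emph{non-peripheral} ones.
\end{enumerate}

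The genuine gap is the converse. Its entire content is discreteness and faithfulness, and you only outline it (``should follow'', ``would then give''). That the axes of $A$ and $B$ cross is true, but it is not the hypothesis Poincar\'e's theorem needs. What you must verify is that the vertex cycle of the parabolic fixed point produces four distinct points of $\partial\H$ in the right cyclic order. Then $A$ and $B$ pair opposite sides of a genuine ideal quadrilateral. Orientation preservation puts each image polygon on the far side, the only cycle transformation is the parabolic commutator, and Poincar\'e yields a Fuchsian group, free on $A,B$, with quotient a once-punctured torus.

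This is easiest with an explicit normal form. For $x,y,z>0$ with $x^2+y^2+z^2=xyz$ take
\[
A=\begin{pmatrix} x-y/z & x/z^2\\ x & y/z\end{pmatrix},\qquad B=\begin{pmatrix} y-x/z & -y/z^2\\ -y & x/z\end{pmatrix}.
\]
Both lie in $\SL(2,\R)$, the determinant being $1$ by the Markov equation, and $\tr A=x$, $\tr B=y$, $\tr AB=z$. One checks
\[
B^{-1}\infty=x/(yz),\qquad A^{-1}(x/(yz))=0,\qquad B(0)=-y/(xz),\qquad A(-y/(xz))=\infty,
\]
so $ABA^{-1}B^{-1}$ fixes $\infty$. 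It has trace $-2$ and is not $-I$ since $z\neq 0$, so it is parabolic. Because $-y/(xz)<0<x/(yz)$, the ideal quadrilateral with vertices $\infty,-y/(xz),0,x/(yz)$ is valid. Here $A$ carries $[-y/(xz),0]$ to $[\infty,x/(yz)]$ and $B$ carries $[0,x/(yz)]$ to $[-y/(xz),\infty]$.

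Finally, your last paragraph cites Lemma \ref{wells} backwards. That lemma says a well forces $D<4$; the existence of a well when $D<4$ is Proposition \ref{real}. In any case no well is needed, since the construction works at any triple.
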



\begin{theorem}[McShane's identity, topograph version]  
Let $\mathcal G$ be a Markov topograph with discriminator $D=0$ and all regions  $>2$. Then
\begin{equation} \la{1/2}
  \sum_{m \in \mathcal G} \frac 1{e^{2 \rho(m)}+1} = \frac 12,
\end{equation}
where the sum is over all regions $m$ of $\mathcal G$ and the numbers $2 \rho(m)$ correspond to geodesic lengths.
\end{theorem}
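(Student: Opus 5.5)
This is Bowditch's argument: attach to each region $m$ a weight so that weights of the regions around a vertex, edge or subtree telescope, and then exhaust the topograph. With $D=0$ and all regions $>2$, Lemma \ref{wells} says there is no edge or vertex well, so something else is needed for the base step. I would use Bowditch's "sink" picture together with the climbing lemma.

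\textbf{Step 1: weights.} For a directed edge $e$ with adjacent regions $s,t$ and head region $u$ (tail region $r$), set
\begin{equation*}
\psi(e) := \frac{u}{st} \in (0,1).
\end{equation*}
From $D=0$ one checks that $u/(st)+r/(st)=1$, so $\psi(e)+\psi(-e)=1$. Around a vertex with regions $r,s,t$, the three outgoing edges have weights $r/(st)$, $s/(rt)$, $t/(rs)$, whose sum is $(r^2+s^2+t^2)/(rst)=1$ since $D=0$.

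\textbf{Step 2: define the region weight.} For a region $m$, take the two directed edges of its boundary that point "into" $m$ from its neighbors along the boundary path. Using Proposition \ref{start}, the neighbors of $m$ form a two-sided sequence $f\lambda_m^j+g\lambda_m^{-j}$. The weights $\psi$ of edges along the boundary of $m$ telescope: the sum over all boundary edges, directed consistently, converges to a limit depending only on $m$. Computing this limit with the formulas of Proposition \ref{start} and $D=0$ should give
\begin{equation*}
h(m)=1-\sqrt{1-4/m^2}=\frac{2}{\lambda_m^2+1}\cdot\text{(const)}.
\end{equation*}
After normalization this is $2/(e^{2\rho(m)}+1)$, using $\sqrt{m^2-4}/m=(\lambda_m^2-1)/(\lambda_m^2+1)$. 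So twice the summand in \eqref{1/2} is the natural region weight.

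\textbf{Step 3: exhaust.} Choose a vertex $v$. The sum of $\psi$ over the three edges out of $v$ is $1$. For any finite subtree $F$ containing $v$, the same identity applied at each vertex (Step 1) shows that the sum of $\psi$ over edges leaving $F$ equals $1$. Now group the outgoing edges: each edge leaving $F$ heads into a branch. By the climbing lemma, outside a finite set every branch is "rising", and the contribution of the branch splits as the weights $h(m)$ of the regions it eventually borders, plus tail terms.

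\textbf{Step 4: the main obstacle, controlling the tails.} The tail terms must go to $0$ as $F$ exhausts the topograph. I would use the exponential growth of Proposition \ref{sim2}: $\psi(e)\approx u/(st)$ is like $1-$(small), so I would instead pair each edge weight with its limiting region weights. The error for an edge between regions $s,t$ is $O(1/(s^2t^2))$, and $\sum 1/(st)^2$ over edges converges by Proposition \ref{sim2} and the Farey counting of edges, since there are about $n$ edges at level $n$ and the terms decay exponentially. That gives $\sum_m h(m)=1$, i.e.\ \eqref{1/2}. Justifying the rearrangement, i.e.\ absolute convergence of $\sum_m 1/(e^{2\rho(m)}+1)$, follows from Theorem \ref{zzz}-type counting, since $m\ge e^{\omega n}$ on each of the finitely many subtrees.
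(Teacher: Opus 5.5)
Your overall plan is Bowditch's argument, and it is essentially the paper's route. The statement is the $D=0$ case of Theorem \ref{summ}. That proof splits $\mathcal G$ at a vertex into three rising trees and applies Theorem \ref{mc}. There, $\Upsilon(b,b')=1-m_q/(m_bm_{b'})$ is exactly your weight on the edge pointing back toward the root, and $\Upsilon(b,b')=\Upsilon(b,q)+\Upsilon(q,b')-D/(m_bm_qm_{b'})$ is your conservation law.

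However, your base step is backwards, and this is a genuine gap. Lemma \ref{wells} only says that a well forces $D<4$. Proposition \ref{real} proves the converse: a topograph with all regions $>2$ and $D=0$ \emph{must} have an edge or vertex well. Bowditch's ``sink'' is precisely this well, and your Steps 3--4 depend on it. The claim that outside a finite set every branch is rising, the ``finitely many subtrees'', and every use of Proposition \ref{sim2} all require the three trees at a vertex to be rising, i.e.\ to have finite descending paths. So you must prove the sink exists, not assert that it does not. The argument is in the proof of Proposition \ref{real}: if every vertex had exactly one incoming edge, descending forever would produce triples with all entries arbitrarily close to $2$, forcing $D=4$.

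Second, Step 1 has the direction reversed; note also that $\psi(e)+\psi(-e)=1$ is just the local rule and does not need $D=0$. With $\psi(\vec e)=u/(st)$ and $u$ the head region, the three edges directed \emph{into} the vertex $(r,s,t)$ carry $r/(st)$, $s/(rt)$, $t/(rs)$, summing to $1+D/(rst)$. The three outgoing edges sum to $2$, and the outgoing frontier sum of a finite subtree grows by $1$ with each vertex added. So Step 3 is false as written; the conserved quantity is the sum over frontier edges pointing \emph{into} $F$.

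Such an edge runs along the boundary of an old region $s$, next to a new large region $t$. Its weight is $\tfrac12\bigl(1-\sqrt{1-4/s^2-4/t^2}\bigr)=1/(\lambda_s^2+1)+O(1/t^2)$, uniformly in $s\ge\xi>2$. The error is of size $1/t^2$, not $1/(s^2t^2)$. Each region ends up owning two such edges. This pointwise limit is what should replace your Step 2. It comes from writing the neighbours of $m$ as $y_n$, $n\in\Z$, and using $y_{n\mp1}/y_n\to\lambda_m^{-1}$ as $n\to\pm\infty$ from Proposition \ref{start}. By contrast, the sum of $\psi$ over the boundary edges of $m$ directed along the boundary diverges, since those weights tend to $\lambda_m^{\pm1}/m$.

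With these repairs, bounding $\sum 1/t^2$ over the frontier as in \e{spring} gives $\sum_m 2/(\lambda_m^2+1)=1$, which is \e{1/2}.
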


We prove identites in Section \ref{tmc} that are valid for all $D$ and reduce to  \e{1/2} when $D=0$. This requires some understanding of the structure of topographs with general discriminators. 

\begin{prop} \la{real}
Let $\mathcal G$ be a Markov topograph  with all regions $\gqs 2$. Then its discriminator satisfies $D\lqs 4$. 
It has an edge  or  vertex
 well if and only if $D<4$.
\end{prop}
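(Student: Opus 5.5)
The plan is to handle the two directions of the equivalence separately. Together they give $D\lqs 4$ for free. By Lemma \ref{wells}, a well forces $D<4$. So it suffices to prove the converse: if $\mathcal G$ has all regions $\gqs 2$ and no edge or vertex well, then $D=4$. Combining the two cases gives $D\lqs 4$ always, with strict inequality exactly when a well exists.

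\textbf{Step 1: a descending path.} Assume there is no well, and start at any vertex. Since that vertex is not a vertex well, some incident edge is $0$-labeled or positively directed into it. I will walk along such an edge against its positive direction and repeat, producing a descending path along which the neighbouring region labels are non-increasing. The walk can stop only at a $0$-edge. That $0$-edge is not an edge well, so one of its two adjacent regions equals $2$.

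\textbf{Step 2: the terminating case.} Suppose the walk stops at a $0$-edge adjacent to regions $s=2$ and $t$. The triple at an endpoint of this edge is $(r,s,t)$ with $st=2r$, so $r=t$. Then \e{suwx} gives
\[
D=r^2+4+t^2-2rt=(r-t)^2+4=4 .
\]

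\textbf{Step 3: the infinite case.} Suppose the walk never stops. First I will rule out the path turning forever around a single region $u$. If $u=2$, its neighbours form an arithmetic progression by the argument of Proposition \ref{stx}, so they cannot decrease forever unless they are constant, which again forces $D=4$ via Step 2. If $u>2$, Proposition \ref{start} gives neighbours of the form $f\lambda_u^j+g\lambda_u^{-j}$. These cannot decrease forever while staying $\gqs 2$, since $f\lqs 0$ would eventually give labels below $2$.

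So each region borders the path for only finitely many edges. The labels met along the path are therefore non-increasing and bounded below by $2$, and they converge to some $L\gqs 2$. Consequently the triples $(r,s,t)$ at the path's vertices converge to $(L,L,L)$.

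If $L>2$, the strict inequalities $st>2r$, $rt>2s$, $rs>2t$ hold in the limit, since $L^2>2L$. They therefore hold at some vertex far along the path, making it a vertex well, which is a contradiction. Hence $L=2$, and taking the limit in \e{suwx} gives $D=3L^2-L^3=12-8=4$.

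\textbf{Main obstacle.} The delicate step is the infinite-descent analysis in Step 3. I need the limiting-triple argument to be rigorous, in particular that all three entries of the triples along the path actually converge to a common $L$. This is exactly where excluding eventual rotation about a single region is needed: without it, one entry of the triple could stay fixed while the others move.
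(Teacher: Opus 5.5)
Your overall strategy is sound: you show that no well forces $D=4$ and combine this with Lemma \ref{wells}. Steps 1 and 2 are fine, and so is the exclusion of eventual rotation about a single region. The gap is exactly where you suspected, and your proposed justification does not close it.

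The labels met along the path do not form a single non-increasing sequence. What the climbing lemma gives is two separate facts: the regions on the left of the path are non-increasing, and so are the regions on the right. A region near $2$ on one side can sit opposite large regions on the other, so the interleaved list is not monotone. Excluding eventual rotation only tells you that both one-sided sequences change infinitely often. Hence they converge to limits $L_1, L_2\ge 2$, which could a priori differ. Without $L_1=L_2$ your vertex-well step fails: a limiting triple $(L_1,L_1,L_2)$ with $L_1,L_2>2$ need not satisfy $L_1^2>2L_2$.

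The missing idea is short. At each vertex $v$ of the path, the descending edge you leave by is positively directed into $v$. By the proof of the climbing lemma, the region at $v$ not adjacent to that edge is therefore strictly larger than the other two regions at $v$. That region lies on the side of the third edge at $v$, and it is the region being left behind on that side. The region on the opposite side belongs to the other sequence. So at vertices whose third edge is on the right, some right label exceeds the current left label, and vice versa. Both kinds of vertices occur infinitely often, since otherwise the path would rotate forever about one region. Passing to the limit gives $L_2\ge L_1$ and $L_1\ge L_2$. Alternatively, equate the limiting values of $D$ at the two kinds of vertices to get $(L_2-L_1)(L_1+L_2-L_1L_2)=0$, which again forces $L_1=L_2$ when $L_1,L_2\ge 2$. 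With a common limit $L$, the rest of your argument goes through.

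This is the same inequality the paper relies on. There, after descending $\ell$ edges, Lemma \ref{s} forces $\min(s,t)$ close to $2$. At the vertex where the path leaves region $s$, the triple $(r,s,w)$ has $r,w<s$, so $D$ is arbitrarily close to $4$. Once repaired, your monotone-limit version avoids the growth estimate of Lemma \ref{s} and the Farey-index bookkeeping altogether.
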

\begin{proof}
If $D=4$ then  $\mathcal G$ does not have a well by Lemma \ref{wells}.
Also note that if $\mathcal G$ has a $2$-region  then, since its neighboring regions form an arithmetic progression, they must all  equal some $r \gqs 2$. In that case $D=4$. So we may assume that $D \neq 4$ and all regions are $>2$. If we can show that $\mathcal G$ has a well in this case then we are done, since that will also imply that $D<4$ by Lemma \ref{wells} again.

Give all edges in this $\mathcal G$ their positive edge direction. By the climbing lemma, if a vertex has an edge directed in to it, or an incident undirected edge, then the other two edges must be directed outwards. Hence any vertex  is either (i) a vertex well with all edges directed out, (ii) an endpoint of an edge well with two edges directed out and one edge undirected, or else (iii) has one edge directed in and two directed out. 

Next,  to obtain a contradiction, assume $\mathcal G$  does not contain a well and all vertices are of type (iii). Start at any region $C>2$ and descend for $\ell$ edges, going against the edge arrows,  to a triple $(s,u,t)$ where the edge between $s$ and $t$ is positively directed towards $u$. 
Treat $(s,u,t)$ as the base triple of a strictly rising tree $T$, and suppose  region $C$ has Farey index $k/n$ in relation to this base. Then $n>\ell$ and, by Lemma \ref{s},
\begin{align*}
  2 e^{\omega_T n} \lqs m(T)_{k/n} & \implies  e^{\omega_T \ell} < C/2\\
    & \implies \min(s,t)/2 < (C/2)^{1/\ell}.
\end{align*}
Taking $s$ to be the minimum, it is forced to be arbitrarily close to $2$  by choosing $\ell$ large enough. Going further along the border of region $s$, the descending path must eventually leave this region. This is because the border regions start increasing again by Proposition \ref{start}. At the vertex where the path leaves $s$ is a triple $(r,s,w)$, say, with $2< r, w< s$. Therefore $D = r^2+s^2+w^2 -r s w$ is arbitrarily close to $4$, giving a contradiction. We have shown that topographs with   $D \neq 4$ and all regions  $>2$ must have a well, as desired.
\end{proof}

\begin{adef} \la{wellb}
{\rm   A {\em negative vertex well} is a vertex in a topograph having all three neighboring regions  $\lqs -2$.
}
\end{adef}

Recall the definition of sign variants after Lemma \ref{wells}.

\begin{lemma} \la{neg}
 Let $\mathcal G$ be a Markov topograph   with  labels in $(-\infty,-2] \cup [2,\infty)$. Suppose  it has a negative vertex well $v$. Then its discriminator satisfies $D\gqs 20$. Also, sign variants of each of the three trees with root $v$ are strictly rising or progressing. Hence $v$ is the unique vertex in $\mathcal G$ with an odd number of negative neighboring regions.
\end{lemma}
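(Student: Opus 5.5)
Let $v$ be the negative vertex well, with surrounding triple $(r,s,t)$ and $r,s,t\lqs -2$. Set $a=-r$, $b=-s$, $c=-t$, so $a,b,c\gqs 2$. Then
\[
D=r^2+s^2+t^2-rst=a^2+b^2+c^2+abc .
\]
Each term is minimized at $2$, so $D\gqs 4+4+4+8=20$. This gives the first claim.

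For the second claim I would pass to a sign variant. Using the $3$-colouring, choose the two colours of $b$ and $c$ and change the sign of every region with those colours. This gives a topograph $\mathcal G'$ with discriminator $D$ containing the triple $(r,b,c)$ with $r=-a\lqs -2$. Instead I would take the tree $T$ with root $v$ whose base regions are, say, $s,t$, and apply the variant that makes both of these positive. The base triple of this sign variant of $T$ is then $(b,u',c)$, where $u'$ is the region across the edge from $r$. The local rule gives $r+u'=bc$, so $u'=bc+a$. This is $\gqs bc/2+2>bc/2$ and also exceeds $b$ and $c$, so the triple is ordered. If $b,c>2$, the climbing lemma with edge label $2u'-bc>0$ makes the tree strictly rising, since all regions exceed $2$ and all edges point away from the root. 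If $b=2$ or $c=2$, then $(b,u',c)$ satisfies Definition \ref{pr}, so the tree is progressing. The same argument applies symmetrically to the other two trees at $v$.

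Finally, for uniqueness, I track the signs. In each of the three sign-variant trees every region is $\gqs 2$, so the signs in the original tree are exactly given by the colour pattern of the variant. In the tree with base regions of two colours $X,Y$, every region coloured $X$ or $Y$ is negative and every region of the third colour is positive. Every non-root vertex of that tree meets one region of each colour, so it has exactly two negative neighbours, an even number. The vertex $v$ has three negative neighbours, an odd number. Every vertex of $\mathcal G$ other than $v$ is a non-root vertex of one of the three trees, so $v$ is the unique vertex with an odd number of negative neighbouring regions.

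The step needing most care is the bookkeeping that the sign change on a whole tree coincides with a colour-class sign change. This holds because a sign variant of $\mathcal G$ restricts to the tree, and a tree rooted at $v$ is determined by its base triple.
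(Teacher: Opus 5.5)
Your proof is correct and follows the paper's argument. You get $D\gqs 20$ directly from \eqref{suwx}. For each tree at $v$ you take the sign variant that makes its two base regions positive, and apply the climbing lemma to the root edge, whose label $bc+2a$ is exactly the paper's $u-r\gqs 8$. Your colour-class argument for the final uniqueness claim spells out what the paper leaves as an immediate consequence of every region of each variant tree being positive.
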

\begin{proof}
Let $v$ be surrounded by the triple $(r, s, t)$ with $r,s,t \lqs -2$. Clearly \e{suwx} implies $D\gqs 20$. Let $u$ be the region on the other side of $s, t$ as in the middle of Figure \ref{root}. Then $u\gqs 6$ and the edge from $r$ to $u$ must have a  label $u-r \gqs 8$. Consider the sign variant  of $\mathcal G$  containing $(-s,u, -t)$. The climbing lemma shows that the tree $T$ with this base has increasing regions and so must be strictly rising or progressing. In particular, all regions of $T$ are positive. The same can be done for the  other two trees with root $v$, requiring the other two sign variants.
\end{proof}

\begin{lemma} \la{neg2}
 Let $\mathcal G$ be a Markov topograph  of discriminator $D$ with  labels in $(-\infty,-2] \cup [2,\infty)$. Suppose  it contains at least one negative region, and is not a sign variant of a  topograph  with all regions positive.    Then $\mathcal G$, or a sign variant of it, contains a negative vertex well.
\end{lemma}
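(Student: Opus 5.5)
The plan is to find, after changing signs by a suitable sign variant, a vertex all of whose incident edges point away from it in the sense of $|\cdot|$, and to show that the three surrounding labels must then all be negative.

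\textbf{Setup.} Sign variants preserve absolute values, and each vertex has exactly one region of each color. So for any vertex $v$ we can choose the sign variant in which the three regions at $v$ have signs all negative, or exactly one negative. The signs at a neighbouring vertex are then forced by the local rule. Define $|\mathcal G|$ by taking absolute values. The local rule gives $|r|+|u|\gqs |r+u| = |s||t|$, possibly with a sign flip. The relevant quantity at each edge is therefore the sign pattern. For an edge with sides $s,t$ and ends $r,u$, the product $r u$ has the sign of $(st)^2-\ldots$; more simply $r+u=st$ and $ru = s^2+t^2-D$.

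\textbf{Step 1: a descent to a well.} Direct every edge towards the end region of larger absolute value. The climbing lemma should be adapted to absolute values: if $[a,b,c]$ has $|a|,|c|\gqs 2$ and the configuration is sign-adjusted so that $a,c\gqs 2$ locally, then it applies unchanged. Its conclusion is that regions increase in absolute value moving forward. As in Proposition \ref{real}, every vertex is then of type (i), (ii) or (iii). Suppose no well exists in $|\mathcal G|$ in this sense. Descending along a path, we apply Lemma \ref{s} to a suitable sign variant of the tree based at the current triple. This forces the labels along the path towards $\pm 2$, and that yields $D$ arbitrarily close to $4$ or a contradiction with discreteness. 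Alternatively, if regions tend to $2$ in absolute value, the local picture becomes a positive-type configuration. The hypothesis that $\mathcal G$ is not a sign variant of a positive topograph rules this out. So there is a vertex or edge $v$ from which all absolute values increase.

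\textbf{Step 2: signs at the well.} Let $(r,s,t)$ surround $v$, and pass to the sign variant making $r,s,t$ either all negative or with exactly one negative. Suppose exactly one is negative, say $r<0<s,t$. Then going across the edge between $s$ and $t$ we get $u=st-r>st$, with the sign pattern positive. Across the other two edges the new region is $rt-s$ or $rs-t$, which is negative with large absolute value. One checks that in the sign variant flipping $r$ and one other color, the whole tree structure becomes positive. This is where the climbing lemma gives monotonic growth with consistent signs, so $\mathcal G$ would be a sign variant of a positive topograph, a contradiction. Hence all three are negative, which gives a negative vertex well. An edge well, with $0$-labeled edge in absolute value, is handled similarly, and in the negative case it leads to $st=2r$ with one region forced nonnegative, giving the same contradiction.

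\textbf{Main obstacle.} The main obstacle is Step 1: making the absolute-value climbing lemma and the descent argument rigorous when signs vary. I would first try to reduce to it by noting that on any single edge one can choose a sign variant making the four surrounding labels have $s,t>0$. The local dynamics of $|\cdot|$ coincide with the positive case exactly when $r,u$ have the same sign pattern as in the positive topograph. The failure of this is precisely the sign flip that produces negative wells.
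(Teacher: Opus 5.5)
Your proposal has a genuine gap: it misses the parity invariant that governs this lemma. Every vertex of $\mathcal G$ meets exactly one region of each color, and a sign variant flips exactly two colors. So at each vertex, the parity of the number of negative regions is the same in every sign variant.

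This makes your Setup claim false. At a vertex with zero or two negative neighbours, no sign variant produces all negative or exactly one negative. It also breaks Step 2 in both directions. If $r<0<s,t$, then flipping $r$ and one other color still leaves exactly one negative at $v$, so the tree can never become positive there. Nor is this case a contradiction: flipping the colors of $s$ and $t$ turns $v$ into a negative vertex well, which is exactly the conclusion you want. The case that actually needs an argument, a well with an even number of negative neighbours, is never treated.

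Step 1 is not a proof either. The absolute-value climbing lemma is only asserted, and there is no discreteness to appeal to for real labels. A descent ending in ``$D$ arbitrarily close to $4$'' gives no contradiction when $D=4$. The lemma allows that value, and positive topographs with $D=4$ have no well at all.

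The statement is purely about signs; the magnitudes only guarantee that every label is nonzero. By the parity remark, it suffices to find a vertex with an odd number of negative neighbours, so suppose every vertex has zero or two. If every vertex has exactly two, then crossing an edge with two negative sides forces the new region to be positive, and crossing an edge with one positive side forces it to be negative. Hence the positive regions are exactly one color class, and flipping the other two colors gives an all-positive topograph, which is excluded. Otherwise some vertex $v_1$ has no negative neighbours, while any negative region gives a vertex $v_2$ with two. Adjacent vertices share two regions, so along the path from $v_1$ to $v_2$ the count of negatives changes by at most one per edge. Some vertex therefore has exactly one negative, a contradiction. This is the paper's short argument; it needs no climbing lemma, no descent, and no case split on $D$.
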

\begin{proof}
 Suppose, to the contrary, that all triples have zero or two  negative labels. If all triples in fact have two  negative labels then, as seen in Figure \ref{topv}, the positive regions must all be the same color. However this is proscribed since $\mathcal G$ is not a sign variant of a  topograph  with all regions positive. Hence there must be a vertex $v_1$ with all neighboring regions positive. A path from $v_1$ to a vertex $v_2$ with two negative neighboring regions must pass a vertex with exactly one negative. This contradiction proves the lemma's claim.
\end{proof}

\begin{cor} \la{cor}
 Let $\mathcal G$ be a Markov topograph  of discriminator $D \neq 4$ with  labels in $(-\infty,-2] \cup [2,\infty)$.    Then $\mathcal G$, or a sign variant of it, contains an edge well, a vertex well, or a negative vertex well.
\end{cor}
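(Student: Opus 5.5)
We split according to whether $\mathcal G$ has a negative region, and reduce everything to Proposition \ref{real} and Lemma \ref{neg2}.

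\emph{First case.} Suppose $\mathcal G$, or one of its sign variants, has all regions positive; call that topograph $\mathcal G^+$. Its labels lie in $(-\infty,-2]\cup[2,\infty)$ and are positive, so all regions of $\mathcal G^+$ are $\gqs 2$. Sign variants only flip the signs of two of the three entries in each triple, so they preserve \e{suwx}, and $\mathcal G^+$ has the same discriminator $D\neq 4$. Proposition \ref{real} then gives $D<4$, and since $D\neq 4$ it also says $\mathcal G^+$ has an edge well or a vertex well. This is the required conclusion for a sign variant of $\mathcal G$, or for $\mathcal G$ itself when $\mathcal G^+=\mathcal G$.

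\emph{Second case.} Suppose no sign variant of $\mathcal G$ (including $\mathcal G$) has all regions positive. Then $\mathcal G$ has at least one negative region, since otherwise we would be in the first case. Lemma \ref{neg2} applies directly and says $\mathcal G$ or a sign variant contains a negative vertex well.

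The only point needing care is the bookkeeping of the case split. The first case should be stated as ``some topograph in the sign-variant family of $\mathcal G$ has all regions positive'', so that the complement is exactly the hypothesis of Lemma \ref{neg2}. We also need that the relation of being a sign variant is symmetric and closed under composition; this holds because each sign change is an involution determined by a pair of colors in the $3$-coloring, and the three such changes together with the identity form a group. Both points are routine, so there is no real obstacle: the substantive work was already done in Proposition \ref{real}.
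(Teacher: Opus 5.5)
Your proof is correct and is essentially the paper's argument. The corollary is stated without proof right after Lemma \ref{neg2}, as the combination of two cases: if some sign variant is all positive, Proposition \ref{real} gives $D<4$ and an edge or vertex well; otherwise Lemma \ref{neg2} gives a negative vertex well. Your remarks that sign variants preserve $D$ and the local rule, and that sign variance is symmetric, are exactly what make the case split match the hypotheses of the two results.
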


The topographs covered by Corollary \ref{cor} have a simple basic structure, with regions increasing in absolute value from a central well. Topographs with $D=4$ have a different shape, explained in Section \ref{d4}. The complete classification is attained with Theorem \ref{class}.


\subsection{Generalizations of McShane's identity} \la{tmc}

Our first 
goal in this section is to prove the following generalization of \e{1/2}. This generalization was established  by Hu,  Tan, and  Zhang in  \cite[p.~495]{tan15} on the way to another identity \e{pan2} discussed below. The relatively short proof here is new, though 
originating in the partial proof from \cite[Sect.~4]{hin} of the $D=0$ case.

\begin{theorem} \cite{tan15} \la{summ}
Let $\mathcal G$ be a Markov topograph with labels in $(-\infty,-2] \cup [2,\infty)$ and discriminator $D\neq 4$. Then
\begin{equation}\label{topx}
   \sum_{ m  \in \mathcal G} \frac 2{e^{2 \rho(|m|)}+1} = 1 + \sum_{
\psset{xunit=0.08cm, yunit=0.08cm, runit=0.2cm}
\psset{linewidth=1pt}
\psset{dotsize=7pt 0,dotstyle=*}
\begin{pspicture}(2,3)(14,8.3)
          \psline(5,5)(3,8)
\psline(5,5)(3,2)
\psline(5,5)(9,5)


\rput(2,5){$_r$}
\rput(7,7.2){$_s$}
\rput(7,2.8){$_t$}

\rput(12.5,5){$_{\ \in \mathcal G}$}
        \end{pspicture}
} \frac{D}{|r s t|}
\end{equation}
where the left side is a sum over every region  of $\mathcal G$, and the right a sum over vertices of $\mathcal G$ with each term involving the three regions meeting there.
\end{theorem}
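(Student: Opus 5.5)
The plan follows Bowditch's combinatorial proof of McShane's identity. The two sides of \eqref{topx} will come from a local identity at each vertex and a telescoping identity along each region's border. The first step is the vertex identity. For a triple $(r,s,t)$, \eqref{suwx} gives $rst=r^2+s^2+t^2-D$, and dividing by $rst$ yields
\begin{equation*}
  \frac{r}{st}+\frac{s}{rt}+\frac{t}{rs} = 1+\frac{D}{rst}.
\end{equation*}
The right side is the summand on the right of \eqref{topx}, up to taking absolute values. So I attach the weight $\psi_v(m):=m/(m'm'')$ to each pair (vertex $v$, adjacent region $m$), where $m',m''$ are the other two regions at $v$. Sign variants do not change $|rst|$ or $|m|$, so by Corollary \ref{cor} and Lemmas \ref{neg}, \ref{neg2} I may work with a representative whose signs are controlled near a central well. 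Away from the well all regions are positive, and $\psi_v(m)=|m|/|m'm''|$ on the finitely many remaining vertices.

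The second step is telescoping along one region. Let $m>2$ have border regions $\dots,m_{-1},m_0,m_1,\dots$. The local rule $m_{j-1}+m_{j+1}=mm_j$ makes $m_{j+1}^2-m_jm_{j+2}=D-m^2$ independent of $j$. Consequently
\begin{equation*}
  \frac{m}{m_jm_{j+1}}=\frac{m}{m^2-D}\left(\frac{m_{j+2}}{m_{j+1}}-\frac{m_{j+1}}{m_j}\right).
\end{equation*}
By Proposition \ref{start}, the ratios $m_{j+1}/m_j$ tend to $\lambda_m$ and $\lambda_m^{-1}$ at the two ends. A partial sum of the weights around $m$ is therefore a difference of two ratios, times $m/(m^2-D)$. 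The full sum $m\sqrt{m^2-4}/(m^2-D)$ diverges when summed over all regions, so I must sum only over finite pieces and keep the boundary ratios.

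The third step is to truncate. Take the finite subtree $\mathcal T_N$ spanned by vertices within distance $N$ of the well. Sum the vertex identity over the vertices of $\mathcal T_N$. The left side regroups as $\sum_m$ over regions touching $\mathcal T_N$ of a finite telescoped border sum. Each such sum has the form $\frac{m}{m^2-D}(\text{ratio at one exit}-\text{ratio at the other})$, where the exits are the two boundary edges of $\mathcal T_N$ on the border of $m$. An exit edge of $\mathcal T_N$ is shared by exactly two regions. Each boundary ratio such as $m_{j+1}/m_j$ should therefore be paired with the corresponding term of the other region across the same exit edge. Combined with the count $\#\text{vertices}=\#\text{edges}+1$ in a tree, this should leave a constant $1$ together with a per-region term. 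Bowditch's edge function $x/(yz)$ suggests the right bookkeeping. The target per-region term is the identity $2/(\lambda_m^2+1)=(m-\sqrt{m^2-4})/m$. This is an algebraic rearrangement of $\frac{m}{m^2-D}(\lambda_m-\lambda_m^{-1})$ once the $D$-dependent boundary pieces are absorbed into the vertex terms $D/(rst)$.

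The fourth step is the limit $N\to\infty$. I expect the boundary terms to vanish and all sums to converge absolutely. The vertex sum $\sum D/|rst|$ converges because regions grow exponentially in height, by Proposition \ref{sim2}. Near $2$-regions in progressing subtrees the growth is polynomial instead, by Proposition \ref{sih}, and there one must check that $\sum 1/|rst|$ is still finite. The boundary corrections are controlled by Proposition \ref{shut}, since $m_{j+1}/m_j-\lambda_m=O(e^{-2\rho(m)j})$. The main obstacle is exactly the bookkeeping at the truncation boundary. Pairing the ratios at each exit edge so that everything cancels except $1$ and $\sum 2/(e^{2\rho(|m|)}+1)$ is delicate. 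I would first carry it out for $D=0$ as a sanity check against Hines and Bowditch, then redo it with the invariant $D-m^2$ tracked explicitly.
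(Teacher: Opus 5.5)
\textbf{Gap: the truncation terms do not vanish.} The step you defer as ``bookkeeping'' is the whole proof, and both expectations you state for it are false. The truncation terms do not vanish, and $2/(m\lambda_m)$ is not a rearrangement of the full border sum $m\sqrt{m^2-4}/(m^2-D)$.

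Take a finite subtree $T$ with $V$ vertices. It meets exactly $V+2$ regions, one for each edge leaving $T$. Let $P_x$ be the sum of your weights $\psi_v(x)$ over the vertices of $T$ on the border of $x$. Then your regrouping is the exact identity
\[
\sum_{x \text{ meeting } T}(1-P_x)=2-\sum_{v\in T}\frac{D}{rst}.
\]
For $D=0$ the right side is $2$ for every $T$. But the limiting values $1-x\sqrt{x^2-4}/(x^2-D)=2/(x\lambda_x)$ sum to $1$ by \eqref{1/2}.

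For general $D$ you have $1-\frac{x\sqrt{x^2-4}}{x^2-D}=\frac{2}{x\lambda_x}-\frac{D\sqrt{x^2-4}}{x(x^2-D)}$. By \eqref{sur}, the last term summed over all regions is $3\sum_v D/(rst)$, not $\sum_v D/(rst)$. So the truncation terms must tend to $1+\sum_v D/(rst)$, which is the entire left side of \eqref{topx}.

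The reason is that many regions meeting $T$ meet it in a single vertex (half of them when $T$ is a ball). For those, $P_x$ is nowhere near its full value, and no bound of the form $O(e^{-2\rho(m)j})$ helps when $j$ stays bounded.

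\textbf{Missing idea: regroup by edges, not regions.} Your weight $\psi_v(m)=m/(m'm'')$ is Bowditch's value on the edge $\vec e$ leaving $v$ between $m'$ and $m''$. The local rule gives $\psi(\vec e)+\psi(-\vec e)=1$.
\begin{itemize}
\item Summing the vertex identity over $T$, the $V-1$ interior edges contribute $1$ each. So $\sum\psi(\vec e)=1+\sum_{v\in T}D/(rst)$, summed over the outward boundary edges. This is where $V-E=1$ actually enters.
\item On a boundary edge with side regions $x$ fixed and $y$ large, \eqref{iv} gives $\psi(\vec e)=1/(x\lambda_x)+O(y^{-2})$.
\item Each region away from the truncation is the small side of exactly two boundary edges. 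So the terms $2/(x\lambda_x)$ come from the boundary, with errors bounded as in \eqref{spring}, \eqref{spring2}.
\end{itemize}
This is the paper's proof. $\Upsilon(b,b')$ in \eqref{up} is this edge function, \eqref{inr} is your vertex identity, and Theorem \ref{mc} carries out the truncation along $\mathcal R_\ell$ in one tree. The three trees are then glued at a well.

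\textbf{Signs at a negative vertex well.} Keep the signs. At a negative vertex well $rst<0$ in every sign variant, so your signed identity produces $-D/|rst|$ there. This cannot be removed by passing to absolute values. For $\mathcal G=(-3,-3,-3)$ with $D=54$, the left side of \eqref{topx} is about $0.819=1+\sum_v D/(rst)$, while $1+\sum_v D/|rst|\approx 4.819$.

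The paper's final simplification has the same blind spot. At a negative well, the positive sign variants of the three trees have $u_0=s_0t_0+r_0$, with $r_0,s_0,t_0$ the absolute values. The paper's expression then evaluates to $1-2D/(r_0s_0t_0)$ rather than $1$. So in that case the identity must be read with the signed product $rst$.
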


The case of Theorem \ref{summ} for a topograph with regions $>2$ and discriminator $D=0$ was proved in this form by Bowditch in \cite{bow96}. This was extended further in \cite[Thm. 3]{bow98} to allow topograph labels  $m\in \C$ with the BQ conditions: $m \not\in [-2,2]$ and only finitely many regions with $|m|\lqs 2$. Then in \cite[p.~495]{tan15} the condition $D=0$ was removed to allow any $D\in \C$, $D\neq 4$, and they remark that labels $m=\pm2$ are allowable.

Our proof 
is based on the following general  tree identity, valid for all $D\in \R$.

\begin{theorem} \la{mc}
For every rising or progressing tree, with base triple $(s_0, u_0, t_0)$ and  discriminator $D$, 
\begin{equation}\label{mcs}
  \sum_{q \, \in \, \Q \cap (0,1)} \left(\frac{2}{m_q \lambda_{m_q}} - \frac{D}{m_{q_0} m_q m_{q_0'}}\right)
    = 1-\frac{u_0}{s_0 t_0} -  \frac{1}{s_0 \lambda_{s_0}} -  \frac{1}{t_0 \lambda_{t_0}}.
\end{equation}
As usual, $q_0$ and $q_0'$ denote the unique  Farey neighbors that have mediant $q$.
\end{theorem}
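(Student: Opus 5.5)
The plan is to prove \eqref{mcs} by telescoping over the vertices of $T$, in the spirit of Bowditch's proof of \eqref{1/2}. The first step is to find a quantity attached to directed edges whose defect at each vertex is exactly the summand.

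\textbf{The edge function.} For an edge of $T$ separating regions $a,b$ and directed away from the root towards a region $c$, set
\begin{equation*}
  G(a,b\to c) := 1-\frac{1}{a\lambda_a}-\frac{1}{b\lambda_b}-\frac{c}{ab}.
\end{equation*}
Consider a vertex with oriented triple $(s,u,t)$, whose incoming edge is $(s,t\to u)$. Its two outgoing edges are $(s,u\to x)$ and $(t,u\to y)$, where $x=su-t$ and $y=tu-s$ by the local rule. Since $x/(su)=1-t/(su)$ and $y/(tu)=1-s/(tu)$, the relation $s^2+t^2+u^2-stu=D$ (written as $s/(tu)+t/(su)+u/(st)=1+D/(stu)$) should give
\begin{equation*}
  G(s,t\to u)-G(s,u\to x)-G(t,u\to y) = \frac{2}{u\lambda_u}-\frac{D}{stu}.
\end{equation*}
This is a short algebraic check. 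Here $u=m_q$ and $\{s,t\}=\{m_{q_0},m_{q_0'}\}$, so the right side is precisely the summand in \eqref{mcs}. The root edge has value $G(s_0,t_0\to u_0)$, which is exactly the right side of \eqref{mcs}.

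\textbf{Telescoping.} Sum this identity over all vertices whose new region $q$ lies in Farey rows $1,\dots,\ell$. The result is
\begin{equation*}
  \sum_{q\in \text{rows }1..\ell}\Big(\frac{2}{m_q\lambda_{m_q}}-\frac{D}{m_{q_0}m_qm_{q_0'}}\Big) = G(s_0,t_0\to u_0)-\sum_{e\in E_\ell}G(e),
\end{equation*}
where $E_\ell$ is the set of $2^\ell$ frontier edges pointing into row $\ell+1$. Theorem \ref{mc} then reduces to showing $\sum_{e\in E_\ell}G(e)\to0$. This also gives convergence of the left side in the row-by-row order. Absolute convergence should follow separately from Proposition \ref{zag2} and the growth bounds of Propositions \ref{sim2} and \ref{sih}.

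\textbf{The frontier estimate (main obstacle).} A frontier edge $(a,b\to c)$ has $c+d=ab$, where $d$ is the region behind it, so $G=d/(ab)-1/(a\lambda_a)-1/(b\lambda_b)$. The difficulty is that regions such as $m_0$ stay on the frontier forever, so $1/(a\lambda_a)$ is not small on its own. Instead it must cancel against $d/(ab)$.

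In the frontier configuration, $b=m_{q_j}$ and $d=m_{q_{j-1}}$ are consecutive neighbors along the border of $a=m_q$. Proposition \ref{start} (or Proposition \ref{stx} at the endpoints) gives $b=fe^{\rho(a)j}+ge^{-\rho(a)j}$, and similarly for $d$. A direct computation should then give
\begin{equation*}
  \frac{d}{ab}-\frac{1}{a\lambda_a}=O\big(\lambda_a^{-2j}/b\big),
\end{equation*}
uniformly in the region $a$, using $|g|<1<f$ from \eqref{wa}.

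Hence each frontier term is $O(1/b)$ with $b$ the larger region. I would then bound the $2^\ell$ terms using Proposition \ref{sim2}, which gives $b>e^{\omega_T n}$ for a rising tree. Combining this with the extra factor $\lambda_a^{-2j}$ and a count of frontier edges by the denominator of $c$ should make the sum tend to $0$.

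For progressing trees, edges bordering a $2$-region have $\lambda_a=1$ and $b$ growing only linearly. In this case $G=-\delta/(2b)-1/(b\lambda_b)=O(1/\ell)$ directly. Only boundedly many such frontier edges occur at each level, and Proposition \ref{sih} handles the remaining ones. This uniformity bookkeeping is where I expect most of the work to lie.
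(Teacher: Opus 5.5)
Your proof is the paper's telescoping argument with the bookkeeping rearranged. The paper uses $\Upsilon(b,b')=1-m_q/(m_bm_{b'})$ and the three-term identity \eqref{inr}. Your $G$ is exactly $\Upsilon$ minus $1/(a\lambda_a)+1/(b\lambda_b)$, and your vertex identity is \eqref{inr} rearranged. The paper instead evaluates the frontier sum $\sum\Upsilon(b_{i-1},b_i)$ as $\frac{1}{s_0\lambda_{s_0}}+\frac{1}{t_0\lambda_{t_0}}+\sum_{2\mid i}\frac{2}{m_{b_i}\lambda_{m_{b_i}}}$ plus an error, and then moves these main terms to the other side. In both versions the key input is the same: $\Upsilon(e)-1/(a\lambda_a)=O(1/b^2)$, with $b$ the row-$\ell$ region. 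Your vertex-by-vertex version has one small advantage: the $2/(u\lambda_u)$ and $D/(stu)$ terms are automatically summed over the same set of $q$.

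One estimate in your sketch is wrong. From Proposition \ref{start} you get $d\lambda_a-b=g\lambda_a^{-j}(\lambda_a^2-1)$. Using $f\lambda_a^j=b-g\lambda_a^{-j}$ and $fg=(a^2-D)/(a^2-4)$ (from \eqref{ib} and \eqref{thu}), this gives
\[
\frac{d}{ab}-\frac{1}{a\lambda_a}=\frac{g\,\lambda_a^{-j}\sqrt{a^2-4}}{ab}=\frac{a^2-D}{a\sqrt{a^2-4}}\cdot\frac{1}{b\,(b-g\lambda_a^{-j})}.
\]
So the decay is $\lambda_a^{-j}/b$, not $\lambda_a^{-2j}/b$; the bound you claimed fails as $j\to\infty$. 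The correct uniform statement is $G(e)=O(1/b^2)$ whenever $a$ stays away from $2$.

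For rising trees your weaker $O(1/b)$, combined with Proposition \ref{sim2}, is already enough. For progressing trees it is not enough with the natural count. If you dominate row $\ell$ by all $k/n$ with $n\ge\ell$, as in \eqref{spring2}, you get $\sum_{n\ge\ell}n^{-1}\sum_k e^{-\beta_T\min(k,n-k)}$, which diverges. With the correct $O(1/b^2)$ you get $\sum_{n\ge\ell}n^{-2}\sum_k e^{-2\beta_T\min(k,n-k)}=O(1/\ell)$, exactly as in the paper. Alternatively, you can keep $O(1/b)$ but use a finer count: row $\ell$ contains at most $k$ fractions in $(0,\tfrac12)$ with numerator $k$, and each has denominator at least $\ell+1$.

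Your separate treatment of the frontier edge along a $2$-region, where $G=-\delta/(2b)-1/(b\lambda_b)$, is correct. It is in fact more careful than the paper. There the expansion $2\Upsilon=2/(s\lambda_s)+O(1/t^2)$ degenerates at $s=2$ to an error of order $1/t$. This is harmless only because at most two such edges occur at each level.
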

\begin{proof}
For $T$ rising or progressing, define a function $\Upsilon = \Upsilon_T$ as follows.
 For   Farey neighbors $b<b'$ in $[0,1]$ with mediant $q$, set 
\begin{equation} \la{up}
  \Upsilon(b,b'):= 1-\frac{m_{q}}{m_b m_{b'}}.
\end{equation}
This definition generates the interesting identity
\begin{equation}\label{inr}
  \Upsilon(b,b') = \Upsilon(b,q)+\Upsilon(q,b') -\frac{D}{m_b m_q m_{b'}}.
\end{equation}
Letting $c$ be the mediant of $b$ and $q$, \e{inr} uses that $m_c+m_{b'}=m_b m_q$  by the local rule, and similarly for $q$ and $b'$.
Starting with
\begin{equation*}
  1-\frac{u_0}{s_0 t_0}  = \Upsilon(0, 1) 
    = \Upsilon(0, 1/2) + \Upsilon(1/2, 1) -D/(m_0 m_{1/2} m_1),
\end{equation*}
and repeatedly applying \e{inr} gives
\begin{equation} \la{giv}
  \sum_{0<i\lqs n} \Upsilon(b_{i-1}, b_i) = 1-\frac{u_0}{s_0 t_0} + \sum_{0<i< n} \frac{D}{m_{b_{i-1}} m_{b_i} m_{b_{i+1}}},
\end{equation}
for any sequence $0=b_0 < b_1 < \dots < b_n=1$ obtained by adding mediants.
We will use the  sequence  $\mathcal R_\ell=\{b_0, \dots, b_{2^\ell}\}$ in \e{bl}, the union of the Farey rows up to row $\ell$. 
Take $\ell$ to be large. Then each $b_i$ for $i$ odd is in row $\ell$  with $m_{b_i}$ relatively large. The even indexed $b_i$ are in lower rows, with $m_{b_i}$ smaller. We consider $\Upsilon(b_{i-1}, b_i)$ with $i$ odd and
 write the ordered triple $(m_{b_{i-1}}, m_q, m_{b_{i}})$ as $(s,u,t)$, with $t$ much larger than $s$. Then using
 \e{iv} in \e{up} implies that
\begin{align*}
  2\Upsilon(b_{i-1}, b_i) & =  1-\left(1+ \frac{4D}{s^2t^2} -4\left( \frac 1{s^2} +\frac 1{t^2}\right)\right)^{1/2}\\
  & =  1-\left(1 - \frac 4{s^2}\right)^{1/2} +O\left(\frac 1{t^2}\right)
  =  \frac 2{s\lambda_s} +O\left(\frac 1{t^2}\right).
\end{align*}
Hence, the left side of \e{giv} is
\begin{equation} \la{hu}
 \frac 1{s_0\lambda_{s_0}} +\frac 1{t_0\lambda_{t_0}} + \sum_{0 < i < 2^{\ell}, \ 2|i} \frac 2{m_{b_i}\lambda_{m_{b_i}}}
 +O\left(\sum_{0\lqs i \lqs 2^{\ell}, \ 2 \nmid i}\frac 1{m_{b_i}^2}\right).
\end{equation}
The denominators in row $\ell$ are between $\ell+1$ and the Fibonacci number $f_{\ell+2}$. 
Recall the bounds from Propositions \ref{sim2}, \ref{sih}
\begin{equation}\label{bs}
  m_{k/n} \gg e^{\omega_T n}, \qquad  m_{k/n} \gg n e^{\beta_T \min(k,n-k)},
\end{equation}
for rising and progressing trees, respectively. In the former case, 
since there are at most $\phi_{\text{Euler}}(n) <n$ Farey fractions with denominator $n$ for $n\gqs 2$, the error in \e{hu} has order at most
\begin{equation} \la{spring}
  \sum_{n\gqs \ell} n e^{-2 \omega_T n} \ll  \ell e^{-2 \omega_T \ell}.
\end{equation}
In the latter case, 
 the error  has order at most
\begin{equation} \la{spring2}
  \sum_{n\gqs \ell} \sum_{k=1}^{n-1}  n^{-2} e^{-2\beta_T \min(k,n-k)} \ll  \sum_{n\gqs \ell} n^{-2} \sum_{k\gqs 1}  e^{-2\beta_T k} \ll \ell^{-1}.
\end{equation}

Altogether, we have shown
\begin{equation}\label{pol}
   \sum_{0 < i < 2^{\ell}, \ 2|i} \frac 2{m_{b_i}\lambda_{m_{b_i}}} =  1-\frac{u_0}{s_0 t_0}  -\frac 1{s_0\lambda_{s_0}} -\frac 1{t_0\lambda_{t_0}} + \sum_{0<i< 2^{\ell}} \frac{D}{m_{b_{i-1}} m_{b_i} m_{b_{i+1}}}+ O \left(\ell^{-1} \right).
\end{equation}
Increasing $\ell$, we see that the contribution to the left side of \e{pol} from terms with $b_i$ in Farey rows $\ell_0$ to $\ell$ is at most $O(\ell_0^{-1})$. This uses $\lambda_x \gqs x/2$ and the  reasoning from \e{spring} and  \e{spring2}. Hence the left side  is absolutely convergent as $\ell \to \infty$. This implies the same for the right side and the proof is complete.
\end{proof}

\begin{proof}[Proof of Theorem \ref{summ}]
Note that both sides of \e{topx} depend only on the absolute value of labels and so are unchanged under sign variants.
By Corollary \ref{cor}, $\mathcal G$ or a sign variant has   a  well. If it has an edge or vertex well then we may choose any vertex $v_0$ in  $\mathcal G$ and  the three  trees rooted at $v_0$ will be rising. If it has a negative vertex well then we should use that as the root. Sign variants of each of the three  trees rooted at this well will be rising or progressing by Lemma \ref{neg}. Let $T_0=(s_0,u_0,t_0)$, $T_1=(t_0,v_0,r_0)$, $T_2=(r_0,w_0,s_0)$ 
be these rooted trees, as displayed on the left of Figure \ref{topb}. Noting that
\begin{equation*}
   e^{2\rho(x)}+1 = \lambda_x^2+1 = (\lambda_x + \lambda_x^{-1}) \lambda_x = x \lambda_x  \qquad (x\gqs 2),
\end{equation*}
gives
\begin{multline*}
   \sum_{ m  \in \mathcal G} \frac 2{e^{2 \rho(|m|)}+1}- \sum_{
\psset{xunit=0.08cm, yunit=0.08cm, runit=0.2cm}
\psset{linewidth=1pt}
\psset{dotsize=7pt 0,dotstyle=*}
\begin{pspicture}(2,3)(14,8.3)
          \psline(5,5)(3,8)
\psline(5,5)(3,2)
\psline(5,5)(9,5)
\rput(2,5){$_r$}
\rput(7,7.2){$_s$}
\rput(7,2.8){$_t$}
\rput(12.5,5){$_{\ \in \mathcal G}$}
        \end{pspicture}
} \frac{D}{|r s t|} = \sum_{i=0}^2  \sum_{q \, \in \, \Q \cap (0,1)} \left(\frac{2}{m(T_i)_q \lambda_{m(T_i)_q}} - \frac{D}{m(T_i)_{q_0} m(T_i)_q m(T_i)_{q_0'}}\right)\\
  + \frac{2}{s_0 \lambda_{s_0}}  + \frac{2}{t_0 \lambda_{t_0}}  + \frac{2}{r_0 \lambda_{r_0}} -  \frac{D}{s_0 t_0 r_0}.
\end{multline*}
Applying Theorem \ref{mc} to the above expression, and simplifying with the local rule, shows it  equals $1$. 
\end{proof}

In \cite{tan15} they continue as follows. For the sum on the right of \e{topx}, fix a region $u=m_q$ and consider all the terms that involve it:
\begin{equation*}
 (m_q m_{q_0} m_{q_0'})^{-1} +  \sum_{j\gqs 0}(m_q m_{q_{j+1}} m_{q_j})^{-1} +  \sum_{j\gqs 0}(m_q m_{q'_{j+1}} m_{q'_j})^{-1} = \sum_{j \in \Z}(m_q m_{q_{j+1}} m_{q_j})^{-1},
\end{equation*}
using the formulas in Proposition \ref{start}. This sum telescopes by \cite[Prop 4.1]{tan15} and converges to 
\begin{equation} \la{sur}
  \frac{1}{f(s,u,t) g(s,u,t) \cdot u (\lambda_u - \lambda_u^{-1})} = \frac{\sqrt{u^2-4}}{u(u^2-D)} =  \left( 1-\frac 2{e^{2 \rho(u)}+1}\right)  \frac{1}{u^2-D},
\end{equation}
using \e{ib}, \e{thu}. Hence 
\begin{equation}\label{topx2}
   \sum_{
\psset{xunit=0.08cm, yunit=0.08cm, runit=0.2cm}
\psset{linewidth=1pt}
\psset{dotsize=7pt 0,dotstyle=*}
\begin{pspicture}(2,3)(14,8.3)
          \psline(5,5)(3,8)
\psline(5,5)(3,2)
\psline(5,5)(9,5)


\rput(2,5){$_r$}
\rput(7,7.2){$_s$}
\rput(7,2.8){$_t$}

\rput(12.5,5){$_{\ \in \mathcal G}$}
        \end{pspicture}
} \frac{D}{|r s t|}
=
 \sum_{ m  \in \mathcal G}  \left( 1-\frac 2{e^{2 \rho(|m|)}+1} \right) \frac{D/3}{m^2-D}.
\end{equation}
Finally, combining \e{topx} and \e{topx2} gives the following version of \cite[Thm.~1.1]{tan15}, specialized to real labels.

\begin{theorem} \cite{tan15} \la{summg}
Let $\mathcal G$ be a Markov topograph with labels in $(-\infty,-2] \cup [2,\infty)$ and discriminator $D\neq 4$. Then
\begin{equation}\label{pan2}
  \sum_{ m  \in \mathcal G} \left[ \left(1+\frac{D/3}{m^2-D}\right)\left(\frac 2{e^{2 \rho(|m|)}+1}-1\right)+1\right] =1.
\end{equation}
\end{theorem}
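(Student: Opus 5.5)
The plan is to combine Theorem \ref{summ} with the vertex-to-region redistribution identity \e{topx2}, as already sketched after that theorem. The work is in justifying the regrouping of the vertex sum and checking the telescoping evaluation \e{sur} in the generality of real labels in $(-\infty,-2]\cup[2,\infty)$.

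First, both sides of \e{pan2} depend only on $|m|$, and $m^2$ is unchanged under sign variants. By Corollary \ref{cor} we may therefore assume $\mathcal G$ has an edge well, a vertex well, or a negative vertex well. By Lemma \ref{neg} we may then work with the three rising or progressing trees rooted at the well, after passing to sign variants. On such trees $D/|rst|$ at each vertex is absolutely summable. The bounds \e{bs} from Propositions \ref{sim2}, \ref{sih} give this, as in the convergence part of the proof of Theorem \ref{mc}. So the vertex sum on the right of \e{topx} may be rearranged freely.

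Next I would regroup it by regions. Each vertex is adjacent to exactly three regions, so assigning a third of each vertex term to each adjacent region gives
\begin{equation*}
\sum_{\text{vertices}} \frac{D}{|rst|} = \sum_{u\in\mathcal G} \frac D3 \sum_{\text{vertices on }\partial u}\frac{1}{|r s t|}.
\end{equation*}
For a fixed region $u=m_q$, the vertices on its border are exactly the triples $(u, m_{q_j}, m_{q_{j+1}})$ for $j\in\Z$, with the two-sided indexing of Proposition \ref{start}: negative $j$ runs over the $q'$ side, and the vertex $(m_{q_0},u,m_{q_0'})$ is included. Along this border $m_{q_j}=f\lambda_u^{j}+g\lambda_u^{-j}$. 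The key telescoping identity, \cite[Prop. 4.1]{tan15}, is
\begin{equation*}
\frac{1}{x_j x_{j+1}} = \frac{1}{fg(\lambda_u-\lambda_u^{-1})}\left(\frac{x_{j+1}\lambda_u^{-?}}{\ldots}\right),
\end{equation*}
or more concretely
\begin{equation*}
\frac{\lambda_u-\lambda_u^{-1}}{x_jx_{j+1}} = \frac{1}{fg}\left(\frac{g\lambda_u^{-j}}{x_j}-\frac{g\lambda_u^{-j-1}}{x_{j+1}}\right).
\end{equation*}
This is a direct check using $x_j=f\lambda_u^j+g\lambda_u^{-j}$. Summing over $j\in\Z$, the boundary terms tend to $1$ and $0$, which yields $1/(fg(\lambda_u-\lambda_u^{-1}))$. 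Dividing by $u$ and using $fg=(u^2-D)/(u^2-4)$ from \e{thu} and \e{ib}, together with $\lambda_u-\lambda_u^{-1}=\sqrt{u^2-4}$, gives \e{sur}. The last equality in \e{sur} follows from $e^{2\rho(u)}+1=u\lambda_u$.

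The main obstacle is signs and absolute values. For sign-variant regions, $u$ and its neighbors may be negative, so the computation should be done with $|m|$ throughout. Here $f$ and $g$ must be positive, which needs $|m_{q_j}|$ to grow in both directions. This holds in rising and progressing trees because $f>0$ there, and also for the finitely many unordered triples near the well, where the border sequence is still a two-sided solution of the same recurrence with positive coefficients. For the well region itself one should check directly that $u^2>D$, which follows from $f,g>0$. Once \e{topx2} holds, substituting it into \e{topx} and moving everything to one side gives $\sum_m\bigl[(1+\tfrac{D/3}{m^2-D})(\tfrac{2}{e^{2\rho(|m|)}+1}-1)+1\bigr]=1$, which is \e{pan2}. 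The rearrangement is legitimate since each piece converges absolutely.
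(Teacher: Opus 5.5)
Your route is the paper's: combine \eqref{topx} with the regrouping \eqref{topx2}, and evaluate each border sum by the telescoping of \cite[Prop.~4.1]{tan15}. Your concrete telescoping identity is correct; drop the first, unfinished display. For topographs with an edge or vertex well the argument is complete, because after a sign variant every border sequence is a positive two-sided solution with $f,g>0$.

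The negative vertex well case, which you explicitly include, breaks your justification. Let the well be $(r_0,s_0,t_0)$ with all labels $\le -2$. List the neighbors of $s_0$ as $x_j$ with $x_0=r_0$, $x_1=t_0$, and put $y_j=(-1)^jx_j$, so that $y_{j-1}+y_{j+1}=|s_0|y_j$. Every other vertex has an even number of negative labels (Lemma \ref{neg}). Hence $y_j>0$ for $j\ge 1$ but $y_j<0$ for $j\le 0$. So on these three borders $f>0>g$ and $s_0^2<D$ (indeed $D=r_0^2+s_0^2+t_0^2+|r_0s_0t_0|$). This contradicts your claims that $f,g>0$ and $u^2>D$ at the well; Corollary \ref{surx} needs labels $>2$.

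With absolute values, the border sum around $s_0$ is therefore $\sqrt{s_0^2-4}/(|s_0|(s_0^2-D))+2/|r_0s_0t_0|$, not \eqref{sur'}s value. So \eqref{topx2} with $|rst|$ is off by $2D/|r_0s_0t_0|$ in this case. Labels $\pm 2$ occur only at these well regions, and your $f,g$ formulas do not cover them at all. For $\mathcal G=(-2,-2,-2)$ the absolute border sum of each $-2$ region is $1/4$. Such a region therefore contributes $\frac{D}{3}\cdot\frac14=\frac53$ to the left side of \eqref{topx2} but $0$ to the right.

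The repair is to keep signs and use $D/(rst)$ throughout. The product $rst$ is invariant under sign variants and is negative only at a negative well. Set $\varepsilon=\operatorname{sgn}(u)$ and $y_j=\varepsilon^jx_j$. Then $ux_jx_{j+1}=|u|y_jy_{j+1}$, and your telescoping gives the signed border sum
\[
\frac{1}{|u|\,fg\,(\lambda_{|u|}-\lambda_{|u|}^{-1})}=\frac{\sqrt{u^2-4}}{|u|(u^2-D)},
\]
whatever the signs of $f$ and $g$. For $|u|=2$ the $y_j$ form a nonconstant arithmetic progression and the signed sum telescopes to $0$. So the signed form of \eqref{topx2} holds for every region.

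The signed form of \eqref{topx} is what the tree decomposition in the proof of Theorem \ref{summ} actually yields. There the root vertex enters as $-D/(r_0s_0t_0)$, and the local rule gives $1+D/(r_0s_0t_0)-D/(r_0s_0t_0)=1$. With $|r_0s_0t_0|$ one would instead get $1-2D/|r_0s_0t_0|$ at a negative well. So \eqref{topx} as literally stated carries the same discrepancy, which is why it cancels in \eqref{pan2}. Combining the two signed identities gives \eqref{pan2} in all cases.
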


A version using weights for regions of each color in Figure \ref{topv} is also given in \cite[Thm.~1.3]{tan15}.
It is possible that the right side of \e{topx}  simplifies further. Such sums for quadratic form topographs are evaluated explicitly in \cite[Thm.~9.1]{OStop}, and also by telescoping in \cite{ka}.

\begin{ex} 
{\rm In the slowest converging case of $\mathcal G=(-2,-2,-2)$ and $D=20$, the left side of \e{pan2} is $\approx 1.00007009$ after summing over all regions $m$ with $|m|\lqs 10^5$.}
\end{ex}

We also note a useful byproduct of \e{sur}:

\begin{cor} \la{surx}
If a Markov topograph of discriminator $D$ has all regions $m$ satisfying $m>2$, then they also satisfy $m^2>D$.
\end{cor}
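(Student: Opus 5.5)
The plan is to extract the claim from the computation \e{sur}, which gives the telescoping sum over all vertices on the border of a fixed region $u$. The key quantity is
\begin{equation*}
  \frac{\sqrt{u^2-4}}{u(u^2-D)} = \sum_{j\in\Z} \frac{1}{u\, m_{q_{j+1}} m_{q_j}},
\end{equation*}
where $m_{q_j}$, for $j\in\Z$, runs through the two-sided sequence of regions bordering $u$. If every region exceeds $2$, then every term on the right is strictly positive, since all the $m$'s are $>2$. The series converges, because by Proposition \ref{start} the border regions grow like $e^{\rho(u)|j|}$ in both directions. So the sum is a positive real number. Since $u>2$, the factor $\sqrt{u^2-4}/u$ is positive, and hence $u^2-D>0$.

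The step that needs care is justifying the telescoping evaluation without assuming $u^2\neq D$ in advance. My approach is to compute the series directly rather than quote \e{sur}. With $s,t$ the neighbours of $u$ and $f=f(s,u,t)$, $g=g(s,u,t)$, write $m_{q_j}=f\lambda^j+g\lambda^{-j}$ with $\lambda=\lambda_u$, extended to all $j\in\Z$ by the recursion; this covers both sides of $u$. Then
\begin{equation*}
  \frac{1}{m_{j+1}m_j}=\frac{1}{fg(\lambda-\lambda^{-1})}\left(\frac{g\lambda^{-j}}{m_j}-\frac{g\lambda^{-j-1}}{m_{j+1}}\right),
\end{equation*}
which can be checked by cross-multiplying. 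This identity needs $fg\neq 0$. Note that $f\neq0$: for $j\to\infty$ the regions are positive and unbounded. Similarly $g\neq 0$, by looking at $j\to-\infty$.

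Summing the telescoping identity over $j\in\Z$, the boundary terms tend to $1/f$ as $j\to-\infty$ and to $0$ as $j\to+\infty$. This gives
\begin{equation*}
  \sum_{j\in\Z}\frac{1}{m_{j+1}m_j}=\frac{1}{fg(\lambda-\lambda^{-1})}.
\end{equation*}
Combining this with $\lambda-\lambda^{-1}=\sqrt{u^2-4}$ and \e{thu}, which says $fg=(u^2-D)/(u^2-4)$ (using $g(s,u,t)=f(t,u,s)/\lambda_u$ from \e{ib}), yields $u(u^2-D)/\sqrt{u^2-4}>0$. Hence $u^2>D$.

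Alternatively, there is a shortcut. Positivity of all terms $m_{q_j}$ for $j\to\pm\infty$ forces $f>0$ and $g>0$, because $m_{q_j}\sim f\lambda^j$ as $j\to+\infty$ and $m_{q_j}\sim g\lambda^{-j}$ as $j\to-\infty$. Then $u^2-D=(u^2-4)fg>0$ by \e{thu}. This is the argument I would present, since it is short and robust. The only point to verify is that the two-sided indexing of the border of $u$ matches \e{m1}, \e{m2} via $g(s,u,t)=f(t,u,s)/\lambda_u$, which \e{ib} provides.
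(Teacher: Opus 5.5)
Your argument is correct and is essentially the paper's: the corollary is stated there as a byproduct of \eqref{sur}, i.e.\ positivity of the telescoped border sum $\sum_{j}(u\,m_{q_{j+1}}m_{q_j})^{-1}$. That positivity is equivalent to $f(s,u,t)g(s,u,t)=(u^2-D)/(u^2-4)>0$ via \eqref{thu}, which is exactly the step the paper uses in the proof of Proposition \ref{sjs}. Two small slips to fix. First, the $j\to-\infty$ boundary term $g\lambda^{-j}/m_{q_j}$ tends to $1$, not $1/f$. Second, in your shortcut mere positivity does not rule out $f=0$: then $m_{q_j}=g\lambda^{-j}>0$ for all $j$, and this is precisely the equality case $u^2=D$. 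So at that step you must invoke $m_{q_j}>2$, which says the regions cannot tend to $0$.
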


\subsection{Generalizing  an identity of Hines} \la{hiid}


The following interesting identity was discovered by Hines  \cite{hin} in the case $D=0$. Recall the height $h_{r,s,t}(m)$ of a region $m$ from Definition \ref{hei}.

\begin{theorem} \la{hnn}
Let $\mathcal G$ be a Markov topograph with labels in $(2,\infty)$ and discriminator $D\neq 4$.
For any triple $(r,s,t)$ of  $\mathcal G$ we have
\begin{equation}\label{hin2b}
  \sum_{m \in \mathcal G} h_{r,s,t}(m) \cdot \log\left( 1-\frac{D-4}{m^2 -4}\right) =\rho(r)+\rho(s)+\rho(t),
\end{equation}
where the  sum is over all regions of $\mathcal G$. In our abused notation,  each region  has  label $m$ and height $ h_{r,s,t}(m)$.
\end{theorem}

Note that the logarithms above are always evaluated at positive real numbers by Corollary \ref{surx}. The simple idea behind Theorem \ref{hnn} is to use the slope differences in \e{sps} to make a telescoping sum over the rationals in $[0,1]$. This is done for each of the three trees attached to a vertex of the topograph and the totals added together. No further justification was given in \cite[p. 8]{hin}. We supply a proof here that this 
is correct, based on our work in Sections \ref{bou} and \ref{y4}.

\begin{theorem} \la{hn}
Let $T$ be a rising tree with  base triple  $(s_0, u_0, t_0)$ and any discriminator $D$. Then
\begin{equation}\label{hin}
  \sum_{k/n \, \in \, \Q \cap (0,1)} n \log\left(\frac{m_{k/n}^2 -D}{m_{k/n}^2 -4}\right) = \log\left(\frac{\lambda_{s_0}\lambda_{t_0} (\lambda_{s_0}^2-1)(\lambda_{t_0}^2-1)}{(\lambda_{s_0} u_0 -t_0)(\lambda_{t_0} u_0 -s_0)}\right).
\end{equation}
\end{theorem}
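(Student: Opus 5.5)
The identity will come from summing the slope jumps of $\Phi$ over all interior rationals, following the idea attributed to Hines. By \e{sps}, at each $q=k/n\in\Q\cap(0,1)$ the difference between the right and left derivatives of $\Phi$ is
\begin{equation*}
S'(q)-S(q)=n\log\left(\frac{m_q^2-D}{m_q^2-4}\right).
\end{equation*}
So the left side of \e{hin} is the total of these jumps. Heuristically this total equals $S(1)-S'(0)$, the change in slope of $\Phi$ across $[0,1]$, because $\Phi'$ on the irrationals has no continuous drift. Indeed, by \e{sg} and \e{sgb},
\begin{equation*}
S(1)-S'(0)=-\log\left(\frac{(\lambda_{s_0}u_0-t_0)(\lambda_{t_0}u_0-s_0)}{\lambda_{s_0}^2\lambda_{t_0}^2\sqrt{s_0^2-4}\sqrt{t_0^2-4}}\right).
\end{equation*}
Using $\sqrt{x^2-4}=\lambda_x-\lambda_x^{-1}$, so that $\lambda_x\sqrt{x^2-4}=\lambda_x^2-1$, this matches the right side of \e{hin}. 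What remains is to make the telescoping rigorous.

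The rigorous version works at finite level $\ell$ with the points $\mathcal R_\ell=\{b_0,\dots,b_{2^\ell}\}$ and the piecewise linear interpolant $\Phi(\ell,x)$.

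First, the slope of the first segment minus the slope of the last segment telescopes exactly. Writing $m_i=\g(b_{i-1},b_i)$, we have
\begin{equation*}
m_{2^\ell}-m_1=\sum_{0<i<2^\ell}(m_{i+1}-m_i).
\end{equation*}

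Second, each $b_i$ with $0<i<2^\ell$ has its two neighbours in $\mathcal R_\ell$ both Farey neighbours of $b_i$. By Proposition \ref{sjs}, each kink $m_{i+1}-m_i=S'_j(b_i)-S_{j'}(b_i)$ equals $S'(b_i)-S(b_i)+n(\varepsilon'_j-\varepsilon_{j'})$ (up to the sign conventions $S_i=S-n\varepsilon_i$, $S'_j=S'+n\varepsilon'_j$). Here $j,j'$ count how far down the Farey tree below row $\ell$ those neighbours are. Concretely, a point $q$ in row $r$ has its $\mathcal R_\ell$-neighbours at indices $j=j'=\ell-r-1$ roughly.

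Third, Proposition \ref{shut} gives the uniform bound $|\varepsilon_j|\le C_Te^{-2\rho(m_q)j}$. Hence the error at $q=k/n$ is at most $2C_T n\,e^{-2\rho(m_q)(\ell-r-1)}$, and $\rho(m_q)\ge \omega_T n$ by Proposition \ref{sim2}.

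The endpoints are handled the same way. Since $m_1=S'_{\ell-1}(0)$ and $m_{2^\ell}=S_{\ell-1}(1)$, Propositions \ref{sjs2} and \ref{shut2} (whose endpoint errors similarly decay exponentially in $\ell$, since $s_0,t_0>2$ for rising trees) show that these converge to $S'(0)$ and $S(1)$.

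The main obstacle is showing that the accumulated error
\begin{equation*}
\sum_{0<i<2^\ell}\bigl(\text{error at } b_i\bigr)
\end{equation*}
tends to $0$. The plan is to split the sum by row $r$. For rows $r\le\ell/2$, the factor $e^{-2\rho_{\min}(\ell-r-1)}\le e^{-c\ell}$ beats the $2^r$ count of points, provided $\rho(\xi_T)$ is handled. This may fail if $\xi_T$ is near $2$. In that case I will instead use $e^{-2\rho(m_q)(\ell-r-1)}\le e^{-2\omega_T n(\ell-r-1)}$, with $n\ge r+1$, and sum over $n$ as in \e{spring}. For rows $r>\ell/2$, the individual errors are bounded via $|\varepsilon_j|\le C_T$ times $n\,e^{-2\omega_T n}$-type decay. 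Precisely, the kink there is itself bounded by Corollary \ref{sldi} as $O(n/m_q^2)$, and $S'-S=O(n/m_q^2)$ as well, so the sum over $n\ge r>\ell/2$ is $O(\ell e^{-\omega_T\ell})$ by the reasoning of \e{spring}.

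This same bound $n\log(1+|D-4|/(m^2-4))\ll n e^{-2\omega_T n}$ gives absolute convergence of the left side of \e{hin}. Letting $\ell\to\infty$ then yields \e{hin}.
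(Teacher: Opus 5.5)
Your proposal is correct and takes essentially the same route as the paper's proof. Both telescope the kinks of the interpolant over $\mathcal R_\ell$, write each kink as $S'(q)-S(q)$ plus errors $n(\varepsilon_{\ell-r}+\varepsilon'_{\ell-r})$ controlled by Propositions \ref{shut} and \ref{sim2}, split the rows at $\ell/2$, treat the endpoints with Propositions \ref{shut2} and \ref{sjs2}, and finish with \e{sg}, \e{sgb}. The differences are cosmetic: the neighbour index is exactly $\ell-r$ (and the first segment slope is $S'_\ell(0)$), and you keep the row-$\ell$ jump terms in the main sum and absorb the remainder via Corollary \ref{sldi}, whereas the paper puts the row-$\ell$ kinks entirely into the error and sums the main term over $\mathcal R_{\ell-1}$.
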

\begin{proof}
Use the  sequence $\mathcal R_\ell$  in \e{bl} again. Write $\g(b_i,b_{i+1})$ for the slope between the points $(b_i, \phi(b_i))$ and $(b_{i+1}, \phi(b_{i+1}))$. Then by telescoping
\begin{equation}\label{er}
  \sum_{0<i<2^\ell} \bigl(\g(b_i,b_{i+1}) - \g(b_{i-1},b_{i}) \bigr) = \g(b_{2^\ell-1},1) -\g(0, b_1).
\end{equation}
Let $q=b_i$ be in row $r$ with  $1 \lqs r \lqs \ell-1$, so that $0<q<1$ and $i$ is even. Its two neighbors in $\mathcal R_{\ell}$ are in row $\ell$ so that, in the notation of Section \ref{rey}, we have
$b_{i-1}=q_{\ell-r}$ and $b_{i+1}=q'_{\ell-r}$. Therefore, as in the proof of  Proposition \ref{sjs}, 
\begin{align*}
  \g(b_i,b_{i+1}) - \g(b_{i-1},b_{i}) & = S'_{\ell-r}(q) - S_{\ell-r}(q) \\
  & = S'(q) + n \varepsilon'_{\ell-r} - (S(q)  - n \varepsilon_{\ell-r}) \\
  & = n \log\left(\frac{m_q^2-D}{m_q^2-4}\right) + n (\varepsilon_{\ell-r} +  \varepsilon'_{\ell-r}),
\end{align*}
where $n = n_q$ is the denominator of $q$.
With Proposition \ref{shut},
 when $1 \lqs r \lqs \ell-1$,
\begin{equation*}  
  n (\varepsilon_{\ell-r} +  \varepsilon'_{\ell-r})  \lqs 2C_T n e^{-2\rho(m_q) (\ell-r)}.
\end{equation*}
Using \e{cru} and Proposition \ref{sim2} then gives
\begin{equation}  \la{era}
  n (\varepsilon_{\ell-r} +  \varepsilon'_{\ell-r}) = O\left( n e^{-2\omega_T n (\ell-r)}\right).
\end{equation}

 If $q=b_i$ is in row $\ell$ then $i$ is odd and the neighbors of $q$ are in lower rows so that
$b_{i-1}=q_{0}$ and $b_{i+1}=q'_{0}$. In this case use Corollary \ref{sldi}  to obtain
\begin{equation} \la{erb}
   |\g(b_i,b_{i+1}) - \g(b_{i-1},b_{i})| = |S_0'(q) -S_0(q)| = O\left( n e^{-2\omega_T n}\right).
\end{equation}
Altogether, the left side of \e{er} can be written as  
\begin{equation} \la{le}
  E_\ell+  \sum_{q \in \mathcal R_{\ell-1}, \ 0<q<1} n_q \log\left(\frac{m_q^2-D}{m_q^2-4}\right),
\end{equation}
with the terms of   \e{era} and \e{erb} incorporated into an error term $E_\ell$. There are $2^\ell$ terms of the form \e{erb}, each with a $q$ that has a denominator $n$ satisfying  $n\gqs \ell+1$. There can be at most $\phi_{\text{Euler}}(n) <n$ such denominators, so their total has order less than
\begin{equation} \la{con}
  \sum_{n \gqs \ell} n^2 e^{-2\omega_T n} \ll \ell^2 e^{-2\omega_T \ell}.
\end{equation}
For the $2^r$ terms of the form \e{era}, consider first those with $r\lqs \ell/2$. They contribute less than
\begin{equation*}
  \sum_{n\gqs 1} n^2 e^{-\omega_T n \ell} \ll   e^{-\omega_T  \ell}.
\end{equation*}
The contribution from the terms with $r >\ell/2$  is the same as \e{con}, but summing over $n>\ell/2$.

By Propositions \ref{shut2} and \ref{sjs2},
the right side of \e{er} equals
$
  S(1)-S'(0)-(\varepsilon_{\ell-1}(1) +  \varepsilon'_{\ell-1}(0)),
$
and in total we have established
\begin{equation} \la{lex}
    \sum_{q \in \mathcal R_{\ell-1}, \ 0<q<1} n_q \log\left(\frac{m_q^2-D}{m_q^2-4}\right) = S(1)-S'(0) + E^*_\ell
\end{equation}
with $E^*_\ell \to 0$ as $\ell \to \infty$. Similar arguments to those used in \e{con} show that the left side of \e{lex} is absolutely convergent as $\ell \to \infty$.
The formulas \e{sg} and \e{sgb} complete the proof.
\end{proof}

\begin{proof} [Proof of Theorem \ref{hnn}]
We use the same setup as in the proof of Theorem \ref{summ}. Our restriction to labels $>2$ means $ \mathcal G$ has an edge or vertex well by Proposition \ref{real}, and the trees $T_i$ are now all rising. Then
\begin{multline*}
   \sum_{ m  \in \mathcal G}  h_{r_0,s_0,t_0}(m) \cdot \log\left( 1-\frac{D-4}{m^2 -4}\right) = \sum_{i=0}^2  \sum_{k/n \, \in \, \Q \cap (0,1)} 
   n \cdot \log\left( \frac{m(T_i)_{k/n}^2 -D}{m(T_i)_{k/n}^2 -4}\right)
   \\
  + \log\left(\frac{r_0^2-D}{r_0^2 -4}\right)+ \log\left(\frac{s_0^2-D}{s_0^2 -4}\right)+ \log\left(\frac{t_0^2-D}{t_0^2 -4}\right).
\end{multline*}
Apply Theorem \ref{hn} to the right side above, then use the identity
\begin{equation*}
   \frac{\lambda_{s_0}}{\lambda_{s_0} u_0 -t_0} =  \frac{1}{ u_0 -t_0 (s-\lambda_{s_0})}
   =  \frac{1}{t_0\lambda_{s_0} - r_0}
\end{equation*}
and its variants to express everything in terms of the central regions $r_0, s_0, t_0$. Writing these as $r,s,t$ for simplicity,  the right side is
\begin{multline*}
  \log\left( \frac{(\lambda_s^2-1)(\lambda_t^2-1)}{(t \lambda_s -r)(s \lambda_t -r)}
   \cdot \frac{(\lambda_t^2-1)(\lambda_r^2-1)}{(r \lambda_t -s)(t \lambda_r -s)}
   \cdot \frac{(\lambda_r^2-1)(\lambda_s^2-1)}{(s \lambda_r -t)(r \lambda_s -t)}
  \right) \\
  +  \log\left( \frac{r^2-D}{r^2-4} \cdot \frac{s^2-D}{s^2-4} \cdot \frac{t^2-D}{t^2-4}\right).
\end{multline*}
Variants of the identity
$
(t\lambda_{s} - r) (r\lambda_{s} - t) = \lambda_{s}(s^2 -D)
$, where all factors are positive,
allow simplification to
\begin{equation*}
  \log\left( \frac{(\lambda_r^2-1)^2(\lambda_s^2-1)^2(\lambda_t^2-1)^2}{\lambda_r \lambda_s  \lambda_t (r^2-4)(s^2-4)(t^2-4)
  }
  \right)  = \log\left( \lambda_r \lambda_s  \lambda_t 
  \right),
\end{equation*}
using  $(\lambda_{x}^2-1)^2 = \lambda_{x}^2(x^2-4)$ finally.
\end{proof}

\section{The case $D=4$} \la{d4}
\subsection{Rising trees}

The simplest rising trees have  discriminator $D=4$. By Lemma \ref{tr4} they cannot have descending paths and must be strictly rising. Therefore all of them may be  found by choosing $s_0, t_0>2$ and then taking $2u_0=s_0 t_0+\sqrt{(s_0^2-4)(t_0^2-4)}$ as in \e{iv} to obtain the base triple.
We know by Theorem \ref{mthm}  that the graph of $\Phi$ is a straight line, so that
\begin{equation} \la{line}
  \Phi(x) = (\rho(t_0)-\rho(s_0))x + \rho(s_0).
\end{equation}
Therefore
\begin{equation*}
  m_{k/n} = 2\cosh \bigl(n \Phi(k/n)\bigr) = 2\cosh \bigl((\rho(t_0)-\rho(s_0))k + \rho(s_0)n \bigr).
\end{equation*}
This simplifies further when $\rho(s_0)=\rho(t_0)$, which can only happen if $s_0$ and $t_0$ are adjacent to an implicit $2$-region below, by the local rule. Let $x= s_0=t_0$. Then
\begin{equation*}
  m_{k/n}   = 2\cosh(\rho(x)n)  = e^{\rho(x)n}+ e^{-\rho(x)n} = \lambda_x^n + \lambda_x^{-n}  = 2 T_n(x/2),
\end{equation*}
for the Chebyshev polynomial $T_n$, as in \cite[Sect. 3]{mcg}. In this case $\Gamma_T$, the right side of the wedge $\mathcal W_T$, is a vertical line by \e{f2}.

For $\rho(s_0) \neq \rho(t_0)$, $\Gamma_T$ is a line of slope $\rho(s_0)/(\rho(s_0)-\rho(t_0))$. If this slope is rational, or infinite  with $\rho(s_0)=\rho(t_0)$,  then there will be many repeated numbers in these trees by Lemma \ref{latt} and Theorem \ref{ag}. On the other hand, if  $\rho(s_0)/\rho(t_0)$ is irrational then each number must appear uniquely on the tree, since  $t \cdot \Gamma_T$ cannot intersect two lattice points simultaneously. We have shown:

\begin{prop} \la{ris}
A rising tree, with discriminator $D=4$ and base triple $(s_0,u_0,t_0)$, has  repeated numbers if and only if $\rho(s_0)/\rho(t_0)$ is rational.
\end{prop}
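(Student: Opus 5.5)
The plan is to work directly from the exact formula
\[
m_{k/n}=2\cosh\bigl(a k+b n\bigr), \qquad a=\rho(t_0)-\rho(s_0), \quad b=\rho(s_0),
\]
which comes from \e{line} and Theorem \ref{mthm}. Since $D=4$, Lemma \ref{tr4} shows $T$ is strictly rising, so every region is $>2$ and $\rho$ is injective there. Two regions $k/n\neq k'/n'$ therefore carry equal labels if and only if
\[
ak+bn = ak'+bn',
\]
because $2\cosh$ is injective on $[0,\infty)$ and $n\Phi(k/n)=\rho(m_{k/n})>0$. Rewriting, this condition is
\[
(1-k/n)\,n\,\rho(s_0)+k\,\rho(t_0) = (n'-k')\rho(s_0)+k'\rho(t_0),
\]
that is,
\[
(n-k-n'+k')\,\rho(s_0) = (k'-k)\,\rho(t_0).
\]
Set $A=n-k-(n'-k')$ and $B=k'-k$. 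Distinct reduced fractions give distinct pairs $(n,k)$, so $(A,B)\neq(0,0)$. Because $\rho(s_0),\rho(t_0)>0$, a solution forces $A$ and $B$ to both be nonzero with the same sign, and then $\rho(s_0)/\rho(t_0)=B/A\in\Q$. This proves the implication from repeated numbers to rationality.

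For the converse, suppose $\rho(s_0)/\rho(t_0)=p/r$ with $p,r$ positive coprime integers. Using the coordinates $(n-k,k)$, which range over all coprime pairs of nonnegative integers other than $(0,0)$ (these index $[0,1]\cap\Q$ bijectively), the value $\rho(m_{k/n})$ equals
\[
c\,\bigl(p(n-k)+r k\bigr), \qquad c=\rho(t_0)/r.
\]
So I need two distinct coprime pairs $(x,y)$ and $(x',y')$ with $px+ry=px'+ry'$.

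\begin{itemize}
\item If $p=r=1$, take $(1,2)$ and $(2,1)$, i.e. $q=2/3$ and $q=1/3$. Both give value $3c$; this is the symmetric case of Chebyshev type.
\item In general, take $(x,y)=(r+1,1)$ and $(x',y')=(1,p+1)$. Both satisfy $px+ry = pr+p+r$, and both pairs are coprime since one coordinate is $1$. They are distinct unless $r=p=0$, which cannot happen.
\end{itemize}

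Converting back, $n=x+y$ and $k=y$, giving the fractions $1/(r+2)$ and $(p+1)/(p+2)$ with equal tree numbers. This is the repeated number needed.

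The only delicate point is to make sure the values compared are $\rho(m)$ and not merely $\Phi$, and that strict rising rules out labels equal to $2$, so that equal labels are equivalent to equal $\rho$-values. Both follow from Lemma \ref{tr4} and Theorem \ref{mthm}. Beyond that there is no real obstacle: the argument is elementary linear Diophantine algebra, matching the lattice-line picture of Lemma \ref{latt}.
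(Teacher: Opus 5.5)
Your proof is correct, and it is more elementary and coordinate-based than the paper's. The paper argues in the lattice plane. For $D=4$ the curve $\Gamma_T$ is a line of slope $\rho(s_0)/(\rho(s_0)-\rho(t_0))$ (vertical if $\rho(s_0)=\rho(t_0)$), and by Lemma \ref{latt} a repeated label means a single dilate $t\cdot\Gamma_T$ passes through two primitive lattice points.

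\textbf{Irrational direction.} For irrational slope this cannot happen. That is the same observation as yours, namely that a level set of $(n,k)\mapsto (n-k)\rho(s_0)+k\rho(t_0)$ contains at most one lattice point.

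\textbf{Rational direction.} For rational or infinite slope the paper invokes Theorem \ref{ag}. Its existence part produces index lines of that slope carrying several admissible primitive points, using de la Vall\'ee Poussin's theorem on primes in arithmetic progressions, and tree numbers are constant along such lines when $D=4$. You replace all of this with the explicit coincidence $m_{1/(r+2)}=m_{(p+1)/(p+2)}=2\cosh\bigl(c(pr+p+r)\bigr)$.

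\textbf{Comparison.} Your version is self-contained, needs no analytic number theory, and names concrete witnesses. Your separate case $p=r=1$ is just the instance $m_{1/3}=m_{2/3}$ of the general construction, so it can be dropped. The paper's route in return places the statement inside the slope-interval picture of Theorem \ref{ag}. It also shows that repeats are arranged along entire index lines of the critical slope $p/(p-r)$ (infinite if $p=r$), not just in a single pair. Your two lattice points $(r+2,1)$ and $(p+2,p+1)$ do lie on one such line.
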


We may give a simple criterion for this irrationality:

\begin{lemma}
For  $s,t \in \Q_{>2}$, the ratio $\rho(s)/\rho(t)$ is irrational if  $\sqrt{(s^2-4)(t^2-4)}$ is irrational.
\end{lemma}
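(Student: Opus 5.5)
We argue by contrapositive: assuming $\rho(s)/\rho(t)=a/b$ is rational with $a,b$ positive integers, we will show that $\sqrt{(s^2-4)(t^2-4)}$ is rational. Rationality of the ratio means $b\rho(s)=a\rho(t)$, i.e.\ $\lambda_s^b=\lambda_t^a$. Both $\lambda_s=(s+\sqrt{s^2-4})/2$ and $\lambda_t=(t+\sqrt{t^2-4})/2$ lie in (at most) quadratic extensions of $\Q$, namely $\Q(\sqrt{s^2-4})$ and $\Q(\sqrt{t^2-4})$.

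The key step is a Galois/conjugation argument. Suppose first that $\sqrt{s^2-4}$ is irrational, so $K=\Q(\sqrt{s^2-4})$ is a real quadratic field with nontrivial automorphism $\sigma$ sending $\lambda_s\mapsto\lambda_s^{-1}$. Then $\lambda_t^a=\lambda_s^b\in K$. We claim that $\lambda_t\in K$. Since $\lambda_t$ is a root of $x^2-tx+1$ over $\Q$, either $\lambda_t\in\Q$ or $\Q(\lambda_t)$ is quadratic. If $\lambda_t\in\Q$, then $\lambda_s^b$ is rational, but $\sigma(\lambda_s^b)=\lambda_s^{-b}\neq\lambda_s^b$ because $\lambda_s>1$, a contradiction. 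So $\Q(\lambda_t)$ is quadratic. Its element $\lambda_t^a$ is irrational by the same conjugate argument ($\lambda_t^{-a}\neq\lambda_t^{a}$), so $\lambda_t^a$ generates $\Q(\lambda_t)$. That element also lies in $K$, which gives $\Q(\lambda_t)=\Q(\lambda_t^a)=\Q(\lambda_s^b)=K$. Thus $\sqrt{s^2-4}$ and $\sqrt{t^2-4}$ generate the same quadratic field, so their ratio is rational. Equivalently, $(s^2-4)(t^2-4)$ is a rational square and $\sqrt{(s^2-4)(t^2-4)}$ is rational.

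For the remaining case, if $\sqrt{s^2-4}$ is rational, the symmetric argument applies with $s$ and $t$ swapped. Either $\sqrt{t^2-4}$ is irrational, which forces $\sqrt{s^2-4}$ to be irrational as well, a contradiction. Or both square roots are rational, and then the product of square roots is rational. In every case the rationality of $\rho(s)/\rho(t)$ forces $\sqrt{(s^2-4)(t^2-4)}$ to be rational, which is the contrapositive.

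The main point needing care is the claim that a nontrivial power of $\lambda_x$ (for $x>2$ with $\sqrt{x^2-4}\notin\Q$) is irrational and generates the field. This follows from the conjugation $\lambda_x\mapsto\lambda_x^{-1}$ together with $\lambda_x>1$. Everything else is elementary field theory.
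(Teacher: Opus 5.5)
Your argument is correct, but it takes a different route from the paper. The paper uses no field theory. From $\lambda_t^a=\lambda_s^b$ it expands both sides by the binomial theorem to get $u_1+v_1\sqrt{t^2-4}=u_2+v_2\sqrt{s^2-4}$ with $u_i,v_i$ positive rationals. It then squares $v_1\sqrt{t^2-4}-v_2\sqrt{s^2-4}=u_2-u_1$, so that $-2v_1v_2\sqrt{(s^2-4)(t^2-4)}$ is the only term not visibly rational; since $v_1v_2>0$, it must be rational. That computation works uniformly, with no case split on whether $\sqrt{s^2-4}$ or $\sqrt{t^2-4}$ is rational. The positivity of $v_1,v_2$ (which comes from $s,t>2$) plays the role that your observation $\lambda_x^{n}\neq\lambda_x^{-n}$ for $\lambda_x>1$ plays in your argument.

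Your Galois-conjugation argument is somewhat longer. It needs the two cases, and it also uses the standard fact that $\Q(\sqrt{d_1})=\Q(\sqrt{d_2})$ forces $d_1d_2$ to be a rational square, which you use without justification. It does hold: the conjugation $\sigma$ of $K$ sends $\lambda_t$ to $\lambda_t^{-1}$, hence sends $\sqrt{t^2-4}=\lambda_t-\lambda_t^{-1}$ to its negative, so the product of the two square roots is $\sigma$-fixed and thus rational. In exchange, your route shows the structural reason behind the lemma. A multiplicative relation $\lambda_s^b=\lambda_t^a$ forces $\lambda_s$ and $\lambda_t$ to generate the same quadratic field, i.e.\ $\Q(\sqrt{s^2-4})=\Q(\sqrt{t^2-4})$, which is exactly the conclusion restated.
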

\begin{proof}
If the ratio $\rho(s)/\rho(t)$  equals $a/b \in \Q$ then $\lambda_t^a=\lambda_s^b$ for positive integers $a, b$. From this,
\begin{align*}
  \left(\tfrac{t+\sqrt{t^2-4}}2 \right)^a = \left(\tfrac{s+\sqrt{s^2-4}}2 \right)^b & \implies u_1 +v_1 \sqrt{t^2-4} =  u_2 +v_2 \sqrt{s^2-4}\\
  & \implies \left(v_1 \sqrt{t^2-4} - v_2 \sqrt{s^2-4}\right)^2 = \left( u_2 -u_1\right)^2
\end{align*}
for positive rationals $u_1, u_2, v_1, v_2$ and the result follows.
\end{proof}

\begin{ex} 
{\rm The rising tree with base $(3,6+\sqrt{15},4)$ has $D=4$, $\rho(3)/\rho(4)$ irrational since $\sqrt{5 \cdot 12}$ is irrational, and no repeats in its numbers
\begin{equation*}
  3,4,6+\sqrt{15},14+3 \sqrt{15},21+4 \sqrt{15},36+8 \sqrt{15},78+15 \sqrt{15},94+21
   \sqrt{15},  \dots
\end{equation*}
By Lemma \ref{uniq} this tree has uniqueness.}
\end{ex}

\subsection{Leveling trees} \la{levf}

Assume that $\mathcal G$ is a real Markov topograph with $D=4$ and all regions $\gqs 2$. One possibility is that $\mathcal G$ is constant with all regions equal to $2$. Otherwise, direct all non-zero edges to make them positive. Let the regions $s, u, t$ meet at a vertex. If $u > st/2$ then the edge $e$ between $s$ and $t$ is directed to $u$ and the tree with base $(s,u,t)$ is strictly rising. If $u \lqs st/2$ then $e$ is undirected or directed away from $u$. The tree with this base has some interesting new features.

\begin{adef}\la{dxs}
{\rm A {\em leveling tree} is a real 
Markov tree, with discriminator $D=4$, where the components of its  base triple $(s_0, u_0, t_0)$ satisfy
\begin{equation}\label{stb}
  2 < s_0, \qquad 2  < t_0, \qquad 2 \lqs u_0 < s_0 t_0/2,
\end{equation}}
or else the base triple takes the form $(2,t_0,t_0)$ or $(s_0,s_0,2)$ for $s_0,t_0>2$.
\end{adef}

All leveling trees may be  found by choosing $s_0, t_0 \gqs 2$ with $s_0t_0 \neq 4$ and then taking $2u_0=s_0 t_0-\sqrt{(s_0^2-4)(t_0^2-4)}$ to obtain the base triple.

\begin{prop} \la{www}
The numbers in a leveling tree are $\gqs 2$. All edges are positively directed away from the root except for those on a simple {\em descending path} from the root. If $\rho(s_0)/\rho(t_0)$ is rational then there is exactly one $2$-region and it occurs at the  end of the descending path.  If $\rho(s_0)/\rho(t_0)$ is irrational then there are no  $2$-regions, the descending path is infinite, and regions adjacent to the path decrease with limit $2$. 
\end{prop}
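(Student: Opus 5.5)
We will follow the descending path explicitly and track it with the quantities $\rho$. The basic tool is the $D=4$ relation. For any triple $(s,u,t)$ with $s,t\gqs 2$, the two solutions of \e{iv} are $u=2\cosh(\rho(s)\pm\rho(t))$, since with $D=4$ the discriminant is $(s^2-4)(t^2-4)$ and $\lambda_s\lambda_t^{\pm1}+\lambda_s^{-1}\lambda_t^{\mp1}$ solves the quadratic. So at every vertex the three values $a=\rho(s),b=\rho(t),c=\rho(u)$ satisfy $c=a+b$ or $c=|a-b|$; that is, one of them is the sum of the other two. Moreover, when we move across an edge with regions $s,t$, the two regions $r,u$ on either side are exactly $2\cosh(a+b)$ and $2\cosh(|a-b|)$. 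Consequently all labels are $\gqs 2$, being of the form $2\cosh(\cdot)$, and in particular every label is $2\cosh$ of an element of the additive semigroup $\Z_{\gqs0}\rho(s_0)+\Z_{\gqs0}\rho(t_0)$.

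In the base triple \e{stb} we have $u_0<s_0t_0/2$, so $\rho(u_0)=|\rho(s_0)-\rho(t_0)|$. (The degenerate bases $(2,t_0,t_0)$ and $(s_0,s_0,2)$ are handled the same way, with $\rho=0$ for the $2$-region.) Going forward from the base, the tree has two children. For the child containing the larger of $s_0,t_0$ and $u_0$, the new region is $2\cosh(\rho(\max)+\rho(u_0))$ or $2\cosh(\rho(\max)-\rho(u_0))$. The new region is the "far" solution of \e{iv} relative to the region being replaced. Since the replaced region is the smaller of $s_0,t_0$, whose $\rho$ equals $\rho(\max)-\rho(u_0)$, the new region gets $\rho(\max)+\rho(u_0)$ and is ordered. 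The other child, containing $\min(s_0,t_0)$ and $u_0$, gets the value $|\rho(\min)-\rho(u_0)|$ whenever the replaced region $\max$ had $\rho=\rho(\min)+\rho(u_0)$, and so on.

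Hence the descending path is exactly the Euclidean algorithm (subtractive form) run on the pair $(\rho(s_0),\rho(t_0))$. At each step the current pair $(a,b)$ with $a>b$ is replaced by $(b,a-b)$ (suitably ordered), and the other branch at each vertex sees an ordered triple $u>st/2$. By the climbing lemma, Lemma \ref{climb}, all edges off the path point away from the root and regions strictly increase there. The path edges point toward the root because the $\rho$-values, and hence the labels, decrease.

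If $\rho(s_0)/\rho(t_0)=a/b$ is rational, then the subtractive Euclidean algorithm terminates after finitely many steps with a pair $(g,g)$, and the next region is $2\cosh(0)=2$. This is the unique $2$-region: a second one would require $\rho=0$ elsewhere, but off the path all values are sums of positive values. The path ends there, since the neighbours of a $2$-region form an arithmetic progression (Lemma \ref{pr4}) and the trees beyond are progressing. If the ratio is irrational, the algorithm never terminates and never produces $0$, so there are no $2$-regions. The path is infinite, and the standard fact that the subtractive Euclidean algorithm on incommensurable reals drives both entries to $0$ gives regions adjacent to the path of the form $2\cosh(\to0)\to2$, decreasing.

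The main obstacle is the bookkeeping showing that at each path vertex exactly one forward edge continues the descent. This needs care with ties (an undirected $0$-edge occurs when $a=2b$ gives equal regions) and with orientation conventions. We would first handle it by an induction on the vertex triple in $\rho$-coordinates, $(a,b,a-b)$, checking each forward direction against \e{iv}.
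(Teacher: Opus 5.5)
Your proof is correct in substance. It reaches the same description as the paper, namely that the descending path is the Euclidean algorithm on $(\rho(s_0),\rho(t_0))$, but by a different mechanism.

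The paper first fixes edge directions with its general machinery:
\begin{itemize}
\item for $D=4$ the base vertex cannot be a vertex well (Lemma~\ref{wells}), so some forward edge points inward;
\item the argument of Proposition~\ref{zag2} then gives $\rho(t_0)=\rho(s_0)+\rho(u_0)$;
\item the inequality \eqref{ewa} shows the next region is $\gqs 2$, so the configuration \eqref{stb} repeats;
\item the limit $2$ in the irrational case is borrowed from the growth argument in the proof of Proposition~\ref{real}.
\end{itemize}
You instead derive everything from the closed form $2\cosh(\rho(s)\pm\rho(t))$ for the two roots of \eqref{iv} when $D=4$. This gives all labels $\gqs 2$ at once. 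It also tells you, at each path vertex, which forward region is the sum (ordered, hence rising by Lemma~\ref{climb}) and which is the difference (continuing the descent). Finally, it reduces the limit $2$ to the classical fact that Euclidean remainders for incommensurable reals tend to $0$, which handles both sides of the path at once. Your route is more self-contained and explicit. The paper's route keeps leveling trees inside the same well/climbing framework it uses for rising trees and for the classification in Theorem~\ref{class2}.

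Some statements need fixing.
\begin{itemize}
\item The $\rho$-values have the form $|m\rho(s_0)+n\rho(t_0)|$ with $m,n\in\Z$. They are not in $\Z_{\gqs 0}\rho(s_0)+\Z_{\gqs 0}\rho(t_0)$; already $\rho(u_0)=|\rho(s_0)-\rho(t_0)|$ is not. You never use this claim, so it is harmless.
\item More importantly, the trees beyond the $2$-region are not progressing, since progressing trees have $D>4$ by Lemma~\ref{pr4}. So your justification for the end of the path fails as written. Your own formula repairs it. Across an edge between the $2$-region ($\rho=0$) and a neighbor $Y$, both flanking regions equal $2\cosh(\rho(Y)\pm 0)=Y$. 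Hence every neighbor of the $2$-region equals $Y$, and the edges bounding the $2$-region are $0$-edges (strictly speaking undirected). The subtrees hanging off have base $(Y,Y^2-2,Y)$ and are strictly rising. This is also what rules out a second $2$-region there.
\item Your remark about ties is off. By \eqref{iv} with $D=4$, an edge between $s$ and $t$ has label $0$ only if $s=2$ or $t=2$. So $0$-edges occur only on the boundary of the $2$-region, not at a step with $a=2b$. At every earlier path vertex the sum and difference are distinct, and exactly one forward edge descends.
\end{itemize}
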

\begin{proof}
Give all  edges  their positive edge direction. The edge between $s_0$ and $t_0$ is directed away from $u_0$.
Starting at the  base triple $(s_0,u_0,t_0)$, we assume for now that there are no $2$-regions nearby. The central vertex $v$ of this base triple cannot have all three edges directed out since that would make a vertex well, see Lemma \ref{wells}. So at least one edge $e$ is directed in, say the edge between $s_0$ and $u_0$. By the arguments of Proposition \ref{zag2},  $\rho(s_0)+\rho(u_0)=\rho(t_0)$. So $\rho(t_0) >\rho(s_0)$, $t_0>s_0$ and the third edge, between $t_0$ and $u_0$, is directed out from $v$. Now let $w$ be the fourth region around edge $e$, that $e$ is directed away from. We have
\begin{equation}\label{ewa}
  2w =s_0 u_0 - \sqrt{(s_0^2-4)(u_0^2-4)} \gqs 4
\end{equation}
where the equality in \e{ewa} is as in \e{iv}. The inequality in \e{ewa} is equivalent to $(s_0-u_0)^2\gqs 0$. Hence the triple $(s_0,w,u_0)$ satisfies the same conditions  as the initial triple with \e{stb}. Repeating our argument produces a unique path $P$ of edges leading away from the root, and with all of its edges directed towards to root. All other edges are directed away from the root by the climbing lemma. The path ends if a region with label $2$ is reached; all further edges are in copies of a rising tree directed away from this $2$-region.

The path $P$ may be described as follows. In the above example with $\rho(t_0) >\rho(s_0)$, the path moved left. Altogether, it will move left $a_0$ times where $a_0$  satisfies $0\lqs \rho(t_0) -a_0 \rho(s_0) < \rho(s_0)$, reaching the adjacent regions $s_0, u_1$ with $\rho(u_1)= \rho(t_0) -a_0 \rho(s_0)$ by Proposition \ref{zag2}. The  path will then move right $a_1$ times with  $0\lqs \rho(s_0) -a_1 \rho(u_1) < \rho(u_1)$, and so on. If  $\rho(t_0) /\rho(s_0) \in \Q$, equaling $[a_0, a_1, \dots, a_m]$ with $a_m\gqs 2$, then we see that the path moves left and right alternately by these number of edges. At the end it moves left or right by $a_m-1$ edges to reach a region with label $2$. The Farey index of the $2$-region is $q=[0, 1+a_0, a_1, \dots, a_m]$, making $m_q=2$. There is exactly one $2$-region in this case.

If  $\rho(t_0) /\rho(s_0) \notin \Q$, then the path $P$ continues indefinitely, passing regions with Farey indices given by the convergents to 
\begin{equation} \la{frh}
 \frac{\rho(s_0)}{\rho(s_0)+\rho(t_0)} = [0, 1+a_0, a_1, \dots ].
\end{equation}
The tree can have no $2$-regions in this case. The argument from the proof of Proposition \ref{real} shows that the regions adjacent to the path must approach $2$. Also, by the climbing lemma,  regions adjacent to the left of the path strictly decrease, as do those to the right.
\end{proof}

\SpecialCoor
\psset{griddots=5,subgriddiv=0,gridlabels=0pt}
\psset{xunit=0.35cm, yunit=0.35cm, runit=0.5cm}
\psset{linewidth=1pt}
\psset{dotsize=4pt 0,dotstyle=*}
\begin{figure}[ht]
\centering

\psset{arrowscale=1.4,arrowinset=0.3,arrowlength=1.1}
\newrgbcolor{light}{0.8 0.8 1.0}
\newrgbcolor{pale}{1 0.7 1}
\newrgbcolor{pale}{1 0.7 0.4}
\newrgbcolor{pale}{1 0.85 0.65}

\begin{pspicture}(-15,-1.6)(15,17.5) 

\psset{arrowscale=1.8,arrowinset=0.3,arrowlength=1.1}
\newrgbcolor{light}{0.9 0.7 1.0}
\newrgbcolor{blue2}{0.3 0.3 0.8}
\newrgbcolor{light}{0.7 0.7 1.0}

\newrgbcolor{light}{0.87 0.7 0.8}

\newrgbcolor{yello2}{0.964844 0.871094 0.453125}

\psset{arrowscale=1.4,arrowinset=0.3,arrowlength=1.0}



\psset{linecolor=yello2}


\pscurve[linewidth=7pt,linecolor=light](-4.03543,17.182)(-4.01956,14.582)(-3.22543,11.689)(-3.36415,8.69223)
(-2.34549,5.87047)(0,4)(0,2)(0,0)

\psline[linecolor=light,linestyle=dashed,dash=3pt 2pt](-2,-1.6)(0,0)(2,-1.6)

\psline[ArrowInside=->,ArrowInsidePos=0.6](0,4)(2.34549,5.87047)(5.32318,6.23569)(8.21397,5.4336)(10.7154,3.77746)(12.438,1.82995)

\psline[ArrowInside=->,ArrowInsidePos=0.6](10.7154,3.77746)(13.1809,2.95215)

\psline[ArrowInside=->,ArrowInsidePos=0.6](8.21397,5.4336)(11.2111,5.56406)(13.7494,5.00099)

\psline[ArrowInside=->,ArrowInsidePos=0.6](11.2111,5.56406)(13.6909,6.34558)

\psline[ArrowInside=->,ArrowInsidePos=0.6](5.32318,6.23569)(7.93444,7.71262)(10.8743,8.31034)(13.4694,8.15128)

\psline[ArrowInside=->,ArrowInsidePos=0.6](10.8743,8.31034)(13.2013,9.47015)

\psline[ArrowInside=->,ArrowInsidePos=0.6](7.93444,7.71262)(9.9615,9.92419)(12.1545,11.3209)

\psline[ArrowInside=->,ArrowInsidePos=0.6](9.9615,9.92419)(11.1623,12.2303)

\psline[ArrowInside=->,ArrowInsidePos=0.6](2.34549,5.87047)(3.36415,8.69223)(5.38512,10.9094)(7.9923,12.3935)(10.5078,13.0511)

\psline[ArrowInside=->,ArrowInsidePos=0.6](7.9923,12.3935)(9.84196,14.2207)

\psline[ArrowInside=->,ArrowInsidePos=0.6](5.38512,10.9094)(6.62203,13.6425)(8.27056,15.6531)

\psline[ArrowInside=->,ArrowInsidePos=0.6](6.62203,13.6425)(7.04442,16.208)

\psline[ArrowInside=->,ArrowInsidePos=0.6](3.36415,8.69223)(3.22543,11.689)(4.01956,14.582)(5.33328,16.8257)

\psline[ArrowInside=->,ArrowInsidePos=0.6](4.01956,14.582)(4.03543,17.182)

\psline[ArrowInside=->,ArrowInsidePos=0.6](3.22543,11.689)(2.16744,14.4963)(1.91145,17.0836)

\psline[ArrowInside=->,ArrowInsidePos=0.6](2.16744,14.4963)(0.652066,16.609)

\psline[ArrowInside=->,ArrowInsidePos=0.6](-3.22543,11.689)(-2.16744,14.4963)(-0.652066,16.609)

\psline[ArrowInside=->,ArrowInsidePos=0.6](-2.16744,14.4963)(-1.91145,17.0836)

\psline[ArrowInside=->,ArrowInsidePos=0.6](-4.01956,14.582)(-5.33328,16.8257)

\psline[ArrowInside=->,ArrowInsidePos=0.6](-3.36415,8.69223)(-5.38512,10.9094)(-6.62203,13.6425)(-7.04442,16.208)

\psline[ArrowInside=->,ArrowInsidePos=0.6](-6.62203,13.6425)(-8.27056,15.6531)

\psline[ArrowInside=->,ArrowInsidePos=0.6](-5.38512,10.9094)(-7.9923,12.3935)(-9.84196,14.2207)

\psline[ArrowInside=->,ArrowInsidePos=0.6](-7.9923,12.3935)(-10.5078,13.0511)

\psline[ArrowInside=->,ArrowInsidePos=0.6](-2.34549,5.87047)(-5.32318,6.23569)(-7.93444,7.71262)(-9.9615,9.92419)(-11.1623,12.2303)

\psline[ArrowInside=->,ArrowInsidePos=0.6](-9.9615,9.92419)(-12.1545,11.3209)

\psline[ArrowInside=->,ArrowInsidePos=0.6](-7.93444,7.71262)(-10.8743,8.31034)(-13.2013,9.47015)

\psline[ArrowInside=->,ArrowInsidePos=0.6](-10.8743,8.31034)(-13.4694,8.15128)

\psline[ArrowInside=->,ArrowInsidePos=0.6](-5.32318,6.23569)(-8.21397,5.4336)(-11.2111,5.56406)(-13.6909,6.34558)

\psline[ArrowInside=->,ArrowInsidePos=0.6](-11.2111,5.56406)(-13.7494,5.00099)

\psline[ArrowInside=->,ArrowInsidePos=0.6](-8.21397,5.4336)(-10.7154,3.77746)(-13.1809,2.95215)

\psline[ArrowInside=->,ArrowInsidePos=0.6](-10.7154,3.77746)(-12.438,1.82995)

\psline[ArrowInside=->,ArrowInsidePos=0.6,linecolor=red](-4.03543,17.182)(-4.01956,14.582)
(-3.22543,11.689)(-3.36415,8.69223)(-2.34549,5.87047)(0,4)(0,0)

\rput(-2,10){$P$}
\rput(-2.34549,2){$3$}
\rput(2.34549,2){$4$}

\rput(-11.1048,4.6315){$_{12.67}$}

\rput(-10.5457,9.18954){$_{8.94}$}
\rput(-8.30087,6.6009){$_{5.01}$}

\rput(-7.40607,13.1265){$_{2.79}$}

\rput(-4.38281,11.514){$_{2.06}$}
\rput(-4.69099,7.74094){$2.38$} 
\rput(-3.0867,14.6858){$_{2.01}$}

\rput(0.,6.8){$2.13$}

\rput(3.0867,14.6858){$_{10.90}$}
\rput(4.69099,7.74094){$5.51$}
\rput(4.38281,11.514){$_{7.72}$}

\rput(7.40607,13.1265){$_{40.37}$}

\rput(8.30087,6.6009){$_{19.91}$}
\rput(10.5457,9.18954){$_{105.64}$}

\rput(11.1048,4.6315){$_{74.11}$}

\psset{linecolor=gray}

\psset{linecolor=black}

\psdot[linecolor=black](0,0)

\end{pspicture}
\caption{The leveling  tree $(3,6-\sqrt{15},4)$, so $D=4$, with an infinite descending path $P$. Numbers are shown correct to the nearest hundredth.}
\label{dtree}
\end{figure}

The leveling tree with $(s_0,u_0,t_0) = (3,6-\sqrt{15},4)$ is drawn in Figure \ref{dtree}. The ratio $\rho(t_0) /\rho(s_0)$ is irrational and equals $[1,2, 1, 2, 1, 1, 61, \dots]$ so that the path $P$, starting from the root edge, goes  $1$ edge left, $2$ edges right,  $1$ edge left, and so on. This path is infinite with all its edges directed towards the root. Leveling trees have a feature not present in rising or progressing trees: a sequence of regions moving away from the root that levels off, either along the infinite descending path $P$ or along the boundary of a $2$-region.


\SpecialCoor
\psset{griddots=5,subgriddiv=0,gridlabels=0pt}
\psset{xunit=1cm, yunit=1cm, runit=1cm}
\psset{linewidth=1pt}
\psset{dotsize=1.4pt,dotstyle=*}
\begin{figure}[t]
\centering

\psset{arrowscale=1.4,arrowinset=0.3,arrowlength=1.1}
\newrgbcolor{light}{0.8 0.8 1.0}
\newrgbcolor{pale}{1 0.7 1}
\newrgbcolor{pale}{1 0.7 0.4}

\newrgbcolor{markov}{0.902344 0.710938 0.945313}
\psset{linecolor=markov}

}

\end{pspicture}
\caption{$\Phi_T$ and $\mathcal W_T$ for the leveling tree $T=(3,6-\sqrt{15},4)$.}
\label{tg}
\end{figure}

\begin{prop}
Let $T$ be a leveling tree with base triple $(s_0,u_0,t_0)$. The graph of its encoding function $\phi_T(x)$   
consists of two  lines on each side of the $x$ value $\rho(s_0)/(\rho(s_0)+\rho(t_0))$. The slope of the left line is $-\rho(s_0)-\rho(t_0)$ and the slope  on the right is $\rho(s_0)+\rho(t_0)$. In the special cases of $s_0=2$ or $t_0=2$  there is only one line.
\end{prop}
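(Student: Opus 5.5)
The plan is to show, by induction on the Farey tree, the exact formula
\begin{equation*}
  \rho(m_{k/n}) = \bigl| (n-k)\rho(s_0) - k\rho(t_0) \bigr| \qquad \text{for all} \qquad k/n \in \Q\cap[0,1].
\end{equation*}
Write $a=\rho(s_0)$, $b=\rho(t_0)$ and $L(k/n):=(n-k)a-kb$. The formula gives
\begin{equation*}
  \phi_T(x)=\rho(m_{k/n})/n=|(1-x)a-xb|=|a-(a+b)x| \qquad (x=k/n).
\end{equation*}
This is the claimed pair of lines: it vanishes at $x=a/(a+b)=\rho(s_0)/(\rho(s_0)+\rho(t_0))$, with slope $-(a+b)$ to the left and $a+b$ to the right. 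If $s_0=2$ then $a=0$ and the vertex sits at $x=0$, leaving a single line of slope $b$. Similarly, $t_0=2$ puts the vertex at $x=1$.

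The basic input is the $D=4$ case of \e{iv}. For any adjacent regions $s,t\gqs 2$, the two regions $r,u$ on either side of their common edge are
\begin{equation*}
  \tfrac12\bigl(st\pm\sqrt{(s^2-4)(t^2-4)}\bigr)=2\cosh(\rho(s)\pm\rho(t)),
\end{equation*}
computed as in \e{u1}. Hence, as multisets,
\begin{equation*}
  \{\rho(r),\rho(u)\}=\{\rho(s)+\rho(t),\,|\rho(s)-\rho(t)|\}.
\end{equation*}
This uses no ordering hypothesis, only that all labels are $\gqs 2$, which holds by Proposition \ref{www}. The function $L$ is additive on mediants, $L(q)=L(q_0)+L(q_0')$, since it is linear in $(n,k)$.

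The induction runs as follows. The base cases are $m_0=s_0$ and $m_1=t_0$ directly. For $m_{1/2}=u_0$, the leveling tree is built with the minus sign, $2u_0=s_0t_0-\sqrt{(s_0^2-4)(t_0^2-4)}$, so $\rho(u_0)=|a-b|=|L(1/2)|$.

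For the inductive step, take $q$ in row $\ell\gqs 2$ with oriented triple $(s,u,t)=(m_{q_0},m_q,m_{q_0'})$, and assume the formula holds in lower rows. The fourth region $r$ across the edge between $s$ and $t$ is the region from which that edge emanates. Of $q_0,q_0'$, the one in the higher row is the mediant of the other and of $r$'s index, so $r$'s index vector $(n,k)$ is the difference of those of $q_0'$ and $q_0$, up to sign. By linearity, $|L(r)|=|L(q_0)-L(q_0')|$, and by induction $\rho(r)$ equals this. Since $\rho(s)=|L(q_0)|$ and $\rho(t)=|L(q_0')|$, the multiset identity forces $\rho(u)$ to be the remaining value $\bigl|\,|L(q_0)|\pm|L(q_0')|\,\bigr|$. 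When $\rho(r)$ matches one of the two options, $\rho(u)$ is the other. If the two options coincide, there is nothing to choose.

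It remains to see that this remaining value is $|L(q_0)+L(q_0')|=|L(q)|$. The multiset identity gives $\{|x+y|,|x-y|\}=\{|x|+|y|,\,\bigl||x|-|y|\bigr|\}$ for $x=L(q_0)$, $y=L(q_0')$. Removing $|x-y|=\rho(r)$ leaves $|x+y|$, which completes the induction.

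I expect the main obstacle to be the bookkeeping step identifying the Farey index (more precisely, the $(n,k)$ vector) of the fourth region $r$ as the difference of those of $q_0$ and $q_0'$. This needs care at the first levels of the tree, where $r$ is the implicit region below the root, and that case is handled by the base case for $u_0$. The $2$-regions and the infinite descending path of Proposition \ref{www} need no special treatment, since $\rho(2)=0$ fits the formula. They correspond exactly to zeros of $L$, equivalently to the vertex $a/(a+b)$ being rational.
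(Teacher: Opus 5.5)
Your proof is correct, and it takes a genuinely different route from the paper's.

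The paper argues locally. Away from the descending path and the boundary of a $2$-region, every oriented triple is ordered and satisfies $\rho(u)=\rho(s)+\rho(t)$. So by \e{zu2} the secant slopes at Farey neighbors agree, and the graph is linear on each side of $\alpha=\rho(s_0)/(\rho(s_0)+\rho(t_0))$. The location of $\alpha$ comes from the description in Proposition \ref{www} of the descending path through the convergents of $\alpha$. The paper then identifies the two slopes as the limits $S'(0)$ and $S(1)$, using the asymptotics of Proposition \ref{stx} and the rewriting $u_0=\lambda_{s_0}\lambda_{t_0}^{-1}+\lambda_{t_0}\lambda_{s_0}^{-1}$.

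You instead prove the exact closed form $m_{k/n}=2\cosh\bigl((n-k)\rho(s_0)-k\rho(t_0)\bigr)$ by induction on Farey rows. The inputs are:
\begin{enumerate}
\item the $D=4$ root identity $\{\rho(r),\rho(u)\}=\{\rho(s)+\rho(t),\,|\rho(s)-\rho(t)|\}$;
\item the fact that the fourth region across the $s$--$t$ edge has index vector equal, up to sign, to the difference of those of $q_0$ and $q_0'$;
\item the multiset identity $\{|x+y|,|x-y|\}=\{|x|+|y|,\,\bigl|\,|x|-|y|\,\bigr|\}$.
\end{enumerate}
All of these check out, including the row-$2$ bookkeeping (where $r$ is $m_1$ or $m_0$) and the cases $\rho(2)=0$.

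Your route is purely algebraic. It needs no limits, no ordering of triples, and no prior knowledge of where the descending path goes. It also avoids the step, only sketched in the paper, of propagating local slope equalities to global linearity on each side of $\alpha$. In one stroke it gives the corner location, both slopes, the value $0$ at the corner, and the continuous extension in Corollary \ref{vee}. It also recovers the $2$-region statements of Proposition \ref{www} as the zeros of the linear form. With the opposite sign, the same induction reproduces the rising $D=4$ formula.

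The paper's route instead reuses its general one-sided derivative machinery. It thereby doubles as a consistency check of the endpoint slope formulas \e{sg}, \e{sgb} in the leveling setting.
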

\begin{proof}
Let $\alpha = \rho(s_0)/(\rho(s_0)+\rho(t_0))$, which may be rational or irrational.  All vertices that are not on the descending path  $P$, and not on the boundary of a $2$-region, are surrounded by a usual ordered triple $(s,u,t)= (m_{q_0}, m_q, m_{q'_0})$,  with $q_0 < q < q_0' < \alpha$ or  $\alpha < q_0 < q < q_0'$. The identity $\rho(u)=\rho(s)+\rho(t)$ implies that the slopes on the graph of $\phi_T$ between $q_0, q, q_0'$ are equal by \e{zu2}: $\g(q_0, q) = \g(q, q_0')$. Since this is true for all triples to the left and right of $\alpha$, it follows that the graph consists of two  lines. This must be amended if $s_0=2$ or $t_0=2$; then  $\alpha = 0$ or $\alpha=1$, respectively, and there is just one line.


The slope of the left line must equal
\begin{equation*}
  S'_{j+1}(0) = \g(0, q'_{j+1}) = \frac{\phi(q'_{j+1}) -\phi(0)}{q'_{j+1} -0} = \rho(m_{q'_{j+1}}) - (j+2)\rho(m_0),
\end{equation*}
for $j$ large enough. By Proposition \ref{stx}, Lemma \ref{rox} and \e{cru},
\begin{align*}
  S'_{j+1}(0) & = \rho\left( f(u_0,s_0,t_0) e^{\rho(s_0) j} + g(u_0,s_0,t_0) e^{-\rho(s_0) j}\right) - (j+2)\rho(s_0)\\
   &  = \rho\left( f(u_0,s_0,t_0) e^{\rho(s_0) j} \right) - (j+2)\rho(s_0) + O\left(  e^{-2\rho(s_0) j}\right)\\
    &  = \log f(u_0,s_0,t_0)+ \rho(s_0) j  - (j+2)\rho(s_0) + O\left(  e^{-2\rho(s_0) j}\right).
\end{align*}
Then $S'_{j+1}(0) \to S'(0) = \log( f(u_0,s_0,t_0)) - 2\rho(s_0)$ as $j \to \infty$, giving the slope on the left, which must be constant in $j$ after a certain point. This agrees with \e{sg}. To simplify $S'(0)$ use
\begin{align*}
  2u_0 & = s_0 t_0 - \sqrt{(s_0^2-4)(t_0^2-4)} \\
   & =(\lambda_{s_0} + \lambda_{s_0}^{-1})(\lambda_{t_0} + \lambda_{t_0}^{-1}) - (\lambda_{s_0} -\lambda_{s_0}^{-1})(\lambda_{t_0} - \lambda_{t_0}^{-1})\\
   & = 2\lambda_{s_0} \lambda_{t_0}^{-1} + 2\lambda_{t_0} \lambda_{s_0}^{-1},
\end{align*}
with \e{use}. Then
\begin{equation*}
  e^{S'(0)} = \frac{\lambda_{s_0} u_0 - t_0}{\lambda_{s_0}^2 \sqrt{s_0^2-4}} = \frac{\lambda_{s_0} (\lambda_{s_0} \lambda_{t_0}^{-1} + \lambda_{t_0} \lambda_{s_0}^{-1}) - (\lambda_{t_0}+\lambda_{t_0}^{-1})}{\lambda_{s_0}^2 (\lambda_{s_0}-\lambda_{s_0}^{-1})}
  =\frac 1{\lambda_{s_0}\lambda_{t_0}},
\end{equation*}
producing $S'(0) = -\rho(s_0)-\rho(t_0)$ as we wanted to show.
Similarly on the right we find the slope $S(1)$ given by \e{sgb} and equaling $\rho(s_0)+\rho(t_0)$.
\end{proof}

Since $\phi_T(0)=\rho(s_0)$ and $\phi_T(1)=\rho(t_0)$ for the above $T$, it follows that the two lines of the graph meet at the point $(\rho(s_0)/(\rho(s_0)+\rho(t_0)),0)$, making a $V$ shape. We obtain the next corollary.

\begin{cor} \la{vee}
Let $T$ be a leveling tree with base triple $(s_0,u_0,t_0)$. Then $\Phi_T$ is well-defined with \e{pph} and
\begin{equation} \la{rru}
  \Phi_T(x)=   \begin{cases}
                         -(\rho(s_0)+\rho(t_0))x + \rho(s_0) & \mbox{if \quad $0\lqs x \lqs \frac{\rho(s_0)}{\rho(s_0)+\rho(t_0)}$},  \\
                          (\rho(s_0)+\rho(t_0))x - \rho(s_0) & \mbox{if \quad $\frac{\rho(s_0)}{\rho(s_0)+\rho(t_0)} \lqs x \lqs 1$}.
                       \end{cases}
\end{equation}
\end{cor}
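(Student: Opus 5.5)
The plan is to deduce Corollary \ref{vee} from the preceding proposition, which shows that the points $(k/n,\phi_T(k/n))$ lie on two lines through $(\alpha,0)$, where $\alpha=\rho(s_0)/(\rho(s_0)+\rho(t_0))$. First I pin down the two lines. We have $\phi_T(0)=\rho(m_0)=\rho(s_0)$, the left slope is $-(\rho(s_0)+\rho(t_0))$, and a line with these data is exactly the first branch of \e{rru}. Likewise $\phi_T(1)=\rho(t_0)$ together with slope $\rho(s_0)+\rho(t_0)$ gives $(\rho(s_0)+\rho(t_0))x-\rho(s_0)$. The two branches agree at $x=\alpha$, where both vanish. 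So for every rational $q\in[0,1]$ on the appropriate side of $\alpha$, $\phi_T(q)$ equals the right-hand side of \e{rru}. When $\alpha$ is rational, the rational $q=\alpha$ is the Farey index of the unique $2$-region from Proposition \ref{www}, and $\rho(2)=0$ gives $\phi_T(\alpha)=0$, which is consistent with both branches. In the degenerate cases $s_0=2$ or $t_0=2$, $\alpha$ is $0$ or $1$ and only one branch is relevant.

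Next I check well-definedness via \e{pph}. The right-hand side of \e{rru}, call it $V(x)=|(\rho(s_0)+\rho(t_0))x-\rho(s_0)|$, is a continuous function on $[0,1]$, and $\phi_T=V$ on $\Q\cap[0,1]$. Hence for any sequence of rationals $x_j\to x$ we get $\phi_T(x_j)=V(x_j)\to V(x)$. The limit therefore exists and is independent of the chosen sequence, so $\Phi_T=V$.

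The only delicate point is whether the proposition really covers every rational, including those whose vertices lie on the descending path or on the boundary of the $2$-region. Its proof only treated ordered triples away from $P$. Equal slopes between Farey neighbors on one side of $\alpha$ still force all points on that side to be collinear, because every rational $q<\alpha$ lies in some Farey interval $[q_0,q_0']\subset[0,\alpha]$ reached by mediants, and the endpoint slope computation fixes the line. Rationals that are neighbors across $\alpha$ are still handled, since each is individually on one side. I will lean on the proposition as stated, which asserts the two-line shape for the whole graph of $\phi_T$, so no extra argument should be needed.
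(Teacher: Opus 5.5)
Your argument is correct and is essentially the paper's. The paper also takes the two-line shape from the preceding proposition, uses $\phi_T(0)=\rho(s_0)$ and $\phi_T(1)=\rho(t_0)$ to see that the lines meet at $(\alpha,0)$, and obtains $\Phi_T$ as the continuous extension via \e{pph}. Your side sketch in the last paragraph is looser than it needs to be: gluing the lines across junction rationals whose own vertices lie on $P$ requires something like $S(q)=S'(q)$ from \e{sps} at $D=4$, not just the endpoint computation at $0$. Since you rely on the proposition as stated, exactly as the paper does, this does not affect the proof.
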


Our running example of the leveling  tree $T=(3,6-\sqrt{15},4)$ has its encoding function $\Phi_T$ shown on the left of Figure \ref{tg}. The corner is at $x=\rho(3)/(\rho(3)+\rho(4)) \approx 0.4222$. The right of the figure shows its lattice curve $\Gamma_T$ as two parallel lines. The corresponding wedge $\mathcal W_T$ will contain infinitely many lattice points.

\begin{cor} \la{exte}
For a rising, progressing or leveling tree, the unique extension of its encoding function $\phi$ over the rationals to a continuous function on $\R \cap [0,1]$ is given by $\Phi$  in Definition \ref{phid2}.
\end{cor}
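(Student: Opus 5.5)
Uniqueness is the easy half: $[0,1]$ is dense in itself and contains $\Q\cap[0,1]$ as a dense subset. Two continuous functions that agree on a dense set agree everywhere, so at most one continuous extension of $\phi$ can exist. The real content is therefore to show that $\Phi$ from Definition \ref{phid2} is well defined and continuous for each of the three tree types, and restricts to $\phi$ on the rationals.

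For rising and progressing trees this is already done in the proof of Theorem \ref{mthm}. Lemma \ref{yu} shows that $\Phi$ is well defined at irrationals. At rationals $q$, the limit in \e{pph} equals $\phi(q)$ because $\phi$ is continuous on $\Q\cap[0,1]$ by Theorem \ref{cont}. The proof of Theorem \ref{mthm} then shows $\Phi$ is continuous on $\R\cap[0,1]$. So for these cases I would only cite those results.

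For a leveling tree, Section \ref{y5} does not apply, since its arguments used the estimates of Propositions \ref{sim2}, \ref{sih}. Instead I would use the preceding proposition: the graph of $\phi_T$ on the rationals lies on the two lines meeting at $(\alpha,0)$ with $\alpha=\rho(s_0)/(\rho(s_0)+\rho(t_0))$. Equivalently, $\phi_T$ is the restriction to $\Q\cap[0,1]$ of the explicit continuous piecewise linear function on the right of \e{rru}. For any sequence of rationals $x_j\to x$, continuity of that function gives convergence of $\phi(x_j)$ to its value at $x$. So the limit in \e{pph} exists, is independent of the sequence, and equals \e{rru}; this is exactly Corollary \ref{vee}. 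The only check needed is that the two lines agree at $\alpha$, where both vanish, so the function is continuous there.

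The main obstacle is just bookkeeping in the leveling case when $\alpha$ is rational. Then $\alpha$ is the index of the unique $2$-region, with $\phi(\alpha)=\rho(2)/n=0$, consistent with the V shape. When $s_0=2$ or $t_0=2$ there is a single line and nothing further to check. Combining the three cases with the uniqueness remark gives the corollary.
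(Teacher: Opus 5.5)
Your proposal is correct and follows the paper's proof. Uniqueness holds because any continuous extension must satisfy \eqref{pph}, by density of the rationals. Existence and continuity come from Theorem \ref{mthm} (with Lemma \ref{yu} and Theorem \ref{cont}) for rising and progressing trees, and from Corollary \ref{vee} for leveling trees. You simply spell out details the paper leaves implicit, such as $\Phi=\phi$ on the rationals and the check that the two lines in \eqref{rru} meet at $(\alpha,0)$.
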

\begin{proof}
Any continuous extension of $\phi$ must satisfy \e{pph}. With Corollary \ref{vee} and Theorem \ref{mthm} this definition is well-defined and gives a continuous function.
\end{proof}

\subsection{Uniqueness when $D=4$}

\begin{adef}\la{irr}
{\rm Let $T$ be a Markov tree. Consider its adjacent regions $s, t$ with $s>2$ and $t>2$. We call $T$ {\em rational} if  $\rho(s)/\rho(t)$ is  rational for all such pairs. It is {\em irrational} if $\rho(s)/\rho(t)$ is  irrational for all such pairs.}
\end{adef}

As we will see in Theorem \ref{class2}, rising trees with $D=4$ and leveling trees are the only possible non-constant Markov trees with  labels $\gqs 2$ and discriminator $D = 4$.

\begin{lemma} \la{ratw}
Let $T$ be a rising tree with $D=4$ or a leveling tree. Then $T$ is either rational or irrational.
\end{lemma}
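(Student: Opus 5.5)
The plan is to show that, for $D=4$, the quantities $\rho(m)$ over all regions $m>2$ of $T$ lie in a single additive structure generated by $\rho(s_0)$ and $\rho(t_0)$. The key fact is Proposition \ref{zag2} and its analogue for leveling trees, as used in the proof of Proposition \ref{www}. Whenever $(s,u,t)$ is a triple of regions $>2$ with $u\gqs st/2$, we have $\rho(u)=\rho(s)+\rho(t)$. In the opposite case $u<st/2$, the formula \e{iv} with $D=4$ gives $2u=st-\sqrt{(s^2-4)(t^2-4)}$, and hence, as in the computation of $2u_0$ in the leveling case, $\lambda_u=\lambda_s\lambda_t^{-1}$ or its inverse. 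So $\rho(u)=|\rho(s)-\rho(t)|$. Either way, around any vertex with all three regions $>2$, one of the three $\rho$ values is the sum of the other two.

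Starting from the base pair, I would then show by induction along the tree that every region $m>2$ satisfies $\rho(m)=a\rho(s_0)+b\rho(t_0)$ with $a,b\in\Z$. Moreover, for adjacent regions $s,t>2$ the integer vectors $(a_s,b_s)$ and $(a_t,b_t)$ form a basis of $\Z^2$, meaning their determinant is $\pm1$. This is just the Farey/Euclidean structure. In the rising case the vectors are $(n-k,k)$ for region $m_{k/n}$, directly from $\rho(u)=\rho(s)+\rho(t)$ on ordered triples, and adjacent Farey neighbours give determinant $\pm1$ by \e{nb}. In the leveling case, the regions along the descending path are obtained by the subtractive Euclidean steps described in the proof of Proposition \ref{www}, and these steps preserve unimodularity. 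Off the path, regions are again sums of their two neighbours.

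With that structure in hand, I would split into two cases according to $\rho(s_0)/\rho(t_0)$.

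\emph{Rational case.} If $\rho(s_0)/\rho(t_0)$ is rational, then every $\rho(m)$ is a rational multiple of $\rho(t_0)$. Since each $\rho(m)>0$, every ratio $\rho(s)/\rho(t)$ is rational, so $T$ is rational.

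\emph{Irrational case.} Suppose $\rho(s_0)/\rho(t_0)$ is irrational and take adjacent $s,t>2$. If $\rho(s)/\rho(t)=p/q$, then $q\rho(s)-p\rho(t)=0$ gives an integer relation $c\rho(s_0)+d\rho(t_0)=0$, where $(c,d)=q(a_s,b_s)-p(a_t,b_t)$. By unimodularity, $(c,d)\neq(0,0)$ unless $p=q=0$, which is impossible. Irrationality of $\rho(s_0)/\rho(t_0)$ then forces $c=d=0$, a contradiction. Hence $T$ is irrational.

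Two edge cases also need handling. In the leveling case $s_0$ or $t_0$ may equal $2$. Then there is just one generator: every $\rho(m)$ is an integer multiple of it, as with the Chebyshev trees, and $T$ is rational.

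I expect the main obstacle to be establishing the unimodular integer bookkeeping cleanly along the descending path of a leveling tree. There the relation is a difference rather than a sum, and I must check that the orientation (which of the three is the sum) is consistent with Proposition \ref{www}. I would handle this by proving the determinant $\pm1$ claim inductively edge by edge, since each move replaces a basis $(v_s,v_t)$ by $(v_s,v_t\pm v_s)$.
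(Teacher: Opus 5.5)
Your proof is correct, but it is organized differently from the paper's.

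\emph{The paper's route} is purely local. At any vertex whose regions $r,s,t$ are all $>2$, some permutation is ordered, say giving $\rho(s)=\rho(t)+\rho(r)$. Then $\rho(s)/\rho(t)=1+\rho(r)/\rho(t)$, so the adjacent pairs at that vertex have ratios that are simultaneously rational or irrational. Connectivity of the tree spreads this to all adjacent pairs.

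\emph{Your route} builds a global coordinate system $\rho(m)=a_m\rho(s_0)+b_m\rho(t_0)$, with adjacent regions giving a basis of $\Z^2$, and reduces everything to the single ratio $\rho(s_0)/\rho(t_0)$. The local input is the same as the paper's: for $D=4$, \eqref{iv} gives either a sum or an absolute difference at each vertex. Your induction does go through. Each step replaces $(v_s,v_t)$ by $(v_s, v_t\pm v_s)$ (up to sign), which preserves $\det=\pm1$. In the irrational case you only need $v_s,v_t$ to be linearly independent, not unimodular.

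\emph{What each approach buys.} Yours gives an explicit lattice description, essentially the structure behind \eqref{line} and the continued-fraction path in Proposition~\ref{www}. It also makes explicit the criterion ``$T$ is rational iff $\rho(s_0)/\rho(t_0)\in\Q$.'' The paper gets that criterion for free as well, since $s_0,t_0$ are adjacent. Its local argument also makes unnecessary the sign and orientation bookkeeping along the descending path that you flagged as the main obstacle.

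\emph{One small patch.} In your rational case, regions created at vertices touching a $2$-region are not covered by your relation for vertices with all three regions $>2$. Since $\rho(2)=0$, the sum relation still holds there (e.g.\ $u=s$ when $t=2$), so your induction continues past the $2$-region.
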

\begin{proof}
If $T$ is rising then it has no $2$-regions. If $T$ is leveling then it has at most one by Proposition \ref{www}.
Starting with a pair of adjacent regions $s, t >2$, there is an adjacent region $r>2$   making a triple with $r,s,t$. A permutation of this must give an ordered triple, say $(t,s,r)$. The proof of Proposition \ref{zag2} then implies that $\rho(s)=\rho(t)+\rho(r)$. Hence $\rho(s)/\rho(t)$ and $\rho(r)/\rho(t)$ are both rational or they are both irrational. Repeating this argument for an expanding set of adjacent regions gives the result. 
\end{proof}

\begin{theorem} \la{thm-rat}
Let $T$ be a rising tree with $D=4$ or a leveling tree, so it is rational or irrational. Then 
\begin{enumerate}
\item[\bf (a)] $T$ has repeated numbers if and only if it is rational,
\item[\bf (b)] $T$ has uniqueness if and only if it is irrational.
\end{enumerate}
\end{theorem}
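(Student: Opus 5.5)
The plan is to reduce everything to explicit closed formulas for the tree numbers, which exist because $D=4$ makes $\rho$ additive on ordered triples (Proposition \ref{zag2}). For a rising tree with $D=4$, \e{line} gives
\[
m_{k/n}=2\cosh\bigl(\rho(s_0)(n-k)+\rho(t_0)k\bigr).
\]
For a leveling tree, Corollary \ref{vee} gives
\[
m_{k/n}=2\cosh\bigl(|\rho(s_0)(n-k)-\rho(t_0)k|\bigr).
\]
By Lemma \ref{ratw}, $T$ is rational or irrational according as $\rho(s_0)/\rho(t_0)$ is rational or irrational. The degenerate leveling bases $(2,t_0,t_0)$ and $(s_0,s_0,2)$ have a $2$-region and fall into the rational case, which I treat directly below.

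\textbf{(a).} The rising case is exactly Proposition \ref{ris}, so only leveling trees need work.

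If $T$ is leveling and rational, Proposition \ref{www} supplies a $2$-region. Its neighbors form an arithmetic progression (Lemma \ref{pr4}/Proposition \ref{stx}). With $D=4$ the common difference must be $0$, exactly as in the proof of Proposition \ref{real}, so these neighbors are all equal and there are infinitely many repeats.

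If $T$ is leveling and irrational, suppose $m_{k/n}=m_{k'/n'}$ and write $a=n-k$, $b=k$, $a'=n'-k'$, $b'=k'$, all $\gqs 0$. Then $\rho(s_0)a-\rho(t_0)b=\pm(\rho(s_0)a'-\rho(t_0)b')$.
\begin{itemize}
\item With the $+$ sign, irrationality forces $(a,b)=(a',b')$, so the two indices coincide.
\item With the $-$ sign, $\rho(s_0)(a+a')=\rho(t_0)(b+b')$, which forces $a=a'=b=b'=0$, impossible.
\end{itemize}
Hence there are no repeats.

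\textbf{(b).} I will show that uniqueness holds if and only if there are no repeated numbers. Combined with (a), this gives (b).

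Suppose first that $T$ has no repeats, and let $c$ be the maximum of some triple. Then $c$ occupies a unique region, and the triples having $c$ as maximum correspond to consecutive pairs of border neighbors of that region that are both $<c$. By Proposition \ref{start} with $D=4$, and the identity $\rho(u)=\rho(s)+\rho(t)$ for the ordered permutation of any triple (proof of Lemma \ref{ratw}), the $\rho$-values of the border neighbors of $c$ are $|\rho(c)j+\beta|$ for $j\in\Z$. A triple $(x,c,y)$ with $c$ maximal has $\rho(c)=\rho(x)+\rho(y)$, i.e. $|\rho(c)j+\beta|+|\rho(c)(j+1)+\beta|=\rho(c)$. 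This holds only when $\rho(c)j+\beta$ and $\rho(c)(j+1)+\beta$ lie on opposite sides of $0$ (allowing one of them to be $0$). That pins down a unique $j$, unless some $\rho(c)j+\beta=0$; in that case there is a $2$-region and two neighbors of $c$ with equal values, contradicting no repeats. So the pair $\{x,y\}$ is determined by $c$, which is uniqueness. For rising irrational trees one can alternatively invoke Lemma \ref{uniq}, since the base triple has distinct entries: $\rho(u_0)=\rho(s_0)+\rho(t_0)$ and $\rho(s_0)\neq\rho(t_0)$.

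Conversely, suppose $T$ is rational. I will exhibit two distinct increasing triples with the same maximum. Using the index-line picture (Lemma \ref{latt} and Theorem \ref{ag}, with $\Gamma_T$ a line of rational slope), or the $2$-region's constant neighbors in the leveling case, pick two regions with a common value $c$. Around each, take the triple where $c$ is maximal.
\begin{itemize}
\item In the leveling case, two consecutive equal neighbors $r$ of the $2$-region give triples $(2,r,r)$, which appear at different vertices but have the same labels. I must instead compare a triple containing $r$ as maximum with nontrivially different companions; I expect to pick a larger repeated value along a rational line, giving triples whose smaller entries are distinct.
\item Producing such a genuinely different pair of companions in both the rising and leveling rational cases is the main obstacle. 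My first attempt is to apply the arithmetic-progression structure of Proposition \ref{start} to a region $c$ whose border $\rho$-sequence $|\rho(c)j+\beta|$ hits $0$ at some $j$ and then has neighbors $\rho(c)$ apart. That yields the triples $\{2,c,c\}$ and the corresponding triple across, which I will verify are different as sorted triples.
\end{itemize}
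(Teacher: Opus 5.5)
Your part (a) and the implication ``irrational $\Rightarrow$ uniqueness'' in (b) are correct, and they take a different route from the paper. Instead of descending to the descending path and applying Lemma~\ref{uniq} to rising subtrees, you use the closed forms from \eqref{line} and Corollary~\ref{vee}, together with the $D=4$ parametrization $\rho=|\rho(c)J+\beta|$ of the border of a region labelled $c$. That parametrization holds because $f(s,u,t)g(s,u,t)=1$ when $D=4$; for $c=s_0$ or $t_0$ only a half-line of $J$ occurs, which only helps.

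The gap is the other half of (b): you never show that a \emph{rational} tree fails uniqueness, and your proposed mechanism cannot do it. Equal labels exchanged by a symmetry produce identical sorted triples, so they give no counterexample to uniqueness. Compare $(3,5,3)$ in Section~\ref{niqu}, which has many repeats but apparently has uniqueness. This is exactly what happens next to a $2$-region. If $c$ borders it at position $J_0$, the border sequence $|\rho(c)(J-J_0)|$ is symmetric about $J_0$. So the triples around $c$ pair off as $\{2,c,c\},\{2,c,c\}$, then $\{c,c,c^2-2\},\{c,c,c^2-2\}$, and so on. The other neighbours of the $2$-region, also labelled $c$, carry the same sorted triple $\{2,c,c\}$. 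Moreover, a rising tree with $D=4$ contains no $2$-region at all. Picking two equal labels via Theorem~\ref{ag} proves nothing until you show their companions differ.

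What you need is a repeated label carrying two different sorted triples. The paper gets this from Lemma~\ref{uniq}, whose path-descent argument applies to strictly rising trees with distinct base entries:
\begin{itemize}
\item use $T$ itself when $s_0\neq t_0$, since distinctness follows from $\rho(u_0)=\rho(s_0)+\rho(t_0)$;
\item otherwise use the subtree one edge out, which is still rational and so has repeats by Proposition~\ref{ris};
\item for a rational leveling tree, apply this to a strictly rising subtree based at a triple $(r,r^2-2,r)$ next to the $2$-region.
\end{itemize}

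Alternatively, your own closed form finishes it directly. Write $\rho(s_0)=a\theta$ and $\rho(t_0)=b\theta$ with $\gcd(a,b)=1$, so $\rho(m_{k/n})=(a(n-k)+bk)\theta$. Consider the vertex creating $1/(b+2)$, with parents $0/1$ and $1/(b+1)$, and the vertex creating $(a+1)/(a+2)$, with parents $a/(a+1)$ and $1/1$. Both have a maximum with $\rho$-value $(ab+a+b)\theta$. Their companions have $\rho$-values $\{a\theta,(ab+b)\theta\}$ and $\{b\theta,(ab+a)\theta\}$, which differ unless $a=b=1$. In that case $m_{k/n}=2\cosh(n\theta)$, and the vertices creating $1/5$ and $2/5$ give $\{\theta,4\theta\}$ versus $\{2\theta,3\theta\}$. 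This case also covers the subtree $(r,r^2-2,r)$ inside a rational leveling tree.
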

\begin{proof}
{\bf Part I.}
  In this part of the proof assume $T$ is rising with $D=4$. By Lemma \ref{tr4}, it is strictly rising. Part (a)  now follow directly from Proposition \ref{ris} and Lemma \ref{ratw}. If $T$ is irrational then its base triple has distinct components and Lemma \ref{uniq} implies it has uniqueness. If $T$ is rational then its base triple may have $s_0=t_0$. Moving outward one edge must give a triple with all components distinct and we take the rational subtree $T_1$ with this base.  Lemma \ref{uniq} applies to $T$ or $T_1$, showing it does not have uniqueness. This confirms the theorem for rising trees. 
  
 {\bf Part II.} In this part assume $T$ is leveling, with its structure is given in Proposition \ref{www}.  First take $T$ to be rational. Moving out from the $2$-region, we can find a triple to form the base of a rising tree $T_2$ contained in $T$. Since $T_2$ is rational, it has repeated numbers and counterexamples to uniqueness by Part I of the proof.

 Lastly, assume  $T$ is leveling and irrational. Suppose $T$ has two repeated numbers or two triples giving a counterexample to uniqueness. Descend from these  regions or triples, going against the positive edge directions, until a common vertex $v$ on the descending path is reached. The ordered triple surrounding $v$ can form the base of a rising tree $T_3$ that is  partially contained in $T$.  Part I now contradicts the existence of  repeated numbers and counterexamples to uniqueness since $T_3$ is irrational. This completes the proof.
\end{proof}

The analog of Corollary \ref{fin} for leveling trees is:

\begin{cor}
Let $T$ be a leveling tree. If it is rational then its minimal number is $2$ and there are only finitely many numbers below any given bound. If it is irrational then there is no minimal number and there are infinitely many numbers below any given bound  $B>2$.
\end{cor}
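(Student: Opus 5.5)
The rational and irrational cases will be handled separately, using Proposition \ref{www} for the structure of a leveling tree.

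\emph{Rational case.} By Proposition \ref{www} there is exactly one $2$-region. It sits at the end of a finite descending path $P$, and all edges off $P$ are directed away from the root. The minimum is therefore $2$, because every label is $\gqs 2$ by Proposition \ref{www}.

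For finiteness, start from the $2$-region. As in the proof of Theorem \ref{thm-rat}, the regions surrounding the finite path $P$ together with the $2$-region cut $T$ into finitely many pieces, and each piece falls into one of three kinds.
\begin{enumerate}
\item[(i)] Finitely many regions along $P$.
\item[(ii)] The regions bordering the $2$-region. Their neighbours form a constant sequence: by the local rule with $D=4$, the regions adjacent to a $2$-region form an arithmetic progression, and the only progression compatible with $D=4$ is constant. The labels here are all equal to some $r>2$, appearing infinitely often.
\item[(iii)] Subtrees hanging off the constant border regions of (ii), or off $P$. Each of these is rooted at an ordered triple with all entries $>2$ and $D=4$, so it is a strictly rising tree.
\end{enumerate}

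I have to be careful with (ii). A constant border would give infinitely many numbers below any bound, which would contradict the statement. So this step needs rechecking.

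Redo the computation with the formula \e{iv}, $w=\tfrac12(rs\pm\sqrt{(r^2-4)(s^2-4)+4(D-4)})$. For $s=2$ and $D=4$ this gives $w=r$, so the neighbours of a $2$-region really are constant. Each of them then appears infinitely often, and I doubt the literal finiteness claim.

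More likely the intended reading is about distinct numbers, or it is measured per branch, since each rising subtree has finitely many numbers below a bound by Corollary \ref{fin}. I would prove the claim for distinct values. Every rising subtree attached along the $2$-border is a copy of the same rising tree, because the border is constant and the local rule then gives identical triples. So only finitely many distinct subtrees occur, and each has finitely many numbers below the bound.

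\emph{Irrational case.} By Proposition \ref{www} the descending path is infinite, there are no $2$-regions, and the regions adjacent to $P$ strictly decrease with limit $2$. Hence every label is $>2$, no minimum is attained, and for any $B>2$ all but finitely many of the regions adjacent to $P$ lie below $B$. Those regions are pairwise distinct as regions, because they strictly decrease along each side. This gives infinitely many numbers below $B$, and in fact infinitely many distinct values.

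The main obstacle is the finiteness claim in the rational case. Because the $2$-region's neighbours repeat, that part only holds when numbers are counted as distinct values. I would make that interpretation explicit and argue it through the finitely many distinct rising subtrees, each handled by Corollary \ref{fin}.
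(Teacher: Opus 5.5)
Your argument is correct and is essentially the route the paper intends. The paper states this corollary without proof, as the leveling-tree analog of Corollary \ref{fin}, resting on Proposition \ref{www} in the way you use it.

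Your doubt about the rational case is justified. All neighbors of the $2$-region carry the same label $r>2$; for instance, Proposition \ref{stx} gives $m_{1/n}=t_0$ for every $n\gqs 1$ in the tree $(2,t_0,t_0)$. So the finiteness claim holds only for distinct values. That is exactly what your decomposition proves: the finite path $P$, the constant border of the $2$-region, and finitely many distinct strictly rising subtrees, each handled by Corollary \ref{fin}.
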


\section{Further results}

\subsection{A classification of real Markov topographs and trees} \la{rea}
We may now complete our classification of real Markov topographs. 
\begin{theorem} \la{class}
Every real Markov topograph $\mathcal G$
 with  labels in $(-\infty,-2] \cup [2,\infty)$ is  one of these distinct forms, or a sign variant:
\begin{enumerate}
  \item A topograph with a vertex well or an edge well. Hence $D<4$ and it has no $2$-regions.
  \item A topograph with all regions labeled $2$, so that $D=4$.
  \item A topograph with $D=4$ and at most  one  $2$-region. Fixing any edge $e$, it consist of a strictly rising tree going one way from $e$ and a leveling tree going the other way, (or two leveling trees if $e$ is adjacent to a $2$-region), as on the right of Figure \ref{topb}. 
       \begin{enumerate}
         \item If the topograph is rational then it has one  $2$-region and repeated numbers on every 
         subtree.
         \item  If it is irrational then it has no $2$-regions and no repeated numbers.
       \end{enumerate}
   \item A topograph with a negative vertex well. Hence $D\gqs 20$ and it has up to three regions with absolute value $2$: they must be adjacent to the well.
\end{enumerate}
\end{theorem}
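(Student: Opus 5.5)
The plan is to split according to the sign pattern and the discriminator, using the results already established. First I will reduce to one of two situations: either $\mathcal G$, or one of its sign variants, has all regions positive (hence all $\gqs 2$), or it does not. In the second situation Lemma \ref{neg2} supplies a sign variant with a negative vertex well. Lemma \ref{neg} then gives $D\gqs 20$, and shows that the three trees rooted there are, up to sign variant, strictly rising or progressing. In a strictly rising tree every region is $>2$, and the same holds in a progressing tree except for the base regions $s_0,t_0$. Those base regions are exactly the regions adjacent to the well, so every region of absolute value $2$ must be adjacent to the well. That gives form (iv).

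Next I treat the case of all regions $\gqs 2$. By Proposition \ref{real} we have $D\lqs 4$. If $D<4$, the same proposition produces an edge or vertex well. In its proof we saw that a $2$-region forces $D=4$, so there are no $2$-regions, and this is form (i).

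For $D=4$, the same observation shows that a $2$-region has all neighbours equal to some $r\gqs 2$. If $r=2$ then the local rule propagates $2$ everywhere, giving form (ii). Otherwise I fix an edge $e$ with its positive direction. Consider the two trees based at the endpoints of $e$ and grown away from each other. On the side that $e$ points towards, the oriented triple $(s,u,t)$ has $u>st/2$, so that tree is strictly rising, using $D=4$ and the climbing lemma. On the other side $u\lqs st/2$, and by the discussion before Definition \ref{dxs} that tree is leveling, with the boundary forms $(2,t_0,t_0)$ or $(s_0,s_0,2)$ arising when $e$ borders a $2$-region. If $e$ is a $0$-edge, both sides have $u=st/2$; I would check that this forces a $2$-region adjacent to $e$, since Lemma \ref{wells} excludes an edge well at $D=4$. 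In that case both trees are leveling.

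Proposition \ref{www} shows the leveling tree contains at most one $2$-region, while the rising tree contains none, so the topograph has at most one. For the dichotomy (a)/(b) I will use Lemma \ref{ratw} together with the observation that rationality is decided by any single adjacent pair of regions $>2$; in particular the pair adjacent to $e$ decides it for both trees. If the topograph is rational, Proposition \ref{www} gives exactly one $2$-region, and Theorem \ref{thm-rat} gives repeated numbers on every subtree. Here a subtree is itself rising with $D=4$ or leveling, and is rational. If the topograph is irrational, there is no $2$-region and each subtree has no repeats. To rule out a repeat between two different trees of the decomposition, I would descend from both regions, as in Part II of the proof of Theorem \ref{thm-rat}, to a common vertex on the (possibly infinite) descending path. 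Its ordered triple is the base of an irrational rising tree containing both regions, which contradicts Proposition \ref{ris}.

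The forms are distinct because of their values of $D$ ($<4$, $=4$, $\gqs 20$), the presence or absence of $2$-regions, and sign considerations. The step I expect to be the main obstacle is the $D=4$ case: showing that the ``leveling side'' genuinely satisfies Definition \ref{dxs} for every choice of $e$, including the degenerate $0$-edge and $2$-region boundaries, and ruling out cross-tree repeats in the irrational case. For cross-tree repeats I would rely on the global descending-path structure from Proposition \ref{www}.
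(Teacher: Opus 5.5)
Your proposal is correct and follows essentially the same route as the paper. The $D\neq 4$ forms come from the dichotomy behind Corollary \ref{cor} (Proposition \ref{real} and Lemmas \ref{neg2}, \ref{neg}). For $D=4$ you use the same edge analysis: a strictly rising tree on the side $e$ points to and a leveling tree on the other side, or two leveling trees for a $0$-edge since Lemma \ref{wells} rules out an edge well. Parts (a) and (b) then follow from Proposition \ref{www} and Theorem \ref{thm-rat}, exactly as in the paper. Your descent argument excluding a repeated number shared between the two trees of the decomposition, and your explicit checks of the ``hence'' clauses in (i) and (iv), usefully fill in details that the paper's proof leaves implicit.
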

\begin{proof}
The cases with discriminator $D\neq 4$ have already been covered in Corollary \ref{cor}, so assume that $\mathcal G$ has $D=4$. 
This topograph cannot contain  a negative vertex well since that would require $D\gqs 20$ by Lemma \ref{neg}. 
Taking a sign variant if necessary, we may suppose that $\mathcal G$ has all regions positive by Lemma \ref{neg2}.

Let $r,s,u,t$ be the regions surrounding some edge $e$, as in the middle of Figure \ref{root}. If $e$ has label $0$ then $u=st/2=r$ and $s=2$ or $t=2$. Also it is easy to show that all the regions surrounding a $2$-region are equal when $D=4$. If $s=t=2$ then we are in the constant topograph case (ii). Otherwise, the trees with bases $(s,u,t)$ and $(t,r,s)$ are both leveling and we are in case (iii).  If $e$ does not have label $0$ then we may suppose  $u>st/2>r$. The triples $(s,u,t)$ and $(t,r,s)$ form the bases of a strictly rising tree and a leveling tree, respectively, completing case (iii).
Applying Proposition \ref{www} and Theorem \ref{thm-rat} to the two trees based at edge $e$ gives parts (a) and (b).
\end{proof}

We may also say that a type (iii) topograph above has uniqueness if and only if it is irrational, thanks to Theorem \ref{thm-rat}.

\begin{theorem} \la{class2}
Every  
real Markov tree $T$ with 
labels $\gqs 2$ takes one of these distinct forms: it is either  rising, progressing, leveling or constant. In particular,   Markov trees with  labels $> 2$ and $D\neq 4$ are rising. 
\end{theorem}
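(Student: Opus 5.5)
We classify $T$ by embedding it in its Markov topograph $\mathcal G$. Every triple of $T$ satisfies \e{suwx}, and the local rule extends $T$ uniquely to $\mathcal G$. The part of $\mathcal G$ outside $T$ may contain labels below $2$, so Theorem \ref{class} cannot be applied to $\mathcal G$ directly. Instead we work inside $T$ with the climbing lemma and the arguments of Propositions \ref{real} and \ref{www}.

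Let the base triple be $(s_0,u_0,t_0)$. If every region is $2$, then $T$ is constant. Otherwise we split into two cases according to whether $T$ has a $2$-region.

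\textbf{Case with a $2$-region.} Suppose some region $m=2$. Then its border regions form an arithmetic progression by the local rule. Since all labels are $\gqs 2$, this progression must be nondecreasing along the border going away from the root.

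Suppose the $2$-region is $s_0$ or $t_0$. If the progression strictly increases, then $u_0>t_0$ (or $u_0>s_0$) and the definition of a progressing tree is met. If the progression is constant, then $D=4$, and the base has the form $(2,t_0,t_0)$ or $(s_0,s_0,2)$, which is leveling (or constant).

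Suppose instead the $2$-region lies deeper in the tree. Descend from it against the positive edge directions. Lemma \ref{wells} and the climbing lemma force the path back to the root with decreasing labels. This should show that the base triple satisfies \e{stb} with $D=4$, so $T$ is leveling. Alternatively, if $D>4$, one can check the base is already progressing. The key computation here is that a $2$-region with a strictly increasing border forces $D=4+(\text{difference})^2>4$. A $2$-region off the root is then impossible for $D>4$ by the climbing lemma applied to ordered triples, so in that situation $T$ is leveling.

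\textbf{Case with all labels $>2$.} Give the edges their positive directions. By the climbing lemma, each vertex has at most one incoming edge, so the descending edges from the root form a simple path $P$.

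If $P$ is finite, it ends at a vertex well or an edge well, or else is empty. Then $T$ is rising by Definition \ref{eatx}; any $D$ is allowed when $P$ is empty.

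If $P$ is infinite, we repeat the argument of Proposition \ref{real}. Regions along $P$ decrease, and the estimate from Lemma \ref{s} forces the minimal adjacent labels to approach $2$. A triple near the end of a long stretch of $P$ then has all entries near $2$, or the triple $(r,s,w)$ at a turning point has $D$ arbitrarily close to $4$. Hence $D=4$. The base triple then has $u_0\lqs s_0t_0/2$, so by \e{iv} with $D=4$ we get $2u_0=s_0t_0-\sqrt{(s_0^2-4)(t_0^2-4)}$, and $T$ is leveling.

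This case also yields the final claim of the theorem. For $D\neq4$ and labels $>2$, an infinite $P$ is impossible, so $T$ is rising.

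\textbf{Distinctness.} The four types are mutually exclusive. Rising trees have no $2$-region and only a finite descending path. Progressing trees have $D>4$ (Lemma \ref{pr4}). Leveling trees have $D=4$ and are either nonconstant with a $2$-region (unlike rising) or have an infinite descending path.

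The main obstacle is the infinite descending path case: showing rigorously that $D=4$. The plan is to adapt the contradiction argument of Proposition \ref{real} verbatim, using the subtree rooted at each vertex of $P$ as a strictly rising tree so that Lemma \ref{s} applies.
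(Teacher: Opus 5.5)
Your treatment of the all-labels-$>2$ case follows the paper: direct the edges, note that a finite descending path makes $T$ rising, and that an infinite one forces $D=4$ by the argument of Proposition \ref{real}, giving \e{stb}. Your subcase $2\in\{s_0,t_0\}$ is also correct. The gap is the subcase of a $2$-region off the base, i.e.\ an oriented triple $(s,2,t)$ with $2=m_q$ and $0<q<1$.

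The missing idea is that the border of such a region lies in $T$ on \emph{both} sides. By the local rule with $u=2$, its neighbours form one two-sided arithmetic progression: $m_{q_j}=s+j(s-t)$ and $m_{q'_j}=t+j(t-s)$ for $j\gqs 0$. All of these are $\gqs 2$, so the common difference must be $0$. Hence $s=t$ and $D=s^2+4+s^2-2s\cdot s=4$; if $s=2$ the tree is constant.

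The reasons you give instead do not work. The climbing lemma does not exclude such a region when $D>4$: it only controls growth forward from a positive edge, whereas the contradiction comes from the progression dropping below $2$ in the backward direction. Your alternative, ``if $D>4$ the base is already progressing'', cannot occur here, because a progressing base has $s_0=2$ or $t_0=2$, which is the other subcase.

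Your route to \e{stb} also runs backwards. The $2$-region is the minimum of $T$, so you cannot descend from it, and labels increase, not decrease, from it toward the root. What works, and is what the paper does, is the following.
\begin{itemize}
\item With $s=t>2$, every triple adjacent to the $2$-region is $(s,s^2-2,s)$, and $s^2-2>s^2/2$.
\item So each tree with such a base is strictly rising by Lemma \ref{climb}.
\item Hence every edge not bordering the $2$-region is positively directed away from it; the bordering edges are $0$-edges.
\item In particular the root edge between $s_0$ and $t_0$ points away from $u_0$, so $u_0<s_0t_0/2$.
\item Together with $s_0,t_0>2$ and $D=4$, this is \e{stb}, so $T$ is leveling.
\end{itemize}

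A smaller point in the infinite-path case: the strictly rising tree to which Lemma \ref{s} is applied must be rooted at a deep vertex of $P$ and point back toward the root of $T$. It cannot be the part of $T$ beyond that vertex, since that part contains the rest of $P$ and is not strictly rising.
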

\begin{proof}
Let $T$ have base $(s_0,u_0,t_0)$. 
Suppose first that $T$ has a $2$-region. If $s_0=u_0=t_0=2$ then $T$ is constant, with all regions $2$. If $s_0=t_0=2$ and $u_0>2$ then $T$ is progressing. If only one of $s_0, t_0$ is $2$, say $s_0$, then $T$ is leveling if $u_0=t_0$ and progressing if $u_0>t_0$. The remain case has $T$ containing an oriented triple of the form $(s,2,t)$. The neighboring regions of this $2$-region form an arithmetic progression and, since they are all contained in $T$ and supposed to be $\gqs 2$, we must have $s=t$. Then $D=2^2+s^2+s^2-2s\cdot s = 4$ and we may assume $s>2$. The triples adjacent to this $2$-region all have the form $(s,s^2-2,s)$, and  trees with this base are strictly rising, showing that all edges of $T$ are positively directed away from this $2$-region. In particular, the edge between $s_0$ and $t_0$ is directed away from $u_0$ and therefore $T$ is leveling by \e{stb}. 

Now suppose that all regions of $T$ are $>2$. Direct all edges positively. If there is an undirected edge or a non-root vertex with all edges directed outward then $T$ is a rising tree. Otherwise, by the climbing lemma,  every non-root vertex must have one edge directed in and two out. Descending from any vertex, (i.e. going against the edge directions), we either reach the root so that $T$ is strictly rising, or we descend forever. The latter case means $D=4$, as in the proof of Proposition \ref{real}, and that the edge between $s_0$ and $t_0$ is directed away from $u_0$ so that $T$ is leveling.
\end{proof}

\subsection{Lattice curves for topographs} \la{norm}

\SpecialCoor
\psset{griddots=5,subgriddiv=0,gridlabels=0pt}
\psset{xunit=0.28cm, yunit=0.28cm, runit=0.28cm}
\psset{linewidth=1pt}
\psset{dotsize=7pt 0,dotstyle=*}
\begin{figure}[!htb]
\centering

\psset{arrowscale=1.4,arrowinset=0.3,arrowlength=1.1}
\newrgbcolor{light}{0.8 0.8 1.0}
\newrgbcolor{darkbrown}{0.5 0.324219 0.257813}
\newrgbcolor{pale}{1 0.7 0.4}

\newrgbcolor{cya2}{0 0.9 0.9}

\psset{linecolor=darkbrown}


\begin{pspicture}(-24,-10)(24,14) 

\rput(-14,2){
\begin{pspicture}(-10,-10)(10,14) 

\psline(0,1)(0,3)


\psline(0,3)(1.82628,5.38006)(4.59792,6.52811)(7.59507,6.65897)(9.16566,6.00841)

\psline(7.59507,6.65897)(9.10299,7.44394)

\psline(4.59792,6.52811)(6.80975,8.55488)(8.43107,9.06608)

\psline(6.80975,8.55488)(7.46032,10.1255)

\psline(1.82628,5.38006)(2.21786,8.35439)(3.60311,11.0154)(4.95181,12.0503)

\psline(3.60311,11.0154)(3.67726,12.7138)

\psline(2.21786,8.35439)(1.56854,11.2833)(1.93649,12.943)

\psline(1.56854,11.2833)(0.53365,12.632)


\psline(0,3)(-1.82628,5.38006)(-2.21786,8.35439)(-1.56854,11.2833)(-0.53365,12.632)

\psline(-1.56854,11.2833)(-1.93649,12.943)

\psline(-2.21786,8.35439)(-3.60311,11.0154)(-3.67726,12.7138)

\psline(-3.60311,11.0154)(-4.95181,12.0503)

\psline(-1.82628,5.38006)(-4.59792,6.52811)(-6.80975,8.55488)(-7.46032,10.1255)

\psline(-6.80975,8.55488)(-8.43107,9.06608)

\psline(-4.59792,6.52811)(-7.59507,6.65897)(-9.10299,7.44394)

\psline(-7.59507,6.65897)(-9.16566,6.00841)


\psline(0,1)(1.82628,-1.38006)(4.59792,-2.52811)(7.59507,-2.65897)(9.16566,-2.00841)

\psline(7.59507,-2.65897)(9.10299,-3.44394)

\psline(4.59792,-2.52811)(6.80975,-4.55488)(8.43107,-5.06608)

\psline(6.80975,-4.55488)(7.46032,-6.1255)

\psline(1.82628,-1.38006)(2.21786,-4.35439)(3.60311,-7.0154)(4.95181,-8.0503)

\psline(3.60311,-7.0154)(3.67726,-8.7138)

\psline(2.21786,-4.35439)(1.56854,-7.2833)(1.93649,-8.943)

\psline(1.56854,-7.2833)(0.53365,-8.632)


\psline(0,1)(-1.82628,-1.38006)(-2.21786,-4.35439)(-1.56854,-7.2833)(-0.53365,-8.632)

\psline(-1.56854,-7.2833)(-1.93649,-8.943)

\psline(-2.21786,-4.35439)(-3.60311,-7.0154)(-3.67726,-8.7138)

\psline(-3.60311,-7.0154)(-4.95181,-8.0503)

\psline(-1.82628,-1.38006)(-4.59792,-2.52811)(-6.80975,-4.55488)(-7.46032,-6.1255)

\psline(-6.80975,-4.55488)(-8.43107,-5.06608)

\psline(-4.59792,-2.52811)(-7.59507,-2.65897)(-9.10299,-3.44394)

\psline(-7.59507,-2.65897)(-9.16566,-2.00841)


\rput(-4.33013,2){$\displaystyle \frac 10$}

\rput(4.33013,2){$\displaystyle \frac 01$}

\rput(0,7.){$\displaystyle \frac 11$}

\rput(4.26133,8.55347){$ \frac 12$}

\rput(-4.26133,8.55347){$ \frac 21$}

\rput(7.36956,7.67616){$\frac 13$}

\rput(2.60944,11.3287){$\frac 23$}

\rput(-2.60944,11.3287){$\frac 32$}

\rput(-7.36956,7.67616){$\frac 31$}



\rput(0,-3.){$\displaystyle \frac{-1}1$}

\rput(4.26133,-4.55347){$ \frac {-1}2$}

\rput(-4.26133,-4.55347){$ \frac {-2}1$}

\rput(7.36956,-3.67616){$\frac {-1}3$}

\rput(2.60944,-7.3287){$\frac {-2}3$}

\rput(-2.60944,-7.3287){$\frac {-3}2$}

\rput(-7.36956,-3.67616){$\frac {-3}1$}

\end{pspicture}}

\rput(14,2){
\begin{pspicture}(-10,-10)(10,14) 

\psset{linecolor=orange}

\psline(0,3)(1.82628,5.38006)(4.59792,6.52811)(7.59507,6.65897)(9.16566,6.00841)

\psline(7.59507,6.65897)(9.10299,7.44394)

\psline(4.59792,6.52811)(6.80975,8.55488)(8.43107,9.06608)

\psline(6.80975,8.55488)(7.46032,10.1255)

\psline(1.82628,5.38006)(2.21786,8.35439)(3.60311,11.0154)(4.95181,12.0503)

\psline(3.60311,11.0154)(3.67726,12.7138)

\psline(2.21786,8.35439)(1.56854,11.2833)(1.93649,12.943)

\psline(1.56854,11.2833)(0.53365,12.632)

\psset{linecolor=blue}

\psline(0,3)(-1.82628,5.38006)(-2.21786,8.35439)(-1.56854,11.2833)(-0.53365,12.632)

\psline(-1.56854,11.2833)(-1.93649,12.943)

\psline(-2.21786,8.35439)(-3.60311,11.0154)(-3.67726,12.7138)

\psline(-3.60311,11.0154)(-4.95181,12.0503)

\psline(-1.82628,5.38006)(-4.59792,6.52811)(-6.80975,8.55488)(-7.46032,10.1255)

\psline(-6.80975,8.55488)(-8.43107,9.06608)

\psline(-4.59792,6.52811)(-7.59507,6.65897)(-9.10299,7.44394)

\psline(-7.59507,6.65897)(-9.16566,6.00841)

\psset{linecolor=purple}

\psline(0,1)(1.82628,-1.38006)(4.59792,-2.52811)(7.59507,-2.65897)(9.16566,-2.00841)

\psline(7.59507,-2.65897)(9.10299,-3.44394)

\psline(4.59792,-2.52811)(6.80975,-4.55488)(8.43107,-5.06608)

\psline(6.80975,-4.55488)(7.46032,-6.1255)

\psline(1.82628,-1.38006)(2.21786,-4.35439)(3.60311,-7.0154)(4.95181,-8.0503)

\psline(3.60311,-7.0154)(3.67726,-8.7138)

\psline(2.21786,-4.35439)(1.56854,-7.2833)(1.93649,-8.943)

\psline(1.56854,-7.2833)(0.53365,-8.632)

\psset{linecolor=cya2}

\psline(0,1)(-1.82628,-1.38006)(-2.21786,-4.35439)(-1.56854,-7.2833)(-0.53365,-8.632)

\psline(-1.56854,-7.2833)(-1.93649,-8.943)

\psline(-2.21786,-4.35439)(-3.60311,-7.0154)(-3.67726,-8.7138)

\psline(-3.60311,-7.0154)(-4.95181,-8.0503)

\psline(-1.82628,-1.38006)(-4.59792,-2.52811)(-6.80975,-4.55488)(-7.46032,-6.1255)

\psline(-6.80975,-4.55488)(-8.43107,-5.06608)

\psline(-4.59792,-2.52811)(-7.59507,-2.65897)(-9.10299,-3.44394)

\psline(-7.59507,-2.65897)(-9.16566,-2.00841)

\psline[linecolor=darkbrown](0,0.95)(0,3.05)


\rput(-4.33013,2){$s$}

\rput(4.33013,2){$t$}

\rput(0,7.){$u$}

\rput(4.26133,8.55347){$_{ut-s}$}

\rput(7.88346, 12.4239){$T_1$}

\rput(-4.26133,8.55347){$_{su-t}$}

\rput(-7.88346, 12.4239){$T_0$}




\rput(0,-3.){$r$}

\rput(4.26133,-4.55347){$_{rt-s}$}

\rput(7.88346, -8.4239){$T_2$}

\rput(-4.26133,-4.55347){$_{rs-t}$}

\rput(-7.88346, -8.4239){$T_3$}


\end{pspicture}}

\end{pspicture}
\caption{The Farey topograph on the left may be used to index Markov topographs}
\label{fatop}
\end{figure}

McShane and Rivin in \cite{mr95,mr95b} introduced a norm on the homology of  punctured tori and found that the boundary of the norm ball has interesting properties, being continuous, convex and having corners where it meets lines through the origin with rational slope. See also \cite{nxv} where these properties are studied in detail. We show next that these boundary curves may be broken into eight of the `lattice curves' we introduced in Definition \ref{enc}. In this way we may rederive the properties of these norm ball boundaries and also generalize them from the $D=0$ case of McShane and Rivin to all $D$.

The regions of a topograph can all be indexed by 
what we may call the {\em Farey topograph}, as seen on the left of Figure \ref{fatop}. It is the dual to the Farey diagram as in \cite[Sect. 4.1]{ha22}. The Farey topograph uniquely extends the Farey tree using the rule: for  indexing regions $k/n$, $k'/n'$ adjacent to an edge $e$, the other two regions meeting the ends of $e$ must have indices $(k+k')/(n+n')$ and $(k-k')/(n-n')$. Every pair of relatively prime integers $k, n$ appear as $k/n$ in the Farey topograph, including $1/0=-1/0$.

We say a topograph $\mathcal G$ is Farey-indexed if we have chosen a triple $(s,u,t)$  and overlaid the Farey topograph indices $(1/0,1/1,0/1)$  on these regions. So the topograph on the left of Figure \ref{fatop} gets placed on top of the topograph on the right.
Write the region labels as $m^*_{k/n}$, starting with 
\begin{equation}\label{tsg}
  m^*_{1/0}=s, \qquad m^*_{1/1}=u, \qquad  m^*_{0/1}=t,  \qquad m^*_{-1/1} =  r=st-u. 
\end{equation}
With this setup, consider the set of points
\begin{equation} \la{pg}
  P_{\mathcal G}:= \left\{\left( \frac{n}{\rho(|m^*_{k/n}|)}, \frac{k}{\rho(|m^*_{k/n}|)} \right)\ : \   k, n \in \Z, \ \gcd(k,n)=1\right\}.
\end{equation}
These points lie symmetrically  on lines through the origin with slope $k/n \in \P^1(\Q)$. We will show that, for $\mathcal G$ with  labels in $(-\infty,-2] \cup [2,\infty)$, $P_{\mathcal G}$  may be uniquely extended to a locally continuous polar curve $\G_{\mathcal G}$. Notice that the absolute value of labels appear in \e{pg}, so replacing $\mathcal G$ by a sign variant leaves  $P_{\mathcal G}$  unchanged.

The Farey topograph can be matched up with our usual Farey tree index using the trees $T_i$ for $0\lqs i\lqs 3$ shown in Figure \ref{fatop} (note that $T_1$ and $T_3$ are reflected):
\begin{equation} 
\begin{aligned} \label{r3}
   m^*_{k/n} & = m(T_0)_{n/k} \quad &\text{for} \quad T_0 & =(s,su-t,u)  \quad  & \text{when} \quad  & k  \gqs n \gqs 0,\\
  m^*_{k/n} & = m(T_1)_{k/n} \quad &\text{for} \quad T_1 & =(t,ut-s,u)  \quad  & \text{when} \quad  & 0  \lqs k\lqs n,\\
   m^*_{k/n} & = m(T_2)_{-k/n} \quad &\text{for} \quad T_2 & =(t,rt-s,r)  \quad  & \text{when} \quad  & 0  \lqs {-}k\lqs n, \\
    m^*_{k/n} & = m(T_3)_{n/-k} \quad &\text{for} \quad T_3 & =(s,rs-t,r)  \quad  & \text{when} \quad  & {-}k  \gqs n \gqs 0.
\end{aligned}
\end{equation}
Replace each $T_i$ with a sign variant, if necessary, to make  its labels $\gqs 2$. If this is possible then these trees are rising, progressing or leveling by Theorem \ref{class2} (assuming they are not constant) and we can write
\begin{equation} \la{pgp}
 P_{\mathcal G} = \pm \left\{(q, 1)/\phi_{T_0}(q), \ (1, q)/\phi_{T_1}(q), \ ({-}1,q)/\phi_{T_2}(q), \ (q,-1)/\phi_{T_3}(q) \ : \ q \in \Q\cap [0,1]\right\}. 
\end{equation}
Therefore, the curve $\G_{\mathcal G}$ we seek, containing $P_{\mathcal G}$,  is made up of eight components, with each a familiar lattice curve  $\Gamma_T$  under reflections and central symmetries.

\begin{adef}\la{enc2} 
{\rm The {\em  lattice curve} for a Farey-indexed Markov topograph $\mathcal G$ is denoted $\Gamma_{\mathcal G}$. In the above notation, if we have non-constant trees $T_i$ with labels $\gqs 2$, it is defined by
\begin{equation}\label{refl}
  \G_{\mathcal G} := \pm \left\{\operatorname{Refl}_{y=x} \G_{T_0} \cup  \G_{T_1} \cup \operatorname{Refl}_{y=0} \G_{T_2} \cup
  \operatorname{Refl}_{y=-x} (\operatorname{Refl}_{y=0}\G_{T_3})\right\}.
\end{equation}}
\end{adef}

\begin{theorem} \la{ball}
Let $\mathcal G$ be a Farey-indexed, type (i) topograph from Theorem \ref{class}, with an edge or vertex well and $D<4$. Then $P_{\mathcal G}$ extends uniquely to a  continuous, closed,  polar curve  about the origin. This curve is $\Gamma_{\mathcal G}$ and it is  strictly convex.
\end{theorem}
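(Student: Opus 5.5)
We first put the four trees of \e{r3} in the framework of Section \ref{num}. Since $\mathcal G$ is of type (i), it has an edge or vertex well, $D<4$ and no $2$-regions. After a sign variant all labels are $>2$ (Proposition \ref{real} and the classification). Theorem \ref{class2} then shows that each $T_i$, $0\lqs i\lqs 3$, is a rising tree, since its labels are $>2$ and $D\neq 4$. Theorem \ref{mthm} gives each $\Phi_{T_i}$ as a continuous, strictly convex function on $[0,1]$. Lemma \ref{cfv} makes each $\Gamma_{T_i}$ a strictly convex polar arc over its sector of angle $\pi/4$. Uniqueness of the extension is immediate: the rational slopes are dense in angle, so a continuous polar curve through $P_{\mathcal G}$ is determined by its values on them. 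In each sector this is Corollary \ref{exte}, transported by $F$.

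Next we check that the eight arcs in \e{refl} close up into one continuous closed curve. Adjacent arcs meet on the rays of slope $0,\pm1,\infty$. There both arcs pass through the same point of $P_{\mathcal G}$, because the trees share the boundary regions $s,t,u,r$ (for instance $m(T_0)_0=m(T_1)$ at index $1$, both equal to $u$, and so on). Central symmetry handles the other half. This gives a continuous closed polar curve $r=f(\theta)$, $0\lqs\theta<2\pi$, with $f>0$.

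The main step is strict convexity \emph{across} these eight junction rays, since within each sector it is already known. Strict convexity of a closed star-shaped curve is local, so it is enough to show that at each junction point the one-sided tangent directions turn the correct way. We would do this with Farey-neighbor slope data, in the style of Proposition \ref{sjs}. Take the junction at slope $1$ (index $1/1$, region $u$). Here Proposition \ref{tan} gives the one-sided tangent of $\Gamma_{T_1}$ at the line $y=x$ as $\sigma(1)$, computed from $S(1)$ for $T_1=(t,ut-s,u)$. Reflecting $\Gamma_{T_0}$ across $y=x$ gives the other one-sided tangent, computed from $S'(0)$ for $T_0=(s,su-t,u)$.

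A cleaner route is to treat $u$ as an interior region of the Farey topograph. Its neighbors $q_j,q'_j$ then run over both trees, and the two-sided sequences from Proposition \ref{start} apply verbatim. The difference of the limiting slopes is the analogue of \e{sps}, namely $n\log\bigl(1-\frac{D-4}{u^2-4}\bigr)$ with $n=1$, and this is positive for $D<4$. Under $F$ (and its reflected versions) this corresponds to a convex corner. So we would simply recompute $S'-S$ at $u$ using the formulas \e{sg}, \e{sgb} for the two trees and verify \e{thu}. The same computation applies at $s$, $t$ and $r$, and by symmetry at the remaining four rays.

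Finally, combining local strict convexity everywhere with the fact that the curve is closed and winds once about the origin gives global strict convexity. We expect the junction computation to be the main obstacle, specifically keeping the orientations of the reflections in \e{refl} consistent. If a direct slope comparison proves messy, the fallback is a re-indexing argument. We would use a second Farey-indexing of $\mathcal G$ in which the junction ray becomes an interior ray of some $T_i$. By Lemma \ref{height} and Theorem \ref{mthm} that tree's $\Gamma$ is strictly convex across the ray. The point sets $P_{\mathcal G}$ for the two indexings differ by the $\SL_2(\Z)$ action, which is linear, and linear maps preserve convexity.
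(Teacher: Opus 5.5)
Your main route is the paper's. You reduce strict convexity of $\Gamma_{\mathcal G}$ to the junction inequalities \eqref{rti}, using the endpoint slopes from Proposition \ref{tan} and \eqref{sg}, \eqref{sgb}, and verify them with the identity behind \eqref{thu}. Your predicted answer is correct, but two index slips need fixing:

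\begin{itemize}
\item At the ray $y=x$, the arcs $\Gamma_{T_1}$ and $\operatorname{Refl}_{y=x}\Gamma_{T_0}$ meet at their $x=1$ endpoints, since $u=m(T_0)_1=m(T_1)_1$ (not $m(T_0)_0$, which is $s$). So both one-sided tangents come from \eqref{sgb}, namely $S_{T_0}(1)$ and $S_{T_1}(1)$.
\item The slope $S'_{T_0}(0)$ belongs to the junction at $s$ on the vertical axis, where it pairs with $S'_{T_3}(0)$.
\end{itemize}

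Traversing counterclockwise, the incoming and outgoing tangent vectors at $(1,1)/\rho(u)$ are $(-S_{T_1}(1),\rho(u)-S_{T_1}(1))$ and $(S_{T_0}(1)-\rho(u),S_{T_0}(1))$. Their cross product is $\rho(u)\bigl(\rho(u)-S_{T_0}(1)-S_{T_1}(1)\bigr)$, so the convex-corner condition is $S_{T_0}(1)+S_{T_1}(1)<\rho(u)$, as in \eqref{rti}. Using $\lambda_u(ut-s)-t=\lambda_u(\lambda_u t-s)$, one gets $S_{T_1}(1)=\rho(u)-\log f(t,u,s)$ and $S_{T_0}(1)=\rho(u)-\log f(s,u,t)$. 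Then \eqref{thu} gives $\rho(u)-S_{T_0}(1)-S_{T_1}(1)=\log\frac{u^2-D}{u^2-4}$, which is \eqref{sps} with $n=1$, as you claimed.

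Your fallback is a genuinely different route, and it is complete on its own. All labels are $>2$ and $D\neq 4$, so every tree in every Farey-indexing of $\mathcal G$ is rising by Theorem \ref{class2}. Re-indexing by some $A\in\SL_2(\Z)$ replaces $P_{\mathcal G}$ by a linear image. By uniqueness of the continuous extension, it also replaces $\Gamma_{\mathcal G}$ by the same linear image. Choosing $A$ to move a junction ray into the interior of a sector, Lemma \ref{cfv} gives strict convexity across it; Lemma \ref{height} is not needed.

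This avoids all computation and explains why the junction defect has the same form as \eqref{sps}. The paper's explicit check buys something else: it shows the conditions \eqref{rti} are equivalent to $D\le 4$, so it pinpoints where convexity would fail. It is also reused, with the inequality reversed, to glue the $T_2$ and $T_3$ pieces in Theorem \ref{ball3}. Both arguments rely on the local-to-global principle for convexity of a closed star-shaped curve, which you state and the paper uses implicitly.
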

\begin{proof}
Replace $\mathcal G$ by a sign variant with all labels positive, if necessary. Then it has an edge or vertex well and the trees $T_i$ in \e{r3} are all  rising. Hence $P_{\mathcal G}$ is given by \e{pgp} and  Corollary \ref{exte} implies the functions $\phi_{T_i}$ extend uniquely to the continuous functions $\Phi_{T_i}$ on $\R \cap [0,1]$. Therefore $P_{\mathcal G}$ extends uniquely to the curve $\Gamma_{\mathcal G}$, made up of polar curves that fit together so that it is continuous. Each component is strictly convex by Lemma \ref{cfv} and it remains to verify that  their union is strictly convex. With Proposition \ref{tan} giving the slopes at the endpoints, this reduces to checking the inequalities
\begin{equation} 
\begin{aligned} \label{rti}
   S'_{T_0}(0)+ S'_{T_3}(0) & \gqs 0, &\qquad S'_{T_1}(0)+ S'_{T_2}(0) & \gqs 0, \\
  S_{T_0}(1)+ S_{T_1}(1) &\lqs \rho(u), &\qquad 
  S_{T_2}(1)+ S_{T_3}(1) &\lqs \rho(r),
\end{aligned}
\end{equation}
with the notation \e{tsg}, \e{r3} and \e{sg}, \e{sgb}. Note that, since the slopes can be of any sign or infinite, the inequalities \e{rti} were derived by first rotating the figure 
so that slopes of line segments on each side of a vertical line were being compared. The conditions \e{rti} can in fact all be shown with strict inequalities and we give the second, $S'_{T_1}(0)+ S'_{T_2}(0)  > 0$, as a representative example. This involves proving
\begin{equation}\label{wp}
  \left( \lambda_t(ut-s) -u\right)\left( \lambda_t(rt-s) -r\right) > \lambda_t^4(t^2-4).
\end{equation}
To eliminate $r$ from \e{wp}, write
\begin{equation*}
  \lambda_t(rt-s) -r = (t\lambda_t-1)r -s\lambda_t = r\lambda_t^2 - s\lambda_t = \lambda_t(\lambda_t(st-u)-s),
\end{equation*}
since $\lambda_t^2 - t\lambda_t+1 =0$ and $r=su-t$. To demonstrate
\begin{equation} \label{wp2}
  \left( \lambda_t(ut-s) -u\right)\left( \lambda_t(st-u)-s\right) > \lambda_t^3(t^2-4),
\end{equation}
expand and simplify with $s^2+u^2+t^2=sut+D$. Then \e{wp} and \e{wp2} are equivalent to $D<4$, and all the conditions in \e{rti} are equivalent to $D\lqs 4$.
\end{proof}

\SpecialCoor
\psset{griddots=5,subgriddiv=0,gridlabels=0pt}
\psset{xunit=2cm, yunit=2cm, runit=2cm}
\psset{linewidth=1pt}
\psset{dotsize=1.4pt,dotstyle=*}
\begin{figure}[ht]
\centering

\psset{arrowscale=1.4,arrowinset=0.3,arrowlength=1.1}
\newrgbcolor{light}{0.8 0.8 1.0}
\newrgbcolor{pale}{1 0.7 1}
\newrgbcolor{pale}{1 0.7 0.4}

\newrgbcolor{markov}{0.4 0.2 0.8}
\newrgbcolor{bal}{1 0.93 0.93}
\newrgbcolor{balb}{0.984375 0.859375 0.984375}
\psset{linecolor=markov}

\begin{pspicture}(-3.1,-1.4)(3.1,1.3) 

\savedata{\mydatax}[
{{0, 1.039}, {0.2162, 1.081}, {0.2730, 1.092}, {0.3141, 
  1.100}, {0.3699, 1.110}, {0.4184, 1.116}, {0.4479, 1.120}, {0.4817, 
  1.124}, {0.5673, 1.135}, {0.6422, 1.124}, {0.6718, 1.120}, {0.6974, 
  1.116}, {0.7398, 1.110}, {0.7854, 1.100}, {0.8190, 1.092}, {1.039, 
  1.039}, {1.039, 1.039}, {1.092, 0.8190}, {1.100, 0.7854}, {1.110, 
  0.7398}, {1.116, 0.6974}, {1.120, 0.6718}, {1.124, 0.6422}, {1.135, 
  0.5673}, {1.124, 0.4817}, {1.120, 0.4479}, {1.116, 0.4184}, {1.110, 
  0.3699}, {1.100, 0.3141}, {1.092, 0.2730}, {1.081, 0.2162}, {1.039, 
  0}, {1.039, 
  0}, {0.8649, -0.2162}, {0.8190, -0.2730}, {0.7854, -0.3141}, 
{0.7398, -0.3699}, {0.6974, -0.4184}, {0.6718, -0.4479}, {0.5673, 
-0.5673}, {0.5673, -0.5673}, {0.4479, -0.6718}, {0.4184, -0.6974}, 
{0.3699, -0.7398}, {0.3141, -0.7854}, {0.2730, -0.8190}, {0.2162, 
-0.8649}, {0, -1.039}, {0, -1.039}, {-0.2162, -1.081}, {-0.2730, 
-1.092}, {-0.3141, -1.100}, {-0.3699, -1.110}, {-0.4184, -1.116}, 
{-0.4479, -1.120}, {-0.4817, -1.124}, {-0.5673, -1.135}, {-0.6422, 
-1.124}, {-0.6718, -1.120}, {-0.6974, -1.116}, {-0.7398, -1.110}, 
{-0.7854, -1.100}, {-0.8190, -1.092}, {-1.039, -1.039}, {-1.039, 
-1.039}, {-1.092, -0.8190}, {-1.100, -0.7854}, {-1.110, -0.7398}, 
{-1.116, -0.6974}, {-1.120, -0.6718}, {-1.124, -0.6422}, {-1.135, 
-0.5673}, {-1.124, -0.4817}, {-1.120, -0.4479}, {-1.116, -0.4184}, 
{-1.110, -0.3699}, {-1.100, -0.3141}, {-1.092, -0.2730}, {-1.081, 
-0.2162}, {-1.039, 0}, {-1.039, 0}, {-0.8649, 0.2162}, {-0.8190, 
  0.2730}, {-0.7854, 0.3141}, {-0.7398, 0.3699}, {-0.6974, 
  0.4184}, {-0.6718, 0.4479}, {-0.5673, 0.5673}, {-0.5673, 
  0.5673}, {-0.4479, 0.6718}, {-0.4184, 0.6974}, {-0.3699, 
  0.7398}, {-0.3141, 0.7854}, {-0.2730, 0.8190}, {-0.2162, 
  0.8649}, {0, 1.039}}
  ]

\savedata{\mydatab}[
{{0, 1.039}, {0.1331, 1.065}, {0.1527, 1.069}, {0.1648, 
  1.071}, {0.1790, 1.074}, {0.1899, 1.076}, {0.1958, 1.077}, {0.2022, 
  1.078}, {0.2162, 1.081}, {0.2281, 1.083}, {0.2323, 1.084}, {0.2358, 
  1.085}, {0.2413, 1.086}, {0.2471, 1.087}, {0.2510, 1.088}, {0.2562, 
  1.089}, {0.2730, 1.092}, {0.2881, 1.095}, {0.2921, 1.096}, {0.2951, 
  1.096}, {0.2991, 1.097}, {0.3027, 1.097}, {0.3050, 1.098}, {0.3075, 
  1.098}, {0.3141, 1.100}, {0.3211, 1.101}, {0.3239, 1.101}, {0.3264, 
  1.102}, {0.3308, 1.103}, {0.3358, 1.103}, {0.3397, 1.104}, {0.3699, 
  1.110}, {0.3976, 1.113}, {0.4051, 1.114}, {0.4107, 1.115}, {0.4184, 
  1.116}, {0.4254, 1.117}, {0.4297, 1.117}, {0.4348, 1.118}, {0.4479, 
  1.120}, {0.4617, 1.121}, {0.4675, 1.122}, {0.4727, 1.123}, {0.4817, 
  1.124}, {0.4923, 1.125}, {0.5005, 1.126}, {0.5126, 1.128}, {0.5673, 
  1.135}, {0.6151, 1.128}, {0.6257, 1.126}, {0.6329, 1.125}, {0.6422, 
  1.124}, {0.6500, 1.123}, {0.6546, 1.122}, {0.6596, 1.121}, {0.6718, 
  1.120}, {0.6832, 1.118}, {0.6876, 1.117}, {0.6913, 1.117}, {0.6974, 
  1.116}, {0.7041, 1.115}, {0.7090, 1.114}, {0.7156, 1.113}, {0.7398, 
  1.110}, {0.7644, 1.104}, {0.7718, 1.103}, {0.7774, 1.101}, {0.7854, 
  1.100}, {0.7929, 1.098}, {0.7977, 1.097}, {0.8034, 1.096}, {0.8190, 
  1.092}, {0.8368, 1.088}, {0.8446, 1.086}, {0.8519, 1.084}, {0.8649, 
  1.081}, {0.8813, 1.077}, {0.8949, 1.074}, {0.9161, 1.069}, {1.039, 
  1.039}, {1.039, 1.039}, {1.069, 0.9161}, {1.074, 0.8949}, {1.077, 
  0.8813}, {1.081, 0.8649}, {1.084, 0.8519}, {1.086, 0.8446}, {1.088, 
  0.8368}, {1.092, 0.8190}, {1.096, 0.8034}, {1.097, 0.7977}, {1.098, 
  0.7929}, {1.100, 0.7854}, {1.101, 0.7774}, {1.103, 0.7718}, {1.104, 
  0.7644}, {1.110, 0.7398}, {1.113, 0.7156}, {1.114, 0.7090}, {1.115, 
  0.7041}, {1.116, 0.6974}, {1.117, 0.6913}, {1.117, 0.6876}, {1.118, 
  0.6832}, {1.120, 0.6718}, {1.121, 0.6596}, {1.122, 0.6546}, {1.123, 
  0.6500}, {1.124, 0.6422}, {1.125, 0.6329}, {1.126, 0.6257}, {1.128, 
  0.6151}, {1.135, 0.5673}, {1.128, 0.5126}, {1.126, 0.5005}, {1.125, 
  0.4923}, {1.124, 0.4817}, {1.123, 0.4727}, {1.122, 0.4675}, {1.121, 
  0.4617}, {1.120, 0.4479}, {1.118, 0.4348}, {1.117, 0.4297}, {1.117, 
  0.4254}, {1.116, 0.4184}, {1.115, 0.4107}, {1.114, 0.4051}, {1.113, 
  0.3976}, {1.110, 0.3699}, {1.104, 0.3397}, {1.103, 0.3358}, {1.103, 
  0.3308}, {1.102, 0.3264}, {1.101, 0.3239}, {1.101, 0.3211}, {1.100, 
  0.3141}, {1.098, 0.3075}, {1.098, 0.3050}, {1.097, 0.3027}, {1.097, 
  0.2991}, {1.096, 0.2951}, {1.096, 0.2921}, {1.095, 0.2881}, {1.092, 
  0.2730}, {1.089, 0.2562}, {1.088, 0.2510}, {1.087, 0.2471}, {1.086, 
  0.2413}, {1.085, 0.2358}, {1.084, 0.2323}, {1.083, 0.2281}, {1.081, 
  0.2162}, {1.078, 0.2022}, {1.077, 0.1958}, {1.076, 0.1899}, {1.074, 
  0.1790}, {1.071, 0.1648}, {1.069, 0.1527}, {1.065, 0.1331}, {1.039, 
  0}, {1.039, 
  0}, {0.9318, -0.1331}, {0.9161, -0.1527}, {0.9063, -0.1648}, 
{0.8949, -0.1790}, {0.8861, -0.1899}, {0.8813, -0.1958}, {0.8762, 
-0.2022}, {0.8649, -0.2162}, {0.8553, -0.2281}, {0.8519, -0.2323}, 
{0.8490, -0.2358}, {0.8446, -0.2413}, {0.8400, -0.2471}, {0.8368, 
-0.2510}, {0.8190, -0.2730}, {0.8034, -0.2921}, {0.7977, -0.2991}, 
{0.7929, -0.3050}, {0.7854, -0.3141}, {0.7774, -0.3239}, {0.7718, 
-0.3308}, {0.7644, -0.3397}, {0.7398, -0.3699}, {0.7156, -0.3976}, 
{0.7090, -0.4051}, {0.7041, -0.4107}, {0.6974, -0.4184}, {0.6913, 
-0.4254}, {0.6876, -0.4297}, {0.6832, -0.4348}, {0.6718, -0.4479}, 
{0.6596, -0.4617}, {0.6546, -0.4675}, {0.6500, -0.4727}, {0.6422, 
-0.4817}, {0.6329, -0.4923}, {0.6257, -0.5005}, {0.6151, -0.5126}, 
{0.5673, -0.5673}, {0.5673, -0.5673}, {0.5126, -0.6151}, {0.5005, 
-0.6257}, {0.4923, -0.6329}, {0.4817, -0.6422}, {0.4727, -0.6500}, 
{0.4675, -0.6546}, {0.4617, -0.6596}, {0.4479, -0.6718}, {0.4348, 
-0.6832}, {0.4297, -0.6876}, {0.4254, -0.6913}, {0.4184, -0.6974}, 
{0.4107, -0.7041}, {0.4051, -0.7090}, {0.3976, -0.7156}, {0.3699, 
-0.7398}, {0.3397, -0.7644}, {0.3308, -0.7718}, {0.3239, -0.7774}, 
{0.3141, -0.7854}, {0.3050, -0.7929}, {0.2991, -0.7977}, {0.2921, 
-0.8034}, {0.2730, -0.8190}, {0.2510, -0.8368}, {0.2471, -0.8400}, 
{0.2413, -0.8446}, {0.2358, -0.8490}, {0.2323, -0.8519}, {0.2281, 
-0.8553}, {0.2162, -0.8649}, {0.2022, -0.8762}, {0.1958, -0.8813}, 
{0.1899, -0.8861}, {0.1790, -0.8949}, {0.1648, -0.9063}, {0.1527, 
-0.9161}, {0.1331, -0.9318}, {0, -1.039}, {0, -1.039}, {-0.1331, 
-1.065}, {-0.1527, -1.069}, {-0.1648, -1.071}, {-0.1790, -1.074}, 
{-0.1899, -1.076}, {-0.1958, -1.077}, {-0.2022, -1.078}, {-0.2162, 
-1.081}, {-0.2281, -1.083}, {-0.2323, -1.084}, {-0.2358, -1.085}, 
{-0.2413, -1.086}, {-0.2471, -1.087}, {-0.2510, -1.088}, {-0.2562, 
-1.089}, {-0.2730, -1.092}, {-0.2881, -1.095}, {-0.2921, -1.096}, 
{-0.2951, -1.096}, {-0.2991, -1.097}, {-0.3027, -1.097}, {-0.3050, 
-1.098}, {-0.3075, -1.098}, {-0.3141, -1.100}, {-0.3211, -1.101}, 
{-0.3239, -1.101}, {-0.3264, -1.102}, {-0.3308, -1.103}, {-0.3358, 
-1.103}, {-0.3397, -1.104}, {-0.3699, -1.110}, {-0.3976, -1.113}, 
{-0.4051, -1.114}, {-0.4107, -1.115}, {-0.4184, -1.116}, {-0.4254, 
-1.117}, {-0.4297, -1.117}, {-0.4348, -1.118}, {-0.4479, -1.120}, 
{-0.4617, -1.121}, {-0.4675, -1.122}, {-0.4727, -1.123}, {-0.4817, 
-1.124}, {-0.4923, -1.125}, {-0.5005, -1.126}, {-0.5126, -1.128}, 
{-0.5673, -1.135}, {-0.6151, -1.128}, {-0.6257, -1.126}, {-0.6329, 
-1.125}, {-0.6422, -1.124}, {-0.6500, -1.123}, {-0.6546, -1.122}, 
{-0.6596, -1.121}, {-0.6718, -1.120}, {-0.6832, -1.118}, {-0.6876, 
-1.117}, {-0.6913, -1.117}, {-0.6974, -1.116}, {-0.7041, -1.115}, 
{-0.7090, -1.114}, {-0.7156, -1.113}, {-0.7398, -1.110}, {-0.7644, 
-1.104}, {-0.7718, -1.103}, {-0.7774, -1.101}, {-0.7854, -1.100}, 
{-0.7929, -1.098}, {-0.7977, -1.097}, {-0.8034, -1.096}, {-0.8190, 
-1.092}, {-0.8368, -1.088}, {-0.8446, -1.086}, {-0.8519, -1.084}, 
{-0.8649, -1.081}, {-0.8813, -1.077}, {-0.8949, -1.074}, {-0.9161, 
-1.069}, {-1.039, -1.039}, {-1.039, -1.039}, {-1.069, -0.9161}, 
{-1.074, -0.8949}, {-1.077, -0.8813}, {-1.081, -0.8649}, {-1.084, 
-0.8519}, {-1.086, -0.8446}, {-1.088, -0.8368}, {-1.092, -0.8190}, 
{-1.096, -0.8034}, {-1.097, -0.7977}, {-1.098, -0.7929}, {-1.100, 
-0.7854}, {-1.101, -0.7774}, {-1.103, -0.7718}, {-1.104, -0.7644}, 
{-1.110, -0.7398}, {-1.113, -0.7156}, {-1.114, -0.7090}, {-1.115, 
-0.7041}, {-1.116, -0.6974}, {-1.117, -0.6913}, {-1.117, -0.6876}, 
{-1.118, -0.6832}, {-1.120, -0.6718}, {-1.121, -0.6596}, {-1.122, 
-0.6546}, {-1.123, -0.6500}, {-1.124, -0.6422}, {-1.125, -0.6329}, 
{-1.126, -0.6257}, {-1.128, -0.6151}, {-1.135, -0.5673}, {-1.128, 
-0.5126}, {-1.126, -0.5005}, {-1.125, -0.4923}, {-1.124, -0.4817}, 
{-1.123, -0.4727}, {-1.122, -0.4675}, {-1.121, -0.4617}, {-1.120, 
-0.4479}, {-1.118, -0.4348}, {-1.117, -0.4297}, {-1.117, -0.4254}, 
{-1.116, -0.4184}, {-1.115, -0.4107}, {-1.114, -0.4051}, {-1.113, 
-0.3976}, {-1.110, -0.3699}, {-1.104, -0.3397}, {-1.103, -0.3358}, 
{-1.103, -0.3308}, {-1.102, -0.3264}, {-1.101, -0.3239}, {-1.101, 
-0.3211}, {-1.100, -0.3141}, {-1.098, -0.3075}, {-1.098, -0.3050}, 
{-1.097, -0.3027}, {-1.097, -0.2991}, {-1.096, -0.2951}, {-1.096, 
-0.2921}, {-1.095, -0.2881}, {-1.092, -0.2730}, {-1.089, -0.2562}, 
{-1.088, -0.2510}, {-1.087, -0.2471}, {-1.086, -0.2413}, {-1.085, 
-0.2358}, {-1.084, -0.2323}, {-1.083, -0.2281}, {-1.081, -0.2162}, 
{-1.078, -0.2022}, {-1.077, -0.1958}, {-1.076, -0.1899}, {-1.074, 
-0.1790}, {-1.071, -0.1648}, {-1.069, -0.1527}, {-1.065, -0.1331}, 
{-1.039, 0}, {-1.039, 0}, {-0.9318, 0.1331}, {-0.9161, 
  0.1527}, {-0.9063, 0.1648}, {-0.8949, 0.1790}, {-0.8861, 
  0.1899}, {-0.8813, 0.1958}, {-0.8762, 0.2022}, {-0.8649, 
  0.2162}, {-0.8553, 0.2281}, {-0.8519, 0.2323}, {-0.8490, 
  0.2358}, {-0.8446, 0.2413}, {-0.8400, 0.2471}, {-0.8368, 
  0.2510}, {-0.8190, 0.2730}, {-0.8034, 0.2921}, {-0.7977, 
  0.2991}, {-0.7929, 0.3050}, {-0.7854, 0.3141}, {-0.7774, 
  0.3239}, {-0.7718, 0.3308}, {-0.7644, 0.3397}, {-0.7398, 
  0.3699}, {-0.7156, 0.3976}, {-0.7090, 0.4051}, {-0.7041, 
  0.4107}, {-0.6974, 0.4184}, {-0.6913, 0.4254}, {-0.6876, 
  0.4297}, {-0.6832, 0.4348}, {-0.6718, 0.4479}, {-0.6596, 
  0.4617}, {-0.6546, 0.4675}, {-0.6500, 0.4727}, {-0.6422, 
  0.4817}, {-0.6329, 0.4923}, {-0.6257, 0.5005}, {-0.6151, 
  0.5126}, {-0.5673, 0.5673}, {-0.5673, 0.5673}, {-0.5126, 
  0.6151}, {-0.5005, 0.6257}, {-0.4923, 0.6329}, {-0.4817, 
  0.6422}, {-0.4727, 0.6500}, {-0.4675, 0.6546}, {-0.4617, 
  0.6596}, {-0.4479, 0.6718}, {-0.4348, 0.6832}, {-0.4297, 
  0.6876}, {-0.4254, 0.6913}, {-0.4184, 0.6974}, {-0.4107, 
  0.7041}, {-0.4051, 0.7090}, {-0.3976, 0.7156}, {-0.3699, 
  0.7398}, {-0.3397, 0.7644}, {-0.3308, 0.7718}, {-0.3239, 
  0.7774}, {-0.3141, 0.7854}, {-0.3050, 0.7929}, {-0.2991, 
  0.7977}, {-0.2921, 0.8034}, {-0.2730, 0.8190}, {-0.2510, 
  0.8368}, {-0.2471, 0.8400}, {-0.2413, 0.8446}, {-0.2358, 
  0.8490}, {-0.2323, 0.8519}, {-0.2281, 0.8553}, {-0.2162, 
  0.8649}, {-0.2022, 0.8762}, {-0.1958, 0.8813}, {-0.1899, 
  0.8861}, {-0.1790, 0.8949}, {-0.1648, 0.9063}, {-0.1527, 
  0.9161}, {-0.1331, 0.9318}, {0, 1.039}}
]

\savedata{\mydatac}[
{{0, 1.039}, {0.2201, 0.8804}, {0.2791, 0.8373}, {0.3217, 
  0.8043}, {0.3797, 0.7593}, {0.4269, 0.7114}, {0.4551, 
  0.6827}, {0.5673, 0.5673}, {0.5673, 0.5673}, {0.6245, 
  0.4163}, {0.6574, 0.3287}, {0.6886, 0.2295}, {0.7593, 0}, {0.7593, 
  0}, {0.6886, -0.2295}, {0.6574, -0.3287}, {0.6245, -0.4163}, 
{0.5673, -0.5673}, {0.5673, -0.5673}, {0.4551, -0.6827}, {0.4269, 
-0.7114}, {0.3797, -0.7593}, {0.3217, -0.8043}, {0.2791, -0.8373}, 
{0.2201, -0.8804}, {0, -1.039}, {0, -1.039}, {-0.2201, -0.8804}, 
{-0.2791, -0.8373}, {-0.3217, -0.8043}, {-0.3797, -0.7593}, {-0.4269, 
-0.7114}, {-0.4551, -0.6827}, {-0.5673, -0.5673}, {-0.5673, -0.5673}, 
{-0.6245, -0.4163}, {-0.6574, -0.3287}, {-0.6886, -0.2295}, {-0.7593, 
  0}, {-0.7593, 0}, {-0.6886, 0.2295}, {-0.6574, 0.3287}, {-0.6245, 
  0.4163}, {-0.5673, 0.5673}, {-0.5673, 0.5673}, {-0.4551, 
  0.6827}, {-0.4269, 0.7114}, {-0.3797, 0.7593}, {-0.3217, 
  0.8043}, {-0.2791, 0.8373}, {-0.2201, 0.8804}, {0, 1.039}}
  ]

\savedata{\mydatad}[
{{0, 1.039}, {0.1346, 0.9421}, {0.1546, 0.9277}, {0.1670, 
  0.9187}, {0.1816, 0.9082}, {0.1929, 0.9001}, {0.1990, 
  0.8957}, {0.2056, 0.8909}, {0.2201, 0.8804}, {0.2324, 
  0.8715}, {0.2368, 0.8683}, {0.2404, 0.8656}, {0.2461, 
  0.8614}, {0.2521, 0.8571}, {0.2562, 0.8541}, {0.2616, 
  0.8501}, {0.2791, 0.8373}, {0.2989, 0.8220}, {0.3061, 
  0.8164}, {0.3122, 0.8117}, {0.3217, 0.8043}, {0.3318, 
  0.7964}, {0.3390, 0.7909}, {0.3483, 0.7837}, {0.3797, 
  0.7593}, {0.4066, 0.7319}, {0.4140, 0.7245}, {0.4194, 
  0.7190}, {0.4269, 0.7114}, {0.4336, 0.7046}, {0.4377, 
  0.7004}, {0.4426, 0.6955}, {0.4551, 0.6827}, {0.4684, 
  0.6691}, {0.4739, 0.6634}, {0.4788, 0.6584}, {0.4873, 
  0.6497}, {0.4973, 0.6394}, {0.5051, 0.6313}, {0.5164, 
  0.6197}, {0.5673, 0.5673}, {0.5673, 0.5673}, {0.6003, 
  0.4802}, {0.6091, 0.4569}, {0.6156, 0.4397}, {0.6245, 
  0.4163}, {0.6302, 0.4011}, {0.6324, 0.3953}, {0.6343, 
  0.3903}, {0.6373, 0.3824}, {0.6405, 0.3736}, {0.6429, 
  0.3674}, {0.6461, 0.3589}, {0.6574, 0.3287}, {0.6675, 
  0.2967}, {0.6704, 0.2873}, {0.6727, 0.2803}, {0.6758, 
  0.2703}, {0.6787, 0.2610}, {0.6805, 0.2552}, {0.6827, 
  0.2483}, {0.6886, 0.2295}, {0.6951, 0.2085}, {0.6979, 
  0.1994}, {0.7005, 0.1910}, {0.7051, 0.1763}, {0.7107, 
  0.1579}, {0.7153, 0.1431}, {0.7223, 0.1204}, {0.7593, 0}, {0.7593, 
  0}, {0.7223, -0.1204}, {0.7153, -0.1431}, {0.7107, -0.1579}, 
{0.7051, -0.1763}, {0.7005, -0.1910}, {0.6979, -0.1994}, {0.6951, 
-0.2085}, {0.6886, -0.2295}, {0.6827, -0.2483}, {0.6805, -0.2552}, 
{0.6787, -0.2610}, {0.6758, -0.2703}, {0.6727, -0.2803}, {0.6704, 
-0.2873}, {0.6675, -0.2967}, {0.6574, -0.3287}, {0.6461, -0.3589}, 
{0.6429, -0.3674}, {0.6405, -0.3736}, {0.6373, -0.3824}, {0.6343, 
-0.3903}, {0.6324, -0.3953}, {0.6302, -0.4011}, {0.6245, -0.4163}, 
{0.6156, -0.4397}, {0.6091, -0.4569}, {0.6003, -0.4802}, {0.5673, 
-0.5673}, {0.5673, -0.5673}, {0.5164, -0.6197}, {0.5051, -0.6313}, 
{0.4973, -0.6394}, {0.4873, -0.6497}, {0.4788, -0.6584}, {0.4739, 
-0.6634}, {0.4684, -0.6691}, {0.4551, -0.6827}, {0.4426, -0.6955}, 
{0.4377, -0.7004}, {0.4336, -0.7046}, {0.4269, -0.7114}, {0.4194, 
-0.7190}, {0.4140, -0.7245}, {0.4066, -0.7319}, {0.3797, -0.7593}, 
{0.3483, -0.7837}, {0.3390, -0.7909}, {0.3318, -0.7964}, {0.3217, 
-0.8043}, {0.3122, -0.8117}, {0.3061, -0.8164}, {0.2989, -0.8220}, 
{0.2791, -0.8373}, {0.2616, -0.8501}, {0.2562, -0.8541}, {0.2521, 
-0.8571}, {0.2461, -0.8614}, {0.2404, -0.8656}, {0.2368, -0.8683}, 
{0.2324, -0.8715}, {0.2201, -0.8804}, {0.2056, -0.8909}, {0.1990, 
-0.8957}, {0.1929, -0.9001}, {0.1816, -0.9082}, {0.1670, -0.9187}, 
{0.1546, -0.9277}, {0.1346, -0.9421}, {0, -1.039}, {0, -1.039}, 
{-0.1346, -0.9421}, {-0.1546, -0.9277}, {-0.1670, -0.9187}, {-0.1816, 
-0.9082}, {-0.1929, -0.9001}, {-0.1990, -0.8957}, {-0.2056, -0.8909}, 
{-0.2201, -0.8804}, {-0.2324, -0.8715}, {-0.2368, -0.8683}, {-0.2404, 
-0.8656}, {-0.2461, -0.8614}, {-0.2521, -0.8571}, {-0.2562, -0.8541}, 
{-0.2616, -0.8501}, {-0.2791, -0.8373}, {-0.2989, -0.8220}, {-0.3061, 
-0.8164}, {-0.3122, -0.8117}, {-0.3217, -0.8043}, {-0.3318, -0.7964}, 
{-0.3390, -0.7909}, {-0.3483, -0.7837}, {-0.3797, -0.7593}, {-0.4066, 
-0.7319}, {-0.4140, -0.7245}, {-0.4194, -0.7190}, {-0.4269, -0.7114}, 
{-0.4336, -0.7046}, {-0.4377, -0.7004}, {-0.4426, -0.6955}, {-0.4551, 
-0.6827}, {-0.4684, -0.6691}, {-0.4739, -0.6634}, {-0.4788, -0.6584}, 
{-0.4873, -0.6497}, {-0.4973, -0.6394}, {-0.5051, -0.6313}, {-0.5164, 
-0.6197}, {-0.5673, -0.5673}, {-0.5673, -0.5673}, {-0.6003, -0.4802}, 
{-0.6091, -0.4569}, {-0.6156, -0.4397}, {-0.6245, -0.4163}, {-0.6302, 
-0.4011}, {-0.6324, -0.3953}, {-0.6343, -0.3903}, {-0.6373, -0.3824}, 
{-0.6405, -0.3736}, {-0.6429, -0.3674}, {-0.6461, -0.3589}, {-0.6574, 
-0.3287}, {-0.6675, -0.2967}, {-0.6704, -0.2873}, {-0.6727, -0.2803}, 
{-0.6758, -0.2703}, {-0.6787, -0.2610}, {-0.6805, -0.2552}, {-0.6827, 
-0.2483}, {-0.6886, -0.2295}, {-0.6951, -0.2085}, {-0.6979, -0.1994}, 
{-0.7005, -0.1910}, {-0.7051, -0.1763}, {-0.7107, -0.1579}, {-0.7153, 
-0.1431}, {-0.7223, -0.1204}, {-0.7593, 0}, {-0.7593, 0}, {-0.7223, 
  0.1204}, {-0.7153, 0.1431}, {-0.7107, 0.1579}, {-0.7051, 
  0.1763}, {-0.7005, 0.1910}, {-0.6979, 0.1994}, {-0.6951, 
  0.2085}, {-0.6886, 0.2295}, {-0.6827, 0.2483}, {-0.6805, 
  0.2552}, {-0.6787, 0.2610}, {-0.6758, 0.2703}, {-0.6727, 
  0.2803}, {-0.6704, 0.2873}, {-0.6675, 0.2967}, {-0.6574, 
  0.3287}, {-0.6461, 0.3589}, {-0.6429, 0.3674}, {-0.6405, 
  0.3736}, {-0.6373, 0.3824}, {-0.6343, 0.3903}, {-0.6324, 
  0.3953}, {-0.6302, 0.4011}, {-0.6245, 0.4163}, {-0.6156, 
  0.4397}, {-0.6091, 0.4569}, {-0.6003, 0.4802}, {-0.5673, 
  0.5673}, {-0.5673, 0.5673}, {-0.5164, 0.6197}, {-0.5051, 
  0.6313}, {-0.4973, 0.6394}, {-0.4873, 0.6497}, {-0.4788, 
  0.6584}, {-0.4739, 0.6634}, {-0.4684, 0.6691}, {-0.4551, 
  0.6827}, {-0.4426, 0.6955}, {-0.4377, 0.7004}, {-0.4336, 
  0.7046}, {-0.4269, 0.7114}, {-0.4194, 0.7190}, {-0.4140, 
  0.7245}, {-0.4066, 0.7319}, {-0.3797, 0.7593}, {-0.3483, 
  0.7837}, {-0.3390, 0.7909}, {-0.3318, 0.7964}, {-0.3217, 
  0.8043}, {-0.3122, 0.8117}, {-0.3061, 0.8164}, {-0.2989, 
  0.8220}, {-0.2791, 0.8373}, {-0.2616, 0.8501}, {-0.2562, 
  0.8541}, {-0.2521, 0.8571}, {-0.2461, 0.8614}, {-0.2404, 
  0.8656}, {-0.2368, 0.8683}, {-0.2324, 0.8715}, {-0.2201, 
  0.8804}, {-0.2056, 0.8909}, {-0.1990, 0.8957}, {-0.1929, 
  0.9001}, {-0.1816, 0.9082}, {-0.1670, 0.9187}, {-0.1546, 
  0.9277}, {-0.1346, 0.9421}, {0, 1.039}}
]

\rput(-2,0.0){
\begin{pspicture}(-1.3,-1.3)(1.3,1.3) 


\dataplot[linewidth=0.9pt,linecolor=cyan,fillstyle=solid,
fillcolor=bal]{\mydatax}

\psset{linecolor=lightgray}

\psline(-1.3,0)(1.3,0)
\psline(0,-1.3)(0,1.3)

\psline(1,-0.07)(1,0.07)
\psline(-1,-0.07)(-1,0.07)
\psline(-0.07,1)(0.07,1)
\psline(-0.07,-1)(0.07,-1)

\psline(0.5,-0.07)(0.5,0.07)
\psline(-0.5,-0.07)(-0.5,0.07)
\psline(-0.07,0.5)(0.07,0.5)
\psline(-0.07,-0.5)(0.07,-0.5)

\rput(1,-0.2){$_{1}$}
\rput(-1,-0.2){$_{-1}$}
\rput(-0.2,1){$_{1}$}
\rput(-0.2,-1){$_{-1}$}

\rput(1,-1){$\mathcal G_1 = (3,3,3)$}
\rput(1,-1.23){$D=0$}

\psset{dotsize=1.3pt,dotstyle=*}
\psset{linewidth=0.9pt}

\dataplot[linecolor=cyan]{\mydatax}

\dataplot[plotstyle=dots,linecolor=blue]{\mydatab}



\end{pspicture}}

\rput(2,0.0){
\begin{pspicture}(-1.3,-1.3)(1.3,1.3) 


\psset{dotsize=1.3pt,dotstyle=*}
\psset{linewidth=0.9pt}

\dataplot[linewidth=0.9pt,linecolor=cyan,fillstyle=solid,
fillcolor=bal]{\mydatac} 

\psset{linecolor=lightgray}

\psline(-1.3,0)(1.3,0)
\psline(0,-1.3)(0,1.3)

\psline(1,-0.07)(1,0.07)
\psline(-1,-0.07)(-1,0.07)
\psline(-0.07,1)(0.07,1)
\psline(-0.07,-1)(0.07,-1)

\psline(0.5,-0.07)(0.5,0.07)
\psline(-0.5,-0.07)(-0.5,0.07)
\psline(-0.07,0.5)(0.07,0.5)
\psline(-0.07,-0.5)(0.07,-0.5)

\rput(1,-0.2){$_{1}$}
\rput(-1,-0.2){$_{-1}$}
\rput(-0.2,1){$_{1}$}
\rput(-0.2,-1){$_{-1}$}

\rput(1,-1){$\mathcal G_2 = (3,6,4)$}
\rput(1,-1.23){$D=-11$}

\dataplot[linecolor=cyan]{\mydatac}

\dataplot[plotstyle=dots,linecolor=blue]{\mydatad}

\psset{dotsize=1.3pt,dotstyle=*}
\psset{linewidth=0.9pt}



\end{pspicture}}

\end{pspicture}
\caption{Lattice curves for the topographs $\mathcal G_1$ and $\mathcal G_2$, enclosing the balls $\mathcal B(\mathcal G_1)$ and $\mathcal B(\mathcal G_2)$. In general these polar curves are continuous and convex for $D<4$.}
\label{balls}
\end{figure}

The $D=0$ case of Theorem \ref{ball} corresponds to the McShane-Rivin norm ball boundary. The left side of Figure \ref{balls} shows the curve for the original Markov topograph whose associated torus is known as the modular torus. This curve first appears in a diagram in \cite{mr95b}; note that our normalization has an extra $2$ so that our lattice curves have twice the radius.
On the right of Figure \ref{balls} is the lattice curve for the topograph  $\mathcal G_2 = (3,6,4)$ which has an edge well.

The constant type (ii) topograph from Theorem \ref{class} has a lattice curve  that may be thought of as a circle at infinity. 

\begin{theorem} \la{ball2}
Let $\mathcal G$ be a Farey-indexed type (iii) `leveling' topograph from Theorem \ref{class}, with $D=4$. Then $P_{\mathcal G}$ extends uniquely to  $\Gamma_{\mathcal G}$  which is a  pair of parallel lines on each side of the origin. 
\end{theorem}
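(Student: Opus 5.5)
By Theorem \ref{class}(iii) and Lemma \ref{neg2}, we may replace $\mathcal G$ by a sign variant with all labels $\gqs 2$; this leaves $P_{\mathcal G}$ unchanged. The plan is then to identify the four trees $T_0,\dots,T_3$ of \e{r3}, compute each $\Phi_{T_i}$ explicitly, and apply the transformation $F$ of \e{f} together with the reflections in \e{refl}.

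By Theorem \ref{class}(iii), cutting along the central edge gives a strictly rising tree on one side and a leveling tree on the other (or two leveling trees if the edge borders the $2$-region). Each $T_i$ is based at one end of the central edge, so it is a subtree of one of these halves. We may assume $T_0,T_1$ lie above the edge, i.e.\ $u\gqs st/2$.
\begin{itemize}
\item The trees $T_0=(s,su-t,u)$ and $T_1=(t,ut-s,u)$ are rising with $D=4$, or leveling if the edge is adjacent to a $2$-region. In the rising case, \e{line} gives $\Phi_{T_0}(x)=(\rho(u)-\rho(s))x+\rho(s)$, and similarly for $T_1$ with $t$ in place of $s$.
\item The trees $T_2=(t,rt-s,r)$ and $T_3=(s,rs-t,r)$ are leveling, or rising when the descending path exits into them. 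Each has a $V$-shaped $\Phi$ given by Corollary \ref{vee}. By Corollary \ref{exte}, each $\phi_{T_i}$ extends uniquely and continuously, so $P_{\mathcal G}$ extends uniquely to $\Gamma_{\mathcal G}$.
\end{itemize}

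The key geometric input is that, by \e{f2}, $F$ sends lines to lines. A straight piece of a graph $\Phi$ therefore gives a straight lattice-curve segment, and a $V$ gives two segments. So $\Gamma_{\mathcal G}$ is a union of at most a dozen segments, and it remains to show they align into two parallel lines symmetric about the origin.

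For this I would work with the linear functional $L(n,k)=\rho(s)n+\rho(t)k$ on the sector $k,n\gqs 0$. The claim to prove is that $\rho(|m^*_{k/n}|)=|a n+b k|$ for one fixed pair $(a,b)$ throughout $\Z^2$. Granting this, the points of $P_{\mathcal G}$ satisfy $|a x+b y|=1$, which is two parallel lines symmetric about the origin.

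To prove the claim, I would use Proposition \ref{zag2} ($D=4$): at every ordered triple $(s,u,t)$ we have $\rho(u)=\rho(s)+\rho(t)$, so that
\begin{equation*}
\rho\bigl(m^*_{(k+k')/(n+n')}\bigr)=\rho\bigl(m^*_{k/n}\bigr)+\rho\bigl(m^*_{k'/n'}\bigr)
\end{equation*}
whenever the edge between $k/n$ and $k'/n'$ points toward $(k+k')/(n+n')$. At leveling vertices the relation becomes a difference. The identity $|x|+|y|=|x+y|$ holds when $x,y$ have the same sign, and $|x+y|=\bigl||x|-|y|\bigr|$ otherwise. The sign change of $an+bk$ occurs exactly across the descending path, since Proposition \ref{www} shows the path follows the continued fraction of the slope ratio \e{frh}. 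So an induction outward from the base triple $(s,u,t)$ should propagate the formula, with $\rho(s),\rho(t)$ determining $(a,b)$ via the values at $1/0$ and $0/1$.

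The main obstacle is this induction across the descending path, where signs flip. I would first handle the case where $\rho(s)/\rho(t)$ is rational, where the path terminates at the $2$-region, at which $an+bk=0$. I would then extend to the irrational case by continuity, using Corollary \ref{vee}, whose corner lies at the zero of the functional.
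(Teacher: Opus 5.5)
Your reduction to the identity $\rho(|m^*_{k/n}|)=|an+bk|$ is sound. If it holds for all coprime $(n,k)$, then $P_{\mathcal G}$ lies on the two lines $|ax+by|=1$, and density of rational directions forces the extension. The identity is also true, and it would give a more uniform proof than the paper's. The paper computes $\Phi$ for the one leveling tree by Corollary \ref{vee} and for the three strictly rising trees by \eqref{line}, pushes the eight pieces through $F$ and the reflections in \eqref{refl}, and covers the remaining configurations (including labels equal to $2$) only with ``the same is true in general.''

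However, your argument for the identity has a genuine gap. Your induction must know, at every edge, whether the relation from Proposition \ref{zag2} is a sum or a difference. You reduce this to the claim that $an+bk$ changes sign exactly across the descending path, which you assert but do not prove. Your proposed workaround does not address it:
\begin{itemize}
\item An induction outward from the central edge reaches every region at finite depth, so whether the path terminates is irrelevant, and you still need the sign claim in the rational case.
\item ``Extend to the irrational case by continuity, using Corollary \ref{vee}'' is not an argument, since continuity of $\Phi$ in $x$ carries no information from one topograph to a different one.
\end{itemize}

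The missing observation is that no sign tracking is needed. Let $x,y\ge 2$ be adjacent labels with $D=4$. By \eqref{iv}, the two regions at the ends of the edge between them are $2\cosh(\rho(x)\pm\rho(y))$. So their $\rho$-values form the multiset $\{\rho(x)+\rho(y),\,|\rho(x)-\rho(y)|\}$. These two regions have Farey-topograph indices $(k\pm k')/(n\pm n')$, and for all reals $L,L'$ we have $\{|L+L'|,\,|L-L'|\}=\{|L|+|L'|,\,\bigl||L|-|L'|\bigr|\}$. Hence if the formula holds at $k/n$, at $k'/n'$ and at one end region, it holds at the other. Induction on distance from the central edge then gives $m^*_{k/n}=2\cosh(an+bk)$ everywhere. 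Here $a=\rho(t)$ and $b=\pm\rho(s)$ (plus when $u\ge st/2$), fixed by the four central labels $s,u,t,r$. This needs neither Proposition \ref{www} nor Corollary \ref{vee}, and it treats labels equal to $2$ uniformly. Alternatively, on each sector the identity can be read off from \eqref{line} and Corollary \ref{vee}, which is what the paper's computation amounts to.

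Two slips also need fixing:
\begin{itemize}
\item Since $m^*_{1/0}=s$ sits at $(n,k)=(0,1)$, your functional should be $\rho(t)n+\rho(s)k$, not $\rho(s)n+\rho(t)k$.
\item $T_2$ and $T_3$ do not both have V-shaped $\Phi$. At most one does, namely the one the descending path continues into, and generically the other is strictly rising with linear $\Phi$ given by \eqref{line}.
\end{itemize}
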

\begin{proof}
With \e{tsg}, \e{r3}, suppose for example that $T_1$ is a leveling tree with $t$ and $u$ not $2$. Computing $\Phi_{T_1}$ with Corollary \ref{vee} and applying $F$ with \e{f2}, gives $R_{T_1}$ as the parts of the two lines $y=(\rho(t) x \pm 1)/(\rho(t)+\rho(u))$ between $y=0$ and $y=x$ in the first quadrant; see the right side of Figure \ref{tg}. With a central symmetry, this produces two of the eight parts of $\G_{\mathcal G}$ in \e{refl}. The trees $T_0$, $T_2$ and $T_3$ are strictly rising with their $\Phi$  functions linear and given in \e{line}. Applying $F$ and reflections confirms that these contribute the remaining pieces of the two lines. The same is true in general. If any one of the labels \e{tsg} equals $2$ then this corresponds to the lines being parallel to the axes or having slope $\pm 1$.
\end{proof}

The next result gives another approach to the curves studied by McShane in \cite{mcs}. He showed the piecewise concavity of these curves using geometrical arguments -- instead of a punctured torus the corresponding surface is a three holed sphere with a hyperbolic structure,  a pair of pants in other words.

\SpecialCoor
\psset{griddots=5,subgriddiv=0,gridlabels=0pt}
\psset{xunit=1.6cm, yunit=1.6cm, runit=1.6cm}
\psset{linewidth=1pt}
\psset{dotsize=1.4pt,dotstyle=*}
\begin{figure}[htb]
\centering

\psset{arrowscale=1.4,arrowinset=0.3,arrowlength=1.1}
\newrgbcolor{light}{0.8 0.8 1.0}
\newrgbcolor{pale}{1 0.7 1}
\newrgbcolor{pale}{1 0.7 0.4}

\newrgbcolor{markov}{0.4 0.2 0.8}
\newrgbcolor{bal}{1 0.93 0.93}
\newrgbcolor{balb}{0.984375 0.859375 0.984375}
\psset{linecolor=markov}

}

\end{pspicture}
\caption{Lattice curves for the topographs $\mathcal G_3$ and $\mathcal G_4$. The curve for $\mathcal G_4$ approaches infinity along the line with slope $1/1$ since $m^*_{1/1}=-2$. Apart from that, these polar curves are continuous. In general for $D\gqs 20$ we obtain piecewise concave  star shapes.}
\label{balls2}
\end{figure}

\begin{theorem} \la{ball3}
Let $\mathcal G$ be a  type (iv) topograph from Theorem \ref{class}, with a negative vertex well $v$ and $D\gqs 20$. Use the Farey indexing that has $v$   the central vertex between $0/1$, $1/1$ and $1/0$ in the Farey topograph. Then $P_{\mathcal G}$ extends uniquely to a  polar curve  about the origin. This curve is $\Gamma_{\mathcal G}$ with six piecewise concave parts. It is continuous except at slopes corresponding to a region $-2$ at $v$, where the curve goes to infinity.
\end{theorem}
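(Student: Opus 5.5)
The plan is to mirror the proof of Theorem \ref{ball}, with the decomposition into trees now centred at the negative vertex well $v$. With the stated Farey indexing, $v$ is surrounded by the regions indexed $1/0$, $1/1$, $0/1$. So (after relabelling \e{tsg} so that the three regions at $v$ are $s=m^*_{1/0}$, $u=m^*_{1/1}$, $t=m^*_{0/1}$, all $\lqs -2$) the topograph splits into the three trees rooted at $v$, as on the left of Figure \ref{topb}. These are not the four trees $T_0,\dots,T_3$ of \e{r3}, since $v$ lies inside that decomposition rather than at an edge. Instead I will use six index sectors: the three trees at $v$ cover the Farey index ranges $[0,1]$, $[1,\infty]$ and $[-\infty,0]$. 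Each of these, after a reflection or a change of Farey variable as in \e{r3}, splits into two halves indexed by $\Q\cap[0,1]$. By central symmetry $\pm$ this produces six arcs of $P_{\mathcal G}$ up to the symmetry.

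Lemma \ref{neg} says that sign variants of each of the three trees rooted at $v$ are strictly rising or progressing. Since $P_{\mathcal G}$ only sees $|m^*_{k/n}|$, I pass to those sign variants. A progressing tree arises exactly when one of the regions at $v$ equals $-2$, i.e.\ its absolute value is $2$. On each tree I then apply Theorem \ref{mthm} and Corollary \ref{exte}: each $\phi_T$ extends uniquely to a continuous $\Phi_T$, which is strictly concave because $D\gqs 20>4$. Transporting via $F$ and the reflections of Definition \ref{enc2}, Lemma \ref{cfv} shows that each arc is a strictly concave polar curve, bulging toward the origin. This gives uniqueness of the extension and piecewise concavity. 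The count of six comes from noting that an arc cannot be joined concavely across the boundary index shared by two trees at $v$; there one expects a corner.

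Next come continuity and the behaviour at the junctions. The arcs meet along the rays of slope $0/1$, $1/1$, $1/0$ (and their negatives), at the points $(n,k)/\rho(|m^*_{k/n}|)$ for the three well regions. Both adjacent arcs end at this same point, which gives continuity, provided $|m^*|>2$ so that $\rho>0$. If a well region equals $-2$, then $\rho(2)=0$ and the point is at infinity. To describe the approach to it, I use Proposition \ref{sjs2}, where $S'(0)=\infty$ or $S(1)=-\infty$ when $s_0=2$ or $t_0=2$. Together with Proposition \ref{stx}, which shows the neighbouring labels grow only linearly, this gives $\phi_T(q)\to 0$ as $q\to 0$. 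Hence $1/\phi_T\to\infty$ and the polar curve runs off to infinity along that ray. By Lemma \ref{neg}, no region other than the three at $v$ has absolute value $2$, so there are no other discontinuities.

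The main obstacle is bookkeeping rather than analysis. I must check that the three trees at $v$, with the chosen reflections and sign variants, match the Farey-topograph indices exactly, analogously to \e{r3}, so that the formula \e{pgp} really holds with six pieces. Optionally, I would also verify through the endpoint slopes \e{sg}, \e{sgb} that genuine corners (inward cusps) appear at the junctions, by the same sort of computation as \e{wp}, now with the inequality reversed for $D>4$. The statement only asks for piecewise concavity, though, so that check is not essential.
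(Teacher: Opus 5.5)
There is a genuine gap at the ray of slope $-1$, which passes through the region $r=m^*_{-1/1}=st-u$. This is exactly where the paper's proof does its one real computation.

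Your decomposition does not hold together as stated. The trees at $v$ covering slopes $[0,1]$ and $[1,\infty]$ are single pieces: they are the sign variants of $T_1$ and $T_0$ in \eqref{r3}, and they do not split. The third tree $(-t,r,-s)$ covers the $90^\circ$ sector of slopes $[-\infty,0]$. No reflection from Definition \ref{enc2} can carry its lattice curve there, since that curve lives in the $45^\circ$ sector $0\lqs y\lqs x$. This is why \eqref{refl} uses four trees, with the third one cut into $T_2=(-t,s-rt,r)$ and $T_3=(-s,t-rs,r)$.

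Applying Lemma \ref{cfv} tree by tree therefore gives four concave arcs up to $\pm$, so eight in total. To reach the six concave parts claimed, you must show that the $T_2$ and $T_3$ arcs fit together concavely across the ray of slope $-1$. Nothing in your proposal does this. Your remark about corners concerns the well rays of slopes $0$, $1$, $\infty$, where the statement asserts nothing.

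The paper closes this gap by checking the slopes at that junction. It shows $S_{T_2}(1)+S_{T_3}(1)\gqs\rho(r)$, which by the same kind of computation as \eqref{wp}, \eqref{wp2} is equivalent to $D\gqs 4$. So the computation you labelled optional is in fact needed, but at the $r$-ray, and it gives concavity rather than a corner.

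Alternatively, your three-tree idea can be rescued without any computation. For the third tree, replace the reflections by the unimodular map $A(x,y)=(x-y,-y)$.
\begin{itemize}
\item $A$ sends the lattice point $(n,k)$ of index $k/n$ in $(-t,r,-s)$ to $(n-k,-k)$. That is the lattice point of topograph index $-k/(n-k)$, which carries the same absolute label.
\item Hence $A\,\Gamma_{(-t,r,-s)}$ is a continuous polar curve through the points of $P_{\mathcal G}$ with slopes in $[-\infty,0]$. By uniqueness of the continuous extension, it equals the $T_2\cup T_3$ part of $\Gamma_{\mathcal G}$.
\item $A$ is linear and fixes the origin, so it preserves concavity in the sense of Lemma \ref{cfv}. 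Concavity across the $r$-ray is then inherited from the strict concavity of $\Gamma_{(-t,r,-s)}$ at the ray through $(2,1)$. Ultimately this is the inequality $S'(1/2)<S(1/2)$ from \eqref{sps}, which also explains why the paper's slope condition reduces to $D\gqs 4$.
\end{itemize}

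The rest of your outline matches the paper: Lemma \ref{neg} for the sign variants, Corollary \ref{exte} for uniqueness, Lemma \ref{cfv} on each piece, continuity at the well rays, and the blow-up when a well label is $-2$.
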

\begin{proof}
With \e{tsg}, \e{r3}, our choice of $v$ has it surrounded by the triple $(s,u,t)$. Lemma \ref{neg} implies the three trees with root $v$ have sign variants that are rising or progressing. These are $T_0=(-s,su-t,-u)$, $T_1=(-t,ut-s,-u)$ and the third tree $(-t,r,-s)$ which splits into $T_2=(-t,s-rt,r)$ and $T_3=(-s,t-rs,r)$. Therefore $P_{\mathcal G}$ is given by \e{pgp} and, by Corollary \ref{exte}, extends uniquely to the curve $\Gamma_{\mathcal G}$. Since $D>4$, each component is strictly concave by Lemma \ref{cfv}. The pieces $\operatorname{Refl}_{y=0} \G_{T_2}$ and
  $\operatorname{Refl}_{y=x} (\operatorname{Refl}_{y=0}\G_{T_3})$ also fit together into a strictly concave union. This follows by checking the slopes where they meet: we require $S_{T_2}(1)+ S_{T_3}(1) \gqs \rho(r)$ and, as in the proof of Theorem \ref{ball}, this is equivalent to $D\gqs 4$. 
  Therefore $\Gamma_{\mathcal G}$ has six continuous piecewise concave components, between the lines through the origin of slopes $1/0, 1/1, 0/1$. If any of $s, u, t$ is $-2$ then the curve will go to infinity along each of these lines, respectively. In this case the corresponding tree is progressing. 
\end{proof}

Figure \ref{balls2} displays examples of these lattice curves with $D\gqs 20$. By 'Farey-indexed' we now specify the choice made in the statement of Theorem \ref{ball3} when the topograph has a negative vertex well.
We could also have chosen to place the well between $0/1$, $-1/1$ and $1/0$ in the Farey topograph, giving a reflected version of $\G_\mathcal G$.

The differentiability results we proved for $\Phi$ in Theorem \ref{mthm} and Section \ref{ite} give corresponding results for lattice curves under the transformation \e{f}. In particular:

\begin{cor}
The lattice curves $\G_{\mathcal G}$ for the topographs $\mathcal G$ in Theorems \ref{ball} and \ref{ball3} have distinct left and right tangents at points where $\G_{\mathcal G}$ meets lines through the origin with rational or infinite slope. They have (usual) tangents where they meet lines of irrational slope. 
\end{cor}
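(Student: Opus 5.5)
The plan is to transfer the one-sided derivative information for each $\Phi_{T_i}$ to the polar curve $\Gamma_{\mathcal G}$ through the map $F$ of \e{f} and the reflections in \e{refl}. These are local diffeomorphisms away from the origin, since $F(x,y)=(1/y,x/y)$ is smooth and invertible on $y>0$. They send tangent lines of the graph of $\Phi$ at $(x_0,\Phi(x_0))$ to tangent lines of $\Gamma_T$ at the image point, and they preserve the distinction between a corner and a smooth point. Concretely, the graph of $\Phi$ near $x_0$ has one-sided tangent directions $(1,S)$ and $(1,S')$, and $dF$ maps these to one-sided tangent directions of $\Gamma_T$. Since $dF$ is an invertible linear map, distinct directions go to distinct directions. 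Moreover, $F$ sends the vertical line $x=x_0$ to the line through the origin of slope $x_0$. So a point of the graph over $x_0$ corresponds to the point where $\Gamma_T$ meets the ray of slope $x_0$.

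\textbf{Interior points of each component.} For $x_0=q$ rational in $(0,1)$, Theorem \ref{mthm} together with Proposition \ref{der} gives left and right derivatives $S(q)\neq S'(q)$; they differ by \e{sps}, since $D\neq 4$ for types (i) and (iv). This yields distinct left and right tangents to $\Gamma_{T}$ on the line of slope $q$. For irrational $\alpha$, $\Phi$ is differentiable at $\alpha$, so $\Gamma_T$ has an ordinary tangent there. Applying the reflections and central symmetries of \e{refl}, and the correspondence \e{r3} between $T_i$ indices and slopes $k/n\in\P^1(\Q)$, then covers every interior point of every component. Rationality of the slope is preserved by these reflections, as is the case of infinite slope.

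\textbf{Junctions.} It remains to treat the points where components meet, on the lines of slope $0,\pm1,\infty$. These slopes are rational or infinite, so we must show that the one-sided tangents there are distinct. By Proposition \ref{tan}, the one-sided tangents of the adjacent pieces come from $S'_{T_i}(0)$ and $S_{T_i}(1)$. Their distinctness is exactly strictness in the inequalities \e{rti} for Theorem \ref{ball}, which the proof there notes are equivalent to $D<4$, and these hold strictly. For Theorem \ref{ball3} the relevant condition is strictness in $S_{T_2}(1)+S_{T_3}(1)\gqs\rho(r)$, equivalent to $D>4$. At the remaining junctions on the lines of slope $1/0,1/1,0/1$, the two pieces come from different trees meeting at the well $v$, and strict concavity across the junction follows from the same computation. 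Where a region is $-2$ the curve goes to infinity, so that point is excluded.

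\textbf{Main obstacle.} The step I expect to be delicate is the junction analysis. One must verify that the equalities in \e{rti} and their type (iv) analogues hold only when $D=4$; the proof of Theorem \ref{ball} asserts this equivalence, so I would redo the representative expansion \e{wp2} to confirm strictness. One must also handle infinite one-sided slopes coming from progressing trees, where $S'(0)=\infty$. These are excluded except along the lines on which the curve escapes to infinity, so no tangent claim is needed there.
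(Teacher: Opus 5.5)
\textbf{Gap at the type (iv) junctions on the lines of slope $1/0,1/1,0/1$.} Your transport argument through $F$ and the reflections is the right one, and it is essentially all the paper offers: the paper only remarks that the differentiability results for $\Phi$ carry over under \e{f}. The argument is sound at interior points of each piece, and at the junctions of type (i) topographs, where strictness in \e{rti} holds because $D<4$. The failing step is your claim that, in type (iv), ``strict concavity across the junction follows from the same computation'' on the lines of slope $1/0,1/1,0/1$.

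The curve is in general not concave across these lines; they are typically the tips of the star. More seriously, redoing the expansion \e{wp2} with the sign-varied trees shows that the one-sided tangents there need not differ. Put $a=-s$, $b=-u$, $c=-t$. Then $T_1=(c,bc+a,b)$, $T_2=(c,c(ac+b)-a,ac+b)$ and $D=a^2+b^2+c^2+abc$. From \e{sg} and the same simplification as in \e{wp2} you get
\begin{equation*}
S'_{T_1}(0)+S'_{T_2}(0)=\log\frac{(\lambda_c(bc+a)-b)(\lambda_c(ac+b)-a)}{\lambda_c^3(c^2-4)}=\log\frac{D-c^2}{c^2-4},
\end{equation*}
with $D-c^2$ in place of the $t^2-D$ of type (i). This is positive (a convex tip), negative (a reflex corner) or zero according as $D>2c^2-4$, $D<2c^2-4$ or $D=2c^2-4$.

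\textbf{The zero case occurs.} Take $(s,u,t)=(-2\sqrt2,-2\sqrt2,-10)$, so $D=196=2\cdot 10^2-4$. Here $T_1=(10,22\sqrt2,2\sqrt2)$ and $T_2=(10,218\sqrt2,22\sqrt2)$ are strictly rising. Using $\lambda_{10}^{1/2}=\sqrt2+\sqrt3$ one checks $S'_{T_1}(0)=-\rho(10)/2$ and $S'_{T_2}(0)=\rho(10)/2$. So both pieces of $\Gamma_{\mathcal G}$ meeting the positive $x$-axis at $(1/\rho(10),0)$ have tangent slope $2$ there. The curve therefore has an ordinary tangent at a point of rational slope.

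\textbf{Consequence.} This step cannot be repaired, because the statement itself fails at such points. It must either exclude the points on the three lines through the regions $m$ adjacent to the negative well, or add the hypothesis $D\neq 2m^2-4$ there. At every other rational point the corner is governed by $\log\frac{m^2-D}{m^2-4}\neq 0$, so your argument goes through. This includes the slope $-1$ junction, which is an interior point of the tree $(-t,r,-s)$.
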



\subsection{Topograph numbers below a given bound}  \la{belo}
We have seen that all Farey-indexed Markov topographs $\mathcal G$
 with  labels in $(-\infty,-2] \cup [2,\infty)$, and classified by Theorem \ref{class}, have a well defined polar lattice curve $\G_{\mathcal G}$. Let $\mathcal B(\mathcal G)$ be the ball it encloses, including the boundary. 
Recall the wedges $\mathcal W$ for rising trees, with a method to calculate their areas in Proposition \ref{wedg}.

\begin{lemma} \la{few}
Let $\mathcal G$ be a Farey-indexed Markov topograph 
 with  labels in $(-\infty,-2) \cup (2,\infty)$ and $D\neq 4$. Then its lattice curve $\G_{\mathcal G}$ is a continuous polar curve enclosing a compact ball $\mathcal B(\mathcal G)$ with finite area given by
 \begin{equation}\label{wax}
   \text{\rm Area}(\mathcal B(\mathcal G)) = 2\sum_{0\lqs i \lqs 3} \text{\rm Area}(\mathcal W_{T_i}),
 \end{equation}
 where the $T_i$ are rising trees from \e{r3}.
\end{lemma}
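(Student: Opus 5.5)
Since $D\neq 4$ and all labels have absolute value greater than $2$, the classification in Theorem \ref{class} leaves only types (i) and (iv); type (iii) needs $D=4$ and type (ii) has $2$-regions. The plan is to show that in either case every tree $T_i$ in \e{r3}, after a sign variant, is rising. Then the eight components of $\Gamma_{\mathcal G}$ in \e{refl} are isometric images of the wedge boundaries $\Gamma_{T_i}$, and the area of $\mathcal B(\mathcal G)$ follows by adding up wedges.

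\textbf{Type (i): edge or vertex well, $D<4$.} Passing to a sign variant with all labels positive (which leaves $P_{\mathcal G}$ unchanged), all regions are $>2$. Theorem \ref{class2} then says every Markov tree in $\mathcal G$ with these labels and $D\neq 4$ is rising. In particular $T_0,\dots,T_3$ are rising.

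\textbf{Type (iv): negative vertex well $v$, $D\gqs 20$.} Using the Farey indexing fixed after Theorem \ref{ball3}, the proof of that theorem shows that sign variants of $T_0,\dots,T_3$ are rising or progressing. A progressing tree contains a $2$-region, and sign variants only change signs. Since no label has absolute value $2$, each of these trees has all labels $>2$ and $D\neq4$, so Theorem \ref{class2} again makes it rising.

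With all $T_i$ rising, continuity of $\Gamma_{\mathcal G}$ follows from Theorem \ref{ball} in case (i) and Theorem \ref{ball3} in case (iv); in case (iv) there is no $-2$ at $v$, so the curve never goes to infinity. Each $\Gamma_{T_i}$ is the polar graph $r=f(\theta)$ of \e{polar} on a sector of angle $\pi/4$, and $f$ is continuous and positive because $\Phi_{T_i}$ is continuous with $\Phi_{T_i}>0$. Hence $\mathcal B(\mathcal G)$ is star-shaped about the origin and bounded by a continuous closed curve, so it is compact.

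For the area, the lines of slope $0,\pm1,\infty$ through the origin cut $\mathcal B(\mathcal G)$ into eight sectors. By \e{refl}, each sector is the image of some wedge $\mathcal W_{T_i}$ under a reflection, possibly composed with the central symmetry. These maps are area preserving, and each $T_i$ contributes two sectors via the $\pm$. The sectors overlap only along the boundary rays, which have measure zero. Summing the areas gives \e{wax}, and each $\text{Area}(\mathcal W_{T_i})$ is finite by the rectifiability argument before Theorem \ref{zzz}.

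The step I expect to need the most care is checking in \e{r3} and \e{refl} that the eight pieces tile the plane's sectors exactly. In particular, the reflected trees $T_1$ and $T_3$ must land in the correct octants with no gaps or double counting, and the boundary rays where adjacent pieces meet must match. This comes down to the bookkeeping already done to derive \e{pgp}.
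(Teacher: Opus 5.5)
Your proposal is correct and follows the paper's own, much terser, proof. With no labels $\pm 2$ you are in the settings of Theorems \ref{ball} and \ref{ball3}, the four trees $T_i$ are rising (a progressing sign variant would force a $2$-region), and \eqref{refl} assembles $\mathcal B(\mathcal G)$ from eight isometric copies of the wedges, two per tree. The one point you assert without justification is that $\Phi_{T_i}>0$ at irrationals, which is what keeps $\Gamma_{\mathcal G}$ bounded; it follows from Proposition \ref{sim2}, which gives $\phi_{T_i}(k/n)=\rho(m_{k/n})/n>\omega_{T_i}-(\log 2)/n$ and hence $\Phi_{T_i}\geq\omega_{T_i}>0$.
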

\begin{proof}
We are in the cases of Theorems \ref{ball} and \ref{ball3}, though excluding labels $\pm 2$ so that $\G_{\mathcal G}$ remains bounded. Hence  the trees $T_i$ in \e{r3} are rising. Their wedges $\mathcal W_{T_i}$ make up $\mathcal B(\mathcal G)$ by \e{refl}.
\end{proof}

The analog of Lemma \ref{latt} is: 

\begin{lemma} \la{latt2}
The dilated set $t \cdot \G_{\mathcal G}$ for $t>0$ intersects a lattice point $(n,k) \in \Z^2$ with $\gcd(n,k)=1$ exactly when $|m^*(\mathcal G)_{k/n}|=2\cosh(t)$.
\end{lemma}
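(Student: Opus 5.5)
The plan is to reduce Lemma \ref{latt2} to Lemma \ref{latt} for the four trees $T_i$ of \e{r3}, using the decomposition \e{refl} of $\G_{\mathcal G}$ into eight pieces. First, because each piece lies in a sector bounded by lines of slope $0$, $\pm1$ or $\infty$, I will record that the piece containing a given point is determined by its direction. A point $(n,k)$ with $\gcd(n,k)=1$ determines the slope $k/n$, and $t\cdot\G_{\mathcal G}$ can meet the ray through $(n,k)$ only inside the one or two pieces whose sectors contain that ray. By the central symmetry $\pm$ in \e{refl} and $m^*_{k/n}=m^*_{-k/-n}$, I may assume $n\gqs 0$. The pair $(n,k)$ then lands in exactly one of the four index ranges of \e{r3}, with boundary overlaps only on the rays $k=0$, $k=\pm n$ and $n=0$.

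Next I treat a typical case, $0\lqs k\lqs n$. Here the relevant piece is $\G_{T_1}$, and $m^*_{k/n}=m(T_1)_{k/n}$, after replacing $T_1$ by a sign variant so its labels are $\gqs 2$; this only changes signs, whence the absolute value $|m^*|$. Lemma \ref{latt}, extended to leveling trees (its proof only uses $F$ and $\Phi_{T}(k/n)=\phi_T(k/n)$, which holds by Corollary \ref{exte}), then gives the following: $t\cdot\G_{T_1}$ contains $(n,k)$ exactly when $t=n\Phi_{T_1}(k/n)=\rho(m_{k/n})$, i.e.\ $2\cosh t=|m^*_{k/n}|$.

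For the other ranges, I will check that the reflections in \e{refl} carry the lattice point to the correct index. Reflection in $y=x$ sends $(n,k)$ to $(k,n)$, which matches the index $n/k$ for $T_0$. Reflection in $y=0$ sends $(n,k)$ to $(n,-k)$, which matches the index $-k/n$ for $T_2$. The composite for $T_3$ similarly sends $(n,k)$ with $-k\gqs n$ to $(-k,n)$, matching the index $n/(-k)$. Since reflections preserve $\Z^2$, coprimality, and commute with dilation, each case reduces to Lemma \ref{latt}.

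I expect the main obstacle to be the bookkeeping on the boundary rays and in degenerate cases. On a shared ray two pieces meet, and I will verify that they give the same point, using the fact that \e{r3} assigns the same label there (for example $m(T_0)_1=u=m(T_1)_1$). I also need uniqueness in $t$: for fixed $(n,k)$ the ray meets $\G_{\mathcal G}$ once, since each piece is a polar graph, so $t$ is determined. When a label equals $\pm2$ the curve is at infinity along that ray, and then no $t>0$ works; this is consistent with $2\cosh t>2$. Constant type (ii) topographs are excluded, or handled vacuously.
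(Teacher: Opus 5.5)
Your argument is correct, but it is routed differently from the paper's. The paper never goes back through the eight tree pieces. Since $\Gamma_{\mathcal G}$ is a polar curve extending $P_{\mathcal G}$, a lattice point $(n_0,k_0)$ on $t\cdot\Gamma_{\mathcal G}$ lies on a ray of rational or infinite slope $k/n$. The only point of $t\cdot\Gamma_{\mathcal G}$ on that ray is $t(n,k)/\rho(|m^*_{k/n}|)$, taken directly from \eqref{pg}. Coprimality of both $(n_0,k_0)$ and $(n,k)$ then forces $(n_0,k_0)=(n,k)$, up to the $\pm$ symmetry, and $t=\rho(|m^*_{k/n}|)$; the argument also reverses.

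In effect you re-derive the identification \eqref{pgp} piece by piece, through the reflection checks for $T_0$, $T_2$, $T_3$ and the agreement of labels on shared rays. You then apply Lemma \ref{latt} to each piece, which is the same gcd argument one level down. One small wording point: for $T_3$, the composite $\operatorname{Refl}_{y=-x}\circ\operatorname{Refl}_{y=0}$ is the rotation $(x,y)\mapsto(y,-x)$. It carries $(-k,n)$ to $(n,k)$, so the map you describe is its inverse. That inverse is the correct map for pulling the lattice point back to $\Gamma_{T_3}$, but it is not an involution like the other two reflections.

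The paper's route is shorter and uniform, because all the symmetry bookkeeping is already packaged in \eqref{pgp}. Yours makes explicit two points the paper leaves tacit. First, Lemma \ref{latt}, stated only for rising and progressing trees, still holds for the leveling trees in type (iii) topographs, via Corollary \ref{exte}. Second, a label $\pm 2$ sends the curve to infinity along that ray, so no $t>0$ works there.
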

\begin{proof}
Relabel the lattice point as $(n_0,k_0)$ and suppose it intersects $t \cdot \G_{\mathcal G}$.
The intersection must happen on a line through the origin with slope in $\P^1(\Q)$. Hence 
$(n_0,k_0) \in t \cdot P_{\mathcal G}$ so that
$
  (n_0,k_0) =  \left( t n, t k \right)/\rho(|m^*_{k/n}|)$.
Since $\gcd(n_0,k_0) = \gcd(n,k)=1$ we must have $n_0=n$, $k_0=k$, $t=\rho(|m^*_{k/n}|)$ and therefore $|m^*_{k/n}|=2\cosh(t)$.
This reasoning may also be reversed.
\end{proof}

Let 
\begin{equation*}
  M(X)=M_{\mathcal G}(X) := \#\{ q\in \P^1(\Q) \, : \, |m^*_q| \lqs X \},
\end{equation*}
count (with their multiplicities) all numbers in $\mathcal G$ that are at most $X$ in absolute value. The following result generalizes  \cite[Thm. 4.1]{mr95b} of McShane and Rivin through the link of Theorem \ref{bowl}.

\begin{theorem} \la{zzzb}
Let $\mathcal G$ be a Farey-indexed Markov topograph 
 with  labels in $(-\infty,-2) \cup (2,\infty)$ and $D\neq 4$. Then
\begin{equation} \la{mzxb}
  M_{\mathcal G}(X) = \frac{3\text{\rm Area}(\mathcal B(\mathcal G))}{\pi^2}  (\log X)^2 +O\left(\log X \log \log X \right).
\end{equation}
\end{theorem}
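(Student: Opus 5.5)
The plan is to reduce to the lattice-point count already done in Theorem \ref{zzz}, now applied to the whole ball $\mathcal B(\mathcal G)$ instead of a single wedge. By Lemma \ref{latt2}, a region with Farey index $k/n$ has $|m^*_{k/n}|\lqs X$ exactly when the primitive lattice point $(n,k)$ lies in $\rho(X)\cdot\mathcal B(\mathcal G)$; here we use that $\mathcal B(\mathcal G)$ is star-shaped about the origin, being enclosed by a polar curve. Each $q\in\P^1(\Q)$ corresponds to exactly two primitive points $\pm(n,k)$. Hence
\begin{equation*}
2M_{\mathcal G}(X)=N_{\mathcal B}(\rho(X)), \qquad N_{\mathcal B}(t):=\#\{(n,k)\in\Z^2 : \gcd(n,k)=1,\ (n,k)\in t\,\mathcal B(\mathcal G)\}.
\end{equation*}
The origin needs no special treatment, since it is not primitive.

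Next I need to know that the boundary is a rectifiable Jordan curve. By Lemma \ref{few} the trees $T_i$ in \e{r3} are rising, so $\G_{\mathcal G}$ is continuous and bounded. It is made up of eight pieces, each a reflected copy of some $\Gamma_{T_i}$. Each $\Gamma_{T_i}$ has finite length, because the graph of $\Phi_{T_i}$ is rectifiable with bounded slopes (see the argument before Theorem \ref{zzz}). The map $F$ is smooth on the strip, where $\Phi_{T_i}$ is bounded below by a positive constant, so it preserves rectifiability. Therefore $\G_{\mathcal G}$ is a closed rectifiable Jordan curve.

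Jarnik's bound then gives $N^*_{\mathcal B}(t)=\text{Area}(\mathcal B)t^2+O(t)$ for the count without the gcd condition. M\"obius inversion over dilates, $N_{\mathcal B}(t)=\sum_{d\lqs Ct}\mu(d)N^*_{\mathcal B}(t/d)$, which is valid by star-shapedness, yields
\begin{equation*}
N_{\mathcal B}(t)=\frac{6}{\pi^2}\text{Area}(\mathcal B)\,t^2+O(t\log t),
\end{equation*}
exactly as in \e{mxt}. Dividing by $2$ and substituting $t=\rho(X)=\log X+O(X^{-2})$ from \e{cru} gives \e{mzxb}.

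An alternative route avoids rectifiability of the whole curve. Split $\mathcal B(\mathcal G)$ into the eight wedges, apply Theorem \ref{zzz} to each $T_i$ twice, and use \e{wax}. The catch is that regions lying on the boundary rays (slopes $0,\pm1,\infty$) are shared between adjacent trees. There are only $O(\log X)$ such regions, so they can be absorbed into the error term.

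The main obstacle is the rectifiability and Jordan property near the ray junctions, including the case of type (iv) topographs, where pieces of $\G_{\mathcal G}$ are glued at concave corners. I would handle this by citing that each of the eight pieces has finite length and that continuity at the junctions was established in Theorems \ref{ball} and \ref{ball3}. The exclusion of labels $\pm2$ keeps the curve bounded there.
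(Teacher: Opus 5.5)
Your proposal is correct and follows the paper's argument. Lemma~\ref{latt2} reduces the problem to counting primitive lattice points in $\rho(X)\cdot\mathcal B(\mathcal G)$, the Jarnik plus M\"obius count from Theorem~\ref{zzz} supplies the asymptotic, and dividing by $2$ accounts for the pair $\pm(n,k)$. The paper takes your alternative eight-wedge route rather than applying Jarnik's bound once to all of $\G_{\mathcal G}$; there the overcount is exactly $4$, since only the regions $m^*_{1/0},m^*_{1/1},m^*_{0/1},m^*_{-1/1}$ lie on the junction rays, not merely $O(\log X)$.
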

\begin{proof}
By Lemma \ref{latt2}, $|m^*_{k/n}| \lqs X$ if and only if $(n,k) \in t \cdot \mathcal B(\mathcal G)$ for $t=\rho(X)$. Then $\mathcal B(\mathcal G)$ breaks into eight wedges and the proof of Theorem \ref{zzz} applies. We have double counted since $-k/{-}n=k/n$ and must divide by $2$ to obtain \e{mzxb}. Note that our use of the trees $T_i$ means the numbers $m^*_{1/0}$, $m^*_{1/1}$, $m^*_{0/1}$, $m^*_{-1/1}$ contribute twice to the count. This adds an extra $4$, but is covered by the error term.
\end{proof}

The next result follows directly.

\begin{cor}
For a topograph $\mathcal G$  with an edge or vertex well, the area of $\mathcal B(\mathcal G)$ is independent of the Farey indexing chosen.
\end{cor}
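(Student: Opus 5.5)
The corollary follows from Theorem \ref{zzzb}, because the counting function $M_{\mathcal G}(X)$ does not depend on any indexing. I would first note that a topograph with an edge or vertex well has $D<4$ by Lemma \ref{wells}. It is type (i) in Theorem \ref{class}, and Corollary \ref{surx} or Proposition \ref{real} shows that its labels are all $>2$. So for every choice of Farey indexing (every choice of triple $(s,u,t)$ carrying the indices $(1/0,1/1,0/1)$) the hypotheses of Lemma \ref{few} and Theorem \ref{zzzb} hold. Hence $\mathcal B(\mathcal G)$ is a compact ball of finite area, and \e{mzxb} holds for that indexing.

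Next I would observe that $M_{\mathcal G}(X)=\#\{q\in\P^1(\Q): |m^*_q|\lqs X\}$ counts the regions of $\mathcal G$ with $|m|\lqs X$, with multiplicity. Any Farey indexing is a bijection between $\P^1(\Q)$ and the set of regions of $\mathcal G$: every reduced $k/n$ occurs exactly once in the Farey topograph, and the overlay places one index on each region. So $M_{\mathcal G}(X)$ is a property of the labelled topograph alone and is the same for any two indexings.

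Finally, take two Farey indexings with balls $\mathcal B_1,\mathcal B_2$. Applying \eqref{mzxb} to both and subtracting gives
\begin{equation*}
\frac{3}{\pi^2}\bigl(\text{Area}(\mathcal B_1)-\text{Area}(\mathcal B_2)\bigr)(\log X)^2 = O(\log X\log\log X).
\end{equation*}
Dividing by $(\log X)^2$ and letting $X\to\infty$ forces the areas to be equal.

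The only point needing care is the claim that changing the indexing just permutes the labels inside the multiset counted by $M_{\mathcal G}$. This follows directly from the bijectivity of the Farey topograph indexing. Apart from that the argument is a formal consequence of the uniqueness of the leading coefficient in the asymptotic.
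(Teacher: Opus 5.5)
Your argument is correct and is exactly what the paper intends. The paper says the corollary follows directly from Theorem \ref{zzzb}: $M_{\mathcal G}(X)$ counts regions of $\mathcal G$ with multiplicity and so is independent of the indexing, while the leading coefficient $3\,\mathrm{Area}(\mathcal B(\mathcal G))/\pi^2$ of the asymptotic is uniquely determined. One small citation slip: Corollary \ref{surx} assumes the labels are $>2$ rather than proving it, so the right justification that a topograph with a well (wells are only defined when all regions are $\geq 2$) has no $2$-regions is Theorem \ref{class}(i), or the remark in the proof of Proposition \ref{real} that a $2$-region forces $D=4$.
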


\begin{ex} 
{\rm Taking $\mathcal G = (3,3,3)$, the topograph completion of the original Markov tree in Figure \ref{mart}, we have $T_0=T_1=(3,6,3)$ and $T_2=T_3=(3,15,6)$ by \e{r3}.  Before \e{16}, the area of $\mathcal W_{T_2}$ was computed as $c \approx 0.297267722$ and $\mathcal W_{T_0}$ was noted to have area $2c$. Hence Lemma \ref{few} gives ${\rm Area}(\mathcal B(\mathcal G)) = 2(2c+2c+c+c)=12c$, making the area of the ball on the left of Figure \ref{balls} twelve times the area of the wedge $\mathcal W$ in Figure \ref{wplot}. The coefficient of the main term in \e{mzxb} is then $36 c/\pi^2$. This is six times the coefficient in  Corollary \ref{zm} and is explained by the numbers in $\mathcal G$ (aside from $3$ and $6$) having multiplicity $6$.}
\end{ex}

\begin{ex} 
{\rm Let $\mathcal G = (4,7,6)$ with $D=-67$, and we have $T_0=(4,22,7)$, $T_1=(6,38,7)$, $T_2=(6,98,17)$ and $T_3=(4,62,17)$; see Figure \ref{rtree}. Then computing with Lemma \ref{few} and Proposition \ref{wedg} finds ${\rm Area}(\mathcal B(\mathcal G)) \approx 1.18418755$. For $X=10^{250}$, the main term in \e{mzxb} is $\approx 119276.09$ while the true value of $M_{\mathcal G}(X)$ is $119974$. The difference is less than $700$, comparing favorably with $\log X \log \log X \approx 3658.5$.}
\end{ex}

\begin{ex} 
{\rm For the topograph  $\mathcal G = (-3,-5,-4)$ with $D=110$, we have $T_0=(3,19,5)$, $T_1=(4,23,5)$, $T_2=(4,65,17)$ and $T_3=(3,47,17)$. Calculating as before finds ${\rm Area}(\mathcal B(\mathcal G)) \approx 1.637393913$. For $X=10^{250}$, the main term in \e{mzxb} is $\approx 164924.84$ while  $M_{\mathcal G}(X)=164915$. The difference is very small, less than $10$.}
\end{ex}




{\small 
\bibliography{patbib} }

{\small 
\vskip 5mm
\noindent
\textsc{Department of Mathematics, The CUNY Graduate Center, 365 Fifth Avenue, New York, NY 10016-4309, U.S.A.}

\noindent
{\em E-mail address:} \texttt{cosullivan@gc.cuny.edu}
}

\end{document}